\DeclareMathOperator{\id}{id}
\DeclareMathOperator{\el}{el}
\DeclareMathOperator{\op}{op}
\DeclareMathOperator{\Lan}{Lan}
\DeclareMathOperator{\Mod}{\mathbf{Mod}}
\DeclareMathOperator{\fp}{fp}
\DeclareMathOperator{\Hom}{Hom}
\DeclareMathOperator{\Spec}{Spec}
\DeclareMathOperator{\Ind}{Ind}
\DeclareMathOperator{\Span}{\mathbf{Span}}
\DeclareMathOperator{\Cospan}{\mathbf{Cospan}}
\DeclareMathOperator{\EM}{EM}
\DeclareMathOperator{\Aff}{\mathbf{Aff}}
\DeclareMathOperator{\Coh}{\mathbf{Coh}}
\DeclareMathOperator{\QCoh}{\mathbf{QCoh}}
\DeclareMathOperator{\Cov}{Cov}
\DeclareMathOperator{\fpqc}{\mathit{fpqc}}
\DeclareMathOperator{\pr}{pr}
\DeclareMathOperator{\Fil}{Fil}
\DeclareMathOperator{\MF}{MF}
\DeclareMathOperator{\colim}{colim}
\DeclareMathOperator{\Er}{Er}
\DeclareMathOperator{\Ps}{\mathbf{Ps}}
\DeclareMathOperator{\Cocts}{\mathbf{Cocts}}
\DeclareMathOperator{\Lex}{\mathbf{Lex}}
\DeclareMathOperator{\Cat}{\mathbf{Cat}}
\DeclareMathOperator{\Gpd}{\mathbf{Gpd}}
\DeclareMathOperator{\Set}{\mathbf{Set}}
\DeclareMathOperator{\Ab}{\mathbf{Ab}}
\DeclareMathOperator{\Comod}{\mathbf{Comod}}
\newcommand{\ca}[1]{\mathscr{#1}}
\newcommand{\Prs}[1]{\mathcal{P}\ca{#1}}
\newcommand{\ten}[1]{\mathop{{\otimes}_{#1}}}
\newcommand{\tenlr}[2]{\mathop{{}_{#1}{\otimes}_{#2}}}
\newcommand{\pb}[1]{\mathop{{\times}_{#1}}}
\newcommand{\defl}{\mathrel{\mathop:}=}
\theoremstyle{plain}
\newtheorem{thm}{Theorem}[subsection]
\newtheorem{prop}[thm]{Proposition}
\newtheorem{lemma}[thm]{Lemma}
\newtheorem{cor}[thm]{Corollary}
\theoremstyle{definition}
\newtheorem{example}[thm]{Example}
\newtheorem{rmk}[thm]{Remark}
\newtheorem{dfn}[thm]{Definition}
\newtheoremstyle{citing}{}{}{\itshape}{}{\bfseries}{.}{ }{\thmnote{#3}}
\theoremstyle{citing}
\newtheorem{cit}{}
\newtheoremstyle{citingdfn}{}{}{}{}{\bfseries}{.}{ }{\thmnote{#3}}
\theoremstyle{citingdfn}
\keywords{Tannaka duality, algebraic stacks, weakly Tannakian categories}
\subjclass[2000]{14A20, 16T05, 18D20}
\author{Daniel Sch\"appi}
\title[Characterizing categories of coherent sheaves]{A characterization of categories of coherent sheaves of certain algebraic stacks}
\thanks{The author gratefully acknowledges support from a 2012 Endeavour Research Fellowship from the Australian Department of Education, Employment and Workplace Relations.}
\begin{document}

\begin{abstract}
 Under certain conditions, a scheme can be reconstructed from its category of quasi-coherent sheaves. The Tannakian reconstruction theorem provides another example where a geometric object can be reconstructed from an associated category, in this case the category of its finite dimensional representations. Lurie's result that the pseudofunctor which sends a geometric stack to its category of quasi-coherent sheaves is fully faithful provides a conceptual explanation for why this works.

 In this paper we prove a generalized Tannakian recognition theorem, in order to characterize a part of the image of the extension of the above pseudofunctor to algebraic stacks in the sense of Naumann. This allows us to further investigate a conjecture by Richard Pink about categories of filtered modules, which were defined by Fontaine and Laffaille to construct $p$-adic Galois representations.

 In order to do this we give a new characterization of Adams Hopf algebroids, which also allows us to answer a question posed by Mark Hovey.
\end{abstract}

\maketitle

\tableofcontents

 \section{Introduction}

 The relationship between schemes and their categories of quasi-coherent sheaves has been studied extensively. Gabriel proved that a Noetherian scheme can be recontructed from its category of quasi-coherent sheaves \cite{GABRIEL}, a result which has been generalized by Rosenberg, Garkusha \cite{ROSENBERG, GARKUSHA}. Balmer \cite{BALMER} has shown that Noetherian schemes can be reconstructed from their derived category of perfect complexes. This was generalized to quasi-compact quasi-separated schemes by Buan-Krause-Solberg \cite{BKS}.

 Tannaka duality in algebraic geometry, developed by Saavedra, Deligne and Milne \cite{SAAVEDRA,DELIGNE_MILNE,DELIGNE}, gives another example where a geometric object (an affine group scheme, or a gerbe with affine band) can be reconstructed from an associated category, in this case the category of its finite dimensional representations.

 In \cite{LURIE}, Lurie has proved a theorem that suggests a common explanation for these phenomena. For geometric stacks he has identified those functors between categories of quasi-coherent sheaves which are induced by morphisms of stacks. A consequence of this is that a geometric stack is determined up to equivalence by its category of quasi-coherent sheaves. This explains why it is possible to reconstruct such stacks from their categories of sheaves.

 Our goal is to complement Lurie's result with a characterization of categories of quasi-coherent sheaves in terms of fiber functors. To do this we introduce the new notion of weakly Tannakian category, and prove that every such category is equivalent to the category of coherent sheaves on a stack.

\subsection{Weakly Tannakian categories and the recognition theorem}\label{section:introduction_weakly_tannakian}
 All the stacks we consider are stacks on the $\fpqc$-site of affine schemes. Following Naumann \cite{NAUMANN}, we call a stack \emph{algebraic} if it is associated to a flat affine groupoid $(X_0,X_1)$. Under the equivalence between affine schemes and commutative rings, such groupoids correspond to flat Hopf algebroids $(A,\Gamma)$. The category of quasi-coherent sheaves on the stack associated to $(X_0,X_1)$ is equivalent to the category of comodules of $(A,\Gamma)$ (see \cite[\S3.4]{NAUMANN} and \cite[Remark~2.39]{GOERSS}). This allows us to state all the results in the language of algebraic stacks even though the proofs are mostly written in the language of Hopf algebroids.

\begin{dfn}\label{dfn:weakly_tannakian}
 Let $R$ be a commutative ring, $B$ a commutative $R$-algebra, and let $\ca{A}$ be a symmetric monoidal abelian $R$-linear category. A strong symmetric monoidal $R$-linear functor
\[
 w \colon \ca{A} \rightarrow \Mod_B 
\]
 is called a \emph{fiber functor} if it is faithful and exact. A fiber functor is called \emph{neutral} if $B=R$.

 If $\ca{A}$ satisfies the conditions:
\begin{enumerate}
 \item[i)]
 There exists a fiber functor $w \colon \ca{A} \rightarrow \Mod_B$ for some commutative $R$-algebra $B$;
 \item[ii)]
 For all objects $A \in \ca{A}$ there exists an epimorphism $A^{\prime} \rightarrow A$ such that $A^{\prime}$ has a dual;

\end{enumerate}
 it is called \emph{weakly Tannakian}.
\end{dfn}

 We will show that a weakly Tannakian category $\ca{A}$ is equivalent (as symmetric monoidal $R$-linear category) to the category of coherent sheaves on an algebraic stack over $R$. The converse is not true in general. The following definitions allow us to describe those stacks $X$ for which $\Coh(X)$ is weakly Tannakian.
 
 A stack is called \emph{coherent} if it satisfies a mild finiteness condition (see Definition~\ref{dfn:coherent} for details). If $(A,\Gamma)$ is a commutative Hopf algebroid where $A$ is a coherent ring, then the associated stack $X$ is coherent.

 An algebraic stack $X$ has the \emph{resolution property} if every coherent sheaf is a quotient of a dualizable sheaf. It is hard to describe the precise class of those algebraic stacks which have the resolution property, but they are abundant. Thomason has studied conditions for a stack to have the resolution property, see \cite{THOMASON}. The proof of \cite[Lemma~2.4]{THOMASON} can be used to show that all algebraic stacks associated to flat Hopf algebroids $(A,\Gamma)$ where $A$ is a Dedekind ring have the resolution property. 

 A lot of algebraic stacks that arise in algebraic topology have the resolution property, for example, the moduli stack of formal groups, and all the stacks associated to the groupoids arising from a long list of cohomology theories (see \cite[\S1.4]{HOVEY}, \cite[Propositions~6.8 and 6.9]{GOERSS}).  The main theorem of our paper is the following.

\begin{thm}\label{thm:recognition}
 Let $R$ be a commutative ring. An $R$-linear category $\ca{A}$ is weakly Tannakian if and only if there exists a coherent algebraic stack $X$ over $R$ with the resolution property, and an equivalence
\[
 \ca{A} \simeq \Coh(X)
\]
 of symmetric monoidal $R$-linear categories.
\end{thm}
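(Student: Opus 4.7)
The reverse direction is the routine one. Given a coherent algebraic stack $X$ with the resolution property presented by a flat Hopf algebroid $(A,\Gamma)$, the category $\Coh(X)$ is equivalent to the category of finitely presented $(A,\Gamma)$-comodules. The forgetful functor along the presentation $\Spec A \to X$ lands in $\Mod_A$, is strong symmetric monoidal, exact, and faithful because the presentation is faithfully flat; this verifies axiom (i) of Definition~\ref{dfn:weakly_tannakian} with $B = A$. Axiom (ii) is essentially the statement of the resolution property, provided one first identifies the dualizable objects of $\Coh(X)$ with dualizable quasi-coherent sheaves.

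For the forward direction, suppose $\ca{A}$ is weakly Tannakian with fiber functor $w \colon \ca{A} \to \Mod_B$. The plan is to reconstruct a Hopf algebroid from the coendomorphisms of $w$. First I would pass to the Ind-completion $\Ind(\ca{A})$, which inherits a cocomplete symmetric monoidal abelian structure, and extend $w$ to a cocontinuous strong symmetric monoidal functor $\hat{w} \colon \Ind(\ca{A}) \to \Mod_B$. The extended functor admits a right adjoint $v$, and the comonad $\hat{w} v$ on $\Mod_B$ is determined by its underlying $B$-bimodule $\Gamma \defl \hat{w} v(B)$. Because $w$ is strong symmetric monoidal with $\hat{w}(I) \cong B$, the object $\Gamma$ inherits two commuting $B$-algebra structures (source and target) together with the comultiplication and antipode making $(B,\Gamma)$ into a commutative Hopf algebroid. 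Flatness of $\Gamma$ over $B$ on each side follows from exactness of $w$.

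The comparison functor $\Ind(\ca{A}) \to \Comod(B,\Gamma)$ associated with this comonad is then strong monoidal, cocontinuous, and compatible with the fiber functors by construction. The critical input at this point is axiom (ii): it implies that every object of $\ca{A}$, hence of $\Ind(\ca{A})$, is covered by a dualizable one, and a short calculation shows that this translates into $\Gamma$ being a filtered colimit of dualizable comodules, i.e., $(B,\Gamma)$ is an Adams Hopf algebroid. The new characterization of Adams Hopf algebroids announced in the abstract is exactly what should upgrade the comparison to an equivalence. Restricting to finitely presented comodules then yields $\ca{A} \simeq \Coh(X)$ for $X$ the stack associated with $(B,\Gamma)$; the resolution property for $X$ is axiom (ii) restated, and coherence of $X$ follows from the fact that $\ca{A}$ itself is abelian (closed under kernels of maps between dualizable objects, which by (ii) compute kernels in $\Coh(X)$).

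The main obstacle I expect is promoting the comparison functor to an equivalence in this non-neutral, non-semisimple setting. In the classical Tannakian story over a field this step uses faithful flatness of $\Gamma$ over $B \otimes B$ together with semisimplicity arguments on $\Mod_B$; over a general ring, with $\Mod_B$ not semisimple, one must instead exploit the Adams structure to descend comodule data along dualizable approximations, which is precisely why the paper separates out a recognition theorem for Adams Hopf algebroids. A secondary bookkeeping difficulty is to check that the recovered stack is independent (up to canonical equivalence) of the choice of fiber functor, and that the equivalence $\ca{A} \simeq \Coh(X)$ respects the symmetric monoidal $R$-linear structure strictly enough to be an honest equivalence of symmetric monoidal $R$-linear categories; both should follow from a $2$-categorical naturality argument once the recognition theorem for Adams Hopf algebroids is in hand.
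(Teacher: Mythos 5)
Your reverse direction matches the paper's. The forward direction, however, has a genuine gap at its central step, and the repair you propose does not address it. You define $\Gamma \defl \hat{w}v(B)$ and assert that the comonad $\hat{w}v$ is ``determined by its underlying $B$-bimodule,'' i.e.\ that it has the form $\Gamma \ten{B} -$. This requires the right adjoint $v$ to be cocontinuous, and that is false in general when $v$ is computed from the whole of $\ca{A}$: the value of $v$ at $M$ is built from the functors $\Mod_B\bigl(w(A),-\bigr)$ for $A \in \ca{A}$, and these preserve arbitrary colimits only when $w(A)$ is dualizable (for a general finitely presented, non-projective $w(A)$ they do not preserve cokernels). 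The paper's key maneuver is precisely to get around this: it shows that the full subcategory $\ca{A}^d$ of dualizable objects is a \emph{dense} generator of $\Ind(\ca{A})$ (via the Day--Street characterization of dense subcategories of Grothendieck abelian categories as strong generators, using axiom ii) to present every object as a cokernel of a map of dualizables), and then proves that the comonad induced by $L$ coincides with the comonad induced by the left Kan extension of $w|_{\ca{A}^d}$ alone. Only after this replacement is the right adjoint cocontinuous and the coend $\Gamma = \int^{A \in \ca{A}^d} w(A) \ten{B} w(A)^{\vee}$ available; the same restriction to dualizables is what makes the antipode construction and the Hopf-monoidality of the comonad go through, which you assert without justification.

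Your second misstep is in where you locate the difficulty of upgrading the comparison functor to an equivalence. That step is handled by Beck's comonadicity theorem: $L \colon \Ind(\ca{A}) \to \Mod_B$ is faithful and exact because $w$ is, hence reflects isomorphisms and preserves the relevant equalizers, hence is comonadic --- no semisimplicity, no faithful flatness of $\Gamma$ over $B \otimes B$, and no Adams-type descent is needed. The characterization of Adams Hopf algebroids (Theorem~\ref{thm:adams_iff_resolution_property}) is proved in \S\ref{section:adams} independently of the recognition theorem and plays no role in its proof; invoking it here does not supply the missing cocontinuity of the comonad, which is the actual obstruction. (Your worry about independence of the fiber functor is also moot: the theorem is an existence statement, so no such uniqueness needs to be checked.)
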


 Specializing to neutral fiber functors we obtain the following result, which can be phrased entirely in the language of flat affine group schemes and their categories of representations.

\begin{thm}\label{thm:affine_group_schemes_recognition}
 If a weakly Tannakian $R$-linear category $\ca{A}$ admits a neutral fiber functor, then it is equivalent to the category of finitely presentable representations of a flat affine group scheme over $R$.
\end{thm}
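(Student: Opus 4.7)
The plan is to reduce to Theorem~\ref{thm:recognition} and then show that a neutral fiber functor forces the resulting algebraic stack to be the classifying stack of a flat affine group scheme. In more detail: first, apply Theorem~\ref{thm:recognition} to obtain a coherent algebraic stack $X$ over $R$ with the resolution property and an equivalence $\ca{A} \simeq \Coh(X)$ of symmetric monoidal $R$-linear categories. Second, identify the neutral fiber functor $w$ with pullback along a morphism $f \colon \Spec R \to X$ of stacks over $R$, with faithfulness and exactness of $w$ translating into faithful flatness of $f$. Third, observe that neutrality additionally forces $f$ to be a section of the structure map $p \colon X \to \Spec R$, since $R$-linearity of $w$ corresponds to $f$ being a morphism of $R$-stacks and the structure map $\Spec R \to \Spec R$ is the identity.

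Granted these three steps, set $G \defl \Spec R \pb{X} \Spec R$. Faithful flatness of $f$ presents $X$ as the stack of the groupoid $G \rightrightarrows \Spec R$, and affineness of the diagonal of an algebraic stack exhibits $G$ as affine over $\Spec R$. For the two projections $s,t \colon G \to \Spec R$, the canonical $2$-isomorphism $f \circ s \simeq f \circ t$ combined with the section $2$-isomorphism $p \circ f \simeq \id$ yields a $2$-isomorphism $s \simeq t$ between morphisms into the scheme $\Spec R$; since $\Spec R$ has no non-trivial $2$-cells, this forces $s = t$. Hence $G$ acquires the structure of a flat affine group scheme over $R$, the stack $X$ is its classifying stack $BG$, and $\Coh(BG) \simeq \Rep_{\fp}(G)$. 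Composing with $\ca{A} \simeq \Coh(X)$ yields the desired equivalence.

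I expect the main obstacle to be step two: rigorously establishing, using the construction underlying Theorem~\ref{thm:recognition}, that $w$ really arises as pullback along a morphism of $R$-stacks and that faithfulness and exactness on $\Coh(X)$ correspond precisely to faithful flatness of $f$. This is a Tannakian-type statement that must be extracted from the proof of the recognition theorem; as an alternative route, one could instead inspect the Hopf algebroid $(R,\Gamma)$ produced by that proof and verify directly that in the neutral case its two unit maps $R \to \Gamma$ coincide, making $\Gamma$ a Hopf algebra and $G \defl \Spec \Gamma$ a flat affine group scheme. Once this translation is in place, the remaining $2$-categorical passage from the groupoid $G \rightrightarrows \Spec R$ to the group scheme $G$ is straightforward.
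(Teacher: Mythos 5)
Your argument is correct in substance, but it takes the geometric (stack-level) route where the paper takes a one-line algebraic shortcut, and your own ``alternative route'' at the end is in fact the paper's proof. The paper simply invokes Theorem~\ref{thm:hopf_recognition} with $B=R$: that theorem already delivers a commutative Hopf algebroid $(R,\Gamma)$ \emph{internal to} $\Mod_R$, together with the commuting triangle identifying $w$ with the forgetful functor. Being a Hopf algebroid in $\Mod_R$ means the source and target $R \rightarrow \Gamma$ are both $R$-algebra homomorphisms out of $R$, hence both equal to the unit of the $R$-algebra $\Gamma$; so they coincide automatically, $\Gamma$ is a commutative flat Hopf algebra, and $\Comod_{\fp}(R,\Gamma)$ is the category of finitely presentable representations of the corresponding flat affine group scheme. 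This disposes of what you identify as the main obstacle (your ``step two''): there is nothing to extract from the proof of Theorem~\ref{thm:recognition}, because Theorem~\ref{thm:hopf_recognition} states the needed compatibility of $w$ with the forgetful functor explicitly. Your stacky version --- realizing $w$ as pullback along the faithfully flat presentation $f \colon \Spec R \rightarrow X$, noting that $R$-linearity makes $f$ a section of $X \rightarrow \Spec R$, and deducing $s=t$ on $G = \Spec R \pb{X} \Spec R$ from the triviality of $2$-cells into the $0$-truncated object $\Spec R$ --- is a valid geometric translation of the same fact and does buy a concrete picture of $X$ as the classifying stack $BG$; but it costs you the bipullback formalism and the identification $\Coh(BG)\simeq \Rep_{\fp}(G)$, none of which is needed if one works directly with the Hopf algebroid.
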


 Note that a neutral fiber functor can only exist if the ring $R$ is coherent. Otherwise the category of finitely presentable objects in the category of representations of an affine group scheme cannot be abelian (it contains the category of finitely presentable $R$-modules as the full subcategory of trivial representations).

 We prove these results in \S \ref{section:weaklytannakian}. As already shown in \cite{DAY_TANNAKA} and \cite{SCHAEPPI}, the Tannakian formalism works reasonably well for categories enriched in a \emph{cosmos}, that is, a complete and cocomplete symmetric monoidal closed category $\ca{V}$. In \S \ref{section:enriched} we introduce the notion of an \emph{enriched weakly Tannakian category} for a certain class of abelian cosmoi $\ca{V}$ (see Definition~\ref{dfn:enriched_weakly_tannakian_category}), and prove a recognition result for them (see Theorem~\ref{thm:enriched_recognition}). Examples of cosmoi for which this can be done are the cosmoi of graded or differential graded $R$-modules, and the cosmos of Mackey functors. In \S \ref{section:enriched} we assume that the reader has some familiarity with the basic theory of enriched categories, see \cite{KELLY_BASIC} and \cite{KELLY_FINLIM}. The rest of the paper does not depend on the contents of \S \ref{section:enriched}.

\subsection{A generalization of Lurie's embedding theorem for geometric stacks}

 In \cite{LURIE}, Lurie showed that for geometric stacks in the sense of loc.\ cit., the assignment which sends $X$ to $\QCoh(X)$ gives an embedding of the 2-category of geometric stacks into the 2-category $\ca{T}$ of symmetric monoidal abelian categories and tame functors (see Definition~\ref{dfn:tame}). In \S\ref{section:localization} we will show that the following slight generalization of Lurie's result also holds in the  context of algebraic stacks in the sense of Naumann. 

\begin{thm}\label{thm:stacks_embedding}
 Let $\ca{AS}$ be the 2-category of algebraic stacks, and let $\ca{T}$ be the 2-category of symmetric monoidal abelian categories, tame functors, and symmetric monoidal natural transformations. The pseudofunctor
\[
\QCoh(-) \colon \ca{AS}^{\op} \rightarrow \ca{T} 
\]
 which sends an algebraic stack to its category of quasi-coherent sheaves is an equivalence on hom-categories.
\end{thm}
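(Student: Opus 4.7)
The plan is to bootstrap from Lurie's theorem via a 2-categorical descent argument. Every algebraic stack $X$ in Naumann's sense is presented by a flat affine groupoid $\Spec(\Gamma) \rightrightarrows \Spec(A)$, so $X$ arises in $\ca{AS}$ as the 2-colimit of this simplicial diagram $U_\bullet$ of affine schemes. Since affine schemes are geometric, Lurie's theorem applies to morphisms out of each $U_n$, which will provide the levelwise input to the argument.

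The first substantive step is a descent statement for $\QCoh$: the functor $\QCoh$ sends the presentation of $X$ to a 2-limit in $\ca{T}$. The natural candidate is the cited equivalence $\QCoh(X) \simeq \Comod(A, \Gamma)$, which realizes a quasi-coherent sheaf on $X$ as an $A$-module equipped with a $\Gamma$-coaction, i.e., descent data. What must be checked is that this description satisfies the appropriate 2-limit universal property in the 2-category of symmetric monoidal abelian categories and tame functors: a tame functor into $\QCoh(X)$ should be determined, up to unique compatible isomorphism, by a tame functor into $\Mod_A$ together with a compatibility datum for the two pullbacks along the source and target of the groupoid.

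Granting this, the theorem follows by a formal limit chase. For algebraic stacks $X$ and $Y$, present $X$ as the 2-colimit of $U_\bullet$ above. Since $\ca{AS}(-, Y)$ sends 2-colimits to 2-limits and $\ca{T}(\QCoh(Y), -)$ preserves 2-limits (as a representable), one obtains the chain of equivalences
\begin{equation*}
 \ca{AS}(X, Y) \simeq \lim \ca{AS}(U_\bullet, Y) \simeq \lim \ca{T}\bigl(\QCoh(Y), \QCoh(U_\bullet)\bigr) \simeq \ca{T}(\QCoh(Y), \QCoh(X)),
\end{equation*}
where the middle equivalence is Lurie's theorem applied levelwise (each $U_n$ being affine, and $Y$ being algebraic, hence geometric in Lurie's sense up to the minor technical differences between the two frameworks), and the outer equivalence uses the descent statement for $\QCoh$.

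The main obstacle I anticipate is the descent statement itself. The underlying equivalence $\QCoh(X) \simeq \Comod(A, \Gamma)$ is classical, but showing that it realizes a 2-limit in the decorated 2-category $\ca{T}$ requires verifying that tameness, symmetric monoidality, and exactness all pass to the descent object and to the comparison functors from it. Flatness of the source and target maps of $(A, \Gamma)$ is essential here: it is what makes the forgetful $\Comod(A, \Gamma) \to \Mod_A$ exact and tame, and what allows the universal property to be checked via a comonadicity-style argument rather than by hand. Once this descent property is in place, the remainder of the proof is entirely formal.
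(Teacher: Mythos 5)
Your strategy---reduce to the affine-source case by presenting $X$ as a codescent object of $U_\bullet$ and then quote Lurie levelwise---is genuinely different from the paper's, but as written it has a gap exactly where the content of the theorem lies. The middle equivalence $\ca{AS}(U_n,Y)\simeq \ca{T}\bigl(\QCoh(Y),\QCoh(U_n)\bigr)$ is not supplied by Lurie's theorem. First, the paper is explicit that algebraic stacks in Naumann's sense strictly generalize Lurie's geometric stacks, and in Lurie's result it is the \emph{target} $Y$ of the stack morphism that must be geometric; so the levelwise statement for an arbitrary algebraic $Y$ is precisely the generalization being proved, not an available input. Second, even where Lurie's theorem applies, it identifies $\ca{AS}(U_n,Y)$ with the category of tame functors and symmetric monoidal natural \emph{isomorphisms}, whereas $\ca{T}$ has all symmetric monoidal natural transformations as 2-cells; upgrading to an equivalence of hom-categories requires knowing that every symmetric monoidal natural transformation between tame functors is invertible, which is one of the new results of the paper (Corollary~\ref{cor:tame_natural_transformation_invertible}, resting on Proposition~\ref{prop:comod_ef3}) and is assumed silently in your argument. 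The real content hidden in your ``levelwise input'' is: given a tame functor $\QCoh(Y)\rightarrow \Mod_A$, produce a morphism $\Spec(A)\rightarrow Y$; in the paper this is done by Proposition~\ref{prop:almost_ef2} and Corollary~\ref{cor:image_of_gamma_ffl_implies_ef2}, which show that such a functor sends $\Gamma$ to a faithfully flat algebra and hence factors, up to a surjective weak equivalence of Hopf algebroids, through a map of Hopf algebroids.

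The other half of your argument, the descent statement that $\QCoh(X)$ is the bilimit of $\QCoh(U_\bullet)$ in $\ca{T}$, is plausible but is also where several of the paper's technical lemmas would reappear: to check that a cone of tame functors lifts to a tame left adjoint into $\Comod(A,\Gamma)$ one needs that a comodule is flat if and only if its underlying module is, that the forgetful functor is faithful, exact and creates colimits, and an adjoint-functor argument for the lifted functor (objects of $\ca{T}$ are not assumed locally presentable). The paper takes a different route that avoids both issues: it shows that $\Comod(-)$ and the associated-stack pseudofunctor are each bicategorical localizations of the 2-category $\ca{H}$ of flat Hopf algebroids at the surjective weak equivalences (Theorems~\ref{thm:comod_localization} and \ref{thm:stacks_localization}) and invokes uniqueness of localizations. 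Your outline could likely be completed, but only after independently establishing the affine-source case for Naumann's algebraic stacks and the invertibility of 2-cells, at which point most of \S\ref{section:comod_as_localization} would have been redone.
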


 While we do not prove a version of this result for the enriched setting, we do try to reduce as much of the proof of Theorem~\ref{thm:stacks_embedding} as possible to categorical results which hold in this setting. We turn to a brief overview of the proof strategy.

 Instead of working with the category of algebraic stacks directly, we will show that both 2-categories satisfy the same (bicategorical) universal property with respect to the 2-category $\ca{H}$ of flat Hopf algebroids. More precisely, we show that both the pseudofunctor
\[
 L \colon \ca{H}^{\op} \rightarrow \ca{AS}
\]
 which sends a flat Hopf algebroid to its associated stack and the pseudofunctor
\[
 \Comod \colon \ca{H} \rightarrow \mathrm{ess.im}(\Comod) \subseteq \ca{T}
\]
 which sends a flat Hopf algebroid to its category of comodules form a bicategorical localization (in the sense of Pronk \cite{PRONK}) at the same class of morphisms. Theorem~\ref{thm:stacks_embedding} follows from the fact that bicategorical localizations are unique up to biequivalence.

 The fact that stacks associated to groupoids form a bicategorical localization at internal weak equivalences is of independent interest. We prove the following theorem in \S \ref{section:stackslocalization}.

\begin{thm}\label{thm:stacks_localization}
 The pseudofunctor $L$ which sends a flat affine groupoid $X$ to its associated $\fpqc$-stack exhibits the 2-category of algebraic stacks as a bicategorical localization of the 2-category of flat affine groupoids at the surjective weak equivalences (see Definition~\ref{dfn:surjective_weak_equivalence}).
\end{thm}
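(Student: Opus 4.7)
The plan is to verify the four conditions of Pronk's bicategorical calculus of fractions \cite{PRONK} for the class $W$ of surjective weak equivalences in the 2-category of flat affine groupoids. Before doing so, I would record the closure properties of $W$: it contains identities, is closed under composition, and the pullback of a morphism in $W$ along any morphism of flat affine groupoids exists and again lies in $W$. This rests on the fact that pullbacks of fpqc covers of affines along morphisms of affines remain fpqc covers by affines, together with the formal observation that the ``fully faithful'' half of the definition of surjective weak equivalence is stable under pullback.

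For (BF1), I would show that every surjective weak equivalence $w \colon X \to Y$ is sent to an equivalence of fpqc-stacks. Internal full faithfulness of $w$ forces $L(w)$ to be fully faithful as a morphism of prestacks, and $w_0 \colon X_0 \to Y_0$ being an fpqc cover ensures that every $T$-point of $L(Y)$ lifts, fpqc-locally on $T$, through $Y_0$ and hence through $X_0$; after stackification this gives essential surjectivity. Condition (BF2), essential surjectivity of $L$ on objects, is immediate from the definition of algebraic stack recalled in \S\ref{section:introduction_weakly_tannakian}.

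Condition (BF3) is the construction of spans representing 1-cells in $\ca{AS}$. Given $f \colon L(X) \to L(Y)$, I would set $Z_0 \defl X_0 \pb{L(Y)} Y_0$. Since $Y_0 \to L(Y)$ is representable, affine, flat, and faithfully flat in Naumann's setting, $Z_0$ is an affine scheme and $Z_0 \to X_0$ is a flat affine cover. Letting $Z_1 \defl Z_0 \pb{L(X)} Z_0$ yields a flat affine groupoid $Z$, a surjective weak equivalence $w \colon Z \to X$ sitting over the identity of $L(X)$, and a canonical morphism $g \colon Z \to Y$ induced on objects by projection to $Y_0$. By construction $L(g) \circ L(w)^{-1}$ is isomorphic to $f$.

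For (BF4), given two fractions $X \xleftarrow{w_i} Z_i \xrightarrow{g_i} Y$ ($i = 1,2$) and a 2-cell $\alpha \colon L(g_1)L(w_1)^{-1} \Rightarrow L(g_2)L(w_2)^{-1}$, I would form the common refinement $V \defl Z_1 \pb{X} Z_2$; its projections to the $Z_i$ lie in $W$ by the closure properties above. Pulling back $\alpha$ to $V$ and using representability of $Y_0 \to L(Y)$ exhibits it as a morphism of affines satisfying the cocycle condition needed to define a 2-cell of groupoids between the composites $V \to Z_i \to Y$ which is sent by $L$ to $\alpha$. The main obstacle will be (BF3), together with the bookkeeping required for (BF4): ensuring that all constructions stay within flat affine groupoids and that the asserted isomorphisms with $f$ and $\alpha$ are natural and coherent in the sense demanded by Pronk's theorem. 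The crucial geometric input throughout is that for any algebraic stack $L(Y)$ the canonical presentation $Y_0 \to L(Y)$ is representable, affine, flat, and faithfully flat, which makes the relevant pullbacks both exist and remain in the flat affine world.
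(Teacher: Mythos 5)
Your overall strategy --- verifying Pronk's conditions for the pseudofunctor $L$ --- is the one the paper uses, and your construction representing a 1-cell $f \colon LX \rightarrow LY$ by a span (the bipullback $Z_0 = X_0 \pb{LY} Y_0$, the higher kernel $Z_1 = Z_0 \pb{LX} Z_0$, relying on representability of the diagonal and faithful flatness of $Y_0 \rightarrow LY$) is essentially the paper's Proposition~\ref{prop:L_ef2}. There are, however, two genuine gaps. First, you cannot apply Pronk's theorem to the class $W$ of surjective weak equivalences directly, because $W$ fails (BF1) and (BF5): an internal equivalence of flat affine groupoids need only be \emph{essentially} surjective on objects, so $W$ does not contain all equivalences, and $W$ is not closed under 2-isomorphism (the paper gives an explicit counterexample involving a chaotic groupoid $(X, X\times X)$). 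The closure properties you record --- identities, composition, pullback-stability --- do not repair this. The paper circumvents the problem with Lemma~\ref{lemma:localization_lemma} and Proposition~\ref{prop:localization}: it suffices that $W$ satisfy (BF3) and that $L$ invert $W$ and satisfy (EF1)--(EF3), because one may then replace $W$ by the saturated class $\overline{W}$ of all 1-cells inverted by $L$, which does satisfy all of (BF1)--(BF5). Some reduction of this kind is indispensable to your argument as planned.

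Second, your condition on 2-cells addresses only fullness: you show that a 2-cell between images of fractions is induced, after refinement, by a 2-cell of groupoids. Pronk's criterion requires $L$ to be \emph{fully faithful} on 2-cells (condition (EF3)), and the injectivity half is not automatic. The paper proves it in Proposition~\ref{prop:L_ef3} by showing that the units $\eta_X \colon X \rightarrow LX$ are fully faithful, which in turn rests on internal groupoids being 2-separated (Lemma~\ref{lemma:groupoids_2_separated}, using that the singleton $\fpqc$-coverings are regular epimorphisms). Without faithfulness you cannot conclude that distinct 2-cells remain distinct after applying $L$, so the comparison with the bicategory of fractions is not yet a biequivalence. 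Your sketch that $L$ inverts surjective weak equivalences is in the right spirit, but note that the paper makes ``essential surjectivity after stackification'' precise via the eso--ff factorization on stacks and codescent objects (Lemma~\ref{lemma:presentation} and Proposition~\ref{prop:L_weak_equivalences}); that machinery, or an equivalent descent argument, is needed to complete that step.
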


 This has been proved for several sites in \cite{PRONK}. Our proof suggests that this might be true for superextensive sites (see Definition~\ref{dfn:superextensive}) as long as the singleton coverings are effective descent morphisms. For topoi, results along those lines have been proved by Bunge \cite{BUNGE}. Joyal-Tierney \cite{JOYAL_TIERNEY} discuss this using the language of model categories.

 Roberts has shown that the localization at internal weak equivalences can also be described in terms of anafunctors, see \cite{ROBERTS} and \cite{ROBERTS_ANAFUNCTORS}.

\subsection{Adams Hopf algebroids and Adams stacks}

 In general it is not easy to determine whether or not a given functor is tame in the sense of \cite{LURIE}. Therefore it is convenient to know examples of stacks $X$ for which \emph{all} strong symmetric monoidal left adjoints with domain $\QCoh(X)$ are tame. We will show that this is the case for Adams stacks.

 A Hopf algebroid $(A,\Gamma)$ is called an \emph{Adams Hopf algebroid} if $\Gamma$, considered as an $(A,\Gamma)$-comodule, is a filtered colimit of dualizable comodules. An algebraic stack associated to an Adams Hopf algebroid is called an \emph{Adams stack} (see \cite[Definition~6.5]{GOERSS}). The following theorem gives a new characterization of Adams Hopf algebroids, hence of Adams stacks. Its proof is independent of the rest of the paper. We say that an algebraic stack has the \emph{strong resolution property} if the dualizable quasi-coherent sheaves form a generator of the category of quasi-coherent sheaves. Every algebraic stack with the strong resolution property has the resolution property, and the two notions coincide for coherent algebraic stacks (see Remark~\ref{rmk:strong_resolution}). In \S \ref{section:adams}, we will prove the following theorems.

\begin{thm}\label{thm:adams_iff_resolution_property}
 A flat Hopf algebroid $(A,\Gamma)$ is an Adams Hopf algebroid if and only if the dualizable comodules form a generator of $\Comod(A,\Gamma)$. Thus an algebraic stack is an Adams stack if and only if it has the strong resolution property.
\end{thm}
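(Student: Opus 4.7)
The plan is to prove both implications directly. Throughout, I use the adjunction $U \dashv R$ between the forgetful functor $U \colon \Comod(A,\Gamma) \to \Mod_A$ and the cofree comodule functor $R(N) = N \otimes_A \Gamma$ with coaction $1 \otimes \Delta$, whose unit at a comodule $M$ is the coaction $\psi_M \colon M \to M \otimes_A \Gamma$. Flatness of $\Gamma$ makes $\Comod(A,\Gamma)$ a Grothendieck abelian symmetric monoidal closed category, and the dualizable comodules form a symmetric monoidal subcategory closed under finite direct sums and duals.

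For the ``only if'' direction, suppose $\Gamma = \colim_{j \in J} \Gamma_j$ is a filtered colimit of dualizable comodules. Given a comodule $M$ and $m \in M$, expand $\psi_M(m) = \sum_{k=1}^n m_k \otimes g_k$ and use filteredness to lift all $g_k$ into a common $\Gamma_j$, writing $g_k = \iota_j(\tilde g_k)$. Using the evaluation $\Gamma_j \otimes_A \Gamma_j^{\vee} \to A$, the antipode of the underlying Hopf algebroid, and the coassociativity of $\psi_M$, I will assemble the data $(m_k, \tilde g_k)$ into a comodule morphism from a dualizable object (a finite-rank twist of $\Gamma_j$ or $\Gamma_j^{\vee}$) into $M$; the counit axiom $(1 \otimes \epsilon) \psi_M = \mathrm{id}_M$ then guarantees that $m$ lies in its image. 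Varying $m$ and $M$ shows that dualizables collectively generate $\Comod(A,\Gamma)$.

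For the ``if'' direction, assume the dualizable comodules form a generator. Let $\ca{D}$ be a small skeleton of the dualizable comodules and form the slice $\ca{D}/\Gamma$. My plan is to show $\ca{D}/\Gamma$ is filtered and that its canonical cocone identifies $\Gamma$ with $\colim_{\ca{D}/\Gamma} P$. Filteredness has two parts: binary upper bounds come from finite direct sums of dualizables, while coequalizing a parallel pair $P \rightrightarrows Q$ over $\Gamma$ requires dominating the categorical coequalizer in $\Comod(A,\Gamma)$---which need not itself be dualizable---by a dualizable sitting over $\Gamma$, using the generator hypothesis. The canonical map $\colim_{\ca{D}/\Gamma} P \to \Gamma$ is then epic because the dualizables generate; monicity follows because any element of a dualizable $P \to \Gamma$ mapping to zero in $\Gamma$ is witnessed to vanish by a further dualizable dominating the kernel, which appears in the filtered system.

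The central obstacle is the ``if'' direction. Because dualizable comodules are not closed under cokernels, the filteredness of $\ca{D}/\Gamma$---particularly the coequalization step---cannot be handled by standard ``closure under finite colimits'' reasoning available in locally finitely presentable categories. Instead, one must carefully and repeatedly invoke the generator hypothesis to dominate each coequalizer quotient or kernel element by a dualizable, constructing the filtered structure by hand.
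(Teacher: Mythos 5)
Your ``only if'' direction is the content of \cite[Proposition~1.4.4]{HOVEY}, which the paper simply cites; your sketch is the standard argument and is not where the difficulty lies. The genuine gap is in the ``if'' direction, precisely at the step you flag as the central obstacle. To show that $\ca{D}\slash\Gamma$ is filtered you must coequalize a parallel pair $f,g \colon (P,\varphi) \rightrightarrows (Q,\psi)$, i.e.\ produce a dualizable $(R,\rho)$ and a morphism $h \colon Q \rightarrow R$ \emph{out of} $Q$ with $hf=hg$ and $\rho h = \psi$. Any such $h$ factors through the coequalizer $c \colon Q \rightarrow C$, so what you actually need is a factorization of $C \rightarrow \Gamma$ \emph{through} a dualizable. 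The generator hypothesis gives maps in the opposite direction: it provides epimorphisms from dualizables \emph{onto} $C$ (your ``domination''), which yields objects of $\ca{D}\slash\Gamma$ mapping \emph{to} $C$ but no morphism from $(Q,\psi)$ anywhere. The same directional mismatch undermines your monicity step: killing an element of $\ker(\varphi)$ in the colimit requires a morphism out of $(P,\varphi)$ in the slice under which it dies, and covering the kernel by a dualizable does not produce one.

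The paper's proof resolves exactly this by a dualization trick you are missing. Using the natural bijection $\Comod(A,\Gamma)(M,\Gamma) \cong \Mod_A(M,A)$ (composition with $\varepsilon$) together with $M \mapsto M^{\vee}$, it constructs a contravariant equivalence between $\ca{A}^{d}\slash\Gamma$ and the category of elements $\el(w)$ of the forgetful functor $w \colon \ca{A}^{d} \rightarrow \Mod_A$. Filteredness of the slice is thereby converted into cofilteredness of $\el(w)$, where the problematic condition becomes an \emph{equalizer} condition: one forms the equalizer $K \rightarrowtail X$ of a parallel pair (computed as in $\Mod_A$ by flatness), observes that the marked element lies in $K$, and then the generator hypothesis applies in the correct direction to furnish a dualizable $Z \rightarrow K$ hitting that element. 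Once filteredness is established, the identification of the colimit with $\Gamma$ follows from \cite[Proposition~1.4.1]{HOVEY} rather than a hand-made epi/mono argument. Without this (or an equivalent) reversal of variance, your construction of the filtered structure does not go through.
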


 Since weakly equivalent Hopf algebroids have equivalent categories of comodules, the above result in particular implies that Adams Hopf algebroids are stable under weak equivalences. This gives a positive answer to a question posed by Hovey \cite[Question~1.4.12]{HOVEY}.

\begin{thm}\label{thm:adams_implies_tame}
 Let $(A,\Gamma)$ be an Adams Hopf algebroid, and let $(B,\Sigma)$ be a flat Hopf algebroid. Then all strong symmetric monoidal left adjoint functors
\[
F \colon \Comod(A,\Gamma) \rightarrow \Comod(B,\Sigma)
\]
 are tame.
\end{thm}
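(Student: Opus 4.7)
The plan is to show that any such $F$ preserves flat comodules, which, combined with its strong symmetric monoidal structure and cocontinuity, will yield tameness in the sense of Definition~\ref{dfn:tame}.

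First, $F$ sends dualizable comodules to dualizable comodules: the duality data witnessing $\ldual{X}$ as a dual of $X$ transports via the strong monoidal coherences to duality data exhibiting $F(\ldual{X})$ as a dual of $F(X)$. Second, by Theorem~\ref{thm:adams_iff_resolution_property}, the dualizable comodules form a generator of $\Comod(A,\Gamma)$; together with their closure under finite sums and tensor products, this implies, by a Lazard-type argument adapted to comodule categories, that every flat $(A,\Gamma)$-comodule is a filtered colimit of dualizables. Since $F$ is cocontinuous and preserves dualizables, it sends flat comodules to filtered colimits of dualizable $(B,\Sigma)$-comodules, which are again flat: dualizables are flat because $\Sigma$ is flat over $B$, and filtered colimits of flats remain flat in a Grothendieck tensor category.

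To deduce tameness, take a short exact sequence $0 \to M' \to M \to M'' \to 0$ in $\Comod(A,\Gamma)$ with $M''$ flat. Writing $M'' = \colim_i P_i$ as a filtered colimit of dualizables and forming the pullbacks $M_i = M \pb{M''} P_i$ gives short exact sequences $0 \to M' \to M_i \to P_i \to 0$ whose filtered colimit recovers the original. Since $F$ is cocontinuous, it is enough to show that $F$ preserves each of these, reducing the problem to short exact sequences with dualizable cokernel. The duality data for $P_i$ together with the strong monoidal structure on $F$ should then yield the required injectivity of $F(M') \to F(M_i)$, after which cocontinuity and the right exactness of $F$ do the rest.

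The main obstacle will be this last reduction. The first two steps are essentially formal consequences of Theorem~\ref{thm:adams_iff_resolution_property} and the standard properties of strong monoidal left adjoints, but bridging the gap from \emph{preservation of flat objects} to \emph{preservation of short exact sequences with flat cokernel} requires genuine work, since dualizability does not in general imply projectivity. The cleanest implementation may route through an internal-hom characterization of tameness or an appeal to the right adjoint of $F$; either way, the filtered-presentation-and-duality strategy sketched above is the heart of the argument.
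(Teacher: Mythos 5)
There are two genuine gaps here, and both sit exactly where you flag the difficulty. First, your claim that every flat $(A,\Gamma)$-comodule is a filtered colimit of dualizables ``by a Lazard-type argument adapted to comodule categories'' is not a formal consequence of Theorem~\ref{thm:adams_iff_resolution_property}: the Govorov--Lazard proof needs one to factor maps from finitely presented objects into a flat object through the distinguished generators, and for comodules this requires lifting an $A$-module factorization to a comodule map, which is precisely the kind of statement that is hard (and, as far as the paper is concerned, unproved) for Hopf algebroids. Second, the reduction of condition ii) of Definition~\ref{dfn:tame} to sequences $0 \to M' \to M_i \to P_i \to 0$ with $P_i$ dualizable does not close: such a sequence is split over $A$ (since the underlying module of $P_i$ is projective) but not in $\Comod(A,\Gamma)$, and the dualization trick that would give left-exactness of $F$ requires \emph{all three} terms to be dualizable, which your pullback construction does not arrange. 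So the ``main obstacle'' you identify is not a technical loose end but the actual content of the theorem, and your outline does not contain an idea for overcoming it.

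The paper avoids verifying the tameness conditions object-by-object altogether. By Corollary~\ref{cor:tame_characterization} (which rests on Corollary~\ref{cor:image_of_gamma_ffl_implies_ef2}), $F$ is tame if and only if $VF(\Gamma)$ is a faithfully flat $B$-algebra, because in that case $F$ is, up to equivalence, induced by a morphism of Hopf algebroids, and such functors are tame since flatness of a comodule is detected on the underlying module. This reduces everything to the single sequence $0 \to A \to \Gamma \to \Gamma/A \to 0$. The Adams condition is then used to write $\Gamma$ as a filtered colimit of dualizables $\Gamma_i$ through which the unit $A \to \Gamma$ factors with an $A$-module splitting, so that each $0 \to A \to \Gamma_i \to \Gamma_i/A \to 0$ is an exact sequence of \emph{dualizable} comodules, split over $A$; its dual is therefore also exact, $F$ is right exact on both the sequence and its dual, and the isomorphism $F(-)^{\vee} \cong F\bigl((-)^{\vee}\bigr)$ converts right-exactness on the dual into left-exactness on the original. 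If you want to salvage your approach, the missing idea is this characterization of tameness via $F(\Gamma)$; without it, you would genuinely need the flat-equals-filtered-colimit-of-dualizables statement and a separate argument for sequences with dualizable cokernel, neither of which you have.
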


 Translated into the language of algebraic stacks, this shows that all strong symmetric monoidal left adjoints
\[
 \QCoh(X) \rightarrow \QCoh(Y)
\]
 between categories of quasi-coherent sheaves of algebraic stacks $X$, $Y$ are tame if $X$ is an Adams stack.

 Brandenburg-Chirvasitu \cite{BRANDENBURG_CHIRVASITU} have studied this question for schemes. They show that any symmetric monoidal left adjoint whose domain is the category of quasi-coherent sheaves on a quasi-compact and quasi-separated scheme is tame. Note that their result is not a consequence of our result: Adams stacks have the resolution property, and not all such schemes are known to have the resolution property. This suggests that there might be a larger class of algebraic stacks for which all symmetric monoidal left adjoints are tame.

 Using Theorem~\ref{thm:adams_implies_tame} we can prove the following two theorems.

\begin{thm}\label{thm:adams_embedding}
 The assignment which sends an Adams stack $X$ to the category $\QCoh_{\fp}(X)$ of finitely presentable quasi-coherent sheaves on $X$ gives a pseudofunctor from the 2-category of Adams stacks to the 2-category $\ca{RM}$ of right exact symmetric monoidal additive categories (see Definition~\ref{dfn:right_exact_symmetric_monoidal}). This pseudofunctor is an equivalence on hom-categories.
\end{thm}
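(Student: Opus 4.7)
The plan is to factor $\QCoh_{\fp}(-)$ through the pseudofunctor from Theorem~\ref{thm:stacks_embedding}, and then reduce the claim on hom-categories to Theorem~\ref{thm:adams_implies_tame} together with a universal property of symmetric monoidal ind-completion. First I need to verify that for a morphism $f \colon X \to Y$ of Adams stacks the pullback $f^* \colon \QCoh(Y) \to \QCoh(X)$ restricts to a right exact symmetric monoidal functor $\QCoh_{\fp}(Y) \to \QCoh_{\fp}(X)$. Since $Y$ is Adams, Theorem~\ref{thm:adams_implies_tame} implies that $f^*$ is tame, so in particular its right adjoint $f_*$ preserves filtered colimits; between the locally finitely presentable categories $\QCoh(Y)$ and $\QCoh(X)$ this is equivalent to $f^*$ preserving finitely presentable objects. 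Right exactness and the symmetric monoidal structure of the restriction are inherited from $f^*$, and the coherence 2-cells assemble into a pseudofunctor to $\ca{RM}$.

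For the equivalence on hom-categories, let $X$ and $Y$ be Adams stacks. I would exhibit the claimed equivalence as a composite. Theorem~\ref{thm:stacks_embedding} gives the first equivalence $\ca{AS}(X, Y) \simeq \ca{T}\bigl(\QCoh(Y), \QCoh(X)\bigr)$. By Theorem~\ref{thm:adams_implies_tame} together with the definition of tameness, when $Y$ is Adams the second category is precisely the category of strong symmetric monoidal left adjoints $\QCoh(Y) \to \QCoh(X)$: every tame functor is such a left adjoint by definition, and the theorem provides the reverse inclusion. The final step is to show that restriction along the inclusion $\QCoh_{\fp}(Y) \hookrightarrow \QCoh(Y)$ gives an equivalence between this category and $\ca{RM}\bigl(\QCoh_{\fp}(Y), \QCoh_{\fp}(X)\bigr)$, with the essential inverse given by left Kan extension (equivalently, filtered-colimit extension) along the same inclusion.

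The main obstacle is this last equivalence, and specifically its compatibility with the symmetric monoidal structure. It rests on the assertion that $\QCoh(Y)$ is the \emph{symmetric monoidal} ind-completion of $\QCoh_{\fp}(Y)$. This splits into checking that $\QCoh_{\fp}(Y)$ is closed under tensor product in $\QCoh(Y)$ (which follows from the fact that $\QCoh_{\fp}(Y)$ is closed under finite colimits and contains the dualizables that, by Theorem~\ref{thm:adams_iff_resolution_property}, generate $\QCoh(Y)$) and that Day convolution on $\Ind\bigl(\QCoh_{\fp}(Y)\bigr)$ agrees with the tensor product on $\QCoh(Y)$. Once these are in place, the universal property of the Day-convolved ind-completion produces the desired bijection between strong symmetric monoidal left adjoints out of $\QCoh(Y)$ and right exact symmetric monoidal functors out of $\QCoh_{\fp}(Y)$, and the monoidal coherence 2-cells assemble into a 2-natural equivalence of hom-categories rather than merely a bijection on objects.
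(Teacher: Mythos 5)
Your proof follows essentially the same route as the paper: factor through Theorem~\ref{thm:stacks_embedding}, identify the tame functors with \emph{all} strong symmetric monoidal left adjoints via Theorem~\ref{thm:adams_implies_tame}, and use that $\QCoh(Y)$ is the (symmetric monoidal) ind-completion of $\QCoh_{\fp}(Y)$ to pass to finitely presentable objects by restriction and left Kan extension. The only caveat is that tameness does not \emph{by definition} imply that the right adjoint preserves filtered colimits; the fact that tame functors preserve finitely presentable objects is exactly Corollary~\ref{cor:tame_preserves_presentability}, which is what should be cited at that step rather than deriving it as an immediate consequence of the definition of tame.
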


\begin{thm}\label{thm:coherent_adams_weakly_tannakian_equivalence}
 The pseudofunctor which sends an algebraic stack $X$ to the category $\Coh(X)$ of coherent modules of $X$ restricts to a contravariant biequivalence between the  2-category of coherent algebraic stacks with the resolution property and the 2-category of weakly Tannakian categories, right exact strong symmetric monoidal functors, and symmetric monoidal natural transformations.
\end{thm}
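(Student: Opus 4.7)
The plan is to assemble this biequivalence from the previous results, with the recognition theorem handling essential surjectivity and the Adams embedding theorem handling the local equivalence.

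\emph{Essential surjectivity.} The ``only if'' direction of Theorem~\ref{thm:recognition} produces, for every weakly Tannakian category $\ca{A}$, a coherent algebraic stack $X$ with the resolution property together with an equivalence $\ca{A} \simeq \Coh(X)$ of symmetric monoidal $R$-linear categories. Conversely, the ``if'' direction says that $\Coh(X)$ is weakly Tannakian for each such $X$, so the pseudofunctor $\Coh(-)$ is well-defined on objects. On morphisms, for $f\colon X \to Y$ the pullback $f^{\ast}$ is a strong symmetric monoidal left adjoint on $\QCoh$, hence right exact; as a left adjoint it preserves finitely presentable objects, and for coherent stacks finitely presentable and coherent sheaves agree, so $f^{\ast}$ restricts to a right exact strong symmetric monoidal functor $\Coh(Y) \to \Coh(X)$.

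\emph{Equivalence on hom-categories.} Let $X$ and $Y$ be coherent algebraic stacks with the resolution property. By Theorem~\ref{thm:adams_iff_resolution_property} together with Remark~\ref{rmk:strong_resolution} (which identifies the strong and ordinary resolution property in the coherent case), both are Adams stacks. Theorem~\ref{thm:adams_embedding} therefore provides an equivalence
\[
 \ca{AS}(X, Y) \;\simeq\; \ca{RM}\bigl(\QCoh_{\fp}(Y), \QCoh_{\fp}(X)\bigr).
\]
Using the identification $\QCoh_{\fp}(-) = \Coh(-)$ for coherent stacks, the right-hand side is the hom-category in $\ca{RM}$ between the weakly Tannakian categories $\Coh(Y)$ and $\Coh(X)$. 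Since the 2-category of weakly Tannakian categories is by definition a full sub-2-category of $\ca{RM}$ (same 1-cells and 2-cells), this equivalence is precisely the action of $\Coh(-)$ on hom-categories. Together with essential surjectivity this gives a contravariant biequivalence.

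\emph{Main obstacle.} The conceptual content is entirely carried by Theorems~\ref{thm:recognition},~\ref{thm:adams_iff_resolution_property}, and~\ref{thm:adams_embedding}; what remains is bookkeeping. The one point that requires care is to check that the restriction of the pseudofunctor $\QCoh_{\fp}(-)$ from Theorem~\ref{thm:adams_embedding} to the full sub-2-category of coherent Adams stacks coincides, up to coherent equivalence, with the pseudofunctor $\Coh(-)$ considered here. This amounts to a compatible natural identification $\QCoh_{\fp}(X) \cong \Coh(X)$ of symmetric monoidal $R$-linear categories for coherent $X$, intertwining pullbacks; this is routine but is the only step where one must keep track of the full pseudofunctorial data rather than just the 0-, 1- and 2-cells pointwise.
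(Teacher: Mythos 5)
Your proposal is correct and follows essentially the same route as the paper: the paper likewise obtains the equivalence on hom-categories from Theorem~\ref{thm:adams_embedding} together with Corollary~\ref{cor:coherent_resolution_property_implies_adams} (which is exactly your combination of Theorem~\ref{thm:adams_iff_resolution_property} and Remark~\ref{rmk:strong_resolution}), and essential surjectivity from Theorem~\ref{thm:recognition}. The only quibble is your aside that a left adjoint automatically preserves finitely presentable objects — that is not true in general, and the correct justification here is Corollary~\ref{cor:tame_preserves_presentability}.
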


\subsection{An application to Fontaine-Laffaille filtered modules}

 Fix a perfect field $k$ of characteristic $p > 0$, let $W$ be its ring of Witt vectors, and let $K$ be the field of fractions of $W$. In \cite{FONTAINE_LAFFAILLE}, Fontaine and Laffaille define a Tannakian category $\MF^{\Phi,f}_K$ of weakly admissible filtered $\Phi$-modules, and use it to give an explicit construction of associated $p$-adic Galois representations. In order to do this they consider a category $\MF_{W,\fp}$ of modules over $W$ endowed with additional structures, and they show that every weakly admissible $\Phi$-module can be obtained by base change of a torsion free such module to the field of fractions $K$ of $W$. 

 Richard Pink conjectured that the Tannakian formalism should be applicable to these categories of modules over $W$. The results of \cite{SCHAEPPI} can be used to show that there is indeed an affine groupoid whose category of dualizable representations is the full subcategory of $\MF_{W,\fp}$ of objects whose underlying $W$-module is torsion free. Richard Pink then conjectured that this groupoid has the following properties. Firstly, it should be a $\mathbb{Z}_p$-model of the affine groupoid associated to the $\mathbb{Q}_p$-linear Tannakian category $\MF^{\Phi,f}_K$. Secondly, the representations of its reduction modulo $p^n$ should be the subcategory of $\MF_{W,\fp}$ consisting of modules which are annihilated by $p^n$, and the corresponding commutative Hopf algebroid should be the limit of its reductions modulo $p^n$. In discussions it became quickly clear that this last point is problematic.

 Nevertheless, using our recognition theorem, the generalization of Lurie's result, and our results about Adams stacks, we can show that the conjecture holds in the 2-category of Adams stacks.

\begin{thm}\label{thm:conjecture_for_adams_stacks}
 There is an algebraic stack $\mathfrak{P}$ on the $\fpqc$-site $\Aff$ and a symmetric monoidal equivalence $\MF_{W,\fp} \simeq \Coh(\mathfrak{P})$. Let 
\[
\vcenter{ \xymatrix{ \mathfrak{P}_{{\mathbb{Q}}_p} \ar[d] \ar[r] & \mathfrak{P} \ar[d] \\ 
\Spec(\mathbb{Q}_p) \ar[r] & \Spec(\mathbb{Z}_p)} }
\quad\mbox{and}\quad
\vcenter{ \xymatrix{ \mathfrak{P}_{n} \ar[d] \ar[r] & \mathfrak{P} \ar[d] \\ 
\Spec(\mathbb{Z}\slash p^n \mathbb{Z}) \ar[r] & \Spec(\mathbb{Z}_p)}}
\]
 be bipullback squares in the category of \emph{all} stacks on the $\fpqc$-site $\Aff$. Then $\Coh(\mathfrak{P}_{{\mathbb{Q}}_p})$ is equivalent (as a symmetric monoidal $\mathbb{Q}_p$-linear category) to the Tannakian category $\MF^{\Phi,f}_K$ of weakly admissible $\Phi$-modules, and $\Coh(\mathfrak{P}_n)$ is equivalent to the full subcategory of $\MF_{W,\fp}$ of objects whose underlying $W$-modules are annihilated by $p^n$. Moreover, the canonical morphisms
\[
 \mathfrak{P}_n \rightarrow \mathfrak{P}
\]
 exhibit $\mathfrak{P}$ as bicolimit of the sequence $\mathfrak{P}_n \rightarrow \mathfrak{P}_{n+1}$ in the full sub-2-category of Adams stacks.
\end{thm}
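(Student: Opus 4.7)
The plan is to apply the recognition theorem (Theorem~\ref{thm:recognition}) to $\MF_{W,\fp}$ and then use the biequivalence of Theorem~\ref{thm:coherent_adams_weakly_tannakian_equivalence} together with the embedding theorems (Theorems~\ref{thm:stacks_embedding} and~\ref{thm:adams_embedding}) to identify the stacks in the bipullback and bicolimit diagrams through their categories of coherent sheaves. For the construction of $\mathfrak{P}$, I would first check that $\MF_{W,\fp}$ is weakly Tannakian over $\mathbb{Z}_p$: the fiber functor is the forgetful functor to $\Mod_W$, and the dualizable-cover axiom holds because every object admits an epimorphism from one whose underlying $W$-module is finitely generated free, once one equips such a free module with a filtration and Frobenius mapping onto those of the given object. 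Theorem~\ref{thm:recognition} then produces a coherent algebraic $\mathbb{Z}_p$-stack $\mathfrak{P}$ with the resolution property and a symmetric monoidal equivalence $\MF_{W,\fp} \simeq \Coh(\mathfrak{P})$; by Remark~\ref{rmk:strong_resolution} and Theorem~\ref{thm:adams_iff_resolution_property}, $\mathfrak{P}$ is an Adams stack.

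For the identification of $\mathfrak{P}_n$ and $\mathfrak{P}_{\mathbb{Q}_p}$, the full subcategory $\ca{M}_n \subseteq \MF_{W,\fp}$ on objects annihilated by $p^n$ is symmetric monoidal $\mathbb{Z}/p^n\mathbb{Z}$-linear abelian, and I would verify that it is weakly Tannakian (dualizable covers are obtained by reducing modulo $p^n$ the free covers used for $\MF_{W,\fp}$). Theorem~\ref{thm:recognition} yields a coherent Adams stack $\mathfrak{Q}_n$ over $\mathbb{Z}/p^n\mathbb{Z}$ with $\Coh(\mathfrak{Q}_n) \simeq \ca{M}_n$, and Theorem~\ref{thm:coherent_adams_weakly_tannakian_equivalence} translates the inclusion $\ca{M}_n \hookrightarrow \MF_{W,\fp}$ into a morphism $\mathfrak{Q}_n \to \mathfrak{P}$; together with the natural factorization through $\Spec(\mathbb{Z}/p^n\mathbb{Z})$ this provides a cone over the bipullback diagram. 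To check the universal property in all $\fpqc$-stacks I would test against affine schemes $\Spec(S)$: by Theorem~\ref{thm:adams_embedding} and Theorem~\ref{thm:adams_implies_tame} a map $\Spec(S) \to \mathfrak{P}$ corresponds to a right exact strong symmetric monoidal functor $\MF_{W,\fp} \to \Mod_S$, and the required existence and uniqueness of a factorization through $\ca{M}_n$ when the composite with $\mathbb{Z}_p \to \mathbb{Z}/p^n\mathbb{Z}$ is fixed becomes tautological. The case of $\mathfrak{P}_{\mathbb{Q}_p}$ is analogous, using the Fontaine-Laffaille result that $\MF^{\Phi,f}_K$ is obtained from the torsion-free part of $\MF_{W,\fp}$ by base change along $W \to K$.

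For the bicolimit, I would translate via Theorem~\ref{thm:coherent_adams_weakly_tannakian_equivalence} into the bilimit statement $\MF_{W,\fp} \simeq \lim_n \ca{M}_n$ in weakly Tannakian categories, with transitions given by reduction modulo $p^n$. The comparison functor $X \mapsto (X/p^n X)_n$ is fully faithful because $W$ is $p$-adically complete and objects of $\MF_{W,\fp}$ have finitely generated underlying $W$-modules, giving $\Hom(X,Y) = \lim_n \Hom(X/p^n X, Y/p^n Y)$; essential surjectivity is the standard completeness construction $X = \lim_n X_n$ from a compatible system of $p^n$-torsion objects, together with a check that the filtration and Frobenius assemble into the required structure. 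To pass from coherent Adams stacks to the full sub-2-category of Adams stacks, I would reformulate the universal property using Theorem~\ref{thm:adams_embedding} via $\QCoh_{\fp}$; since $\mathfrak{P}$ and the $\mathfrak{P}_n$ are coherent, $\QCoh_{\fp}$ and $\Coh$ agree on them and the argument transfers verbatim.

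The hardest step is the bicolimit of Step~3: beyond the $p$-adic completeness argument, one must verify that the bilimit computed inside the ambient 2-category of weakly Tannakian categories remains weakly Tannakian (rather than needing to be replaced by a reflection), and that the reassembled object $X = \lim_n X_n$ carries a weakly admissible filtration and Frobenius structure inherited from the system. A secondary difficulty occurs in Step~2, where the universal property of the bipullback must be verified against \emph{all} $\fpqc$-stacks rather than only Adams stacks, and where the dualizable covers needed to apply Theorem~\ref{thm:recognition} to $\ca{M}_n$ and to $\MF^{\Phi,f}_K$ must be constructed respecting the filtration and Frobenius.
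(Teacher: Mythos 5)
Your overall architecture agrees with the paper's for the construction of $\mathfrak{P}$ (apply Theorem~\ref{thm:recognition} to $\MF_{W,\fp}$, using Wintenberger's results for the weakly Tannakian axioms) and for the bicolimit (use Theorem~\ref{thm:adams_embedding} to translate into a bilimit of categories, then a $p$-adic completeness argument; the paper carries this out via Proposition~\ref{prop:bilimit_of_ab_categories}, Lemma~\ref{lemma:bilimit_conditions}, and a 2-monadic lifting to $\ca{RM}$). Where you genuinely diverge is the identification of the fibers, which the paper itself flags as the main difficulty. The paper computes the bipullbacks \emph{directly}: since the associated-stack pseudofunctor is left exact and the bottom legs are discrete, the bipullback is a pushout of Hopf algebroids (Proposition~\ref{prop:quotient_and_localization}), and the work goes into identifying $\Comod_{\fp}$ of $(W/p^nW,\Pi/p^n\Pi)$ and $(S^{-1}W,S^{-1}\Pi)$ (Propositions~\ref{prop:comodules_of_quotient} and \ref{prop:comodules_of_localization}, both resting on the Noetherian filtered-union result, Proposition~\ref{prop:finitely_generated_subcomodules}). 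You instead build candidate stacks from the candidate categories and verify the bipullback universal property pointwise on affines via the embedding theorems. That is a legitimate alternative (bipullbacks of stacks are computed pointwise on affines), and it buys a softer argument for the torsion fibers; the paper's route buys an explicit Hopf-algebroid description and keeps the Noetherian input localized in one place.

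Two concrete problems with your version. First, the inclusion $\ca{M}_n \hookrightarrow \MF_{W,\fp}$ is not a 1-cell you can feed to Theorem~\ref{thm:coherent_adams_weakly_tannakian_equivalence}: it does not preserve the unit object ($W$ versus $W/p^nW$), so it is not strong symmetric monoidal, and in any case the contravariant biequivalence would turn a morphism $\mathfrak{Q}_n\to\mathfrak{P}$ into a functor $\MF_{W,\fp}\to\ca{M}_n$, not the other way around. The morphism you want is induced by the \emph{reduction} functor $M\mapsto M/p^nM$ (which is the left adjoint to the inclusion); with that correction your ``tautological'' factorization of a right exact symmetric monoidal $F\colon\MF_{W,\fp}\to\Mod_{S,\fp}$ with $p^nS=0$ through $\ca{M}_n$ does go through, since $F(M)\to F(M/p^nM)$ is the cokernel of $F(p^n\cdot\mathrm{id})=0$. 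Second, the $\mathbb{Q}_p$ fiber is not ``analogous'': there the factorization is through a hom-localization $\mathbb{Q}_p\otimes_{\mathbb{Z}_p}\MF_{W,\fp}$, and you must prove (i) that every object of this localization is represented by a $p$-torsion-free object of $\MF_{W,\fp}$, (ii) that the torsion-free part base-changed to $K$ is $\MF^{\Phi,f}_K$ (the Fontaine--Laffaille/Wintenberger lattice results), and (iii) that right exact strong symmetric monoidal functors out of $\MF_{W,\fp}$ inverting $p$ correspond to such functors out of the localization. This is exactly the content of Proposition~\ref{prop:comodules_of_localization} and the second half of the proof of Proposition~\ref{prop:first_half_of_conjecture}, and it is the heart of the theorem; your proposal defers it entirely. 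Likewise, for the bicolimit you correctly identify but do not carry out the verification that the inverse limit of a compatible system of $p^n$-torsion objects again lies in $\MF_{W,\fp}$ (filtration by direct summands via Mittag--Leffler, assembly of the $\varphi^i$ on the limit), which is the substance of Lemma~\ref{lemma:bilimit_conditions}.
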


 We will prove this theorem in \S \ref{section:filteredmodules}. The major difficulty lies in identifying the categories of quasi-coherent sheaves of $\mathfrak{P}_n$ and $\mathfrak{P}_{\mathbb{Q}_p}$.

\begin{rmk}
 In the proof we will see that the stacks $\mathfrak{P}$, $\mathfrak{P}_n$ and $\mathfrak{P}_{\mathbb{Q}_p}$ are coherent algebraic stacks with the resolution property, so they are in particular Adams stacks.
\end{rmk}

 Note that $\mathfrak{P}_{\mathbb{Q}_p}$ is a stack on the $\fpqc$-site $\Aff$ of affine schemes over $\mathbb{Z}$, so it is not equal to the gerbe whose representations are equivalent to the Tannakian category $\MF^{\Phi,f}_K$ constructed in \cite{DELIGNE}. The latter is a stack on the $\fpqc$-site of schemes over $\mathbb{Q}_p$. We cannot avoid this, because we clearly need a 2-category containing both $\mathfrak{P}$ and $\Spec(\mathbb{Z}_p)$ in order to talk about the desired bipullbacks.

\section*{Acknowledgments}
 This project forms a part of my graduate studies at the University of Chicago under the supervision of Prof.\ Peter May. I want to thank Peter May for providing a wonderful learning environment, both in personal discussions and in the seminars he organises at Chicago, and for helpful and detailed suggestions on how to improve this paper. I thank Mike Shulman for pointing out a few corrections.
 
 The paper was written at Macquarie University during a half-year stay made possible by a 2012 Endeavour Research Fellowhship from the Australian government.
 I am very grateful to Richard Garner, Steve Lack, and Ross Street for many long discussions on various aspects of this work. Ross Street pointed out a significant simplification of the proof of the recognition theorem, which made it possible to extend the results of \S \ref{section:enriched} to a larger class of enriched categories than I originally considered. Last, but certainly not least, I want to thank Richard Pink for suggesting and explaining such an interesting problem.
 \section{Weakly Tannakian categories}\label{section:weaklytannakian}

\subsection{The recognition theorem in the language of Hopf algebroids}

 In this section we will show how Theorem~\ref{thm:recognition} can be deduced from the following result, which is stated entirely in the language of flat Hopf algebroids.

\begin{thm}\label{thm:hopf_recognition}
 Let $R$ be a commutative ring, and let $\ca{A}$ be a weakly Tannakian category with $R$-linear fiber functor $w \colon \ca{A} \rightarrow \Mod_B$. Then there exists a coherent commutative Hopf algebroid $(B,\Gamma)$ in $\Mod_R$ with the resolution property, together with a symmetric monoidal $R$-linear equivalence $\ca{A} \simeq \Comod_{\fp}(B,\Gamma)$ such that the triangle
\[
 \xymatrix{\ca{A} \ar[r]^-{\simeq} \ar[rd]_w & \Comod_{\fp}(B,\Gamma) \ar[d]^V\\ & \Mod_B} 
\]
 commutes, where $V$ denotes the forgetful functor.
\end{thm}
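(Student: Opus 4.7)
The strategy is to reconstruct $(B,\Gamma)$ from the rigid subcategory $\ca{A}_{\fgp}\subseteq\ca{A}$ of dualizable objects, and then extend the reconstruction across the ind-completion. Condition~(ii) of Definition~\ref{dfn:weakly_tannakian} ensures that every object of $\ca{A}$ is a quotient of a dualizable; iterating on the kernel, every object admits a two-step resolution by dualizables. The subcategory $\ca{A}_{\fgp}$ is itself a rigid symmetric monoidal $R$-linear category, and the restriction of $w$ to it is a faithful strong symmetric monoidal exact functor. Classical Tannakian reconstruction for rigid tensor categories, as developed in \cite{SCHAEPPI} along the lines of \cite{DAY_TANNAKA}, then produces a flat commutative Hopf algebroid $(B,\Gamma)$ --- with $\Gamma$ constructed as the coendomorphism coalgebra of $w|_{\ca{A}_{\fgp}}$ --- together with a symmetric monoidal $R$-linear equivalence $\ca{A}_{\fgp}\simeq\Comod_{\fgp}(B,\Gamma)$ lying over $\Mod_B$.

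Next, pass to the ind-completion. The category $\Ind(\ca{A})$ is Grothendieck abelian and symmetric monoidal closed, its finitely presentable part is $\ca{A}$, and by condition~(ii) the dualizable objects form a dense generator. The cocontinuous extension $\Ind(w)\colon\Ind(\ca{A})\to\Mod_B$ is strong symmetric monoidal, exact and faithful, and has a right adjoint $U$. The comonad $\Ind(w)\circ U$ on $\Mod_B$ is cocontinuous, and by the explicit description of $\Gamma$ from the previous step it is canonically identified with $-\otimes_B\Gamma$. This yields a comparison functor $K\colon\Ind(\ca{A})\to\Comod(B,\Gamma)$, and Beck's comonadicity theorem applies: conservativity follows from faithful exactness of $\Ind(w)$, and preservation of the requisite (split) equalizers follows from its exactness. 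Hence $K$ is an equivalence.

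Restricting $K$ to finitely presentable objects gives $\ca{A}\simeq\Comod_{\fp}(B,\Gamma)$; the two-step resolutions by dualizables identify the finitely presentable comodules on both sides, and the commuting triangle over $\Mod_B$ holds tautologically by construction of $K$. That the resulting stack is coherent is encoded in the fact that $\Comod_{\fp}(B,\Gamma)\simeq\ca{A}$ is abelian; the resolution property is literally condition~(ii). The main obstacle is the identification of the comonad $\Ind(w)\circ U$ with $-\otimes_B\Gamma$ and the verification of comonadicity: one cannot invoke classical Tannakian duality directly on $\ca{A}$, since $\ca{A}$ itself need not be rigid, so the argument must bootstrap from $\ca{A}_{\fgp}$ via density --- and it is precisely condition~(ii) which makes $\ca{A}_{\fgp}$ into a dense generator of $\Ind(\ca{A})$, thereby forcing the comonad to take the expected form.
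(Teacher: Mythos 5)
Your proposal is correct and follows essentially the same route as the paper: pass to $\Ind(\ca{A})$, establish comonadicity of the extended fiber functor via Beck, use condition~(ii) together with the Day--Street characterization to make the dualizable objects a dense generator, and thereby identify the induced comonad with the cocontinuous Hopf monoidal comonad $\Gamma\otimes_B-$ coming from the coendomorphism coalgebroid of $w$ restricted to the rigid subcategory. The only cosmetic difference is the order of operations --- you build $(B,\Gamma)$ first from $\ca{A}^d$ and then match the comonad, while the paper extracts $(B,\Gamma)$ from the comonad after proving it is determined by its restriction to $\ca{A}^d$ --- and you correctly single out that identification as the crux.
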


 This result has the following immediate consequence, which is not entirely obvious from the definition of weakly Tannakian categories.

\begin{cor}
 An object $A \in \ca{A}$ of a weakly Tannakian category has a dual if and only if $w(A)$ does for some fiber functor $w \colon \ca{A} \rightarrow \Mod_B$.
\end{cor}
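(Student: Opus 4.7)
The plan is to use Theorem~\ref{thm:hopf_recognition} to transfer the statement to the setting of comodules over a flat Hopf algebroid, where the corresponding fact is standard.

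The ``only if'' direction is immediate: any strong symmetric monoidal functor preserves duals (it sends the evaluation and coevaluation of $A$ to evaluation and coevaluation of $w(A)$, and the triangle identities transport along the monoidal structure constraints), so if $A$ has a dual in $\ca{A}$ then $w(A)$ has a dual in $\Mod_B$ for \emph{every} fiber functor $w$.

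For the converse, suppose $w \colon \ca{A} \to \Mod_B$ is a fiber functor for which $w(A)$ has a dual in $\Mod_B$, i.e.\ $w(A)$ is finitely generated projective over $B$. Applying Theorem~\ref{thm:hopf_recognition} to $w$, we obtain a flat Hopf algebroid $(B,\Gamma)$ and a symmetric monoidal $R$-linear equivalence $\ca{A} \simeq \Comod_{\fp}(B,\Gamma)$ under which $w$ corresponds to the forgetful functor $V$. Writing $M$ for the $(B,\Gamma)$-comodule corresponding to $A$, the assumption becomes that $V(M)$ is finitely generated projective, and the task reduces to proving that such an $M$ admits a dual in $\Comod(B,\Gamma)$.

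This last step is the main (but routine) obstacle: one equips the $B$-linear dual $V(M)^{\vee}$ with a $(B,\Gamma)$-coaction obtained from the coaction on $M$ by transposition through the antipode of $\Gamma$, and then verifies that the standard $B$-linear evaluation and coevaluation of $V(M)^{\vee}$ lift to comodule homomorphisms. Since $(B,\Gamma)$ is flat, the same computations that establish this in the Hopf algebra case go through using the two-sided $B$-bimodule structure on $\Gamma$. The resulting dual has underlying $B$-module $V(M)^{\vee}$, which is again finitely generated projective, hence lies in $\Comod_{\fp}(B,\Gamma)$. Transporting back through the equivalence produces the required dual of $A$ in $\ca{A}$.
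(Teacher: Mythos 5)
Your proof is correct and follows essentially the same route as the paper: both reduce via Theorem~\ref{thm:hopf_recognition} to the statement that a comodule over a flat Hopf algebroid has a dual if and only if its underlying module does. The only difference is that the paper simply cites this fact (Hovey, Proposition~1.3.4) whereas you sketch its proof.
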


\begin{proof}
 It is a well-known fact that a comodule of a flat Hopf algebroid has a dual if and only if its underlying module does (see for example \cite[Proposition~1.3.4]{HOVEY}).
\end{proof}

 It also allows us to prove that weakly Tannakian $R$-linear categories which admit a neutral fiber functor are equivalent to categories of finitely presentable representations of flat affine group schemes over $R$.

\begin{proof}[Proof of Theorem~\ref{thm:affine_group_schemes_recognition}]
 Recall that a fiber functor is neutral if $B=R$. In that case, Theorem~\ref{thm:hopf_recognition} shows that $\ca{A}$ is equivalent to the category of comodules of a flat Hopf algebroid $(R,\Gamma)$. The pair $(R,\Gamma)$ is a commutative Hopf algebroid in $\Mod_R$ if and only if $\Gamma$ is a commutative Hopf algebra in $\Mod_R$. Under the equivalence between $R$-algebras and affine schemes over $R$, flat Hopf algebras correspond to flat affine group schemes.
\end{proof}

 In order to relate the two recognition theorems, Theorems~\ref{thm:recognition} and \ref{thm:hopf_recognition} we first need to define the terms in the former. 

\begin{dfn}
 The $\fpqc$-topology on the category $\Aff$ of affine schemes is generated by finite families consisting of flat morphisms $U_i \rightarrow U$ such that $\coprod U_i \rightarrow U$ is faithfully flat, meaning that the corresponding morphism $A \rightarrow \prod A_i$ of commutative rings makes $\prod A_i$ a faithfully flat $A$-algebra. 
\end{dfn}

 Following Naumann \cite{NAUMANN}, we call a stack on the $\fpqc$-site $\Aff$ \emph{algebraic} if it has an affine diagonal and admits a faithfully flat morphism from an affine scheme. We will call such a faithfully flat morphism a \emph{presentation} of the algebraic stack. Note that the usual definition of an algebraic stack uses a different Grothendieck topology, imposes certain finiteness conditions which would exclude the gerbes considered in \cite{SAAVEDRA, DELIGNE}, and at the same time relaxes the requirement that there exists a presentation by an affine scheme. Algebraic stacks in the sense of Naumann generalize Lurie's geometric stacks, so from that point of view it might be preferable to call them geometric. However, they do not have to satisfy any smoothness conditions, so the author prefers to follow Naumann's terminology.

 All algebraic stacks arise as stacks associated to flat affine groupoid schemes, see \cite[\S 3]{NAUMANN}, so this definition coincides with the one given in \S \ref{section:introduction_weakly_tannakian}. Note that all the stacks we consider are stacks on the site of affine schemes over $\mathbb{Z}$. The following proposition shows that we can still talk about algebraic stacks over an arbitrary commutative ring $R$ in this setting, literally as an algebraic stack over $\Spec(R)$.

\begin{prop}\label{prop:general_base}
 Let $X$ be an algebraic stack, associated to the flat affine groupoid scheme $\bigl( \Spec(A),\Spec(\Gamma) \bigr)$, and let $R$ be a commutative ring. Then there is an equivalence between morphisms $X \rightarrow \Spec(R)$ of stacks and $R$-algebra structures on $A$ which endow $(A,\Gamma)$ with the structure of a Hopf algebroid in $\Mod_R$.
\end{prop}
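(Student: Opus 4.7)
The plan is to use the presentation $\Spec(A) \to X$ as an fpqc cover to reduce $\Hom(X,\Spec(R))$ to algebraic data on $A$ and $\Gamma$, and then to interpret that data as the structure of a Hopf algebroid in $\Mod_R$ on $(A,\Gamma)$.

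First I would note that $\Spec(R)$ is an fpqc sheaf and $X$ is the fpqc-stackification of the groupoid $\bigl(\Spec(A),\Spec(\Gamma)\bigr)$, so by the sheaf property the set of stack morphisms $X \to \Spec(R)$ is the equalizer of the pair
\[
\Hom\bigl(\Spec(A),\Spec(R)\bigr) \rightrightarrows \Hom\bigl(\Spec(\Gamma),\Spec(R)\bigr)
\]
induced by the source and target of the groupoid. Applying Yoneda, a point of this equalizer is precisely a ring map $\phi \colon R \to A$ satisfying $\eta_L \circ \phi = \eta_R \circ \phi$, where $\eta_L,\eta_R \colon A \to \Gamma$ denote the two unit maps of the Hopf algebroid $(A,\Gamma)$.

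The second step is to identify such maps $\phi$ with $R$-algebra structures on $A$ that endow $(A,\Gamma)$ with the structure of a Hopf algebroid in $\Mod_R$. Given $\phi$, the common composite $\eta_L \phi = \eta_R \phi$ gives $\Gamma$ an $R$-algebra structure for which both $\eta_L$ and $\eta_R$ are $R$-linear. The remaining structure maps $\epsilon$, $\Delta$, $S$ are then automatically $R$-linear: the counit because $\epsilon \circ \eta_L = \id_A$ forces $\epsilon$ to recover $\phi$, and the comultiplication and antipode because they arise by dualizing composition and inversion of the internal groupoid $\bigl(\Spec(A),\Spec(\Gamma)\bigr)$, which by the equalizer condition is an internal groupoid over $\Spec(R)$; hence all its structure morphisms live over $\Spec(R)$. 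Conversely, any Hopf algebroid structure in $\Mod_R$ on $(A,\Gamma)$ makes $\eta_L$ and $\eta_R$ into $R$-algebra maps, so pre-composing the given $R \to A$ with $\eta_L = \eta_R$ yields an element of the equalizer, and these constructions are manifestly inverse.

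I do not foresee a serious obstacle. The only point requiring care is the automatic $R$-linearity of $\epsilon$, $\Delta$ and $S$ once $\eta_L,\eta_R$ are $R$-linear; but this is transparent once one notes that a stack morphism $X \to \Spec(R)$ corresponds to an internal functor from the groupoid $\bigl(\Spec(A),\Spec(\Gamma)\bigr)$ into the discrete groupoid $\Spec(R)$, so the entire groupoid, together with its composition and inverse, is tautologically an internal groupoid in schemes over $\Spec(R)$.
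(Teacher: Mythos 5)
Your proof is correct and follows essentially the same route as the paper's: both reduce a stack morphism $X \to \Spec(R)$ to a morphism from the presenting groupoid into the discrete stack $\Spec(R)$ — you via descent along the cover $\Spec(A) \to X$ with kernel pair $\Spec(\Gamma)$, the paper via the universal property of the associated stack — arriving at the same condition that the ring map $R \to A$ equalizes the two units $\eta_L, \eta_R \colon A \to \Gamma$. The only difference is that you spell out the automatic $R$-linearity of $\epsilon$, $\Delta$ and $S$, which the paper leaves implicit; there is no substantive divergence.
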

 
\begin{proof}
 Since $\Spec(R)$ is a stack and $X$ is the stack associated to the presheaf of groupoids represented by $\bigl(\Spec(A),\Spec(\Gamma)\bigr)$, giving a morphism $X \rightarrow \Spec(R)$ of stacks amounts to giving a morphism
\[
 (R,R) \rightarrow (A,\Gamma)
\]
 of Hopf algebroids. Such morphisms are uniquely determined by the ring homomorphism $R \rightarrow A$, and an arbitrary such ring homomorphism gives rise to a morphism of Hopf algebroids as above if and only if the two resulting $R$-actions on $\Gamma$ coincide.
\end{proof}

 Recall that an object $C$ of a category $\ca{C}$ is called \emph{finitely presentable} if $\ca{C}(C,-)$ preserves filtered colimits. The category $\ca{C}$ \emph{locally finitely presentable} if it is cocomplete and it has a strong generator consisting of finitely presentable objects (see \cite{GABRIEL_ULMER}). Here the term `local' is used in the category theorists' sense, not in the geometric sense.

 \begin{dfn}\label{dfn:coherent}
 An algebraic stack $X$ is called \emph{coherent} if the category $\QCoh(X)$ is locally coherent, that is, if it is locally finitely presentable, the category $\Coh(X)$ coincides with the category of finitely presentable objects, and it is closed under finite limits in $\QCoh(X)$ (see \cite[\S 2]{ROOS} and \cite[Theorem~1.6]{HERZOG}).
 \end{dfn}

 \begin{dfn}\label{dfn:resolution_property}
  An algebraic stack $X$ has the \emph{resolution property} if every coherent sheaf is a quotient of a dualizable coherent sheaf. 
 \end{dfn}

 We are now ready to deduce Theorem~\ref{thm:recognition} from Theorem~\ref{thm:hopf_recognition}.

\begin{proof}[Proof of Theorem~\ref{thm:recognition}]
 Recall from \cite[\S3.4]{NAUMANN} and \cite[Remark~2.39]{GOERSS} that the category of quasi-coherent sheaves on a stack $X$ associated to a flat Hopf algebroid $(B,\Gamma)$ is equivalent to the category of comodules of $(B,\Gamma)$.

 First let $X$ be a coherent algebraic stack over $R$ with the resolution property. Since $X$ is algebraic, it is associated to a flat Hopf algebroid $(B,\Gamma)$. The morphism $X \rightarrow \Spec(R)$ corresponds to a morphism of Hopf algberoids $(R,R) \rightarrow (B,\Gamma)$, which implies that $(B,\Gamma)$ is a Hopf algbroid in $\Mod_R$ (see Proposition~\ref{prop:general_base}). The forgetful functor from the category of finitely presentable comodules to the category of $B$-modules gives the desired $R$-linear fiber functor. The category is abelian since $X$ is coherent (see Definition~\ref{dfn:coherent}), and it satisfies condition ii) of Definition~\ref{dfn:weakly_tannakian} because $X$ has the resolution property (see Definition~\ref{dfn:resolution_property}).

 Conversely, suppose that $\ca{A}$ is an $R$-linear weakly Tannakian category. By Theorem~\ref{thm:hopf_recognition} there exists a flat Hopf algebroid $(B,\Gamma)$ in $\Mod_R$ and a symmetric monoidal $R$-linear equivalence $\ca{A} \simeq \Comod(A,\Gamma)$. Since the two $R$-actions of on $\Gamma$ coincide, we get a morphism $(R,R) \rightarrow (B,\Gamma)$. Passing to associated stacks we get the desired algebraic stack $X$ over $R$.
\end{proof}

 The proof of Theorem~\ref{thm:hopf_recognition}, the recognition theorem for flat Hopf algebroids, has four key ingredients: left Kan extensions, Beck's comonadicity theorem, dense subcategories, and a characterization of such subcategories due to Day and Street. We recall these categorical concepts and results as they are needed.

\subsection{Left Kan extensions and Beck's comonadicity theorem}

 Let $R$ be a commutative ring. A natural transformation
\[
 \xymatrix{\ca{A} \ar[rd]_{F} \ar[r]^K & \ca{C} \ar[d]^{L} \dtwocell\omit{^<2>\alpha} \\ & \ca{B} }
\]
 between $R$-linear functors is said to exhibit $L$ as \emph{left Kan extension} of $F$ along $K$ if the function
\[
 [\ca{C},\ca{B}](L,G) \rightarrow  [\ca{A},\ca{B}](F,KG)
\]
 which sends a natural transformation $\varphi \colon L \Rightarrow G$ to $\varphi K \cdot \alpha$ is a bijection\footnote{This is the right definition for $R$-linear categories. For general enriched categories, the unviersal property has to be strengthened (see \cite[\S4]{KELLY_BASIC})}. If this is the case we write $\Lan_K F$ for $L$.

 The natural transformation $\alpha$ is called the \emph{unit} of the Kan extension. If $K$ is fully faithful (which will always be the case for the left Kan extensions we consider), then the unit $\alpha$ is invertible (see \cite[Proposition~4.23]{KELLY_BASIC}). If $\ca{B}$ is cocomplete and $\ca{A}$ is essentially small, the left Kan extension of any $R$-linear functor $F$ exists.

 We will frequently use the special case where $K$ is the Yoneda embedding $Y \colon \ca{A} \rightarrow \Prs{A}$. In that case, the $R$-linear functor $L$ has a right adjoint, henceforth always denoted by
\[
\widetilde{F} \colon \ca{B} \rightarrow \Prs{A} \smash{\rlap{,}}
\]
 which sends $B\in \ca{B}$ to $\ca{B}(F-,B) \in \Prs{A}$. If $\ca{B}$ is cocomplete, then the functor which sends $F$ to $\Lan_Y F$ gives an equivalence
\[
 [\ca{A},\ca{B}] \rightarrow \Cocts[\Prs{A},\ca{B}] \smash{\rlap{,}}
\]
 with inverse given by whiskering with the Yoneda embedding.

 The first theorem from category theory that we need is Beck's comonadicity theorem, which gives a characterization of categories of comodules\footnote{Comodules are often called coalgebras. We do not use this terminology because we frequently consider comonads induced by $R$-coalgebras.} of comonads. Let
\[
 F \colon \ca{C} \rightleftarrows \ca{D} \colon G
\]
 be an $R$-linear adjunction with unit $\eta$ and counit $\varepsilon$. Then $FG$ is a comonad with comultiplication given by $F\eta G \colon FGFG \Rightarrow FG$ and counit given by $\varepsilon \colon FG \Rightarrow \id$. The assignment which sends $D \in \ca{D}$ to the comodule $(FD,F\eta_D)$ gives an $R$-linear functor
\[
 \ca{C} \rightarrow \ca{D}_{FG}
\]
 from $\ca{C}$ to the category of comodules of $FG$. The left adjoint $F$ is called \emph{comonadic} if the comparison functor is an equivalence. Beck's comonadicity theorem gives necessary and sufficient conditions for when this is the case. The following special case of the enriched version of Beck's result suffices for our purposes.

\begin{prop}
 Let $F \colon \ca{C} \rightleftarrows \ca{D} \colon G$ be an $R$-linear adjunction. If $\ca{C}$ has equalizers, $F$ preserves them, and $F$ reflects isomorphisms, then $F$ is comonadic. In particular, if $\ca{C}$ is abelian and $F$ is faithful and exact, then it is comonadic.
\end{prop}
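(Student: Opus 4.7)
The plan is to apply the crude form of the enriched Beck comonadicity theorem. For $\Mod_R$-enrichment no special care is required: the relevant limits are conical equalizers, and an $R$-linear functor preserves or creates them in exactly the same way as an unenriched additive one, so I can reason as in the ordinary case.

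Concretely, given the $R$-linear adjunction $F\dashv G$ with unit $\eta$ and counit $\varepsilon$, I would construct a right adjoint
\[
 G'\colon \ca{D}_{FG}\longrightarrow \ca{C}
\]
to the comparison functor $K\colon\ca{C}\to\ca{D}_{FG}$, $C\mapsto (FC,F\eta_C)$, by sending a comodule $(D,\delta)$ to the equalizer in $\ca{C}$ of the parallel pair
\[
 G\delta,\ \eta_{GD}\colon GD\rightrightarrows GFGD,
\]
which exists by hypothesis on $\ca{C}$. A direct diagram chase (identical to the unenriched proof of the dual of Beck's theorem) shows that $K\dashv G'$. To upgrade this adjunction to an equivalence, I would apply $F$: since $F$ preserves equalizers, $FG'(D,\delta)$ is the equalizer of $FG\delta$ and $F\eta_{GD}$ in $\ca{D}$, and the counit $\varepsilon$ makes $\delta\colon D\to FGD$ into a split fork for this pair, so that $D$ itself provides the equalizer. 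Consequently both unit and counit of $K\dashv G'$ become isomorphisms after applying $F$ (respectively the forgetful functor $\ca{D}_{FG}\to\ca{D}$, which reflects isomorphisms automatically); since $F$ reflects isomorphisms by assumption, they are already isomorphisms, and $K$ is an equivalence.

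For the ``in particular'' clause, any abelian category has equalizers, and every exact additive functor preserves them, so only the reflection of isomorphisms needs comment. A faithful additive functor reflects zero objects: if $FX=0$ then $F(\id_X)=0=F(0)$ forces $\id_X=0$ and hence $X=0$. For a morphism $f$ with $Ff$ invertible, exactness gives $F(\ker f)\cong \ker(Ff)=0$ and $F(\coker f)\cong\coker(Ff)=0$, whence $\ker f=\coker f=0$ and $f$ is invertible.

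I do not anticipate any real obstacle; the only subtlety is verifying that the enrichment does not complicate the construction of $G'$, but this is immediate because $R$-linear equalizers are computed as in the underlying ordinary category.
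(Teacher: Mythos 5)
Your proof is correct. The paper disposes of the first claim by citing Dubuc's enriched comonadicity theorem and of the second exactly as you do (a faithful exact $R$-linear functor reflects isomorphisms); you instead prove the crude comonadicity theorem from scratch, constructing the right adjoint $G'$ to the comparison functor via equalizers and using the split-fork argument together with conservativity of $F$ and of the forgetful functor $\ca{D}_{FG}\to\ca{D}$. This is the same underlying argument, just carried out in full rather than outsourced to a reference, and your remark that conical equalizers in $\Mod_R$-categories are computed as in the underlying ordinary categories is exactly the point that makes the unenriched proof transfer.
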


\begin{proof}
 The first claim follows from the enriched comonadicity theorem in \cite[Theorem~2.II.1]{DUBUC}, and the second follows from the fact that any faithful exact $R$-linear functor reflects isomorphisms.
\end{proof}

\subsection{The proof strategy}
 Throughout the remainder of this section we fix a commutative ring $R$, a commutative $R$-algebra $B$, and an $R$-linear weakly Tannakian category $\ca{A}$ with a fiber functor $w \colon \ca{A} \rightarrow \Mod_B$. We will write $\ca{A}^d$ for the full subcategory of dualizable objects.

\begin{prop}\label{prop:L_comonadic}
 The left Kan extension $L \colon \Ind(\ca{A}) \rightarrow \Mod_B$ of $w$ along the inclusion $\ca{A} \rightarrow \Ind(\ca{A})$ is left exact, comonadic, and strong symmetric monoidal.
\end{prop}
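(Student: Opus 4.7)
The plan is to exploit the fact that $\Ind(\ca{A})$ is the free filtered cocompletion of $\ca{A}$, which gives a concrete formula for $L$: for $F = \colim_i A_i$ with each $A_i \in \ca{A}$, we have $L(F) \cong \colim_i w(A_i)$. By construction $L$ preserves filtered colimits and agrees with $w$ on $\ca{A}$; since $w$ is exact and additive it preserves finite colimits, so $L$ is in fact fully cocontinuous. The symmetric monoidal structure on $\Ind(\ca{A})$ can be taken to be the one whose tensor preserves filtered colimits in each variable and restricts to $\ca{A}$'s tensor on $\ca{A}$, so there is no obstruction to extending $w$'s strong monoidal structure.

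For strong symmetric monoidality I would compute directly: writing $F = \colim_i A_i$ and $G = \colim_j A_j'$ as filtered colimits of objects of $\ca{A}$, the chain
\[
 L(F \otimes G) \cong \colim_{i,j} w(A_i \otimes A_j') \cong \colim_{i,j} \bigl(w(A_i) \otimes w(A_j')\bigr) \cong L(F) \otimes L(G),
\]
assembled from the strong monoidal structure of $w$ together with the cocontinuity of $L$ and of the tensor on both sides, provides the coherent comparison. For left exactness I would use that $\ca{A}$ being abelian makes $\Ind(\ca{A})$ a locally coherent Grothendieck abelian category with $\ca{A}$ as its full subcategory of finitely presentable objects. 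Any finite-limit diagram in $\Ind(\ca{A})$ is then a filtered colimit of finite-limit diagrams in $\ca{A}$, and since filtered colimits are exact in $\Mod_B$ and $w$ is left exact, applying $L$ preserves the limit.

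For comonadicity, the form of Beck's theorem just cited reduces the problem to showing that $\Ind(\ca{A})$ is abelian, that $L$ has a right adjoint, and that $L$ is faithful and exact. The right adjoint exists because $L$ is a cocontinuous functor between locally presentable categories. Only faithfulness is still open, and since $L$ is exact it suffices to show that $L$ reflects the zero object. Given $F \neq 0$ in $\Ind(\ca{A})$, there is a nonzero morphism $A \to F$ from some $A \in \ca{A}$, because $F$ is the filtered colimit of the canonical diagram $\ca{A}/F \to \Ind(\ca{A})$. Its image $A' \hookrightarrow F$ again lies in $\ca{A}$, since $\ca{A}$ is closed under quotients inside $\Ind(\ca{A})$. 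Applying the left exact $L$ yields a monomorphism $w(A') \hookrightarrow L(F)$, and $w(A') \neq 0$ because $w$ is faithful and hence reflects zero.

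The main obstacle I expect is the technical verification that $\Ind(\ca{A})$ is locally coherent with finitely presentable part precisely $\ca{A}$, which underlies both the left exactness argument (expressing limit diagrams as filtered colimits of diagrams in $\ca{A}$) and the faithfulness argument (using that $\ca{A}$ is closed under images). This is standard when $\ca{A}$ is essentially small and abelian, and I would cite the references to Roos and Herzog already used in the paper's definition of a coherent stack.
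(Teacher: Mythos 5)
Most of your proof runs parallel to the paper's: the same special case of Beck's theorem (abelian domain, faithful exact left adjoint), the same filtered-colimit description of the tensor product of ind-objects for strong symmetric monoidality, and essentially the standard uniformity argument for left exactness. Producing the right adjoint by the adjoint functor theorem rather than by checking that $\widetilde{w}$ lands in the left exact presheaves is a harmless variant.

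The faithfulness step, however, contains a genuine gap. The assertion that $\ca{A}$ is closed under quotients inside $\Ind(\ca{A})$ is false in general: the image $A'$ of a nonzero map $A \to F$ is $A/\ker(A\to F)$, and $\ker(A\to F)$ is an arbitrary subobject of $A$ in $\Ind(\ca{A})$, which need not be finitely generated. For instance, take $B=k[x_1,x_2,\dots]$ (a coherent ring) and let $\ca{A}$ be the weakly Tannakian category of finitely presented $B$-modules with $w$ the inclusion into $\Mod_B \simeq \Ind(\ca{A})$; the image of $B \to B/(x_1,x_2,\dots)$ is $k$, which is not finitely presented and hence not in $\ca{A}$. (What is true, and harmless elsewhere in your argument, is that $\ca{A}$ is closed under images of maps \emph{between objects of} $\ca{A}$.) The gap is also not repaired by writing $A'\cong\colim_i A/K_i$ as a filtered colimit of nonzero quotients lying in $\ca{A}$, since a filtered colimit of nonzero objects along epimorphisms can vanish. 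The correct argument --- the one the paper spells out in the enriched setting in Proposition~\ref{prop:enriched_L_comonadic} --- is to show that the unit $\eta_F\colon F \to \widetilde{w}L(F)$ is a monomorphism, equivalently that $\Ind(\ca{A})(A,F)\to\Mod_B(wA,LF)$ is injective for every $A\in\ca{A}$: write $F$ as a filtered colimit of objects $F_i\in\ca{A}$, use that $A$ is finitely presentable in $\Ind(\ca{A})$ and that $w(A)$ is finitely generated (being a quotient of a dualizable object by condition~ii) of Definition~\ref{dfn:weakly_tannakian}) to reduce, up to an injection, to the filtered colimit of the injections $\ca{A}(A,F_i)\to\Mod_B(wA,wF_i)$. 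Note that this is where condition~ii) of the definition enters the proof of this proposition; faithfulness of $L$ is not simply inherited from faithfulness of $w$.
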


\begin{proof}
 The category $\Ind(\ca{A})$ can be identified with the full subcategory of $\Prs{A}$ consisting of left exact presheaves. By \cite[Theorem~4.47]{KELLY_BASIC}, $L$ is given by the restriction of $\Lan_Y w$ to $\Ind(\ca{A})$.  Right exactness of $w$ implies that
\[
\widetilde{w} \colon \Mod_B \rightarrow \Prs{A} 
\]
 factors through $\Ind(\ca{A})$. This shows that $L$ is a left adjoint.

 Since $w$ is left exact and faithful, so is $L$. Thus $L$ reflects isomorphisms and preserves equalizers, hence it is comonadic. That it is strong symmetric monoidal follows directly from the definition of the tensor product of ind-objects of $\ca{A}$ as filtered colimit of tensor products of objects in $\ca{A}$.
\end{proof}

 The above result already allows us to give a proof strategy for the recognition theorem, which we summarize in the following corollary.

\begin{cor}\label{cor:proof_strategy}
 If the comonad induced by the functor $L$ from Proposition~\ref{prop:L_comonadic} is a cocontinuous symmetric Hopf monoidal comonad, then $\Ind(\ca{A})$ is equivalent (as a symmetric monoidal category) to the category of comodules of a flat Hopf algebroid $(B,\Gamma)$.
\end{cor}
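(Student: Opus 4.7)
\medskip
\noindent \textbf{Proof strategy.} The plan is to exploit comonadicity of $L$ to identify $\Ind(\ca{A})$ with the category of coalgebras for the induced comonad $T \defl L \widetilde{w}$ on $\Mod_B$, and then translate the assumed extra structure on $T$ into the defining data of a flat commutative Hopf algebroid $(B,\Gamma)$ together with its category of comodules.

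First I would represent $T$ via the Eilenberg--Watts theorem: cocontinuity (hence additivity) yields a natural isomorphism $T \cong \Gamma \ten B (-)$, where $\Gamma \defl T(B)$ carries a $(B,B)$-bimodule structure whose left action comes from the ambient $\Mod_B$ structure and whose right action comes from applying $T$ to $B \cong \End_B(B)$. The comonad data on $T$ then transport to a counit $\varepsilon \colon \Gamma \to B$ and a comultiplication $\Delta \colon \Gamma \to \Gamma \ten B \Gamma$ of $B$-bimodules, giving $\Gamma$ the structure of a coalgebroid over $B$. Evaluating the symmetric monoidal coherence morphisms of $T$ at copies of $B$ next supplies a unit $B \to \Gamma$ and a commutative multiplication $\Gamma \ten B \Gamma \to \Gamma$, and their compatibility with $(\varepsilon,\Delta)$ assembles $(B,\Gamma)$ into a commutative bialgebroid. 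The Hopf monoidal hypothesis --- i.e.\ invertibility of the fusion operator associated with $T$ --- is precisely what produces an antipode $S \colon \Gamma \to \Gamma$ satisfying the Hopf algebroid axioms, via the standard dictionary between symmetric Hopf monoidal comonads on $\Mod_B$ and commutative Hopf algebroids over $B$. Left exactness of $L$, hence of $T$, forces $\Gamma \ten B (-)$ to be left exact, so $\Gamma$ is flat as a right $B$-module; flatness with respect to the other $B$-action then follows from the fact that $S$ swaps the two units.

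What remains is to promote the underlying equivalence $\Ind(\ca{A}) \simeq \Mod_B^T$ to a symmetric monoidal equivalence with $\Comod(B,\Gamma)$. That $T$-coalgebras are the same as $\Gamma$-comodules is essentially a matter of unwinding definitions, since a $T$-coaction $M \to TM = \Gamma \ten B M$ is exactly a $\Gamma$-coaction. The hardest part, which I expect to be the main obstacle, is the symmetric monoidal compatibility: the tensor product on $\Mod_B^T$ induced by the symmetric monoidal comonad structure on $T$ must be identified with the standard tensor product of $(B,\Gamma)$-comodules built from the bialgebroid multiplication on $\Gamma$. This will be a bookkeeping-heavy coherence verification that reduces the strong symmetric monoidal structure on $L$ from Proposition~\ref{prop:L_comonadic} to the compatibility between the bialgebroid structure on $\Gamma$ and the tensor product of comodules.
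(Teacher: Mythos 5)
Your proposal follows essentially the same route as the paper's proof: Eilenberg--Watts to write the cocontinuous comonad as $\Gamma \ten{B} -$, transport of the comonad, symmetric monoidal, and Hopf monoidal structures into a coalgebroid, commutative bialgebroid, and Hopf algebroid structure on $(B,\Gamma)$ respectively (the paper outsources this dictionary to a citation), and identification of comodules of the comonad with $(B,\Gamma)$-comodules; your remarks on flatness via left exactness and on the antipode swapping the two units correctly fill in details the paper leaves implicit. No gaps.
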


\begin{proof}
 Since the induced comonad is cocontinuous, its underlying functor is given by
\[
 \Gamma \mathop{\otimes_B} - \colon \Mod_B \rightarrow \Mod_B
\]
 for some $B$-$B$ bimodule $\Gamma$. The comonad structure endows the pair $(B,\Gamma)$ with the structure of a coalgebroid. The symmetric monoidal structure corresponds to a bialgebroid structure on $(B,\Gamma)$, and the comonad is Hopf monoidal if and only if $(B,\Gamma)$ is a Hopf algebroid (cf.\ \cite[Example~11.6.2]{SCHAEPPI}). Comodules of $(B,\Gamma)$ correspond to comodules of the comonad induced by $L$. 
\end{proof}

 To show that the induced comonad is Hopf monoidal and that the right adjoint of $L$ is cocontinuous we will need a different way to compute the comonad induced by $L$. This is the key step in our argument, and requires further concepts and results from category theory.

\subsection{Density and left Kan extensions}
 An $R$-linear functor $K \colon \ca{A} \rightarrow \ca{C}$ is called \emph{dense} if $\widetilde{K} \colon \ca{C} \rightarrow \Prs{A}$ is fully faithful. A full subcategory is called dense if the induced inclusion functor is dense. See \cite[Theorem~5.1]{KELLY_BASIC} for various equivalent conditions. From the Yoneda lemma we get a natural isomorphism $\widetilde{Y}F \cong F$, which shows that the Yoneda embedding is always dense.

\begin{dfn}
 Let $K \colon \ca{A} \rightarrow \ca{C}$ be a fully faithful functor. A colimit in $\ca{C}$ is called \emph{$K$-absolute} if it is preserved by $\widetilde{K}$. If $\ca{A}$ is essentially small, this is equivalent to being preserved by $\ca{C}(KA,-)$ for all $A \in \ca{A}$.
\end{dfn}

 This concept allows us to identify certain functors as left Kan extensions.

\begin{thm}\label{thm:left_kan_characterization}
 Let $K \colon \ca{A} \rightarrow \ca{C}$ be a fully faithful dense functor. Then a functor $S \colon \ca{C} \rightarrow \ca{D}$ is isomorphic to $\Lan_{K} SK$ if and only if it preserves all $K$-absolute colimits.
\end{thm}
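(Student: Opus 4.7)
The plan is to transfer the question from $\ca{C}$ into the presheaf category $\Prs{A}$ via $\widetilde{K}$, where both implications become transparent. The key preliminary step is to recast the pointwise formula
\[
 \Lan_K SK(C) \;\cong\; \int^{A \in \ca{A}} \ca{C}(KA,C) \cdot SKA
\]
as the composite $\Lan_K SK \cong (\Lan_Y SK) \circ \widetilde{K}$, where $\Lan_Y SK \colon \Prs{A} \to \ca{D}$ is the cocontinuous extension of $SK$ along the Yoneda embedding. This identity follows from $\widetilde{K}(C)(A) = \ca{C}(KA,C)$ combined with the standard coend formula $\Lan_Y F(P) = \int^A P(A) \cdot F(A)$.

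For the ``only if'' direction, suppose $S \cong \Lan_K SK$, so equivalently $S \cong (\Lan_Y SK) \circ \widetilde{K}$. Any $K$-absolute colimit in $\ca{C}$ is, by definition, preserved by $\widetilde{K}$, and is further preserved by the cocontinuous functor $\Lan_Y SK$; composing shows that $S$ preserves it.

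For the ``if'' direction, density of $K$ expresses every $C \in \ca{C}$ as a canonical colimit of the form $C \cong \colim_{(A,\, f\colon KA \to C)} KA$, whose image under $\widetilde{K}$ is the usual colimit-of-representables presentation of $\widetilde{K}(C)$. Consequently this density colimit is itself $K$-absolute. Applying $S$, which by hypothesis preserves $K$-absolute colimits, one obtains $SC \cong \colim SKA$, and this is precisely $\Lan_K SK(C)$ by the coend formula above. To finish one checks that the canonical comparison $\Lan_K SK \Rightarrow S$ arising from the universal property of the Kan extension is the inverse of this isomorphism; both are induced from $S$ applied to the morphisms $f\colon KA \to C$, so naturality is automatic.

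The principal technical point is the identification $\Lan_K SK \cong (\Lan_Y SK) \circ \widetilde{K}$: it is what allows the adjunction ``restriction along $K$ against left Kan extension along $K$'' to be refined to the adjunction $\Lan_Y K \dashv \widetilde{K}$ exhibited by density. Once this bridge is in place, the two directions reduce to tracing universal properties through the density diagram, and no further machinery beyond what is recalled in the preceding subsection is required.
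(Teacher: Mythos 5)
Your proof is correct in substance, but it is worth noting that the paper does not prove this statement at all: it simply cites \cite[Theorem~5.29]{KELLY_BASIC}, and what you have written is essentially a reconstruction of the standard argument behind that cited result. Your key identity $\Lan_K SK \cong (\Lan_Y SK)\circ\widetilde{K}$ is exactly the pointwise formula, and the ``if'' direction via the canonical density colimit $\widetilde{K}C \star K$ (which is $K$-absolute because $\widetilde{K}K\cong Y$ by full faithfulness, so its image under $\widetilde{K}$ is the tautological presentation of $\widetilde{K}C$) is the standard route. Two small points of precision. First, in the $R$-linear setting the density presentation is the \emph{weighted} colimit $\widetilde{K}C\star K$ (equivalently the coend you wrote), not literally a conical colimit over the comma category $(K\downarrow C)$; your coend formula shows you know this, but the phrasing ``$C\cong\colim_{(A,f)}KA$'' should be read accordingly. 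Second, the factorization through $\Lan_Y SK$ presupposes both that $\Lan_K SK$ is pointwise and that $\ca{D}$ admits enough colimits for $\Lan_Y SK$ to be defined on all of $\Prs{A}$; this is harmless in the paper's applications (where $\ca{A}$ is essentially small and $\ca{D}$ is cocomplete), but for the ``only if'' direction you can avoid it entirely: from $SC\cong\widetilde{K}C\star SK$ and the fact that weighted colimits commute with colimits in the weight variable, $\widetilde{K}(\colim_j C_j)\cong\colim_j\widetilde{K}C_j$ immediately gives $S(\colim_j C_j)\cong\colim_j SC_j$ for any $K$-absolute colimit, with no mention of $\Lan_Y$. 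With these caveats your argument is a complete and self-contained proof of the theorem the paper delegates to Kelly.
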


\begin{proof}
 This follows from \cite[Theorem~5.29]{KELLY_BASIC}.
\end{proof}

 The key ingredient in our proof is the following characterization of dense subcategories in Grothendieck abelian categories, which is a special case of a theorem of Day and Street. Recall that a set $\ca{G}$ of objects in a category $\ca{C}$ is called a \emph{strong generator} if the representable functors $\ca{C}(G,-)$, $G \in \ca{G}$, jointly reflect isomorphisms.

\begin{thm}[Day-Street]\label{thm:density_characterization}
 Let $\ca{C}$ be a Grothendieck abelian category. Then a subcategory $\ca{A}$ of $\ca{C}$ is dense if and only if it is a strong generator. 
\end{thm}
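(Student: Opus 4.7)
The key observation is that density of $\ca{A}$ in $\ca{C}$ amounts, by \cite[Theorem~5.1]{KELLY_BASIC}, to fully faithfulness of the induced functor $\widetilde{K}\colon \ca{C} \to \Prs{A}$ sending $C$ to $\ca{C}(K-,C)$. My plan is to prove each direction of the biconditional separately using this reformulation.

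The forward direction is essentially immediate: any fully faithful functor reflects isomorphisms, and a morphism $\widetilde{K}(f)$ in $\Prs{A}$ is invertible iff each of its components $\ca{C}(KA,f)$ is invertible. Thus conservativity of $\widetilde{K}$ translates directly into the statement that the family $\{\ca{C}(KA,-)\}_{A \in \ca{A}}$ jointly reflects isomorphisms, which is what it means for $\ca{A}$ to be a strong generator.

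For the converse, I would assume $\ca{A}$ is a strong generator. Cocompleteness of $\ca{C}$ gives the left adjoint $L \defl \Lan_Y K \colon \Prs{A} \to \ca{C}$ of $\widetilde{K}$, and fully faithfulness of $\widetilde{K}$ is equivalent to the counit $\epsilon_C \colon L\widetilde{K}(C) \to C$ being invertible for every $C$. I would first identify $L\widetilde{K}(C)$ with the canonical colimit $\colim_{\ca{A}/C} KA$ and observe that $\epsilon_C$ is epic, since its image is the union of the images of all morphisms $KA \to C$, which equals $C$ by the usual generator characterization available in a Grothendieck abelian category.

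The main obstacle is then monic-ness of $\epsilon_C$, and for this I would appeal to the Gabriel--Popescu theorem in its version for a small strongly generating subcategory of a Grothendieck abelian category: this asserts precisely that $\widetilde{K}$ is fully faithful with exact left adjoint. The essential technical content of Gabriel--Popescu is the exactness of $L$; once this is established, exactness of $L$ combined with conservativity of $\widetilde{K}$ (from the strong generator hypothesis) and the triangle identity $\widetilde{K}(\epsilon_C)\cdot\eta_{\widetilde{K}(C)} = \id_{\widetilde{K}(C)}$ (which forces $\widetilde{K}(\epsilon_C)$ to be split epi) yields $\ker \epsilon_C = 0$ by a short diagram chase, completing the argument.
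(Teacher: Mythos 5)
Your argument is correct in substance, but it follows a different route from the paper, which disposes of the theorem in one line by citing Day and Street's general characterization of dense subcategories of suitable cocomplete categories. Your easy direction (dense $\Rightarrow$ strong generator) is exactly right and needs no hypothesis on $\ca{C}$: full faithfulness of $\widetilde{K}$ gives conservativity, and isomorphisms in $\Prs{A}$ are detected pointwise. For the converse you reduce to the many-object Gabriel--Popescu theorem, which is a legitimate and essentially equivalent substitute: the Day--Street result the paper invokes is a categorical generalization whose Grothendieck-abelian instance is precisely Gabriel--Popescu, so your citation carries the same logical weight as the paper's, at the cost of being specific to the abelian setting (the paper's reference also covers the enriched variants used later in \S\ref{section:enriched}). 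One small point worth making explicit: Gabriel--Popescu is usually stated for a \emph{generating} family (the $\ca{C}(KA,-)$ jointly faithful), whereas your hypothesis is that they jointly reflect isomorphisms; in an abelian category the two coincide for a set of objects, by the standard kernel/cokernel argument, but you should say so.

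The one part I would not let stand as written is the final ``short diagram chase.'' You propose to use only the exactness of $L$ from Gabriel--Popescu and rederive full faithfulness from exactness of $L$, conservativity of $\widetilde{K}$, and the fact that $\widetilde{K}(\epsilon_C)$ is split epi. The natural reductions here go in a circle: since $\widetilde{K}$ is conservative and left exact, $\ker\epsilon_C=0$ is \emph{equivalent} to $\widetilde{K}(\epsilon_C)$ being mono, which (given the splitting $\eta_{\widetilde{K}C}$) is equivalent to $\eta_{\widetilde{K}C}$ being epi --- and that is just a restatement of what one must prove; split epimorphisms can have nonzero kernels, so the splitting alone gives nothing. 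This does not create a gap in your proof, because the version of Gabriel--Popescu you cite already asserts full faithfulness of $\widetilde{K}$ outright, which is all that density requires; but you should either drop the chase and cite the full statement, or replace it by the genuine argument for injectivity of the counit, which is the actual technical heart of Gabriel--Popescu (on a par with, not downstream of, the exactness of $L$).
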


\begin{proof}
 This follows from \cite[Theorem~2]{DAY_STREET_GENERATORS} and Example~(3) in loc.\ cit. 
\end{proof}

\subsection{Proof of the recognition theorem for Hopf algebroids}\label{section:recognition_proof}

 The following proposition can be used to give an alternative description of the comonad induced by the functor $L$ from Proposition~\ref{prop:L_comonadic}.

\begin{prop}\label{prop:duals_dense}
 Let $L \colon \Ind(\ca{A}) \rightarrow \Mod_B$ be the left Kan extension of $w$ along the inclusion $\ca{A} \rightarrow \Ind(\ca{A})$. Write $K \colon \ca{A}^d \rightarrow \Ind(\ca{A})$ for the evident inclusion. The functor $K$ is dense, and $L$ is a left Kan extension of the restriction of $w$ to $\ca{A}^d$ along $K$.
\end{prop}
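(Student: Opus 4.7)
The plan is to prove the two assertions in order: first density of $K$, then the Kan extension identification. For density I would invoke the Day--Street characterization (Theorem~\ref{thm:density_characterization}) after verifying that $\Ind(\ca{A})$ is Grothendieck abelian and that $\ca{A}^d$ is a strong generator. For the Kan extension claim I would use the universal characterization from Theorem~\ref{thm:left_kan_characterization} together with the fact that $L$ has a right adjoint.

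First I would note that $\Ind(\ca{A})$ is a Grothendieck abelian category: since $\ca{A}$ is essentially small and abelian, $\Ind(\ca{A})$ is locally finitely presentable, abelian, and has a generator consisting of the finitely presentable objects (namely $\ca{A}$ itself, up to equivalence). The key input for the strong generator claim is condition (ii) of Definition~\ref{dfn:weakly_tannakian}: every $A \in \ca{A}$ admits an epimorphism $A' \twoheadrightarrow A$ with $A'$ dualizable. Since every object of $\Ind(\ca{A})$ admits an epimorphism from a coproduct of objects of $\ca{A}$ (being a filtered colimit of such objects), we can postcompose with the epimorphisms coming from condition (ii) to exhibit every object of $\Ind(\ca{A})$ as a quotient of a coproduct of objects of $\ca{A}^d$. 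In a Grothendieck abelian category this is well known to be equivalent to $\ca{A}^d$ being a strong generator (the representables $\Ind(\ca{A})(D,-)$ for $D \in \ca{A}^d$ jointly reflect isomorphisms). Applying Theorem~\ref{thm:density_characterization} yields density of the inclusion $K$.

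For the second claim, by Proposition~\ref{prop:L_comonadic} the functor $L$ has a right adjoint $\widetilde{w}$, so $L$ preserves all small colimits. In particular it preserves all $K$-absolute colimits. Since the inclusion $J \colon \ca{A} \hookrightarrow \Ind(\ca{A})$ is fully faithful, the unit of the Kan extension defining $L$ is invertible, so $LJ \cong w$, and restricting further along $\ca{A}^d \hookrightarrow \ca{A}$ gives $LK \cong w|_{\ca{A}^d}$. With density of $K$ already established, Theorem~\ref{thm:left_kan_characterization} then identifies $L$ with $\Lan_K(w|_{\ca{A}^d})$, as desired.

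The only genuinely delicate point is the justification that the kind of generation provided by condition (ii) — every object is epi-covered by a coproduct of dualizables — actually produces a \emph{strong} generator in the categorical sense used by Theorem~\ref{thm:density_characterization}. This equivalence is standard in a Grothendieck abelian category, but is the one step I would want to write out carefully (or cite precisely) rather than taking for granted; once it is in place both halves of the proposition fall out of the two cited theorems with essentially no further computation.
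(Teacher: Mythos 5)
Your proof is correct, and its overall skeleton is the same as the paper's: establish that $\ca{A}^d$ is a strong generator of $\Ind(\ca{A})$, invoke the Day--Street characterization (Theorem~\ref{thm:density_characterization}) for density, and then combine cocontinuity of $L$ (from its right adjoint, Proposition~\ref{prop:L_comonadic}) with Theorem~\ref{thm:left_kan_characterization} and the isomorphism $LK \cong w|_{\ca{A}^d}$ to identify $L$ with $\Lan_K(w|_{\ca{A}^d})$. Where you diverge is in the verification of the strong-generator condition. The paper reduces it, via \cite[Proposition~3.40]{KELLY_BASIC}, to showing that $\Ind(\ca{A})$ is the closure of $\ca{A}^d$ under colimits; this requires exhibiting each $A \in \ca{A}$ as a \emph{finite} colimit of dualizables, which is done by applying condition~ii) twice to get an exact sequence $D' \to D \to A \to 0$ and hence a cokernel presentation. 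You instead apply condition~ii) only once per object, show every ind-object is a quotient of a coproduct of dualizables, and invoke the standard fact that in a (Grothendieck) abelian category with coproducts a generator in this sense is automatically a strong generator. That fact is indeed correct and its proof is short (joint faithfulness of the $\Hom(D,-)$ forces the kernel and cokernel of a map inverted by all of them to vanish), so your route is, if anything, slightly more economical; the paper's route avoids relying on it by staying entirely within Kelly's general machinery. The one point you flag as delicate is exactly the right one to write out or cite precisely, but it does hold, so there is no gap.
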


\begin{proof}
 To show that $K$ is dense we only have to show that $\ca{A}^d$ forms a strong generator of $\Ind(\ca{A})$ (see Theorem~\ref{thm:density_characterization}). To do this it suffices to show that $\Ind(\ca{A})$ is the closure of $\ca{A}^d$ under colimits (see \cite[Proposition~3.40]{KELLY_BASIC}). The inclusion of $\ca{A}$ in $\Ind(\ca{A})$ is exact, and every ind-object is a filtered colimit of objects in $\ca{A}$. Therefore it suffices to show that every object of $\ca{A}$ is a finite colimit of objects of $\ca{A}^d$.

 Let $A \in \ca{A}$, and choose an epimorphism $D \rightarrow A$ with $D \in \ca{A}^d$ (such an epimorphism exists by part ii) of Definition~\ref{dfn:weakly_tannakian}). Let $B$ be its kernel, and choose an epimorphism $D^{\prime} \rightarrow B$ with $D^{\prime} \in \ca{A}^d$. We obtain an exact sequence
\[
 \xymatrix{D^{\prime} \ar[r] & D \ar[r] & A \ar[r] & 0} \smash{\rlap{,}}
\]
 which shows that $A$ is the cokernel of a morphism in $\ca{A}^d$. Thus $\ca{A}^d$ is indeed a strong, and therefore dense, generator of $\Ind(\ca{A})$.

 In Proposition~\ref{prop:L_comonadic} we have already seen that $L$ is comonadic. Therefore it preserves all colimits, and it follows from Theorem~\ref{thm:left_kan_characterization} that $L$ is isomorphic to $\Lan_K LK$. Using the fact that the restriction of $L$ to $\ca{A}$ is isomorphic to $w$ it follows that $LK \cong w K$.
\end{proof}

 The notion of density heavily depends on the enriching category. In order to extend the proof of the above result to more general enriched categories, we will have to deal with this problem. On the other hand, in the remainder of this section we can give arguments that work equally well for $R$-linear categories and the enriched categories considered in \S\ref{section:enriched}.

 Recall that the category of $R$-linear presheaves of a symmetric monoidal category has a symmetric monoidal structure given by Day convolution (see \cite{DAY_CLOSED}). The Yoneda embedding is strong monoidal, and the left Kan extension of a strong monoidal functor along the Yoneda embedding is again strong monoidal (see \cite[Theorem~5.1]{IM_KELLY}).

\begin{cor}\label{cor:restricted_comonad_isomorphic}
 Let $w_d$ be the restriction of $w$ to $\ca{A}^d$. The comonad induced by the functor $L$ is naturally isomorphic (as symmetric monoidal $R$-linear comonad) to the comonad induced by $\Lan_Y w_d  \dashv \widetilde{w_d}$.
\end{cor}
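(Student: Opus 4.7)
The plan is to exploit the density of $K \colon \ca{A}^d \rightarrow \Ind(\ca{A})$ established in Proposition~\ref{prop:duals_dense} to factor $L$ through the left Kan extension of $K$ along the Yoneda embedding $Y \colon \ca{A}^d \rightarrow \Prs{A^d}$, and then to observe that the counit of this intermediate adjunction is invertible.

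First I would set up the factorization. Let $\widetilde{w}$ denote the right adjoint of $L$, which exists by Proposition~\ref{prop:L_comonadic}. Since $\Ind(\ca{A})$ is cocomplete and $\ca{A}^d$ is essentially small, the left Kan extension $\Lan_Y K \colon \Prs{A^d} \rightarrow \Ind(\ca{A})$ exists, is cocontinuous, and has right adjoint $\widetilde{K}$. The composite $L \circ \Lan_Y K$ is cocontinuous and its restriction along $Y$ is $L \circ K \cong w_d$ by Proposition~\ref{prop:duals_dense}, so by the universal property of Yoneda it is isomorphic to $\Lan_Y w_d$; taking right adjoints gives $\widetilde{w_d} \cong \widetilde{K} \circ \widetilde{w}$. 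Density of $K$ means that $\widetilde{K}$ is fully faithful, so the counit $\varepsilon \colon \Lan_Y K \circ \widetilde{K} \Rightarrow \id$ is a natural isomorphism. Whiskering $\varepsilon$ by $L$ on the left and by $\widetilde{w}$ on the right produces the desired isomorphism
\[
\Lan_Y w_d \circ \widetilde{w_d} \;\cong\; L \circ \Lan_Y K \circ \widetilde{K} \circ \widetilde{w} \;\cong\; L \circ \widetilde{w}
\]
between the underlying endofunctors of the two comonads.

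Finally I would upgrade this to an isomorphism of symmetric monoidal $R$-linear comonads. Each of the adjunctions $\Lan_Y K \dashv \widetilde{K}$ and $L \dashv \widetilde{w}$ is a symmetric monoidal adjunction: the Yoneda embedding is strong symmetric monoidal for Day convolution, $K$ is strong symmetric monoidal because the dualizable objects are closed under the tensor product and contain the unit, and the left Kan extension along $Y$ of a strong symmetric monoidal functor is again strong symmetric monoidal by \cite[Theorem~5.1]{IM_KELLY}; $L$ itself is strong symmetric monoidal by Proposition~\ref{prop:L_comonadic}. The counit of a monoidal adjunction is automatically a monoidal natural transformation, so $\varepsilon$ is a monoidal natural isomorphism, and the whiskered composite displayed above is an isomorphism of symmetric monoidal comonads.

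The main obstacle I anticipate is the bookkeeping required to ensure that the identification $\Lan_Y w_d \cong L \circ \Lan_Y K$ is genuinely an isomorphism of \emph{strong symmetric monoidal} functors, so that the comonad structure on $\Lan_Y w_d \circ \widetilde{w_d}$ transports correctly across the counit $\varepsilon$. This is a routine verification using that left Kan extension along the strong monoidal functor $Y$ is a 2-functor on appropriate categories of symmetric monoidal functors, rather than a conceptual difficulty.
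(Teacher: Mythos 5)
Your proposal is correct and follows essentially the same route as the paper: factor $L$ through $\Lan_Y K$, identify $L \circ \Lan_Y K \cong \Lan_Y w_d$ via the universal property of Kan extension along Yoneda (citing \cite[Theorem~5.1]{IM_KELLY}), and use density of $K$ to invert the counit of $\Lan_Y K \dashv \widetilde{K}$. The paper carries out the same steps, merely phrasing the monoidal bookkeeping you defer to the end as an argument conducted from the start inside the 2-category of symmetric lax monoidal $R$-linear functors, where the fact that the whiskered counit is a morphism of symmetric monoidal comonads is a general statement about composable adjunctions in any 2-category.
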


\begin{proof}
 To establish the isomorphism of symmetric monoidal comonads we will argue entirely in the 2-category of symmetric (lax) monoidal $R$-linear functors and symmetric monoidal natural transformations. Giving an adjunction in this 2-category is equivalent to endowing the left adjoint with the structure of a \emph{strong} symmetric monoidal structure (see \cite{KELLY_DOCTRINAL}). 

 Let $\ca{C}$ be a cocomplete symmetric monoidal $R$-linear category for which tensoring with a fixed object gives a cocontinuous functor. If we endow the category of presheaves of a small symmetric monoidal $R$-linear category $\ca{B}$ with its Day convolution structure, then left Kan extensions along the Yoneda embedding induces an equivalence between strong symmetric monoidal functors $\ca{B} \rightarrow \ca{C}$ and strong monoidal left adjoints $\Prs{B} \rightarrow \ca{C}$. The inverse of this equivalence is given by whiskering with the Yoneda embedding, and one of the required natural isomorphisms is given by the units of the left Kan extensions (this is implicit in the the proof of \cite[Theorem~5.1]{IM_KELLY}). Applying this to the case $\ca{B}=\ca{A}^d$ and $\ca{C}=\Ind(\ca{A})$, we find that
\[
\Lan_Y K \colon \mathcal{P} \ca{A}^d \rightarrow \Ind(\ca{A}) 
\]
 has a unique strong symmetric monoidal structure such that the unit $\Lan_Y K \cdot Y \cong K$ is symmetric monoidal. Similarly we get a unique symmetric monoidal structure on $\Lan_Y w^d$ such that the unit $\Lan_Y w^d \cdot Y \cong w^d$ is symmetric monoidal.

 If we whisker the symmetric monoidal isomorphism $L \cdot Y_{\ca{A}} \cong w$ with the inclusion $\ca{A}^d \rightarrow \ca{A}$ we get a symmetric monoidal isomorphism $L \cdot K \cong w^d$. Together these three isomorphisms give a symmetric monoidal isomorphism
\[
 L \cdot \Lan_Y K \cdot Y \cong L \cdot K \cong w_d \cong \Lan_Y w^d \cdot Y \smash{\rlap{,}}
\]
 and \cite[Theorem~5.1]{IM_KELLY} implies that $L \cdot \Lan_Y K \cong \Lan_Y w^d$ as symmetric monoidal $R$-linear functors. From this it follows that $L \cdot \Lan_Y K$ and $\Lan_Y w_d$ induce isomorphic symmetric monoidal comonads.

 It remains to show that $L$ and $L \cdot \Lan_Y K$ induce isomorphic symmetric monoidal comonads. Let $W$ be the right adjoint of $L$. The comonad induced by $L$ is thus given by $L \cdot W$. It follows that
\[
 L\varepsilon W \colon L \cdot \Lan_Y K \cdot \widetilde{K} \cdot W \rightarrow L \cdot W
\]
 defines a morphism of symmetric monoidal comonads (the analogous fact is true for any pair of adjunctions in a 2-category). Thus it suffices to show that the counit $\varepsilon$ of the adjunction $\Lan_Y K \dashv \widetilde{K}$ is an isomorphism. From Proposition~\ref{prop:duals_dense} we know that $K$ is dense, hence that $\widetilde{K}$ is fully faithful. A right adjoint is fully faithful if and only if the counit of the adjunction is an isomorphism. 
\end{proof}

\begin{cor}\label{cor:L_hopf_monoidal}
 The symmetric monoidal comonad induced by the functor $L$ from Proposition~\ref{prop:L_comonadic} is cocontinuous and Hopf monoidal.
\end{cor}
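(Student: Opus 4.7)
The plan is to exploit the alternative description of the comonad from Corollary~\ref{cor:restricted_comonad_isomorphic}, which identifies the comonad induced by $L \dashv W$ with the one induced by $\Lan_Y w_d \dashv \widetilde{w_d}$ on $\Mod_B$, where $w_d$ is the restriction of $w$ to the rigid subcategory $\ca{A}^d \subseteq \ca{A}$ of dualizable objects.

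For cocontinuity, I would verify directly that $\widetilde{w_d}$ preserves all colimits. By definition, $\widetilde{w_d}(M)(A) = \Mod_B(w(A), M)$ for $A \in \ca{A}^d$. Since $A$ is dualizable in $\ca{A}$ and $w$ is strong symmetric monoidal, $w(A)$ is a dualizable (i.e., finitely generated projective) $B$-module, so $\Mod_B(w(A), -) \cong w(A)^{\vee} \otimes_B (-)$ preserves all colimits. Colimits in $\Prs{\ca{A}^d}$ are computed pointwise, hence $\widetilde{w_d}$ is cocontinuous; composing with the cocontinuous left adjoint $\Lan_Y w_d$ yields the cocontinuity of the comonad.

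For the Hopf monoidality, the key input is the rigidity of $\ca{A}^d$: its Yoneda image in $\Prs{\ca{A}^d}$ (with Day convolution) consists of dualizable objects, and the strong symmetric monoidal left adjoint $\Lan_Y w_d$ carries these to dualizable $B$-modules, since strong monoidal functors preserve duals and $w_d(A^{\vee}) \cong w(A)^{\vee}$. This is exactly the data required to invoke \cite[Example~11.6.2]{SCHAEPPI}, which gives a general criterion identifying when the commutative bialgebroid $(B,\Gamma)$ underlying a cocontinuous symmetric monoidal comonad of this form is in fact a Hopf algebroid---equivalently, when the comonad is Hopf monoidal.

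The principal obstacle is the Hopf monoidality step: verifying that the Hopf fusion map is an isomorphism is not transparent from the abstract comonad description. The cleanest route is to apply the criterion of \cite[Example~11.6.2]{SCHAEPPI} with the inputs assembled above (cocontinuity of $T$, the strong symmetric monoidal structure on $\Lan_Y w_d$, and the rigidity of $\ca{A}^d$). A more hands-on alternative would use cocontinuity of $T$ together with the fact that the tensor product in $\Mod_B$ preserves colimits in each variable to reduce the Hopf condition to a verification on objects in the image of $w_d$, where the explicit duality data in $\ca{A}^d$ produces the required inverse.
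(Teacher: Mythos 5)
Your cocontinuity argument is exactly the paper's: reduce via Corollary~\ref{cor:restricted_comonad_isomorphic} to the comonad $\Lan_Y w_d \cdot \widetilde{w_d}$, note that colimits in $\Prs{A}^d$ are pointwise, and use that each $\Mod_B(w(A),-)$ with $A$ dualizable is cocontinuous. No issues there.

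For Hopf monoidality your overall strategy (rigidity of $\ca{A}^d$ plus the strong symmetric monoidal left adjoint structure is the data that drives a general criterion) is the right one and matches the paper's rigorous argument in spirit, but the result you invoke does not do the work you assign to it. In the paper, \cite[Example~11.6.2]{SCHAEPPI} is only the dictionary translating ``the cocontinuous symmetric monoidal comonad is Hopf monoidal'' into ``the bialgebroid $(B,\Gamma)$ is a Hopf algebroid''; it is not a criterion that takes rigidity of the generating subcategory as input and outputs Hopf monoidality. The statement that actually closes the argument is \cite[Theorem~9.5.1]{SCHAEPPI}: the adjunction $\Lan_Y w_d \dashv \widetilde{w_d}$ is viewed as an internal symmetric monoidal adjunction between $\ca{A}^d$ and $B$ in the monoidal bicategory of profunctors, and a left adjoint monoidal 1-cell between \emph{autonomous map pseudomonoids} induces a Hopf monoidal comonad; autonomy of $\ca{A}^d$ as a pseudomonoid is where the rigidity enters. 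Your ``hands-on alternative'' is essentially the paper's second sketch, and the paper explicitly flags the gap in it: the chain of isomorphisms
\[
 \ca{C}(C, X\otimes GD) \cong \cdots \cong \ca{C}\bigl(C,G(FX\otimes D)\bigr)
\]
uses dualizability of $X$ at intermediate stages, so one must still check that the canonical fusion morphism (not merely \emph{some} isomorphism) is inverted, i.e.\ that the duality morphisms cancel; this amounts to verifying that the adjunction is strong (left and right) coclosed in the sense of Chikhladze--Lack--Street, which is not automatic from the existence of the abstract isomorphisms. So as written your proof has a citation that cannot bear the load, and your fallback reduction leaves precisely the cancellation issue unaddressed; replacing the appeal to Example~11.6.2 by Theorem~9.5.1 of \cite{SCHAEPPI} repairs it.
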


\begin{proof}
 By Corollary~\ref{cor:restricted_comonad_isomorphic} it suffices to check this for the comonad $\Lan_Y w_d \cdot \widetilde{w_d}$. The functor $\widetilde{w_d}$ sends a $B$-module $M$ to the presheaf $\Mod_B\bigl(w_d(-),M\bigr)$. Since colimits in a presheaf category are computed pointwise, it suffices to check that for every $A \in \ca{A}^d$, the functor
\[
 \Mod_B\bigl(w(A),-\bigr) \colon \Mod_B \rightarrow \Mod_R
\]
 is cocontinuous. This follows immediately from the fact that $w(A)$ is dualizable.

 There are three ways to see that the resulting comonad is Hopf monoidal (respectively that the corresponding bialgebroid is a Hopf algebroid). We sketch the first two, and give a completely rigorous proof using some of the machinery from \cite{SCHAEPPI}.

  Using the coend formula for left Kan extensions, we find that the bialgebroid $(B,\Gamma)$ is given by
\[
 \Gamma=\int^{A \in \ca{A}^d} w_d(A)\mathop{\otimes_B} w_d(A)^{\vee} \smash{\rlap{,}}
\]
 and one can check that the usual argument for the existence of an antipode (for example in \cite{DELIGNE, DAY_TANNAKA, STREET_QUANTUM}) does not depend on the existence of any limits or colimits in the domain category.

 Alternatively, we can use the fact that any symmetric monoidal $R$-linear adjunction
\[
 F \colon \ca{C} \rightleftarrows \ca{D} \colon G
\]
 where $\ca{C}$ has a dense set of dualizable objects and $G$ is cocontinuous induces a Hopf monoidal comonad. Indeed, for a dualizable object $X \in \ca{C}$ and $C \in \ca{C}$, $D \in \ca{D}$ arbitrary we have the sequence of natural isomorphisms
\begin{align*}
 \ca{C}(C, X\otimes GD) & \cong \ca{C}(C \otimes X^{\vee}, GD) \\
& \cong \ca{D}\bigl(F(C \otimes X^{\vee}),D\bigr)\\
& \cong \ca{D}\bigl(FC \otimes (FX)^{\vee},D \bigr)\\
& \cong \ca{D}(FC, FX\otimes D)\\
& \cong \ca{C}\bigl(C,G(FX\otimes D)\bigr) \smash{\rlap{,}}
\end{align*}
 and density of the dualizable objects and the fact that $G$ is cocontinuous allow us to conclude that there is an isomorphism $C\otimes GD \rightarrow G(FC\otimes D)$ for all $C \in \ca{C}$. This makes it at least plausible that the canonical such morphism is invertible as well. There is something to be checked, though: in the above sequence of isomorphisms we have definitely used the fact that $X$ is dualizable, so one needs to show that the morphisms only available for such $X$ cancel in the end. Checking this would imply that the adjunction is strong right coclosed in the sense of \cite{CHIKHLADZE_LACK_STREET}. In \cite{FAUSK_HU_MAY}, such an adjunction is called an adjunction which \emph{satisfies the projection formula}. These induce (right) Hopf monoidal comonads by \cite[Proposition~4.4]{CHIKHLADZE_LACK_STREET}. Showing that the adjunction is strong \emph{left} coclosed is analogous.

 Finally, the following gives a completely rigorous proof that the comonad induced by the adjunction $\Lan_Y w_d \dashv \widetilde{w_d}$ is Hopf monoidal. Note that we can think of this adjunction as an internal symmetric monoidal adjunction between $\ca{A}^d$ and $B$ in the symmetric monoidal bicategory of profunctors. The conclusion follows from the general fact that a left adjoint monoidal 1-cell between autonomous map pseudomonoids in a monoidal bicategory induces a Hopf monoidal comonad (see \cite[Theorem~9.5.1]{SCHAEPPI}). 
\end{proof}

 We are now ready to prove the recognition theorem for Hopf algebroids.

\begin{proof}[Proof of Theorem~\ref{thm:hopf_recognition}]

 By Corollary~\ref{cor:L_hopf_monoidal}, the comonad induced by $L$ is cocontinuous and Hopf monoidal, hence by Corollary~\ref{cor:proof_strategy}, there exists a flat Hopf algebroid $(B,\Gamma)$ and a symmetric monoidal equivalence $\Ind(\ca{A}) \simeq \Comod(B,\Gamma)$ such that the triangle
\[
 \xymatrix{\Ind(\ca{A}) \ar[r]^-{\simeq} \ar[rd]_L & \Comod(B,\Gamma) \ar[d]^V \\ &\Mod_B }
\]
 commutes up to isomorphism. Since a comodule structure can be transferred uniquely along an isomorphism (in categorical language: the forgetful functor $V$ is an isofibration), we can replace this equivalence by one that makes the triangle strictly commutative.

 By definition of $L$ (see Proposition~\ref{prop:L_comonadic}), the triangle
\[
 \xymatrix{\ca{A} \ar[r] \ar[rd]_w & \Ind(\ca{A}) \ar[d]^L\\ & \Mod_B}
\]
 commutes up to symmetric monoidal isomorphism. Combining these two triangles, we get the desired symmetric monoidal equivalence. It only remains to show that $(B,\Gamma)$ is coherent and that it has the resolution property.

 It is coherent because the finitely presentable objects in $\Comod(B,\Gamma)\simeq \Ind(\ca{A})$ form an abelian category (see \cite[\S 2]{ROOS} and \cite[Theorem~1.6]{HERZOG}). By Proposition~\ref{prop:duals_dense}, the dualizable objects form a dense generator of $\Ind(\ca{A})$, so the Hopf algebroid $(B,\Gamma)$ has the resolution property by \cite[Proposition~1.4.1]{HOVEY}.
\end{proof}

\section{Enriched weakly Tannakian categories}\label{section:enriched}
 
\subsection{The enriched recognition theorem}
 In this section we will state the definition of weakly Tannakian categories enriched in a cosmos $\ca{V}$, and the corresponding recognition theorem. To do that we have to assume that the cosmos $\ca{V}$ satisfies some technical conditions.

\begin{dfn}\label{dfn:DAG}
 A small full subcategory $\ca{X} \subseteq \ca{V}$ is called a \emph{dense autonomous generator} if it is $\Set$-dense, it consists of dualizable objects, and it is closed under duals and tensor products. 
\end{dfn}

 The category of ind-objects of a finitely complete additive category $\ca{A}$ can be identified with the category of \emph{left exact} presheaves, that is, functors
\[
 \ca{A}^{\op} \rightarrow \Ab
\]
 which preserve finite limits. Kelly has shown that the same is true for categories enriched in cosmos which is finitely presentable as a closed category (see \cite[\S5.5]{KELLY_FINLIM}). Therefore, throughout this section, the category
\[
 \Lex[\ca{A}^{\op},\ca{V}]
\]
 of left exact $\ca{V}$-presheaves (in the sense of \cite[\S4.5]{KELLY_FINLIM}) will be used in much the same way $\Ind(\ca{A})$ was used in \S\ref{section:weaklytannakian}.

 Recall from \cite[Proposition~7.3.2]{SCHAEPPI} that a cosmos with dense autonomous generator is finitely presentable as a closed category if and only if the unit object $I \in \ca{V}$ is finitely presentable, that is, the functor
\[
 \ca{V}(I,-) \colon \ca{V} \rightarrow \Set
\]
 preserves filtered colimits.

 Throughout this section we will therefore fix an abelian cosmos $\ca{V}$ with a dense autonomous generator $\ca{X}$ and a finitely presentable unit object $I$. We are particularly interested in the following examples.

\begin{example}
 The category of graded or differential graded $R$-modules is an abelian cosmos with a dense autonomous generator given by bounded finitely generated free (differential) graded modules. If $(A,\Gamma)$ is an Adams Hopf algebroid, then $\ca{V}=\Comod(A,\Gamma)$ is an abelian cosmos with a dense autonomous generator given by the dualizable comodules.

 Finally, the category of Mackey functors for a finite group $G$ is an abelian cosmos with dense autonomous generator. Recall that this is the category of $k$-linear presheaves of the autonomous category of spans of finite $G$-sets (see \cite[\S13]{PANCHADCHARAM_STREET} for details). The representable functors give the desired dense autonomous generator.
\end{example}
 
 If $B \in \ca{V}$ is a commutative monoid, then the category $\Mod_B$ is a $\ca{V}$-category in a natural way. One way to see this is to note that a module is really a presheaf on the $\ca{V}$-category $\ca{B}$ with one object $\ast$ and $\ca{B}(\ast,\ast)=B$. Thus $\Mod_B=\Prs{B}$ as a $\ca{V}$-category.

\begin{dfn}\label{dfn:enriched_weakly_tannakian_category}
 Let $\ca{V}$ be an abelian cosmos with dense autonomous generator $\ca{X}$ and finitely presentable unit object. Let $B \in \ca{V}$ be a commutative monoid, and let $\ca{A}$ be an $\ca{X}$-tensored abelian symmetric monoidal $\ca{V}$-category. A strong symmetric monoidal $\ca{V}$-functor
\[
 w\colon \ca{A} \rightarrow \Mod_B
\]
 is called a \emph{fiber functor} if its underlying unenriched functor $w_0$ is faithful and exact. A fiber functor is called \emph{neutral} if $B=I$ (and therefore $\Mod_B \simeq \ca{V}$).

 If $\ca{A}$ satisfies the conditions:
\begin{enumerate}
 \item[i)]
 There exists a fiber functor $w \colon \ca{A} \rightarrow \Mod_B$ for some commutative monoid $B \in \ca{V}$;
 \item[ii)]
 For all objects $A \in \ca{A}$ there exists an epimorphism $A^{\prime} \rightarrow A$ such that $A^{\prime}$ has a dual;
\end{enumerate}
 it is called a \emph{weakly Tannakian $\ca{V}$-category}.
\end{dfn}

 The goal of this section is to prove the following enriched version of our main theorem.

\begin{thm}\label{thm:enriched_recognition}
 Let $\ca{V}$ be an abelian cosmos with dense autonomous generator $\ca{X}$ and finitely presentable unit object. Let $\ca{A}$ be a weakly Tannakian $\ca{V}$-category, and let $w \colon \ca{A} \rightarrow \Mod_B$ be a fiber functor. Then there exists a Hopf algebroid $(B,\Gamma)\in \ca{V}$ and an equivalence $\ca{A} \simeq \Comod_{\fp}(B,\Gamma)$ of symmetric monoidal $\ca{V}$-categories such that the triangle
\[
 \xymatrix{\ca{A} \ar[r]^-{\simeq} \ar[rd]_w & \Comod_{\fp}(B,\Gamma) \ar[d]^V\\ & \Mod_B} 
\]
 commutes, where $V$ denotes the forgetful functor. The Hopf algebroid $(B,\Gamma)$ is flat in the sense that $\Gamma \ten{B}-$ preserves finite weighted limits.
\end{thm}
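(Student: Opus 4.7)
The plan is to follow the proof of Theorem~\ref{thm:hopf_recognition} step by step, replacing each categorical input by its $\ca{V}$-enriched analogue. First, I would replace $\Ind(\ca{A})$ by $\Lex[\ca{A}^{\op},\ca{V}]$, the category of left exact $\ca{V}$-presheaves; the assumption that $\ca{V}$ is finitely presentable as a closed category (equivalently, that $I$ is finitely presentable) is precisely what makes this the right home for ind-objects in the $\ca{V}$-enriched setting. The fiber functor $w$ left Kan extends to a $\ca{V}$-functor $L \colon \Lex[\ca{A}^{\op},\ca{V}] \to \Mod_B$ which is a left adjoint because $w$ is right exact, hence $\widetilde{w}$ factors through left exact presheaves. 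Exactness and faithfulness of $w_0$ then give that $L$ is left exact and comonadic via the enriched version of Beck's theorem (\cite[Theorem~2.II.1]{DUBUC}), and $L$ is strong symmetric monoidal by the same Day-convolution argument as in Proposition~\ref{prop:L_comonadic}.

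The key step, and the main obstacle, is the enriched analogue of Proposition~\ref{prop:duals_dense}: one must show that the full $\ca{V}$-subcategory $\ca{A}^d \subseteq \Lex[\ca{A}^{\op},\ca{V}]$ of dualizable objects is \emph{$\ca{V}$-dense}, not merely a strong generator of the underlying ordinary category. This is where the hypothesis that $\ca{A}$ is $\ca{X}$-tensored and that $\ca{X}$ is a dense autonomous generator of $\ca{V}$ becomes essential. The strategy is to first run the argument of Proposition~\ref{prop:duals_dense} internally, using condition~ii) of Definition~\ref{dfn:enriched_weakly_tannakian_category} to present every object of $\ca{A}$ as a finite colimit of dualizable objects, and then promote ordinary density to $\ca{V}$-density by tensoring with $\ca{X}$: since $\ca{X}$ is $\Set$-dense in $\ca{V}$ and consists of dualizable objects, the composite Yoneda-type comparison map can be checked to be an isomorphism in $\ca{V}$ by testing against $\ca{X}$. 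One then invokes a suitable enriched Day--Street-style criterion in place of Theorem~\ref{thm:density_characterization}.

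Once $\ca{V}$-density of $\ca{A}^d$ is established, the enriched version of Theorem~\ref{thm:left_kan_characterization} identifies $L$ with the left Kan extension of $w_d \defl w|_{\ca{A}^d}$ along the inclusion. The argument of Corollary~\ref{cor:restricted_comonad_isomorphic} then goes through verbatim in the $\ca{V}$-enriched setting, using the Day convolution structure on $\ca{V}$-presheaves and the fact that the left Kan extension of a strong symmetric monoidal $\ca{V}$-functor along the Yoneda embedding is again strong symmetric monoidal. From the description of the comonad as $\Lan_Y w_d \cdot \widetilde{w_d}$, cocontinuity follows pointwise from dualizability of the objects $w_d(A)$ in $\Mod_B$, exactly as in Corollary~\ref{cor:L_hopf_monoidal}.

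To obtain the Hopf monoidal structure I would use the abstract argument at the end of Corollary~\ref{cor:L_hopf_monoidal}, which is formulated in the symmetric monoidal bicategory of profunctors and therefore applies unchanged to $\ca{V}$-profunctors: the adjunction $\Lan_Y w_d \dashv \widetilde{w_d}$ is a left adjoint monoidal 1-cell between autonomous map pseudomonoids ($\ca{A}^d$ is autonomous by construction, and $B$ regarded as a one-object $\ca{V}$-category is trivially so), and \cite[Theorem~9.5.1]{SCHAEPPI} yields that the induced comonad is Hopf monoidal. Translating the resulting cocontinuous symmetric Hopf monoidal comonad into a flat Hopf algebroid $(B,\Gamma)$ in $\ca{V}$ proceeds as in Corollary~\ref{cor:proof_strategy}, with flatness of $\Gamma\ten{B}-$ with respect to finite weighted limits corresponding to left exactness of $L$. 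The final commuting triangle is obtained by combining the unit isomorphism $LY_{\ca{A}}\cong w$ with the equivalence $\Lex[\ca{A}^{\op},\ca{V}]\simeq \Comod(B,\Gamma)$ and restricting to finitely presentable objects, exactly as in the proof of Theorem~\ref{thm:hopf_recognition}.
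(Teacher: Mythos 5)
Your overall route is the paper's own: replace $\Ind(\ca{A})$ by $\Lex[\ca{A}^{\op},\ca{V}]$, show the left Kan extension $L$ is a left exact, strong symmetric monoidal, comonadic left adjoint, establish $\ca{V}$-density of $\ca{A}^d$ by reducing it to ordinary density (the paper's Theorem~\ref{thm:density_with_DAG} is exactly your ``test against $\ca{X}$'' step, after which the ordinary density is handled by Day--Street as in Proposition~\ref{prop:duals_dense}), and finish with the Im--Kelly and profunctor arguments of Corollaries~\ref{cor:restricted_comonad_isomorphic} and \ref{cor:L_hopf_monoidal}, which the paper likewise reuses essentially verbatim. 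Your identification of flatness of $\Gamma\ten{B}-$ with left exactness of $L$, and of the finitely presentable objects of $\Lex[\ca{A}^{\op},\ca{V}]$ with the representables, also matches the paper.

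The one genuine gap is the sentence ``exactness and faithfulness of $w_0$ then give that $L$ is left exact and comonadic via the enriched version of Beck's theorem.'' The enriched Beck theorem requires $L$ to reflect isomorphisms, and this does not follow formally from faithfulness of $w_0$: faithfulness is a statement about the restriction of the underlying functor to $\ca{A}_0$, whereas one must control $L_0$ on all of $\Lex[\ca{A}^{\op},\ca{V}]_0$, and in the enriched setting one cannot simply quote ``a faithful exact functor between abelian categories reflects isomorphisms,'' since $\Lex[\ca{A}^{\op},\ca{V}]_0$ is only shown to be abelian after some work (Lemma~\ref{lemma:lex_abelian}, which itself rests on the Day--Street uniformity lemma). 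The paper's Proposition~\ref{prop:enriched_L_comonadic} spends most of its length on precisely this point: one shows the unit $\eta_A \colon A \rightarrow \widetilde{w}L(A)$ is a regular monomorphism for every $A$; reduces to representable $A$ using that condition ii) of Definition~\ref{dfn:enriched_weakly_tannakian_category} forces each $w(A)$ to be finitely presentable, so that $\widetilde{w}$ preserves filtered colimits and regular monomorphisms are closed under them; uses abelianness of $\Lex[\ca{A}^{\op},\ca{V}]_0$ to replace ``regular mono'' by ``mono''; and only then invokes $\Set$-density of $\ca{X}$ together with faithfulness of $w_0$ to conclude that each $w_{A,B}$ is a monomorphism in $\ca{V}$. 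This is the same ``test against $\ca{X}$'' device you correctly deploy for the density step; it is needed here too, and the surrounding reductions are not automatic. Two smaller points you gloss over but which also require the $\ca{X}$-tensored hypothesis: one must first check that $\ca{A}$ has finite weighted (co)limits and that $w$ preserves them in the enriched sense (Lemma~\ref{lemma:finitely_complete}), and the symmetric monoidal structure on $\Lex[\ca{A}^{\op},\ca{V}]$ has to be produced via Day's reflection theorem (Proposition~\ref{prop:lex_symmetric_monoidal}) before the Im--Kelly argument can be run.
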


 The proof of this result closely follows the strategy used in the proof of Theorem~\ref{thm:recognition}.

\subsection{Proof of the enriched recognition theorem}\label{section:enriched_recognition_proof}
 In this section we assume that the reader has some familiarity with the basic concepts of enriched category theory \cite{KELLY_BASIC} and with the theory of weighted finite limits for categories enriched in a cosmos $\ca{V}$ which is locally finitely presentable as a closed category (see \cite{KELLY_FINLIM}).

\begin{lemma}\label{lemma:finitely_complete}
 Any weakly Tannakian $\ca{V}$-category $\ca{A}$ is finitely complete and finitely cocomplete in the enriched sense, and the fiber functor preserves finite weighted limits and finite weighted colimits.
\end{lemma}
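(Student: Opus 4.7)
The plan is to reduce arbitrary finite weighted (co)limits in $\ca{A}$ to finite conical (co)limits together with (co)tensors by objects of $\ca{X}$, and then verify that each ingredient both exists in $\ca{A}$ and is preserved by the fiber functor $w$.

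First, the underlying ordinary category of $\ca{A}$ is abelian, so $\ca{A}$ admits all finite conical limits and colimits. Second, $\ca{A}$ is $\ca{X}$-tensored by hypothesis; and because each $X \in \ca{X}$ is dualizable with $\ldual{X} \in \ca{X}$, the evaluation and coevaluation for $X \dashv \ldual{X}$ in $\ca{V}$ transport to $\ca{A}$ to give an adjunction $X \otimes (-) \dashv \ldual{X} \otimes (-)$ between the corresponding copower endofunctors of $\ca{A}$. Combining this with the universal property of the copower yields
\[
 \ca{A}(A, \ldual{X} \otimes A') \cong \ca{A}(X \otimes A, A') \cong [X, \ca{A}(A, A')],
\]
so the copower $\ldual{X} \otimes A'$ doubles as the cotensor $X \pitchfork A'$. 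Hence $\ca{A}$ is $\ca{X}$-cotensored as well.

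Third, since $\ca{V}$ is locally finitely presentable as a closed category with dense autonomous generator $\ca{X}$, the finitely presentable objects of $\ca{V}$ are generated by $\ca{X}$ under finite colimits (cf.\ \cite[Proposition~7.3.2]{SCHAEPPI} together with \cite{KELLY_FINLIM}). By Kelly's theory of enriched finite limits, a $\ca{V}$-category admits all finite weighted limits iff it has finite conical limits and cotensors by finitely presentable objects of $\ca{V}$; and since cotensor with a colimit of weights computes as a limit of cotensors, finite conical limits together with $\ca{X}$-cotensors suffice. The dual statement holds for colimits. Therefore $\ca{A}$ is finitely complete and finitely cocomplete in the enriched sense.

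Finally, the underlying functor $w_0$ is exact and so preserves finite conical limits and colimits. Being strong symmetric monoidal and $\ca{V}$-linear, $w$ preserves copowers by dualizable objects: writing $\iota_{\ca{A}}(X) := X \otimes I_{\ca{A}}$, we have $X \otimes A \cong \iota_{\ca{A}}(X) \ten{\ca{A}} A$, and both $w \circ \iota_{\ca{A}}$ and $\iota_{\Mod_B}$ are strong symmetric monoidal $\ca{V}$-functors $\ca{V} \to \Mod_B$ sending $I_{\ca{V}}$ to $B$, so they agree up to canonical isomorphism on $\ca{X}$. This gives $w(X \otimes A) \cong X \otimes w(A)$ for $X \in \ca{X}$. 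Preservation of $\ca{X}$-cotensors follows from their identification with copowers by duals. Combining with the decomposition in the third paragraph, $w$ preserves all finite weighted (co)limits. The main obstacle is the third step, which requires a careful invocation of Kelly's enriched finite limit theory to justify the reduction; the remaining ingredients are standard facts of enriched category theory.
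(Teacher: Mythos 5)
Your overall strategy is the same as the paper's: reduce finite weighted (co)limits to finite conical (co)limits together with (co)tensors by objects of $\ca{X}$, obtain the $\ca{X}$-cotensors from the $\ca{X}$-tensors via duals, and use that the finitely presentable objects of $\ca{V}$ are the closure of $\ca{X}$ under finite colimits. There is, however, one step that is asserted rather than proved, and it is precisely the step where the hypotheses on $\ca{X}$ do real work. You pass from ``$\ca{A}_0$ is abelian'' to ``$\ca{A}$ admits all finite conical limits and colimits'' (and, in the last paragraph, from ``$w_0$ is exact'' to ``$w$ preserves finite conical (co)limits''). For a general $\ca{V}$-category a (co)limit in the underlying ordinary category need not be a conical (co)limit in the enriched sense: one must check that the enriched representables $\ca{A}(A,-)$ carry it to a (co)limit in $\ca{V}$. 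The paper closes this gap by noting that $\ca{A}$ is $\ca{X}$-tensored and (by your second paragraph) $\ca{X}$-cotensored, and that $\ca{X}$ is $\Set$-dense in $\ca{V}$, so that conical (co)limits in $\ca{A}$ coincide with (co)limits in $\ca{A}_0$ (via \cite[\S3.8]{KELLY_BASIC}). You have all the needed ingredients on the page, but the inference as written is not licensed, and nothing in your argument ever invokes the $\Set$-density of $\ca{X}$.

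On the preservation of $\ca{X}$-copowers by $w$, you take a genuinely different route from the paper: you identify $X\odot A$ with $(X\odot I_{\ca{A}})\otimes A$ and compare strong monoidal functors. The paper instead observes that tensors and cotensors with dualizable objects are absolute (co)limits, hence preserved by \emph{every} $\ca{V}$-functor \cite{STREET_ABSOLUTE}. Your argument can be made to work, but the isomorphism $X\odot A\cong(X\odot I_{\ca{A}})\otimes A$ requires $-\otimes A$ to preserve the copower $X\odot I_{\ca{A}}$, which is itself most cleanly justified by the same absoluteness; and the absoluteness argument handles the cotensors at no extra cost, whereas your reduction of cotensors to copowers by duals must then be threaded through the monoidal comparison as well. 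I would recommend replacing that paragraph by the one-line appeal to absoluteness.
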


\begin{proof}
 The dense autonomous generator $\ca{X}$ is by definition closed under duals. The fact that $\ca{A}$ is $\ca{X}$-tensored implies that it is also $\ca{X}$-cotensored. Indeed, the cotensor of $X\in \ca{X}$ and $A \in \ca{A}$ is given by the tensor $X^{\vee}\odot A$. Since $\ca{X}$ is a $\Set$-dense generator of $\ca{V}$ it follows that the notions of limits and colimits in the underlying category $\ca{A}_0$ coincide with the notion of conical limits and colimits in $\ca{A}$ (this follows from the $\Set$-density of $\ca{X}$ and the discussion in \cite[\S3.8]{KELLY_BASIC}).

 The closure of $\ca{X}$ under finite colimits is the full subcategory $\ca{V}_f$ of $\ca{V}$ consisting of finitely presentable objects. It follows that $\ca{A}$  has tensors and cotensors with objects in $\ca{V}_f$, given by the finite colimit (respectively finite limit) in $\ca{A}$ of the tensors (cotensors) with objects in $\ca{X}$. Thus $\ca{A}$ is finitely complete and finitely cocomplete in the enriched sense \cite[\S4.5]{KELLY_FINLIM}.

 Any $\ca{V}$-functor preserves tensors and cotensors with objects in $\ca{X}$ because they are absolute colimits respectively absolute limits (cf.\ \cite{STREET_ABSOLUTE}). Since $w$ preserves conical limits and colimits it follows from the above construction of tensors and cotensors with objects in $\ca{V}_f$ that they are preserved by $w$. 
\end{proof}

 Fix a weakly Tannakian $\ca{V}$-category $\ca{A}$ with fiber functor $w \colon \ca{A} \rightarrow \Mod_B$, and let $\ca{A}^d \subseteq \ca{A}$ be the full subcategory of dualizable objects. As already mentioned, some of the proofs of \S\ref{section:recognition_proof} generalize easily to the enriched setting. The general strategy is the same. The role of $\Ind(\ca{A})$ is played by the category $\Lex[\ca{A}^{\op},\ca{V}]$ of left exact functors, which makes sense by the above lemma. We start by showing that the left Kan extension
\[
 L \colon \Lex[\ca{A}^{\op},\ca{V}] \rightarrow \Mod_B
\]
 of $w$ along the corestricted Yoneda embedding
\[
 Y^{\prime} \colon \ca{A} \rightarrow \Lex[\ca{A}^{\op},\ca{V}]
\]
 is comonadic. Then we show that the comonad induced by $L$ is isomorphic (as a symmetric monoidal comonad) to the comonad induced by the left Kan extension $\Lan_Y w^d$ of the restriction
\[
 w^d \colon \ca{A}^d \rightarrow \Mod_B
\]
 of $w$ to $\ca{A}^d$ along the Yoneda embedding $Y \colon \ca{A}^d \rightarrow \mathcal{P}\ca{A}^d$. The arguments from \S\ref{section:recognition_proof} show that the latter is a cocontinuous symmetric Hopf monoidal comonad.

 \begin{lemma}\label{lemma:lex_abelian}
  The category $\Lex[\ca{A}^{\op},\ca{V}]_0$ is a Grothendieck abelian category.
 \end{lemma}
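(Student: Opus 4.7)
The plan is to exhibit $\Lex[\ca{A}^{\op},\ca{V}]_0$ as an exact reflective subcategory of an ambient Grothendieck abelian category, whence the conclusion follows from standard results on exact localizations.

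First I would verify that the underlying ordinary category $\ca{V}_0$ is Grothendieck abelian. The dense autonomous generator $\ca{X}$ consists of $\Set$-dense, dualizable objects, and the unit $I$ is finitely presentable. For any $X \in \ca{X}$, one has a natural isomorphism $\ca{V}(X,-) \cong \ca{V}(I, X^{\vee} \otimes -)$, and since $X^{\vee} \otimes -$ is a left adjoint while $\ca{V}(I,-)$ preserves filtered colimits, each $X \in \ca{X}$ is finitely presentable in $\ca{V}_0$. Thus $\ca{V}_0$ is locally finitely presentable and abelian, which forces AB5, and the strong generator $\ca{X}$ gives a generator in the ordinary sense. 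Next, the category $\Prs{A}_0$ of $\ca{V}$-presheaves on $\ca{A}$ is Grothendieck abelian, because its kernels, cokernels, and filtered colimits are all computed pointwise in $\ca{V}$, and a generator is obtained from the representables (or their tensors with objects in $\ca{X}$) by Yoneda and the density of $\ca{X}$.

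Second, by Lemma~\ref{lemma:finitely_complete}, $\ca{A}$ is finitely complete as a $\ca{V}$-category. Kelly's theory of finite-limit theories \cite{KELLY_FINLIM} then guarantees that $\Lex[\ca{A}^{\op},\ca{V}]$ is a reflective full $\ca{V}$-subcategory of $\Prs{A}$ whose reflector $L \colon \Prs{A} \to \Lex[\ca{A}^{\op},\ca{V}]$ preserves finite weighted limits. Being a left adjoint, $L$ already preserves all weighted colimits, so $L$ is an exact $\ca{V}$-functor; its underlying ordinary functor $L_0$ is therefore also exact.

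Third, I would conclude via the standard fact that an exact reflective subcategory of a Grothendieck abelian category is again Grothendieck abelian. Concretely: kernels in $\Lex[\ca{A}^{\op},\ca{V}]_0$ are computed as in $\Prs{A}_0$, because finite limits of left exact presheaves are left exact; cokernels and images in $\Lex[\ca{A}^{\op},\ca{V}]_0$ are obtained by applying $L_0$ to the corresponding constructions in $\Prs{A}_0$, using exactness of $L_0$ to verify the axioms of an abelian category. Filtered colimits in $\Lex[\ca{A}^{\op},\ca{V}]_0$ agree with those in $\Prs{A}_0$ because filtered colimits commute with finite limits in $\ca{V}$ (since $\ca{V}$ is locally finitely presentable), so AB5 is inherited from $\Prs{A}_0$. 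Finally, a generator is obtained by applying $L_0$ to a generator of $\Prs{A}_0$, or equivalently by taking the representables $\ca{A}(-,A)$ themselves, which lie in $\Lex[\ca{A}^{\op},\ca{V}]$ by finite completeness of $\ca{A}$.

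The principal obstacle is the exactness of the reflector $L$, which is not formal; it rests entirely on Kelly's result that the left adjoint to the inclusion of left-exact presheaves preserves finite weighted limits, and this in turn uses the hypothesis that $\ca{V}$ is locally finitely presentable as a closed category (ensured here by the finitely presentable unit, via \cite[Proposition~7.3.2]{SCHAEPPI}). Once this is invoked, every remaining verification is routine manipulation of the standard abelian-category axioms.
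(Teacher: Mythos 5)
Your strategy---realize $\Lex[\ca{A}^{\op},\ca{V}]_0$ as an \emph{exact} reflective subcategory of the Grothendieck category $\Prs{A}_0$ and quote the stability of Grothendieck categories under left exact localization---is genuinely different from the paper's, and it founders on exactly the step you yourself single out as ``the principal obstacle.'' The claim that the reflector $L \colon \Prs{A} \rightarrow \Lex[\ca{A}^{\op},\ca{V}]$ preserves finite weighted limits is \emph{not} a consequence of Kelly's theory of finite-limit theories, and \cite{KELLY_FINLIM} contains no such statement. What Kelly proves is that $\Lex[\ca{A}^{\op},\ca{V}]$ is reflective and locally finitely presentable; left exactness of the reflector is simply false for general finite-limit theories (for $\ca{V}=\Set$ the reflector onto $\Lex[\ca{C},\Set]\simeq\mathbf{Grp}$ cannot be left exact, since a left exact localization of a presheaf topos is a topos and $\mathbf{Grp}$ is not one). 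The statement you need is true in the classical case $\ca{V}=\Ab$, $\ca{A}$ small abelian---but that is Gabriel's theorem that $\Lex(\ca{A}^{\op},\Ab)$ is the quotient of the functor category by the localizing subcategory of effaceable functors, a result whose proof depends essentially on $\ca{A}$ being abelian and which is itself at least as hard as the lemma you are trying to prove. Your argument never uses the abelian-ness of $\ca{A}$ at the point where the exactness of $L$ is asserted, which is a sign that something is missing: to salvage this route you would have to prove an enriched analogue of Gabriel's exact-localization theorem (or an enriched Diaconescu-type statement for the finitely complete $\ca{A}$), neither of which is available off the shelf.

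The paper avoids this entirely. It gets completeness, cocompleteness, AB5 and a generator from local finite presentability of $\Lex[\ca{A}^{\op},\ca{V}]$ (as you do, essentially), but for the abelian axiom $\mathrm{coker}(\mathrm{ker} f)\cong\mathrm{ker}(\mathrm{coker} f)$ it invokes the Day--Street uniformity lemma: every morphism of $\Lex[\ca{A}^{\op},\ca{V}]_0$ is a filtered colimit, in the arrow category, of morphisms between representables; the comparison map is an isomorphism for those because $\ca{A}_0$ is abelian and Yoneda is exact, and filtered colimits commute with kernels and cokernels. This uses the abelian-ness of $\ca{A}$ exactly where it must be used and sidesteps any claim about the reflector. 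I would recommend either adopting that argument or, if you want to keep the localization picture, supplying an actual proof that the reflector is exact in this enriched abelian setting rather than a citation that does not cover it.
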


\begin{proof}
 Since $\ca{V}$ is abelian, the underlying unenriched category $\ca{B}_0$ of every $\ca{V}$-category $\ca{B}$ is $\Ab$-enriched. The category $\Lex[\ca{A}^{\op},\ca{V}]$ is locally finitely presentable in the enriched sense, so its underlying unenriched category is locally finitely presentable as well (see \cite[Proposition~7.5]{KELLY_FINLIM}). Thus it is in particular complete and cocomplete, and filtered colimits commute with finite limits. It only remains to check that the canonical morphism
\[
 \mathrm{coker}\bigl(\mathrm{ker}(f)\bigr) \rightarrow  \mathrm{ker}\bigl(\mathrm{coker}(f)\bigr)
\]
 is an isomorphism for every morphism $f \colon A \rightarrow B$ in $\Lex[\ca{A}^{\op},\ca{V}]_0$. From the uniformity lemma proved in \cite[\S 1]{DAY_STREET_LOCALISATIONS} we know that $f$ can be written as filtered colimit of morphisms $f_i \colon A_i \rightarrow B_i$ in the arrow category, where the $f_i$ lie in the image of the Yoneda embedding of $\ca{A}_0$. The comparison morphism induced by the $f_i$ is an isomorphism since $\ca{A}_0$ is abelian and the Yoneda embedding is exact. The claim follows since filtered colimits commute with kernels and cokernels.
\end{proof}

 \begin{prop}\label{prop:lex_symmetric_monoidal}
 The reflection of the Day convolution symmetric monoidal structure on $\Prs{A}$ defines a symmetric monoidal structure on $\Lex[\ca{A}^{\op},\ca{V}]$. 
 \end{prop}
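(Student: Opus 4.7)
The plan is to invoke Day's reflection theorem for symmetric monoidal reflective subcategories, applied to the inclusion $\Lex[\ca{A}^{\op},\ca{V}] \hookrightarrow \Prs{A}$. First I would use Kelly's theory of finite-limit theories (see \cite[\S5.5]{KELLY_FINLIM}) to identify $\Lex[\ca{A}^{\op},\ca{V}]$ as a reflective subcategory of $\Prs{A}$; this uses that $\ca{V}$ is locally finitely presentable as a closed category (ensured by the hypothesis that the unit $I$ is finitely presentable together with the dense autonomous generator $\ca{X}$) and that $\ca{A}$ is finitely complete in the $\ca{V}$-enriched sense by Lemma~\ref{lemma:finitely_complete}.

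The most economical route is to show that Day convolution already takes pairs of left exact presheaves to left exact presheaves, so that the reflector acts as the identity on such tensor products and the induced symmetric monoidal structure on $\Lex[\ca{A}^{\op},\ca{V}]$ is simply the restriction of Day convolution. For this I would use Kelly's identification of $\Lex[\ca{A}^{\op}, \ca{V}]$ as the free $\ca{V}$-enriched filtered cocompletion of $\ca{A}$, so that every left exact presheaf is a filtered colimit of representables. Day convolution is biclosed, hence cocontinuous separately in each variable, and the Yoneda embedding is strong monoidal, giving $Y(A) \otimes Y(B) \cong Y(A \otimes B)$. Combining these, the Day convolution of two left exact presheaves is a filtered colimit of representables, which is again left exact because filtered colimits commute with finite weighted limits in the locally finitely presentable cosmos $\ca{V}$.

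The symmetric monoidal axioms for $\Lex[\ca{A}^{\op}, \ca{V}]$ are then inherited directly from those of $\Prs{A}$ by restriction, with unit given by the representable $Y(I_{\ca{A}})$, which is automatically left exact. The main obstacle is the $\ca{V}$-enriched filtered-cocompletion description of $\Lex$: while the $\Set$-based analogue $\Lex \simeq \Ind$ is routine, the enriched version requires the technical hypotheses on $\ca{V}$ (dense autonomous generator and finitely presentable unit) in order to invoke Kelly's machinery of $\ca{V}$-enriched locally finitely presentable categories in the form needed, and to ensure that the notion of filtered colimit one works with really is the one compatible with the $\ca{V}$-enrichment.
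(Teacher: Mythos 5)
Your argument is correct, but it takes a genuinely different route from the paper's. The paper applies Day's reflection theorem in its internal-hom form: it suffices to check that $[\ca{A}(-,A),G]$ lies in $\Lex[\ca{A}^{\op},\ca{V}]$ for every representable presheaf and every left exact $G$, and this is done via the Yoneda computation $[\ca{A}(-,A),G]\cong G(-\otimes A)$ together with the fact that $-\otimes A$ preserves finite weighted colimits, a fact extracted from the existence of the fiber functor via Lemma~\ref{lemma:finitely_complete}. You instead show that $\Lex[\ca{A}^{\op},\ca{V}]$ is closed under the Day tensor itself: write left exact presheaves as filtered colimits of representables, use cocontinuity of convolution in each variable and strong monoidality of the Yoneda embedding, and conclude from the commutation of filtered colimits with finite weighted limits in $\ca{V}$. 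Both routes are sound. Yours has the mild advantage of not needing right exactness of $-\otimes A$ on $\ca{A}$ (hence not the fiber functor at all for this step), and it shows the reflector acts trivially on tensors of left exact presheaves, so the reflected structure is literally the restriction of Day convolution; the paper's has the advantage of being shorter once Day's theorem is quoted, and of exhibiting $\Lex[\ca{A}^{\op},\ca{V}]$ as closed under internal homs from representables, which is the form of hypothesis Day's theorem actually asks for. The enriched filtered-cocompletion description of $\Lex[\ca{A}^{\op},\ca{V}]$ that your argument leans on is exactly the one the paper already invokes (via \cite[\S5.5]{KELLY_FINLIM} and Lemma~\ref{lemma:finitely_complete}), so no additional hypotheses are incurred.
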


\begin{proof}
 The category $\Lex[\ca{A}^{\op},\ca{V}]$ is a reflective subcategory of $\Prs{A}$. By Day's reflection theorem \cite{DAY_REFLECTION} it suffices to check that for all representable presheaves $F=\ca{A}(-,A) \in \Prs{A}$ and all $G \in \Lex[\ca{A}^{\op},\ca{V}]$, the internal hom $[F,G]$ lies in $\Lex[\ca{A}^{\op},\ca{V}]$. The internal hom for the Day convolution structure is given by the end
\[
[F,G]=[\ca{A}(-,A),G]=\int_{B\in \ca{A}} [\ca{A}(B,A),G(-\otimes B)] \smash{\rlap{,}}
\]
 which is isomorphic to $G(-\otimes A)$ by Yoneda. The existence of a fiber functor implies that $-\otimes A$ preserves finite weighted colimits (cf.\ Lemma~\ref{lemma:finitely_complete}), hence that 
\[
 [F,G] \cong G(-\otimes A)\colon \ca{A}^{\op} \rightarrow \ca{V}
\]
 preserves finite weighted limits.
\end{proof}

 The proof of the following proposition roughly follows the proof of \cite[Proposition~1]{DAY_TANNAKA}, but there are some complications since we do not assume that every object in $\ca{A}$ has a dual.

 \begin{prop}\label{prop:enriched_L_comonadic}
  The left Kan extension
\[
 L \colon \Lex[\ca{A}^{\op},\ca{V}] \rightarrow \Mod_B
\]
 is left exact, comonadic, and strong symmetric monoidal.
 \end{prop}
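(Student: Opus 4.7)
The plan is to mirror the argument of Proposition~\ref{prop:L_comonadic}, taking extra care with enriched weighted limits and with the reflective replacement of the Day convolution structure. I would first identify $L$ with the restriction of the Kan extension $\Lan_{Y} w \colon \Prs{A} \rightarrow \Mod_B$ to the reflective subcategory $\Lex[\ca{A}^{\op},\ca{V}]$. By Lemma~\ref{lemma:finitely_complete} the functor $w$ preserves finite weighted colimits, so the right adjoint $\widetilde{w}(M)=\Mod_B\bigl(w(-),M\bigr)$ sends any $M\in\Mod_B$ to a left exact functor. This factorisation exhibits $\widetilde{w}$ as a right adjoint to $L$, producing $L$ as a $\ca{V}$-left adjoint between locally finitely presentable $\ca{V}$-categories.

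For left exactness I would apply the Day-Street uniformity lemma invoked in the proof of Lemma~\ref{lemma:lex_abelian}: every finite weighted limit diagram in $\Lex[\ca{A}^{\op},\ca{V}]$ can be written as a filtered colimit of finite weighted limit diagrams whose vertices come from the corestricted Yoneda embedding $Y^{\prime}$. Since $L$ preserves filtered colimits (as a left adjoint), since its restriction along $Y^{\prime}$ is $w$ (which preserves finite weighted limits, again by Lemma~\ref{lemma:finitely_complete}), and since filtered colimits commute with finite weighted limits in $\Mod_B$, these facts combine to show that $L$ preserves finite weighted limits.

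Comonadicity will then follow from the enriched Beck theorem cited at the end of \S\ref{section:weaklytannakian}: I need only verify that $L_0$ is faithful and exact as an unenriched functor between the Grothendieck abelian categories supplied by Lemma~\ref{lemma:lex_abelian}. Exactness of $L_0$ is immediate from the left exactness just established together with cocontinuity; the content is faithfulness, which in an abelian setting reduces to showing that $L_0(X)=0$ forces $X=0$. For nonzero $X$, density of $\ca{A}$ in $\Lex[\ca{A}^{\op},\ca{V}]$ produces a nonzero morphism $A\to X$ with $A\in\ca{A}$; its image factors through an object of $\ca{A}$ (since $Y^{\prime}$ is exact), and is nonzero in $\ca{A}$. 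Because $w$ is faithful and exact on the abelian category $\ca{A}$, applying $L$ yields a nonzero subobject of $L(X)$.

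Finally, strong symmetric monoidality will follow from \cite[Theorem~5.1]{IM_KELLY}, which endows $\Lan_Y w \colon \Prs{A}\rightarrow \Mod_B$ with a canonical strong symmetric monoidal structure, combined with Proposition~\ref{prop:lex_symmetric_monoidal}, which identifies the tensor product on $\Lex[\ca{A}^{\op},\ca{V}]$ as the reflection of Day convolution. I expect the main obstacle to be the faithfulness step, and more precisely the verification that the corestricted Yoneda embedding $Y^{\prime}$ is exact, so that images of morphisms out of $\ca{A}$ really do remain in $\ca{A}$; this requires explicit control of the reflector from $\Prs{A}$ to $\Lex[\ca{A}^{\op},\ca{V}]$ and is the place where the weighted, $\ca{V}$-enriched setting departs most visibly from the unenriched argument of Proposition~\ref{prop:L_comonadic}.
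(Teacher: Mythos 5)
Your identification of $L$ with the restriction of $\Lan_Y w$ to the reflective subcategory, the construction of the right adjoint $\widetilde{w}$ from the preservation of finite weighted colimits by $w$, and the treatment of the symmetric monoidal structure via \cite[Theorem~5.1]{IM_KELLY} and Proposition~\ref{prop:lex_symmetric_monoidal} all match the paper; your route to left exactness through the uniformity lemma is a workable variant of the paper's appeal to \cite[Theorem~6.11]{KELLY_FINLIM}. The genuine gap is in the faithfulness step, and it is not where you locate it. The problem is not the exactness of $Y^{\prime}$ but the claim that the image of a nonzero morphism $A \to X$ with $A \in \ca{A}$ ``factors through an object of $\ca{A}$''. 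That image is $A/K$ for $K = \ker(A \to X)$, and $K$ need not be finitely generated, so the image need not be representable; worse, a nonzero object of $\Lex[\ca{A}^{\op},\ca{V}]$ may admit no nonzero subobject from $\ca{A}$ at all. Already in the unenriched picture, take the coherent ring $R = k[x_1,x_2,\dots]$, let $\ca{A} = \Coh(\Spec R)$ be the finitely presentable $R$-modules, and let $X = R/(x_1,x_2,\dots)$ viewed in $\Ind(\ca{A}) \simeq \Mod_R$: every nonzero map from a finitely presentable module to $X$ has image all of $X$, which is not finitely presentable. So ``density gives a nonzero $A \to X$; pass to its image in $\ca{A}$; apply $w$'' does not produce a nonzero subobject of $L(X)$. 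What finite presentability of $A$ does give you is a factorization $A \to X_i \to X$ through a representable $X_i$, and the image of $A \to X_i$ does lie in $\ca{A}$; but the coprojection $X_i \to X$ is not a monomorphism, so this still does not yield a subobject of $X$.

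The paper sidesteps this by never reflecting zero objects directly: it shows that the unit $\eta_X \colon X \to \widetilde{w}L(X)$ is a regular monomorphism for \emph{every} $X$, which gives that $L$ reflects isomorphisms by the dual of \cite[Theorem~3.13]{TTT}. The two ingredients are: first, condition ii) of Definition~\ref{dfn:enriched_weakly_tannakian_category} forces $w(A)$ to be finitely presentable for all $A \in \ca{A}$, so $\widetilde{w}$ preserves filtered colimits and the regular-mono property of the unit passes from the finitely presentable objects (which are exactly the representables) to arbitrary objects of $\Lex[\ca{A}^{\op},\ca{V}]$; second, at a representable the unit is $w_{-,B} \colon \ca{A}(-,B) \to \Mod_B(w-,wB)$, which is a monomorphism --- hence regular, since $\Lex[\ca{A}^{\op},\ca{V}]_0$ is abelian by Lemma~\ref{lemma:lex_abelian} --- by the $\Set$-density of $\ca{X}$ together with the faithfulness of $w_0$. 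If you want to keep the Beck-criterion formulation, replace your image argument with this unit argument; the step you flagged as the main obstacle (exactness of $Y^{\prime}$) is not where the difficulty actually lies.
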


\begin{proof}
 We first show that $L$ is a left exact and strong symmetric monoidal left adjoint. To keep the notation simple we will write $\ca{C}=\Mod_B$ throughout this proof. Since the inclusion $\Lex[\ca{A}^{\op},\ca{V}] \rightarrow \Prs{A}$ is fully faithful, $L$ is isomorphic to the restriction of 
\[
 \Lan_Y w \colon \Prs{A} \rightarrow \ca{C}
\]
 to $\Lex[\ca{A}^{\op},\ca{V}] \subseteq \Prs{A}$ (see \cite[Theorem~4.47]{KELLY_BASIC}). The right adjoint $\widetilde{w}$ of $\Lan_Y w$ sends an object $C \in \ca{C}$ to $\ca{C}(w-,C)$. Since $w$ preserves finite weighted colimits (see Lemma~\ref{lemma:finitely_complete}) we know that $\ca{C}(w-,C)$ sends finite weighted colimits in $\ca{A}$ to finite weighted limits in $\ca{V}$, that is, it is a left exact presheaf. Therefore $\widetilde{w}$ also gives a right adjoint to the restriction of $\Lan_Y w$ to $\Lex[\ca{A}^{\op},\ca{V}]$.

 We claim that $\Lan_Y w$ is left exact. To see this it suffices to check that its composite with the forgetful functor $\ca{C}\rightarrow \ca{V}$ is left exact, since the latter creates limits. This forgetful functor also preserves (and creates) colimits, so it preserves left Kan extensions. Thus it suffices to show that the left Kan extension of a left exact functor is again left exact. This is the content of \cite[Theorem~6.11]{KELLY_FINLIM}. Since (finite) weighted limits in the reflective subcategory $\Lex[\ca{A}^{\op},\ca{V}]$ are computed as in $\Prs{A}$ it follows that $L$ is left exact as well.

 Let $F \colon \Prs{A} \rightarrow \Lex[\ca{A}^{\op},\ca{V}]$ be a reflection. Note that $\Lan_Y w \cong LF$, because both functors have isomorphic restrictions along the Yoneda embedding. By the proof of \cite[Theorem~5.1]{IM_KELLY}, $\Lan_Y w$ has a unique strong symmetric monoidal structure such that the unit of the Kan extension is symmetric monoidal. From the construction of the monoidal structure on $\Lex[\ca{A}^{\op},\ca{V}]$ as a reflection of the Day convolution symmetric monoidal structure it follows that $L$ is strong symmetric monoidal.

 It remains to check that the left adjoint $L$ is comonadic. Left exactness of $L$ implies that it preserves the necessary equalizers, so we only need to show that it reflects isomorphisms. To do this it suffices to show that the unit morphisms
\[
 \eta_A \colon A \rightarrow \widetilde{w} L(A)
\]
 are regular monomorphisms. Indeed, this is a statement about the underlying unenriched adjunction, and the implication follows easily from the dual of \cite[Theorem~3.13]{TTT}.
 
 First note that condition ii) of Definition~\ref{dfn:enriched_weakly_tannakian_category} implies that $w(A)$ is finitely presentable for all $A \in \ca{A}$. Thus $\widetilde{w}$ preserves filtered colimits. Since $\Lex[\ca{A}^{\op},\ca{V}]$ is finitely presentable, regular monomorphisms in it are closed under filtered colimits. Thus it suffices to check that the unit at a finitely presentable object is a regular monomorphism. The finitely presentable objects in $\Lex[\ca{A}^{\op},\ca{V}]$ are precisely the representable morphisms. This reduces the problem to showing that
\[
 w_{-,B} \colon \ca{A}(-,B) \rightarrow \ca{C}(w-,wB)
\]
 is a regular monomorphism for every $B \in \ca{A}$. From Lemma~\ref{lemma:lex_abelian} we know that $\Lex[\ca{A}^{\op},\ca{V}]_0$ is abelian, hence that every monomorphism in it is regular. Thus it suffices to check that for all $A,B \in \ca{A}$, the morphism
\[
 w_{A,B} \colon \ca{A}(A,B) \rightarrow \ca{C}(wA,wB) 
\]
 is a monomorphism in $\ca{V}$.  Since all $\ca{V}$-functors preserve tensors with $X \in \ca{X}$ (see \cite{STREET_ABSOLUTE}), the vertical arrows in the commutative diagram
\[
 \xymatrix{\ca{V}_0\bigr(X,\ca{A}(A,B)\bigl) \ar[d]_{\cong} \ar[rr]^-{\ca{V}_0(X,w_{A,B})} && \ca{V}_0\bigr(X,\ca{C}(wA,wB)\bigl) \ar[d]^{\cong} \\
\ca{V}_0\bigr(I,\ca{A}(X\odot A,B)\bigl) \ar[d]_{\cong} \ar[rr]^-{\ca{V}_0(I,w_{X\odot A,B})} && \ca{V}_0\Bigr(I,\ca{C}\bigl(w(X\odot A),wB\bigl)\Bigl) \ar[d]^{\cong} \\
\ca{A}_0(X\odot A,B) \ar[rr]^-{(w_0)_{X\odot A,B}} && \ca{C}_0\bigr(w(X \odot A),wB\bigl) }
\]
 are isomorphisms. The $\Set$-density of $\ca{X} \subseteq \ca{V}$ and the fact that $w_0 \colon \ca{A}_0 \rightarrow \ca{C}_0$ is faithful imply that $w_{A,B}$ is a monomorphism. This concludes the proof that $L$ reflects isomorphisms.
\end{proof}

 It remains to check that the proof of Proposition~\ref{prop:duals_dense} can be adapted to the enriched setting as well. In order to do this we need to recall the following theorem from \cite{SCHAEPPI}.

\begin{thm}\label{thm:density_with_DAG}
 Let $\ca{V}$ be a cosmos which has a dense autonomous generator $\ca{X}$. Let $\ca{A}$ be an $\ca{X}$-tensored $\ca{V}$-category, and let $\ca{C}$ be a $\ca{V}$-category which is cotensored. A $\ca{V}$-functor $K \colon \ca{A} \rightarrow \ca{C}$ is $\ca{V}$-dense if and only if $K_0 \colon \ca{A}_0 \rightarrow \ca{C}_0$ is $\Set$-dense.
\end{thm}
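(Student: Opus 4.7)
The plan is to show both implications by testing, for each pair $C, C' \in \ca{C}$, whether the canonical comparison morphism
\[
 \varphi_{C,C'} \colon \ca{C}(C,C') \longrightarrow \int_{A \in \ca{A}} [\ca{C}(KA, C), \ca{C}(KA, C')]
\]
is an isomorphism in $\ca{V}$, using the dense autonomous generator $\ca{X}$ to reduce the $\ca{V}$-enriched statement to an unenriched one. Concretely, since $\ca{X}$ is $\Set$-dense in $\ca{V}$, $\varphi_{C,C'}$ is an isomorphism if and only if $\ca{V}_0(X, \varphi_{C,C'})$ is a bijection for every $X \in \ca{X}$, and both directions of the theorem will follow from a single computation of this latter map.

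Cotensoring of $\ca{C}$ identifies the domain with $\ca{C}_0(C, X \pitchfork C')$. For the codomain, the functor $\ca{V}_0(X, -)$ preserves the end and satisfies $\ca{V}_0(X, [Y, Z]) \cong \ca{V}_0(X \otimes Y, Z)$, so the codomain identifies with the set of $\ca{V}$-natural transformations $X \otimes \ca{C}(K-, C) \Rightarrow \ca{C}(K-, C')$; by the closed adjunction and cotensoring of $\ca{C}$, this is further in bijection with $\ca{V}$-naturals $\ca{C}(K-, C) \Rightarrow \ca{C}(K-, X \pitchfork C')$.

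The key step is to pass from $\ca{V}$-naturality to ordinary naturality. Since $\ca{X}$ is closed under duals, the $\ca{X}$-tensoring of $\ca{A}$ supplies $\ca{X}$-cotensors and thereby makes $\ca{A}^{\op}$ itself $\ca{X}$-tensored; combined with $\Set$-density of $\ca{X}$ in $\ca{V}$, the standard fact recorded in \cite[\S 3.8]{KELLY_BASIC} identifies $\ca{V}$-natural transformations between $\ca{V}$-functors $\ca{A}^{\op} \to \ca{V}$ with ordinary natural transformations between their underlying functors. The codomain of $\ca{V}_0(X, \varphi_{C,C'})$ thus becomes $\mathrm{Nat}\bigl(\ca{C}_0(K_0-, C),\, \ca{C}_0(K_0-, X \pitchfork C')\bigr)$, and a diagram chase confirms that $\ca{V}_0(X, \varphi_{C,C'})$ agrees with the hom-map of $\widetilde{K_0}$ at the pair $(C, X \pitchfork C')$.

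The theorem now follows: if $K_0$ is $\Set$-dense then all these hom-maps are bijections, so each $\varphi_{C,C'}$ is an isomorphism and $K$ is $\ca{V}$-dense; conversely, assuming each $\varphi_{C,C'}$ is an isomorphism and specializing $X = I$ recovers the $\Set$-density of $\widetilde{K_0}$. The main obstacle is the invocation of the equivalence between $\ca{V}$-naturality and ordinary naturality, which is where the autonomous generator hypothesis on $\ca{V}$ and the tensoring hypothesis on $\ca{A}$ are used together in an essential way.
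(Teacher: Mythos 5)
Your overall strategy---testing the comparison morphism $\varphi_{C,C'}$ against the generator $\ca{X}$ and matching $\ca{V}_0(X,\varphi_{C,C'})$ with the hom-map of $\widetilde{K_0}$ at the pair $(C,X\pitchfork C')$---is the right one, and your reductions of the domain to $\ca{C}_0(C,X\pitchfork C')$ and of the codomain to $\ca{V}$-natural transformations $\ca{C}(K-,C)\Rightarrow\ca{C}(K-,X\pitchfork C')$ are correct. (For what it is worth, the paper does not prove this statement at all: it cites Theorem~A.1.1 of \cite{SCHAEPPI}, so there is no in-paper argument to compare against.)

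The genuine gap is in your ``key step.'' The fact you invoke would identify $\ca{V}$-natural transformations $\ca{C}(K-,C)\Rightarrow\ca{C}(K-,E)$ with \emph{ordinary} natural transformations between the underlying functors---but the underlying functor of $\ca{C}(K-,C)$ is the $\ca{V}_0$-\emph{valued} functor $A\mapsto\ca{C}(KA,C)$ on $\ca{A}_0^{\op}$, not the $\Set$-valued functor $A\mapsto\ca{C}_0(K_0A,C)$ that $\Set$-density of $K_0$ concerns. So the asserted identification of the codomain with $\mathrm{Nat}\bigl(\ca{C}_0(K_0-,C),\ca{C}_0(K_0-,X\pitchfork C')\bigr)$ does not follow from what you have cited: one still needs a bijection between natural families of $\ca{V}_0$-morphisms $\ca{C}(K_0A,C)\to\ca{C}(K_0A,E)$ and natural families of \emph{functions} $\ca{C}_0(K_0A,C)\to\ca{C}_0(K_0A,E)$, and this is exactly where the substance of the theorem lives. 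Establishing it requires (i) the identifications $\ca{V}_0\bigl(X',\ca{C}(KA,C)\bigr)\cong\ca{C}_0\bigl(K_0(X'\odot A),C\bigr)$ coming from the $\ca{X}$-tensoring of $\ca{A}$ together with the fact that $K$ and the representables preserve (co)tensors with dualizable objects; (ii) \emph{full} faithfulness of $V\mapsto\ca{V}_0(\ca{X}-,V)$, not just the generator property, in order to reassemble a morphism $\alpha_A$ in $\ca{V}_0$ from the $\Set$-natural data $\beta_{X'\odot A}$; and (iii) a verification that the resulting family is $\ca{V}$-natural, again by testing against generalized elements $X'\to\ca{A}(A',A)$. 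Relatedly, the ``standard fact'' that ordinary naturality implies $\ca{V}$-naturality is stated in the literature for categories tensored over all of $\ca{V}$ (and \cite[\S3.8]{KELLY_BASIC} is about conical limits, not about naturality), whereas $\ca{A}^{\op}$ here is only $\ca{X}$-tensored; so even that half needs the density argument spelled out rather than quoted. In short, the skeleton is viable, but the step you dismiss as ``a diagram chase'' conceals essentially all of the content of the theorem.
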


\begin{proof}
 This is \cite[Theorem~A.1.1]{SCHAEPPI}.
\end{proof}

 \begin{prop}\label{prop:duals_V_dense}
 Let $L \colon \Lex[\ca{A}^{\op},\ca{V}] \rightarrow \Mod_B$ be the left Kan extension of $w$ along the inclusion $\ca{A} \rightarrow \Lex[\ca{A}^{\op},\ca{V}]$. Write $K \colon \ca{A}^d \rightarrow \Lex[\ca{A}^{\op},\ca{V}]$ for the evident inclusion. The functor $K$ is $\ca{V}$-dense, and $L$ is a left Kan extension of the restriction of $w$ to $\ca{A}^d$ along $K$.
 \end{prop}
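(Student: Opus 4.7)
The plan is to mirror the proof of Proposition~\ref{prop:duals_dense} step by step, using Theorem~\ref{thm:density_with_DAG} to reduce $\ca{V}$-density of $K$ to $\Set$-density of $K_0$, at which point the Day--Street characterization (Theorem~\ref{thm:density_characterization}) together with Lemma~\ref{lemma:lex_abelian} reduces the problem to showing that $\ca{A}^d$ is a strong generator of the underlying Grothendieck abelian category $\Lex[\ca{A}^{\op},\ca{V}]_0$. The second assertion will then follow formally from comonadicity of $L$ and Theorem~\ref{thm:left_kan_characterization}.

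First I would verify the hypotheses of Theorem~\ref{thm:density_with_DAG}. The category $\Lex[\ca{A}^{\op},\ca{V}]$ is locally finitely presentable (used already in Lemma~\ref{lemma:lex_abelian}) and hence cotensored. For $\ca{A}^d$ I need to check that it is $\ca{X}$-tensored: given $X \in \ca{X}$ and $D \in \ca{A}^d$, the tensor $X \odot D$ exists in $\ca{A}$ because $\ca{A}$ is $\ca{X}$-tensored, and it is dualizable because $X$ is dualizable by the DAG assumption and because in any symmetric monoidal $\ca{V}$-category the $\ca{X}$-tensor of two dualizable objects is again dualizable (the candidate dual is $X^{\vee} \odot D^{\vee}$, with evaluation and coevaluation constructed from those of $X$ and $D$).

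Next I would produce the strong generator. By Lemma~\ref{lemma:lex_abelian}, $\Lex[\ca{A}^{\op},\ca{V}]_0$ is Grothendieck abelian, so by Theorem~\ref{thm:density_characterization} it suffices to show that $\ca{A}^d_0$ is a strong generator. The representables give a strong generator consisting of objects of $\ca{A}$ (since $\Lex[\ca{A}^{\op},\ca{V}]$ is locally finitely presentable with generators the representables, via \cite[Proposition~7.5]{KELLY_FINLIM}), so it is enough to write every object of $\ca{A}$ as a finite colimit of objects in $\ca{A}^d$. This is where I use condition ii) of Definition~\ref{dfn:enriched_weakly_tannakian_category} exactly as in the proof of Proposition~\ref{prop:duals_dense}: choose an epimorphism $D \twoheadrightarrow A$ with $D \in \ca{A}^d$, take its kernel (which exists by Lemma~\ref{lemma:finitely_complete}), and cover that kernel by an epimorphism from $D^{\prime} \in \ca{A}^d$ to obtain an exact sequence $D^{\prime} \rightarrow D \rightarrow A \rightarrow 0$. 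Theorem~\ref{thm:density_with_DAG} then upgrades $\Set$-density of $K_0$ to $\ca{V}$-density of $K$.

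For the second claim, Proposition~\ref{prop:enriched_L_comonadic} shows $L$ is a left adjoint, so it preserves all (enriched) colimits, in particular all $K$-absolute ones, and Theorem~\ref{thm:left_kan_characterization} (which is enriched, being \cite[Theorem~5.29]{KELLY_BASIC}) yields $L \cong \Lan_K (LK)$. Finally, the restriction of $L$ to $\ca{A}$ is isomorphic to $w$ by the unit of the defining Kan extension (invertible because $\ca{A} \hookrightarrow \Lex[\ca{A}^{\op},\ca{V}]$ is fully faithful, cf.\ \cite[Proposition~4.23]{KELLY_BASIC}), so $LK \cong w|_{\ca{A}^d}$. The main obstacle, as in the unenriched case, is packaged into the strong-generator step; everything else is formal once the enriched density reduction and Grothendieck abelian setup are in place.
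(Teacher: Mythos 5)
Your proposal follows the paper's proof essentially step for step: reduce $\ca{V}$-density to $\Set$-density of $K_0$ via Theorem~\ref{thm:density_with_DAG}, invoke Day--Street on the Grothendieck abelian category $\Lex[\ca{A}^{\op},\ca{V}]_0$ (Lemma~\ref{lemma:lex_abelian}), reuse the strong-generator resolution argument from Proposition~\ref{prop:duals_dense}, and deduce $L \cong \Lan_K(LK)$ from cocontinuity of $L$ together with \cite[Theorem~5.29]{KELLY_BASIC}. The one bridging step you elide is that Theorem~\ref{thm:density_characterization} concerns $\Ab$-enriched density, not $\Set$-density; the paper closes this by observing that $\ca{A}^d_0$ has finite direct sums, so a second application of Theorem~\ref{thm:density_with_DAG} (with $\ca{V}=\Ab$) identifies the two notions before Day--Street is applied --- a minor omission that is easily repaired with the tools you already cite.
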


\begin{proof}
 We first check that $K$ is $\ca{V}$-dense. By Theorem~\ref{thm:density_with_DAG} it suffices to show that $K_0 \colon \ca{A}^d_0 \rightarrow \Lex[\ca{A}^{\op},\ca{V}]_0$ is $\Set$-dense. Since $\ca{A}^d_0$ has finite direct sums, the same theorem implies that this is the case if and only if  $K_0$ is $\Ab$-dense. We have already seen that $\Lex[\ca{A}^{\op},\ca{V}]_0$ is a Grothendieck abelian category (see Lemma~\ref{lemma:lex_abelian}). Thus we can apply the characterization of dense subcategories of such by Day and Street (see Theorem~\ref{thm:density_characterization}), which reduces the problem to showing that $\ca{A}^d$ is a strong generator. This can be proved exactly as in the proof of Proposition~\ref{prop:duals_dense}.

 From \cite[Theorem~5.29]{KELLY_BASIC} it follows that $L$ is isomorphic to $\Lan_K LK$ (as a $\ca{V}$-functor). The composite of $L$ with the corestricted Yoneda embedding
\[
 \ca{A} \rightarrow \Lex[\ca{A}^{\op},\ca{V}]
\]
 is isomorphic to $w$. Thus $L \cong \Lan_K wK$, as claimed.
\end{proof}

\begin{proof}[Proof of Theorem~\ref{thm:enriched_recognition}]
 As already mentioned in \S\ref{section:recognition_proof}, Corollaries~\ref{cor:restricted_comonad_isomorphic} and \ref{cor:L_hopf_monoidal} hold in the enriched context. Their proofs can be adapted simply by replacing `$R$-linear' with the prefix `$\ca{V}$-' where necessary. The $\ca{V}$-functor $L$ has the required properties by Propositions~\ref{prop:enriched_L_comonadic} and \ref{prop:duals_V_dense}.

 Corollary~\ref{cor:restricted_comonad_isomorphic} shows that the comonads induced by $L$ and by $\Lan_Y w^d$ are isomorphic as symmetric monoidal comonads, and Corollary~\ref{cor:L_hopf_monoidal} shows that these comonads are $\ca{V}$-cocontinuous symmetric Hopf monoidal comonads on $\Mod_B$. They must therefore be isomorphic to the comonad induced from a Hopf algebroid $(B,\Gamma)$ in $\ca{V}$. The finitely presentable objects in $\Lex[\ca{A}^{\op},\ca{V}]$ are precisely the representable functors (this follows from \cite[Theorem~7.2.(i)]{KELLY_FINLIM}, applied to the set $\ca{G}$ of representable functors). Composing the corestricted Yoneda embedding with the symmetric monoidal equivalence
\[
 \Lex[\ca{A}^{\op},\ca{V}] \rightarrow \Comod(B,\Gamma)
\]
 gives the desired commutative triangle
\[
 \xymatrix{\ca{A} \ar[r]^-{\simeq} \ar[rd]_w & \Comod_{\fp}(B,\Gamma) \ar[d]^V\\ & \Mod_B}
\]
 of symmetric monoidal functors. It can be chosen to be strictly commutative because $V$ is an isofibration (cf.\ the proof of Theorem~\ref{thm:hopf_recognition} at the end of \S \ref{section:recognition_proof}).
\end{proof}
\section{Embedding algebraic stacks in the 2-category of symmetric monoidal abelian categories}\label{section:localization}

\subsection{The embedding theorem as a consequence of a bicategorical universal property}

 In \cite[Remark~5.12]{LURIE}, Lurie has shown that the pseudofunctor which sends a geometric stack to its category of quasi-coherent sheaves gives an embedding of the 2-category of geometric stacks into the 2-category of symmetric monoidal categories, tame functors and symmetric monoidal natural isomorphisms.

 We first recall the definition of tame functors from \cite[Definition~5.9]{LURIE}.

\begin{dfn}[Lurie]\label{dfn:tame}
 An object $M \in \ca{A}$ of a symmetric monoidal abelian category is called \emph{flat} if the functor $M\otimes -$ is exact.

 A functor $F \colon \ca{A} \rightarrow \ca{B}$ is called \emph{tame} if it is a strong symmetric monoidal left adjoint satisfying the following two conditions:
\begin{enumerate}
 \item[i)]
 It sends flat objects to flat objects;
\item[ii)]
 It preserves exact sequences
\[
 \xymatrix{0 \ar[r] & M \ar[r] & M^{\prime} \ar[r] & M^{\prime\prime} \ar[r] & 0}
\]
 if $M^{\prime\prime}$ is flat.
\end{enumerate}
\end{dfn}

 As we will see, the key fact about tame functors is that they preserve faithfully flat algebras (see \cite[Remark~5.10]{LURIE}).

\begin{dfn}
 We write $\ca{H}$ for the 2-category of flat Hopf algebroids, $\ca{AS}$ for the 2-category of algebraic stacks, and $\ca{T}$ for the 2-category of symmetric monoidal closed abelian categories, tame functors, and symmetric monoidal natural transformations.
\end{dfn}

 The goal of this section is to prove Theorem~\ref{thm:stacks_embedding}, which states that the pseudofunctor
\[
\QCoh(-) \colon \ca{AS}^{\op} \rightarrow \ca{T} 
\]
 which sends an algebraic stack to its category of quasi-coherent modules is an equivalence on hom-categories.

 As already mentioned in the introduction, we prove Theorem~\ref{thm:stacks_embedding} by showing that both the associated stack pseudofunctor and the pseudofunctor which sends a flat Hopf algebroid to its category of comodules have the same universal property. This proof is quite different from the one given in \cite{LURIE}, where the analogous result is deduced from a more general fact about tame functors landing in the category of modules of some ringed topos. Our proof allows us to slightly generalize Lurie's result: we show that \emph{all} symmetric monoidal natural transformations between tame functors are invertible. Thus the 2-functor $\QCoh(-)$ from algebraic stacks to symmetric monoidal categories is fully faithful on 2-cells.

 To state the bicategorical universal property in question we need the following definition.

 \begin{dfn}\label{dfn:surjective_weak_equivalence}
  An internal functor
\[
(f_0,f_1) \colon (X_0,X_1) \rightarrow (Y_0,Y_1)
\]
 between affine groupoids is called a \emph{surjective weak equivalence} if $f_0 \colon X_0 \rightarrow Y_0$ is faithfully flat (the functor is surjective on objects), and the diagram
\[
 \xymatrix{X_1 \ar[d]_{(s,t)} \ar[r]^-{f_1} & Y_1 \ar[d]^{(s,t)} \\ 
X_0 \times X_0 \ar[r]^-{f_0 \times f_0} & Y_0 \times Y_0} 
\]
 is a pullback diagram (the functor is fully faithful).
 \end{dfn}

\begin{rmk}\label{rmk:weak_equivalences_hopf_algebroids}
 Under the equivalence between the 2-category of affine groupoids and the 2-category of flat Hopf algebroids, the surjective weak equivalences correspond to morphisms of Hopf algebroids $(A,\Gamma) \rightarrow (A^{\prime},\Gamma^{\prime})$ such that $A\rightarrow A^{\prime}$ is faithfully flat and the diagram
\[
 \xymatrix{\Gamma \ar[r] & \Gamma^{\prime} \\ 
A\otimes A \ar[r] \ar[u] & A^{\prime} \otimes A^{\prime} \ar[u]}
\]
 is a pushout in the category of commutative rings. We will call such morphisms surjective weak equivalences of Hopf algebroids.  
\end{rmk}

 Let $W$ be a class of 1-cells in a bicategory $\ca{C}$. For a bicategory $\ca{A}$ we write $\Hom_W(\ca{C},\ca{A})$ for the full sub-bicategory of those pseudofunctors which send 1-cells in $W$ to equivalences. A pseudofunctor $F \colon \ca{C} \rightarrow \ca{D}$ with this property is a \emph{bicategorical localization} of $\ca{C}$ at $W$ if the pseudofunctor
\[
 \Hom(\ca{D},\ca{A})  \rightarrow  \Hom_W(\ca{C},\ca{A})
\]
 given by precomposition with $F$ is an equivalence of bicategories.

 In \S \S \ref{section:comod_as_localization} and \ref{section:ftm}, we will prove the following result.

\begin{thm}\label{thm:comod_localization}
 The corestriction of the pseudofunctor
\[
 \Comod \colon \ca{H} \rightarrow \ca{T}
\]
 to its essential image is a bicategorical localization of $\ca{H}$ at the surjective weak equivalences.
\end{thm}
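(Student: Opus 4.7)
The strategy is to apply Pronk's bicategorical localization theorem \cite{PRONK}. After verifying that $\Comod$ inverts surjective weak equivalences, the remaining content is to exhibit a bicategorical calculus of fractions for the class $W$ of surjective weak equivalences in $\ca{H}$ and, using Pronk's criterion, to show that every tame symmetric monoidal functor between comodule categories is isomorphic to a fraction $\Comod(g) \circ \Comod(w)^{-1}$ with $w \in W$, and that every symmetric monoidal natural transformation between two such tame functors can be represented over a common refinement.

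That $\Comod$ inverts surjective weak equivalences follows from faithfully flat descent: by Remark~\ref{rmk:weak_equivalences_hopf_algebroids}, a surjective weak equivalence $(A,\Gamma) \to (A',\Gamma')$ presents $(A',\Gamma')$ as the pullback groupoid along the faithfully flat morphism $A \to A'$, so $\Comod(A,\Gamma)$ can be identified with the category of descent data for $(A',\Gamma')$-comodules. Since an equivalence of symmetric monoidal abelian categories is automatically tame, the resulting equivalence lies in $\ca{T}$. The calculus of fractions axioms in $\ca{H}$ then reduce to the stability of surjective weak equivalences under composition and under pullback; the latter follows from the fact that faithfully flat morphisms of commutative rings are closed under pushout and that pullbacks of pullback squares are again pullback squares.

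The main obstacle, which I expect to be the heart of the proof, is producing from a tame $F \colon \Comod(A,\Gamma) \to \Comod(B,\Sigma)$ a flat Hopf algebroid $(C,\Delta)$ together with a surjective weak equivalence $w \colon (C,\Delta) \to (A,\Gamma)$ and a morphism $g \colon (C,\Delta) \to (B,\Sigma)$ such that $F \cong \Comod(g) \circ \Comod(w)^{-1}$. The plan is to apply $F$ to a canonical faithfully flat commutative algebra $T$ in $\Comod(A,\Gamma)$, for instance the one encoding the presentation of the associated stack; tameness guarantees that $F(T)$ is again faithfully flat in $\Comod(B,\Sigma)$, and strong monoidality upgrades the resulting $B$-algebra to the object of objects $C$ of the sought Hopf algebroid. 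Preservation of short exact sequences with flat quotient will then be used to identify the object of morphisms $\Delta$ as the appropriate pushout, guaranteeing that the induced $w$ is genuinely a surjective weak equivalence and that $F$ factors through it as claimed. Representation of 2-cells via spans is then a comparatively routine refinement argument, and together these give the Pronk conditions for $\Comod$ to exhibit its essential image as the bicategorical localization $\ca{H}[W^{-1}]$.
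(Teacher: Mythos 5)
Your high-level plan---Pronk's criterion, inverting surjective weak equivalences by faithfully flat descent, and extracting the substance of (EF2) from the fact that a tame $F$ sends the faithfully flat algebra $\Gamma \in \Comod(A,\Gamma)$ to a faithfully flat $B$-algebra---is the same as the paper's. But the fraction you write down for (EF2) points the wrong way, and this is not cosmetic. You ask for a surjective weak equivalence $w \colon (C,\Delta) \rightarrow (A,\Gamma)$ (so $C \rightarrow A$ faithfully flat, i.e.\ a refinement attached to the \emph{source} of $F$) with $F \cong \Comod(g)\circ\Comod(w)^{-1}$. Such a representation does not exist in general: take $(A,\Gamma)=(k,\mathcal{O}(G))$ for a finite group scheme $G$ over a prime field $k$ and $(B,\Sigma)=(B,B)$; a tame functor $\Rep(G)\rightarrow \Mod_B$ corresponds to a $G$-torsor over $\Spec(B)$, any surjective weak equivalence into $(k,\mathcal{O}(G))$ is forced to be an isomorphism (its source must be a subfield of $k$), and a factorization of your form then forces the functor to factor through $\Mod_k$, i.e.\ forces the torsor to be trivial. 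Your own construction also cannot produce such a $w$: the object you build, $C=VF(\Gamma)$, is a faithfully flat \emph{$B$-algebra}, and the ring map coming out of the monadicity argument (Proposition~\ref{prop:almost_ef2}) goes $A\rightarrow C$, not $C\rightarrow A$. The correct statement, which the paper proves as Corollary~\ref{cor:image_of_gamma_ffl_implies_ef2}, is the cospan version: there is a surjective weak equivalence $w\colon (B,\Sigma)\rightarrow (B',\Sigma')$ with $B'=VF(\Gamma)$ and $\Sigma'=B'\ten{B}\Sigma\ten{B}B'$, and a morphism $g\colon (A,\Gamma)\rightarrow (B',\Sigma')$ with $\Comod(w)\circ F\cong \Comod(g)$. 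Because $\ca{H}$ is the opposite of the 2-category of affine groupoids, the entire verification has to run through the \emph{dual} of Pronk's conditions.

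Two further gaps. First, the class of surjective weak equivalences does not literally admit a calculus of fractions: it fails (BF1) and (BF5) (the paper exhibits counterexamples right after Proposition~\ref{prop:bf3}), so "closure under composition and pullback" is not enough; the paper circumvents this by showing (Proposition~\ref{prop:localization}) that (BF3) for $W$ together with (EF1)--(EF3) for the pseudofunctor suffices, the remaining axioms holding only for the saturation $\overline{W}$. Second, (EF3)---that every symmetric monoidal natural transformation between functors induced by morphisms of Hopf algebroids comes from a unique 2-cell of Hopf algebroids---is not a "routine refinement argument"; it is where the paper invests the formal theory of comonads (\S\ref{section:ftm}), factoring $\Comod$ through $\EM^c\bigl(\Cospan(\ca{A})\bigr)$ and checking full faithfulness on 2-cells at each of four stages. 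As stated, your proposal gives no argument for this condition.
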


 A large part of the proof of the above theorem also works in the enriched context. In \S \ref{section:stackslocalization} we will prove Theorem~\ref{thm:stacks_localization}, which states that the category of algebraic stacks is also a bicategorical localization at the surjective weak equivalences. These two facts allow us to prove the embedding theorem.

\begin{proof}[Proof of Theorem~\ref{thm:stacks_embedding}]
 It follows directly from the definition that bicategorical localizations are unique up to essentially unique biequivalence. Using this, Theorem~\ref{thm:comod_localization}, and Theorem~\ref{thm:stacks_localization}, we find that there exists an essentially unique biequivalence
\[
 \ca{AS}^{\op} \rightarrow \mathrm{ess.im}(\Comod) \subseteq \ca{T}
\]
 which commutes with $L$ and $\Comod(-)$ up to pseudonatural equivalence. It only remains to show that the triangle
\[
 \xymatrix{& \ca{H} \ar[ld]_{L^{\op}} \ar[rd]^{\Comod(-)}\ar@{}[d]|{\simeq}  \\ \ca{AS}^{\op} \ar[rr]_{\QCoh(-)} && \ca{T} }
\]
 commutes up to pseudonatural equivalence. This follows from the equivalence
\[
 \QCoh\bigl(L(\Spec A, \Spec \Gamma)\bigr) \simeq \Comod(A,\Gamma)
\]
 of symmetric monoidal categories (see \cite[\S3.4]{NAUMANN} and \cite[Remark~2.39]{GOERSS}).
\end{proof}

 In order prove Theorems~\ref{thm:comod_localization} and \ref{thm:stacks_localization}, we will use the following characterization of bicategorical localizations due to Pronk \cite{PRONK}.

\subsection{A characterization of bicategorical localizations}
 In \cite{PRONK} a sufficient set of conditions on a class $W$ of morphisms in a bicategory $\ca{C}$ is given such that the localization of $\ca{C}$ at $W$ exists. These conditions generalize the concept of a calculus of right fractions for a class of morphisms in a category, which is due to Gabriel and Zisman.

\begin{dfn}[Pronk]
 A class of 1-cells $W$ in a bicategory $\ca{C}$ \emph{admits a calculus of right fractions} if the following conditions hold:
\begin{enumerate}
 \item[(BF1)] It contains the equivalences; 
 \item[(BF2)] It is closed under composition; 
 \item[(BF3)] For all $w\colon A \rightarrow B$ in $W$ and all 1-cells $f \colon C \rightarrow B$ there exist 1-cells $v$, $g$ and an invertible 2-cell
\[
 \xymatrix{D \ar[r]^-v \ar[d]_{g} \ar@{}[rd]|{\cong} & C \ar[d]^f \\ A \ar[r]_-{w} & B}
\]
 with $v \in W$; 
 \item[(BF4$'$)] Whiskering with a 1-cell in $W$ is fully faithful; 
 \item[(BF5)] The class $W$ is closed under 2-isomorphisms. 
\end{enumerate}
\end{dfn}

 Note that the condition (BF4$'$) above is stronger than the condition (BF4) from \cite[\S 2.1]{PRONK}, but it is always satisfied in the examples we consider. Pronk has shown that the bicategorical localization at a class which admits a right calculus of fractions exists. More important for us is the following characterization of localizations.

\begin{prop}[Pronk]\label{prop:localization_pronk}
 Let $W$ be a class of 1-cells of a bicategory $\ca{C}$ which admits a calculus of right fractions. A pseudofunctor $F \colon \ca{C} \rightarrow \ca{D}$ is a localization of $\ca{C}$ at $W$ if it sends 1-cells in $W$ to weak equivalences, and it satisfies:
 \begin{enumerate}
  \item[(EF1)] F is essentially surjective;
 \item[(EF2)] For every 1-cell $f \colon FA \rightarrow FB$ there exists a 1-cell $w \in W$ and a 1-cell $g$ in $\ca{C}$ such that $Fg\cong f \cdot Fw$;
 \item[(EF3)] F is fully faithful on 2-cells.
 \end{enumerate}
\end{prop}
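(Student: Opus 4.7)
The plan is to factor $F$ through Pronk's explicit construction of the bicategory of fractions and verify that the induced pseudofunctor is a biequivalence. Since $W$ admits a calculus of right fractions, Pronk's theorem produces a bicategory $\ca{C}[W^{-1}]$ and a pseudofunctor $P \colon \ca{C} \rightarrow \ca{C}[W^{-1}]$ which is a bicategorical localization at $W$. The objects of $\ca{C}[W^{-1}]$ are those of $\ca{C}$; a 1-cell from $A$ to $B$ is a span $A \xleftarrow{w} C \xrightarrow{g} B$ with $w \in W$, composed via the pullback squares provided by (BF3); a 2-cell is an equivalence class of diagrams involving a further span together with compatible 2-cells, modulo identification after whiskering by an element of $W$. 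By the universal property of $P$, the hypothesis that $F$ inverts $W$ produces a pseudofunctor $\widetilde{F} \colon \ca{C}[W^{-1}] \rightarrow \ca{D}$, unique up to pseudonatural equivalence, with $\widetilde{F} \circ P \simeq F$.

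It therefore suffices to show that $\widetilde{F}$ is a biequivalence, and since $P$ is the identity on objects this reduces to two tasks. First, essential surjectivity of $\widetilde{F}$ on objects is precisely (EF1). Second, for each pair $A$, $B$, we must show that
\[
\widetilde{F}_{A,B} \colon \ca{C}[W^{-1}](A,B) \rightarrow \ca{D}(FA,FB)
\]
is an equivalence of categories. For essential surjectivity, observe that $\widetilde{F}$ sends a span $(w,g)$ with $w \in W$ to the composite $Fg \cdot (Fw)^{-1}$, a quasi-inverse of $Fw$ existing because $F$ inverts $W$; (EF2) is then exactly the statement that every 1-cell in $\ca{D}(FA,FB)$ is isomorphic to one in the essential image of $\widetilde{F}_{A,B}$.

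The main obstacle, and the subtle part of the argument, is full faithfulness of $\widetilde{F}_{A,B}$ on 2-cells. By the Pronk construction, a 2-cell between spans $(w,g)$ and $(w',g')$ is represented by a further span $C \xleftarrow{u} C'' \xrightarrow{u'} C'$ together with 2-cells $wu \Rightarrow w'u'$ in $W$ and $gu \Rightarrow g'u'$, with two such representatives identified after whiskering by a common 1-cell in $W$. The hypothesis (BF4$'$) guarantees that whiskering 2-cells by 1-cells of $W$ is a bijection, so this equivalence relation can be tracked precisely. Applying $\widetilde{F}$ and using that $F$ inverts $W$, an arbitrary 2-cell in $\ca{D}$ between $\widetilde{F}(w,g)$ and $\widetilde{F}(w',g')$ is, after suitable manipulation with quasi-inverses of $Fw$ and $Fw'$, equivalent data to a 2-cell in $\ca{D}$ between the underlying components in $\ca{C}$. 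Condition (EF3) then produces a unique preimage 2-cell in $\ca{C}$, and (BF4$'$) ensures that the ambiguity in the choice of representative in $\ca{C}[W^{-1}](A,B)$ exactly matches the equivalence relation built into the Pronk construction. The bulk of the proof is therefore this careful bookkeeping, which matches the representatives of 2-cells in $\ca{C}[W^{-1}]$ with their images in $\ca{D}$ under the combined force of (EF3) and (BF4$'$).
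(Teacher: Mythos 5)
The paper does not actually prove this proposition: its proof is the single citation \cite[Proposition~24]{PRONK}. Your proposal reconstructs the argument behind that citation, and the route you take --- factor $F$ as $\widetilde{F}\circ P$ through the bicategory of fractions $\ca{C}[W^{-1}]$, then check that $\widetilde{F}$ is a biequivalence, with (EF1) supplying biessential surjectivity on objects, (EF2) supplying essential surjectivity of each hom-functor $\widetilde{F}_{A,B}$, and (EF3) together with the fractions calculus supplying full faithfulness on 2-cells --- is essentially how Pronk proves Proposition~24, so in substance you are supplying the proof that the paper outsources. The outline is sound and each hypothesis is used where it should be. Two pieces of bookkeeping that your sketch leaves implicit are worth flagging: in the surjectivity half of the 2-cell argument you need (BF3), not only (BF4$'$), to first produce a common refinement $C\xleftarrow{u}C''\xrightarrow{u'}C'$ of the two spans over which an arbitrary 2-cell $\widetilde{F}(w,g)\Rightarrow\widetilde{F}(w',g')$ in $\ca{D}$ can be rewritten before (EF3) is applied to lift it; and the description of $\widetilde{F}(w,g)$ as $Fg\cdot(Fw)^{\ast}$ presupposes a coherent choice of adjoint pseudo-inverses of the $Fw$, which is legitimate but is precisely what the universal property of $P$ is packaging for you. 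Neither point undermines the strategy; they are the ``careful bookkeeping'' you acknowledge, and it is carried out in full in \cite{PRONK}.
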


\begin{proof}
 This is \cite[Proposition~24]{PRONK}.
\end{proof}

 The existence of a pseudofunctor $F$ with the properties (EF1)-(EF3) simplifies the criteria (BF1) through (BF5).

\begin{lemma}\label{lemma:localization_lemma}
 Let $W$ be a class of 1-cells of $\ca{C}$ which satisfies (BF3). Let $F$ be a pseudofunctor which sends the elements of $W$ to equivalences and which satisfies conditions (EF1)-(EF3) of Proposition~\ref{prop:localization_pronk}. Write $\overline{W}$ for the class of all 1-cells which $F$ sends to equivalences. Then a pseudofunctor $G \colon \ca{C} \rightarrow \ca{A}$ sends all the 1-cells in $W$ to equivalences if and only if it sends all the 1-cells of $\overline{W}$ to equivalences.
\end{lemma}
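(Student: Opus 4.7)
The ``if'' direction is immediate: since $F$ sends every $w \in W$ to an equivalence, we have $W \subseteq \overline{W}$, so any pseudofunctor that inverts $\overline{W}$ certainly inverts $W$.

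For the converse, assume $G$ sends $W$ to equivalences, and let $f \colon A \rightarrow B$ belong to $\overline{W}$, so that $Ff$ is an equivalence. Fix a pseudoinverse $k \colon FB \rightarrow FA$ of $Ff$. The plan is to apply (EF2) to $k$, obtaining $w \colon X \rightarrow B$ in $W$ and $g \colon X \rightarrow A$ in $\ca{C}$ with $Fg \cong k \cdot Fw$. Whiskering with $Ff$ then gives $F(f \cdot g) \cong Ff \cdot Fg \cong Fw$, and (EF3) lifts this isomorphism to an isomorphism $f \cdot g \cong w$ in $\ca{C}$. Since both $Ff$ and $Fw$ are equivalences, $Fg$ is as well, so $g$ itself lies in $\overline{W}$; iterating the same construction with $g$ in place of $f$ produces $h \colon Y \rightarrow X$ in $\ca{C}$ and $w' \in W$ with $g \cdot h \cong w'$.

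Applying $G$ we obtain isomorphisms $Gf \cdot Gg \cong Gw$ and $Gg \cdot Gh \cong Gw'$, whose right-hand sides are equivalences by hypothesis. If $\ell$ denotes a left pseudoinverse of $Gw$ and $r$ a right pseudoinverse of $Gw'$, then $\ell \cdot Gf$ is a left pseudoinverse of $Gg$ and $Gh \cdot r$ is a right pseudoinverse of $Gg$. A 1-cell in a bicategory admitting both a left and a right pseudoinverse is automatically an equivalence (the two pseudoinverses agree up to isomorphism), so $Gg$ is an equivalence, and consequently $Gf \cong Gw \cdot (Gg)^{-1}$ is a composite of equivalences.

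The main obstacle is the asymmetry of the right calculus of fractions: (EF2) supplies only right factorizations of a given 1-cell, so a single application furnishes $Gf$ with a right pseudoinverse but not a left one. The trick is to iterate the construction once and shift the focus from $Gf$ to $Gg$: the resulting ``three-term'' chain $Gf \cdot Gg \cdot Gh$ in which both consecutive two-fold composites are equivalences pins down the middle arrow $Gg$ as an equivalence, from which the conclusion for $Gf$ drops out immediately.
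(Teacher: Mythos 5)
Your proof is correct. It has the same overall shape as the paper's argument: both produce a ``three-term chain'' $a \cdot b$, $b \cdot c$ in which $G$ sends the two consecutive composites to equivalences, deduce that the middle 1-cell has both a left and a right pseudoinverse under $G$ and hence is sent to an equivalence, and then conclude by 2-out-of-3. The difference lies in how the second composite is manufactured. The paper applies (EF2) once to a pseudoinverse of $Fg$ to get $h'$ with $g \cdot h' \cong w$, and then invokes \textbf{(BF3)} to produce $g'$ and $v \in W$ with $g \cdot v \cong w \cdot g'$, from which $h' \cdot g' \cong v$ follows via (EF3). You instead observe that the 1-cell $g$ produced by (EF2) satisfies $Fg \cong k \cdot Fw$ and is therefore itself in $\overline{W}$, so the same (EF2)/(EF3) construction can simply be iterated to get $g \cdot h \cong w'$. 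This is a genuine (if modest) simplification: your argument never uses (BF3), so that hypothesis is superfluous for this particular lemma (it is of course still needed in Proposition~\ref{prop:localization}, where the lemma is applied). Both routes are valid; yours is slightly more economical and makes the self-improving nature of the (EF2) factorization explicit.
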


\begin{proof}
 Since $W$ is contained in $\overline{W}$, one direction is clear. Assume that $G$ sends the 1-cells in $W$ to equivalences. Let $g$ be a 1-cell such that $Fg$ is an equivalence, with inverse equivalence $h$, say. By condition (EF2) there exists a 1-cell $w \in W$ and a 1-cell $h^{\prime} \in \ca{C}$ such that $Fh^{\prime}\cong h \cdot Fw$. From this it follows that $F(gh^{\prime})$ is isomorphic to $Fw$, so we get an isomorphism $g\cdot h^{\prime}\cong w$ by (EF3). It follows that $G$ sends $g \cdot h^{\prime}$ to an equivalence.

 From (BF3) we conclude that there exist 1-cells $g^{\prime}$ in $\ca{C}$ and $v \in W$ such that $g\cdot v\cong w \cdot g^{\prime}$. From this we get
\[
 Fh^{\prime} \cdot Fg^{\prime} \cong h \cdot Fw\cdot Fg^{\prime}\cong h \cdot F(w\cdot g^{\prime}) \cong h \cdot F(g\cdot v) \cong h \cdot F(g) \cdot F(v) \cong F(v) \smash{\rlap{,}}
\]
 and from (EF3) we deduce that $h^{\prime} \cdot g^{\prime}\cong w$. Taken together we have shown that $G$ sends both $g\cdot h^{\prime}$ and $h^{\prime} \cdot g^{\prime}$ to equivalences. From this it follows that $Gh^{\prime}$ is an equivalence. Using the 2-out-of-3 property we find that $Gg$ is an equivalence.
\end{proof}

\begin{prop}\label{prop:localization}
 Let $W$ be a class of 1-cells of $\ca{C}$ and let $F \colon \ca{C} \rightarrow \ca{D}$ be a pseudofunctor which sends 1-cells in $W$ to equivalences. If $W$ satisfies (BF3), and $F$ satisfies (EF1)-(EF3), then $F$ is a bicategorical localization of $\ca{C}$ at $W$.
\end{prop}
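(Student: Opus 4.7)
The plan is to reduce the statement to Pronk's characterization (Proposition~\ref{prop:localization_pronk}) by passing to the saturation of $W$. Let $\overline{W}$ denote the class of all 1-cells of $\ca{C}$ that $F$ sends to equivalences in $\ca{D}$; note that $W \subseteq \overline{W}$ by assumption. The strategy proceeds in three steps: first verify that $\overline{W}$ admits a calculus of right fractions, then apply Proposition~\ref{prop:localization_pronk} to conclude that $F$ exhibits $\ca{D}$ as a bicategorical localization of $\ca{C}$ at $\overline{W}$, and finally invoke Lemma~\ref{lemma:localization_lemma} to transfer the conclusion back to $W$.

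For the first step, axioms (BF1), (BF2), and (BF5) for $\overline{W}$ are immediate, since a pseudofunctor preserves equivalences, equivalences in $\ca{D}$ are closed under composition, and $F$ preserves invertibility of 2-cells. For (BF4$'$), the whiskering functor $\ca{C}(B,C) \rightarrow \ca{C}(A,C)$ with $w \in \overline{W}$ fits in a commutative square (up to isomorphism) with the whiskering $\ca{D}(FB,FC) \rightarrow \ca{D}(FA,FC)$ by $Fw$; the vertical components are fully faithful by (EF3), and the bottom arrow is an equivalence since $Fw$ is, so the top is fully faithful as required. The main obstacle is (BF3): given $w \colon A \rightarrow B$ in $\overline{W}$ and $f \colon C \rightarrow B$ in $\ca{C}$, choose a pseudo-inverse $\overline{Fw}$ of $Fw$ in $\ca{D}$ and apply (EF2) to the composite $\overline{Fw} \cdot Ff \colon FC \rightarrow FA$. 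This yields a 1-cell $v \in W \subseteq \overline{W}$ and a 1-cell $g \colon D \rightarrow A$ in $\ca{C}$ together with an invertible 2-cell $Fg \cong \overline{Fw} \cdot Ff \cdot Fv$, which after whiskering with $Fw$ becomes an invertible $F(wg) \cong F(fv)$. By (EF3) this 2-isomorphism is the image of a unique 2-cell $\alpha \colon wg \rightarrow fv$ in $\ca{C}$; applying (EF3) to the lift of the inverse 2-iso shows that $\alpha$ is itself invertible, providing the required square.

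With (BF1)-(BF5) established for $\overline{W}$, all hypotheses of Proposition~\ref{prop:localization_pronk} are satisfied: $F$ inverts $\overline{W}$ tautologically, and conditions (EF1)-(EF3) transfer without change from $W$ to $\overline{W}$ (condition (EF2) only gets easier when the class is enlarged). Hence $F$ is a bicategorical localization of $\ca{C}$ at $\overline{W}$. Lemma~\ref{lemma:localization_lemma} now applies and gives the equality $\Hom_W(\ca{C},\ca{A}) = \Hom_{\overline{W}}(\ca{C},\ca{A})$ of full sub-bicategories of $\Hom(\ca{C},\ca{A})$ for every bicategory $\ca{A}$; consequently, precomposition with $F$ also induces a biequivalence $\Hom(\ca{D},\ca{A}) \rightarrow \Hom_W(\ca{C},\ca{A})$, which is exactly the assertion that $F$ is a bicategorical localization of $\ca{C}$ at $W$.
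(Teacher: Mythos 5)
Your proposal is correct and follows essentially the same route as the paper: both pass to the saturation $\overline{W}$ of 1-cells inverted by $F$, verify (BF1)--(BF5) for $\overline{W}$ using (EF2) and (EF3) exactly as you do, invoke Proposition~\ref{prop:localization_pronk}, and use Lemma~\ref{lemma:localization_lemma} to identify $\Hom_W(\ca{C},\ca{A})$ with $\Hom_{\overline{W}}(\ca{C},\ca{A})$. Your extra remark that the lifted 2-cell in (BF3) is invertible because a functor fully faithful on 2-cells reflects invertibility is a point the paper leaves implicit, but it is the same argument.
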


\begin{proof}
 Let $\overline{W}$ be the class of 1-cells in $\ca{C}$ which are sent to equivalences by $F$. Lemma~\ref{lemma:localization_lemma} shows that the bicategories $\Hom_{W}(\ca{C},\ca{A})$ and $\Hom_{\overline{W}}(\ca{C},\ca{A})$ are equal for all bicategories $\ca{A}$. Thus $F$ is a localization of $\ca{C}$ at $W$ if and only if it is a localization of $\ca{C}$ at $\overline{W}$. The class $\overline{W}$ obviously contains the equivalences, is closed under composition, and under 2-isomorphic 1-cells, so it satisfies (BF1), (BF2), and (BF5).

 To see that it satisfies (BF3), let $g$ be a 1-cell such that $Fg$ is an equivalence, and let $f$ be an arbitrary 1-cell with the same codomain as $g$. By (EF2) there exists a 1-cell $w \in W \subseteq \overline{W}$ and a 1-cell $h$ such that $Fh \cong (Fg^{-1} \cdot Ff) \cdot Fw$. Composing with $Fg$ we get an isomorphism $F(g\cdot h)\cong F(f \cdot w)$, and from (EF3) we deduce the existence of the desired isomorphism $g\cdot h \cong f \cdot w$.

 To see that (BF4$'$) holds it suffices to observe that (EF3) implies that whiskering with a 1-cell in $\overline{W}$ gives a fully faithful functor on hom-categories in $\ca{C}$.
\end{proof}

\begin{prop}\label{prop:bf3}
 The class of surjective weak equivalences in the 2-category of affine groupoids with faithfully flat source and target maps satisfies (BF3).
\end{prop}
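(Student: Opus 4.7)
The plan is to construct, for a surjective weak equivalence $w \colon (X_0, X_1) \to (Y_0, Y_1)$ and an arbitrary internal functor $f \colon (Z_0, Z_1) \to (Y_0, Y_1)$ in the 2-category in question, the iso-comma object $(P_0, P_1)$ of the pair $(w, f)$. I would take $P_0$ to be the limit in $\Aff$ of the diagram
\[ Z_0 \xrightarrow{f_0} Y_0 \xleftarrow{s} Y_1 \xrightarrow{t} Y_0 \xleftarrow{w_0} X_0, \]
so that its points classify triples $(z, \alpha, x)$ with $\alpha \colon f_0(z) \to w_0(x)$ an arrow of the groupoid $Y$. I would define $P_1$ by a similar iterated fibered product whose points classify tuples $(\beta, \alpha, \gamma)$ with $\beta \in Z_1$, $\gamma \in X_1$, and $\alpha \colon f_0 s(\beta) \to w_0 s(\gamma)$ in $Y_1$; invertibility of arrows in $Y$ implies that the matching arrow $\alpha' \colon f_0 t(\beta) \to w_0 t(\gamma)$ satisfying the naturality square $\alpha' \circ f_1(\beta) = w_1(\gamma) \circ \alpha$ is automatic. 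This $P = (P_0, P_1)$ comes equipped with projections $p \colon P \to Z$ and $q \colon P \to X$ and an invertible 2-cell $fp \Rightarrow wq$ whose component at $(z, \alpha, x)$ is $\alpha$ itself.

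The remainder of the argument splits into three verifications. First, one checks that $P$ is a flat affine groupoid whose source and target maps are faithfully flat, so that $P$ lies in the sub-2-category under consideration; this amounts to decomposing the defining limits into iterated pullbacks and applying stability of (faithful) flatness under pullback, using that the source and target of $X$, $Y$, and $Z$ are themselves faithfully flat. Second, one checks that $p_0 \colon P_0 \to Z_0$ is faithfully flat: by construction it factors as the pullback of $s \colon Y_1 \to Y_0$ along $f_0$ followed by the pullback of $w_0 \colon X_0 \to Y_0$ along the map $Z_0 \pb{Y_0} Y_1 \to Y_0$ induced by $t$, both of which are faithfully flat. Third, one checks that $p$ is fully faithful as an internal functor, i.e.\ that the canonical map
\[ P_1 \to Z_1 \pb{Z_0 \times Z_0} (P_0 \times P_0) \]
is an isomorphism; unwinding the fibered products, this reduces to the observation that given $\beta \in Z_1$ and matching endpoint data $(z, \alpha, x), (z', \alpha', x') \in P_0$, the naturality equation forces $w_1(\gamma) = \alpha' \circ f_1(\beta) \circ \alpha^{-1}$, which has a unique solution $\gamma \in X_1$ precisely because $w$ is fully faithful.

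The principal technical obstacle is the flatness bookkeeping required to ensure that $P$ has faithfully flat source and target maps, since this is needed for $P$ to land in the sub-2-category in question. This forces one to unfold $P_1$ as an iterated fibered product in which each successive projection is identified as a pullback of a faithfully flat map from among $s$, $t$, $w_0$, and the source/target maps of $Z$, after which stability of faithful flatness under pullback completes the argument. Once this is done, (BF3) follows by taking $v = p$ and $g = q$ together with the invertible 2-cell described above.
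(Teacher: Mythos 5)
Your proof is correct and takes essentially the same route as the paper: both construct the comma object of $w$ and $f$ (which for groupoids coincides with your iso-comma object, since the fourth arrow of a naturality square is determined by the other three), verify that $P$ is a flat affine groupoid and that the projection to the domain of $f$ is faithfully flat on objects by decomposing $P_0$ and $P_1$ into iterated pullbacks of faithfully flat maps, and check full faithfulness of that projection representably using full faithfulness of $w$. No gaps.
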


\begin{proof}
 Let $X$, $Y$ and $Z$ be affine groupoids whose source and target morphisms are faithfully flat. Let $w \colon X \rightarrow Z$ be a surjective weak equivalence and let $f \colon Y \rightarrow Z$ be an arbitrary morphism. Let $P$ be the comma object of $w$ and $f$ in the 2-category of affine groupoids. The comma object of a pair of internal functors is defined representably. In $\Cat$, the set of objects of a comma object consists of triples $(x,\varphi,y)$ where $\varphi \colon wx \rightarrow fy$, and the set of arrows consists of quadruples $(\alpha, \beta, \varphi, \psi) \in X_1 \times Y_1 \times Z_1 \times Z_1$ such that the diagram
\[
 \xymatrix{ws(\alpha) \ar[d]_{w\alpha} \ar[r]^-{\varphi} & fs(\beta) \ar[d]^{f\beta} \\
wt(\alpha) \ar[r]^-{\psi} & ft(\beta)}
\]
 is commutative. Since we are working with groupoids, the arrow $\psi$ is uniquely determined by the other three, so an arrow in the comma object is a triple $(\alpha,\beta, \varphi)$ subject to the matching conditions expressing that $\varphi$ is an arrow with domain $ws(\alpha)$ and codomain $fs(\beta)$. To internalize this we have to express these matching conditions using pullback diagrams. By doing this we find that the object of objects $P_0$ of $P$ and the object of arrows $P_1$ of $P$ fit in the commutative diagram
\[
 \xymatrix{P_1 \ar[r]^-{s} \ar[d] & P_0 \ar[r] \ar[d] & Z_1 \ar[d]^{(s,t)} \\
X_1\times Y_1 \ar[r]^-{s\times s} & X_0 \times Y_0 \ar[r]^{w_0\times f_0} & Z_0\times Z_0}
\]
 where both squares are pullbacks. This shows in particular that the source map of $P$ is faithfully flat. Since the source and target maps are isomorphic, it follows that $P$ lives in the desired subcategory of flat affine groupoids.

 It remains to show that the induced internal functor $w^{\prime} \colon P \rightarrow Y$ is a surjective weak equivalence. The object $P_0$ also fits in the following diagram
\[
 \xymatrix{P_0 \ar[r] \ar[d] & Z_1 \mathop{{\times}_{Z_0}} Y_0 \ar[r] \ar[d] & Y_0 \ar[d]^{f_0} \\
 X_0 \mathop{{\times}_{Z_0}} Z_1 \ar[r] \ar[d] & Z_1 \ar[d] \ar[r]^{t} & Z_0\\
X_0 \ar[r]^-{w_0} & Z_0
}
\]
 where all squares are pullbacks. This shows that $w^{\prime}_0$ is faithfully flat. We can check that $w^{\prime}$ is fully faithful representably, where it follows from the fact that for any diagram of solid arrows
\[
 \xymatrix{wx \ar@{..>}[d] \ar[r]^-{\varphi} & fy \ar[d]^{f\beta} \\
wx^{\prime} \ar[r]^-{\varphi^{\prime}} & fy^{\prime}}
\]
 there exists a unique arrow $\alpha \colon x \rightarrow x^{\prime}$ in $X$ such that $w\alpha$ in place of the dotted arrow makes the above square commutative.
\end{proof}

 Note that in this case, the class $W$ of surjective weak equivalences itself does not satisfy (BF1) and (BF5). A counterexample for (BF1) is given by a functor from the terminal groupoid to a chaotic groupoid $(X,X\times X)$ for some nontrivial affine scheme $X$ which admits a map $\ast \rightarrow X$. 

 The identity on $(X,X\times X)$ is then also naturally isomorphic to a functor which factors through the terminal groupoid. The former is a surjective weak equivalence while the latter is not, so (BF5) does not hold either.

\subsection{The comodule pseudofunctor is a bicategorical localization}\label{section:comod_as_localization}
 In this section and the next, we will prove Theorem~\ref{thm:comod_localization}. Since the category of Hopf algebroids is dual to the category of affine groupoids, we have to apply the dual version of Proposition~\ref{prop:localization}. Recall that $\ca{H}$ is the 2-category of flat Hopf algebroids, and that $\ca{T}$ is the 2-category of symmetric monoidal abelian categories, tame functors, and natural transformations between them. We have to show that the pseudofunctor
\[
 \Comod \colon \ca{H} \rightarrow \ca{T}
\]
 sends surjective weak equivalences to equivalences and satisfies the duals of the conditions (EF2) and (EF3). If we corestrict this pseudofunctor to its essential image, we get the desired localization by Proposition~\ref{prop:localization}. 

 We check the first two facts in the remainder of this section (see Proposition~\ref{prop:comod_weak_equivalences} and Corollary~\ref{cor:comod_ef2}). We will also prove some facts about tame functors which will be used in \S \ref{section:adams}. We check that $\Comod$ satisfies condition (EF3) in \S \ref{section:ftm} (see Proposition~\ref{prop:comod_ef3}).

 Throughout the remainder of this section we write
\[
 V \colon \Comod(A,\Gamma) \rightleftarrows \Mod_A \colon W
\]
 for the comonadic adjunction induced by $(A,\Gamma)$. Here $V$ is the forgetful functor, and $W$ is the functor which sends an $A$-module $M$ to $\Gamma \ten{A} M$.

 We first need to see that $\Comod$ sends surjective weak equivalences to equivalences.

\begin{prop}\label{prop:comod_weak_equivalences}
 The pseudofunctor $\Comod \colon \ca{H} \rightarrow \ca{T}$ sends surjective weak equivalences of flat Hopf algebroids to equivalences.
\end{prop}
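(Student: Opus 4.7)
The plan is to show that the extension-of-scalars functor $f^{\ast} = A' \ten{A} - \colon \Comod(A,\Gamma) \to \Comod(A',\Gamma')$ induced by a surjective weak equivalence $f \colon (A,\Gamma) \to (A',\Gamma')$ is an equivalence of underlying abelian categories. Since $\Comod$ factors through $\ca{T}$ as a pseudofunctor, $f^{\ast}$ is already a tame strong symmetric monoidal functor, so an equivalence of underlying categories will automatically upgrade to an equivalence in $\ca{T}$.

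The first step is to prove that $f^{\ast}$ is comonadic. I would work with the commutative square of left adjoints
\[
\xymatrix{\Comod(A,\Gamma) \ar[r]^{f^{\ast}} \ar[d]_V & \Comod(A',\Gamma') \ar[d]^{V'} \\ \Mod_A \ar[r]_-{A' \ten{A} -} & \Mod_{A'}}
\]
in which the vertical forgetful functors $V$ and $V'$ are comonadic (hence in particular faithful and exact) and the bottom horizontal arrow is faithful and exact because $A \to A'$ is faithfully flat. Combining these observations with the commutativity $V' f^{\ast} \cong (A' \ten{A} -) V$ and with the fact that $V$ reflects isomorphisms and exact sequences, one concludes that $f^{\ast}$ itself is faithful and exact, and hence comonadic by the criterion for faithful exact functors out of an abelian category already cited in the paper.

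The second step is to show that the comonad induced by $f^{\ast}$ on $\Comod(A',\Gamma')$ is the identity, which forces the comonadic $f^{\ast}$ to be an equivalence. Computing this comonad through the Beck-Chevalley mate of the commutative square above and passing to $\Mod_{A'}$ via the comonadic functor $V'$, one finds that it is represented by the $(A',\Gamma')$-bimodule $A' \ten{A} \Gamma \ten{A} A'$. The defining pushout square of a surjective weak equivalence in Remark~\ref{rmk:weak_equivalences_hopf_algebroids} is precisely the statement that the canonical comparison map $A' \ten{A} \Gamma \ten{A} A' \to \Gamma'$ is an isomorphism, which identifies the comonad with the identity on $\Comod(A',\Gamma')$. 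The main obstacle is the careful identification of this comonad in terms of bimodule data: one has to track the two $A$-actions on $\Gamma$ via source and target maps together with the compatibility of the $\Gamma$- and $\Gamma'$-coactions, but once the pushout condition is in hand this is standard faithfully flat descent bookkeeping.
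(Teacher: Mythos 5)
Your strategy is essentially the one the paper itself sketches in the remark immediately following this proposition (the paper's official proof is just a two-line citation to Hovey's Morita theorems, with $g=\id$): use Beck's comonadicity theorem plus faithful flatness of $A \rightarrow A'$, and then identify the induced comonad using the pushout condition from Remark~\ref{rmk:weak_equivalences_hopf_algebroids}. Your Step 1 is fine: $V'f^{\ast} \cong (A'\ten{A}-)\circ V$ is faithful and exact, $V'$ reflects both properties, so $f^{\ast}$ is faithful and exact and hence comonadic by the criterion already quoted in the paper.

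Step 2, however, places the comonad on the wrong category, and this is where the actual content sits. The bimodule $A'\ten{A}\Gamma\ten{A}A'$ computes the comonad on $\Mod_{A'}$ induced by the \emph{composite} left adjoint $V'f^{\ast}\cong(A'\ten{A}-)\circ V$, whose right adjoint (cofree comodule followed by restriction of scalars) is cocontinuous, so that the comonad is determined by its value on $A'$. It does not compute the comonad $f^{\ast}f_{\ast}$ on $\Comod(A',\Gamma')$: the right adjoint $f_{\ast}$ on comodule categories is a cotensor product, i.e.\ an equalizer, it is not given by tensoring with a bimodule, and the Beck--Chevalley mate of your square is not invertible in general. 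Showing directly that $f^{\ast}f_{\ast}\cong\id$ is exactly the faithfully flat descent statement you are trying to prove, so deferring it as ``bookkeeping'' is circular. The repair is to reroute as the paper does: the composite $V'f^{\ast}$ is itself comonadic (faithful exact out of an abelian category), its comonad on $\Mod_{A'}$ is $(A'\ten{A}\Gamma\ten{A}A')\ten{A'}-$, and the pushout condition identifies this with $\Sigma'\ten{A'}-$ --- where one must also check that the ring isomorphism $A'\ten{A}\Gamma\ten{A}A'\cong\Gamma'$ respects the coalgebroid structure --- giving $\Comod(A,\Gamma)\simeq\Comod(A',\Gamma')$ compatibly with $f^{\ast}$. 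With that rerouting your argument coincides with the paper's sketch.
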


\begin{proof}
 This follows from \cite[Theorem~5.5]{HOVEY_MORITA}, applied to the case $g=\id$, and \cite[Theorem~4.5]{HOVEY_MORITA}.
\end{proof}

 To stick with our goal to indicate that these conditions are true for arbitrary enrichment, we give a rough sketch which shows that the above proposition essentially follows from Beck's comonadicity theorem. The reader might want to skip straight to Proposition~\ref{prop:almost_ef2}, which is the key result for showing that (EF2) holds. It also allows us to find some properties of tame functors.

\begin{rmk}
 If $w \colon (A,\Gamma) \rightarrow (B,\Sigma)$ is a surjective weak equivalence, then $A \rightarrow B$ is faithfully flat. Thus the functor $B\ten{A}-$ is faithful and exact. From Beck's comonadicity theorem it follows that the composite
\[
 \xymatrix{\Comod(A,\Gamma) \ar[r]^-{V} & \Mod_A \ar[r]^{B\ten{A}-} & \Mod_B}
\]
 is comonadic. The right adjoints of both of these functors are cocontinuous, so the induced comonad is uniquely determined by where it sends $B$. The sequence
\[
 B \ten{A} VW(B)=B \ten{A} V(\Gamma \tenlr{\sigma}{A} B)=B \tenlr{A}{\tau} \Gamma \tenlr{\sigma}{A} B
\]
 shows that the comonad induced by $B \ten{A} V(-)$ is given by the Hopf algebroid $(B,B \tenlr{A}{\tau} \Gamma \tenlr{\sigma}{A} B)$. This Hopf algebroid is precisely the pushout in the second condition of being a surjective weak equivalence of Hopf algebroids (see Remark~\ref{rmk:weak_equivalences_hopf_algebroids}). This shows that
\[
 w_\ast \colon \Comod(A,\Gamma) \rightarrow \Comod(B,\Sigma)
\]
 is an equivalence of categories.
\end{rmk}

 \begin{prop}\label{prop:almost_ef2}
 Let $(A,\Gamma)$ and $(B,\Sigma)$ be flat Hopf algebroids, and let 
\[
F \colon \Comod(A,\Gamma) \rightarrow \Comod(B,\Sigma) 
\]
 be a strong symmetric monoidal left adjoint. Write $B^{\prime} \in \Mod_B$ for the image of the commutative monoid $\Gamma \in \Comod(A,\Gamma)$ under the functor $VF$. Then there exists a ring homomorphism $A \rightarrow B^{\prime}$ such that the diagram
\[
 \xymatrix{\Comod(A,\Gamma) \ar@{}[rrd]|{\cong} \ar[r]^-{F} \ar[d]_{V} & \Comod(B,\Sigma) \ar[r]^-{V} & \Mod_B \ar[d]^{B^{\prime} \ten{B} -} \\
 \Mod_A \ar[rr]_-{B^{\prime} \ten{A} -} & & \Mod_{B^{\prime}}}
\]
 commutes up to symmetric monoidal natural isomorphism.
 \end{prop}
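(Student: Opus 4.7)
The plan is to construct $B'$ as a commutative $B$-algebra, produce the ring homomorphism $A \to B'$ using the Eilenberg--Watts theorem, and then identify both composites in the diagram as $V_B F$ applied to the same underlying $A$-module equipped with two different coactions, which are related by a canonical Hopf-algebroid isomorphism.

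First, since $\Gamma$ is a commutative monoid in $\Comod(A,\Gamma)$ with unit the source map $\eta_s \colon A \to \Gamma$, and both $F$ and $V_B$ are strong symmetric monoidal, the $B$-module $B' = V_B F(\Gamma)$ inherits the structure of a commutative $B$-algebra. To construct the ring homomorphism $A \to B'$, consider the composite $V_B F W_A \colon \Mod_A \to \Mod_B$, where $W_A = \Gamma \ten{A} (-)$ is the cofree comodule functor. This composite is cocontinuous: $V_A$ is left adjoint to $W_A$ and creates colimits (it preserves them as a left adjoint, and is conservative for any Hopf algebroid), so $W_A$ is cocontinuous because $\Gamma \ten{A} (-)$ is cocontinuous on $\Mod_A$; and $V_B F$ is cocontinuous as a composite of left adjoints. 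By the Eilenberg--Watts theorem there is therefore a natural isomorphism $V_B F W_A(-) \cong B' \ten{A} (-)$ of $R$-linear functors, determined by a canonical $A$-action on $B'$. Since $W_A(a) \colon \Gamma \to \Gamma$ is precisely multiplication by $\eta_s(a)$ in the commutative algebra $\Gamma$, its image under the strong monoidal functor $V_B F$ is multiplication by an element of the commutative $B$-algebra $B'$; this shows that the $A$-action on $B'$ factors through a ring homomorphism $A \to B \to B'$, where $A \to B$ is the canonical map obtained from $F$ being strong symmetric monoidal and $B \to B'$ is the algebra unit.

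With the ring homomorphism in hand, set $\Phi(N) := B' \ten{A} V_A(N)$ and $\Psi(N) := B' \ten{B} V_B F(N)$ for $N \in \Comod(A,\Gamma)$. Applying the Watts identification to $M = V_A(N)$ gives $\Phi(N) \cong V_B F(W_A V_A(N))$, where $W_A V_A(N) = \Gamma \ten{A} V_A(N)$ carries the cofree coaction (acting on the $\Gamma$ factor only). Strong monoidality of $V_B F$ yields $\Psi(N) \cong V_B F(\Gamma \otimes N)$, where the tensor is taken in $\Comod(A,\Gamma)$, so $\Gamma \otimes N$ has the same underlying $A$-module $\Gamma \ten{A} V_A(N)$ but is equipped with the diagonal coaction. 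The Hopf algebroid axioms (in particular the existence of the antipode) imply that the canonical comparison map $\beta_N \colon \Gamma \otimes N \to W_A V_A(N)$ between the two coactions, given in Sweedler notation by $\gamma \otimes n \mapsto \gamma \cdot n_{(-1)} \otimes n_{(0)}$, is an isomorphism in $\Comod(A,\Gamma)$; applying $V_B F$ to it yields the required natural isomorphism $\Psi(N) \xrightarrow{\cong} \Phi(N)$.

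The main obstacle will be verifying that this natural isomorphism is genuinely symmetric monoidal, not merely natural. This should follow from the compatibility of $\beta$ with the multiplication on $\Gamma$ together with the strong monoidality of every constituent functor, but the bookkeeping is delicate.
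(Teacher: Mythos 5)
Your argument is sound in outline and rests on exactly the same two pillars as the paper's proof: the Hopf/projection-formula isomorphism identifying the cofree comodule functor with $\Gamma\otimes -$ (your $\beta_N\colon \Gamma\otimes N\cong W_AV_A(N)$, which the paper obtains from the coclosedness of Hopf monoidal comonad adjunctions), and Eilenberg--Watts. The difference is packaging. The paper first proves that $W_A$ is \emph{monadic} with induced monad isomorphic, \emph{as a symmetric monoidal monad}, to $\Gamma\otimes-$; this yields a symmetric monoidal equivalence $\Mod_A\simeq \ca{M}_\Gamma$ (modules over $\Gamma$ internal to $\Comod(A,\Gamma)$), after which $VF$ lifts to a strong symmetric monoidal functor $\ca{M}_\Gamma\to(\Mod_B)_{B'}$ by a general construction, and Eilenberg--Watts is applied only at the very end. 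The payoff of that detour is precisely the point you defer as ``delicate bookkeeping'': the symmetric monoidality (and the $B'$-linearity) of the final natural isomorphism comes for free from general facts about Kleisli/Eilenberg--Moore objects in the 2-category of symmetric monoidal categories, rather than from a hand check that $\beta$ interacts correctly with the multiplication of $\Gamma$ and the monoidal constraints of $V$, $F$ and the two scalar-extension functors. Since the proposition asserts commutativity up to \emph{symmetric monoidal} isomorphism, and since the $B'$-module structures on $B'\ten{A}V(-)$ and $B'\ten{B}VF(-)$ must also be matched (which amounts to $\beta_N$ being $\Gamma$-linear, i.e.\ to $WV\cong\Gamma\otimes-$ being an isomorphism of \emph{monads}), your proof is genuinely incomplete at exactly the point you flag; the paper's route is the efficient way to discharge it.

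One concrete error to fix: your claim that the $A$-action on $B'$ ``factors through a ring homomorphism $A\to B\to B'$'' with $B\to B'$ the algebra unit is false. Already for $F=\id$ one has $B'=\Gamma$ with $B$-algebra unit one of the two structure maps $\eta_t\colon A\to\Gamma$, while the Eilenberg--Watts $A$-action is via the \emph{other} map $\eta_s$; these differ for a genuine Hopf algebroid. What is true, and all you need, is that the $A$-action $A\to\End_B(B')$ lands in the image of $B'\to\End_B(B')$, because $W_A(a\cdot)$ is multiplication by the global element $V_BF(g_a)\in B'$, where $g_a\colon A\to\Gamma$ is the comodule map $1\mapsto\eta_s(a)$. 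Deleting the spurious factorization through $B$ leaves the rest of your argument intact.
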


 To prove this we need the following lemma.

\begin{lemma}\label{lemma:modules_monadic_over_comodules}
 For each flat Hopf algebroid $(A,\Gamma)$, the right adjoint
\[
 W \colon \Mod_A \rightarrow \Comod(A,\Gamma)
\]
 is monadic, and the induced symmetric monoidal monad on $\Comod(A,\Gamma)$ is isomorphic to the symmetric monoidal monad induced by the commutative monoid $\Gamma$ in the symmetric monoidal category $\Comod(A,\Gamma)$.
\end{lemma}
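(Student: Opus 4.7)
Both claims will follow from a single construction: identifying $\Mod_A$, as a symmetric monoidal category over $\Comod(A,\Gamma)$, with the category of modules for the commutative monoid $\Gamma$ internal to $\Comod(A,\Gamma)$, in such a way that $W$ corresponds to the free-$\Gamma$-module functor $M \mapsto \Gamma \ten{A} M$. Since the forgetful functor from modules over a commutative monoid in a symmetric monoidal category is tautologically monadic with associated symmetric monoidal monad being tensoring with that monoid, both the monadicity of $W$ and the identification of its induced symmetric monoidal monad with $\Gamma \otimes -$ will follow by transport along this equivalence.

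The main step is to produce a symmetric monoidal natural isomorphism $\Gamma \otimes - \cong WV$ of functors on $\Comod(A,\Gamma)$. Both functors have the same underlying $A$-module $\Gamma \ten{A} N$, but they carry different comodule structures: the diagonal coaction for $\Gamma \otimes -$ and the cofree coaction $\Delta \ten{A} 1$ for $WV$. The classical antipode-based shuffle map, in its Hopf-algebroid form, supplies the required natural isomorphism between these two comodule structures; a direct computation using coassociativity and the antipode axioms then verifies that this isomorphism intertwines units, multiplications, and monoidal coherences, yielding the desired symmetric monoidal monad isomorphism. The main obstacle is precisely this verification: although classical for Hopf algebras, the Hopf-algebroid setting requires careful bookkeeping of the two-sided $A$-action on $\Gamma$ and the distinction between tensor products over the source and target maps $\eta_L, \eta_R \colon A \to \Gamma$.

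Granting the monad isomorphism, monadicity of $W$ reduces to showing that the Eilenberg--Moore comparison $\Mod_A \to \Comod(A,\Gamma)^{WV}$ is an equivalence, which I would verify by Beck's theorem. The functor $W$ reflects isomorphisms because $\Gamma$ is faithfully flat over $A$: flatness is assumed, and the counit $\epsilon \colon \Gamma \to A$ splits the source $\eta_L$, exhibiting $A$ as an $A$-module retract of $\Gamma$, so $VW = \Gamma \ten{A} -$ is faithful and exact, whence so is $W$. Creation of $W$-split coequalizers is then routine: $V$ is comonadic and preserves all colimits as a left adjoint, the flat comonad $VW$ preserves reflexive coequalizers in $\Mod_A$, so $V$ creates them in $\Comod(A,\Gamma)$, from which the remaining condition in Beck's theorem follows by a direct chase.
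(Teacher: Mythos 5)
Your overall strategy --- identify the monad $WV$ with $\Gamma \otimes -$ and then apply crude Beck monadicity using faithful flatness of the unit $A \to \Gamma$ (split by $\varepsilon$) and cocontinuity of $\Gamma \ten{A} -$ --- coincides with the paper's, and your monadicity argument is essentially the one given there. Where you genuinely diverge is in how the isomorphism of symmetric monoidal monads $WV \cong \Gamma \otimes -$ is produced. You propose the explicit antipode-built shear map between the cofree and diagonal coactions on $\Gamma \ten{A} VM$, followed by a hands-on check that it respects units, multiplications, and the monoidal coherences. The paper instead notes that $V \dashv W$ is induced by a Hopf monoidal comonad, so by Chikhladze--Lack--Street it satisfies the projection formula $W(VM \otimes N) \cong M \otimes WN$; setting $N = A$ gives $WV(M) \cong M \otimes \Gamma$ with no computation, and an Eckmann--Hilton argument then identifies the monad multiplication on $\Gamma$ with its commutative-monoid multiplication in $\Comod(A,\Gamma)$. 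Your route is more elementary and makes the antipode's role visible, but the verification you defer as ``careful bookkeeping'' --- in particular that $W\varepsilon V$ transports to the algebra multiplication of $\Gamma$ and not merely to some associative multiplication --- is exactly the content that the projection formula plus Eckmann--Hilton supply, and it is where all the work lies; the paper's citation-based route also has the advantage of being enrichment-independent. Two small corrections: under the identification of $\Mod_A$ with $\Gamma$-modules in $\ca{M} = \Comod(A,\Gamma)$, it is $V$ that corresponds to the free $\Gamma$-module functor and $W$ to the forgetful one (your phrasing swaps these roles); and the underlying $A$-modules of $WV(M)$ and $\Gamma \otimes M$ are tensor products over the two \emph{different} $A$-actions on $\Gamma$ (source versus target), so even the claim of a common underlying module already requires the shear map rather than an identity --- a point you flag but should not understate.
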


\begin{proof}
 Since the source morphism $A \rightarrow \Gamma$ is faithfully flat we know that $W$ is a faithful functor, so it reflects isomorphisms. It also preserves all colimits, so in particular coequalizers. Therefore it is monadic by Beck's monadicity theorem.

 The symmetric monoidal monad induced by the adjunction $V \dashv W$ is given by $WV \colon \Comod(A,\Gamma) \rightarrow \Comod(A,\Gamma)$. Since the adjunction is induced by a Hopf monoidal comonad, it is left and right coclosed, that is, there are natural isomorphisms
\[
 W(VM \otimes N) \cong M \otimes WN
\]
 for all $M \in \Comod(A,\Gamma)$ and all $N \in \Mod_A$ (see \cite[Theorem~4.3]{CHIKHLADZE_LACK_STREET}). In the terminology of \cite{FAUSK_HU_MAY}: the projection formula holds for this adjunction.

 Applying this to the case where $N$ is the unit object $A$ of $\Mod_A$, we get an isomorphism $WV(M) \cong M \otimes WA$, and we have $\Gamma=WA$ by definition of $W$.

 The (symmetric) monoidal structure and the monad structure on $WV$ both induce compatible multiplications on $\Gamma$, which must be commutative and equal to each other by the Eckmann-Hilton argument. The symmetric monoidal structure of $WV$ induces the usual commutative monoid structure on $\Gamma$. This shows that the above isomorphism is an isomorphism of symmetric monoidal monads.
\end{proof}

\begin{proof}[Proof of Proposition~\ref{prop:almost_ef2}]
 Let $\ca{M}=\Comod(A,\Gamma)$. Write $\ca{M}_{\Gamma}$ for the category of modules of the commutative monoid $\Gamma \in \ca{M}$. From Lemma~\ref{lemma:modules_monadic_over_comodules} we know that the adjunction $V \colon \ca{M} \rightleftarrows \Mod_A \colon W$ is monadic. Therefore there exists an equivalence of categories such that the diagram
\[
 \xymatrix{ \Mod_A \ar[rd] \ar[rr]^{\simeq} && \ar[ld] \ca{M}_{\Gamma} \\ & \ca{M}}
\]
 is commutative. We first claim that this equivalence is symmetric monoidal. To see this it suffices to check that the adjoint equivalence $\ca{M}_{\Gamma} \rightarrow \Mod_A$ is symmetric monoidal, and this can be further reduced to showing that it is so on the full subcategory of free $\Gamma$-modules. The latter is equivalent to the Kleisli-category of the symmetric monoidal monad $\Gamma \otimes -$, so the claim follows from the fact that the Kleisli category of a symmetric monoidal monad is a Kleisli object in the 2-category of strong symmetric monoidal functors.

 A basic observation about adjunctions valid in all 2-categories shows that the diagram
\[
 \xymatrix{ \Mod_A \ar[rr]^{\simeq} & \ar@{}[d]|{\cong} & \ca{M}_{\Gamma} \\ & \ar[lu]^{V} \ca{M} \ar[ru]_{\Gamma \otimes-}} 
\]
 commutes up to symmetric monoidal isomorphism.

 Now let $\ca{N}$ be any cocomplete symmetric monoidal closed category, and let $G \colon \ca{M} \rightarrow \ca{N}$ be a cocontinuous symmetric monoidal functor. Define a functor $\overline{G} \colon \ca{M}_{\Gamma} \rightarrow  \ca{N}_{G\Gamma}$ on objects by $\overline{G}(M)=GM$, with evident $G\Gamma$-module structure induced by the strong monoidal structure of $G$. The symmetric monoidal structure on the category of modules of a commutative monoid is defined as a coequalizer of the monoidal structure of the underlying symmetric monoidal category. Using the fact that $G$ preserves coequalizers we find that $\overline{G}$ is strong symmetric monoidal. The diagram
\[
 \xymatrix{\ca{M}_{\Gamma} \ar@{}[rd]|{\cong} \ar[r]^-{\overline{G}} & \ca{N}_{G\Gamma} \\
 \ca{M} \ar[u]^{\Gamma \otimes -} \ar[r]_-{G} & \ca{N} \ar[u]_{G\Gamma \otimes -}}
\]
 commutes up to symmetric monoidal equivalence given by the strong symmetric monoidal structure $\psi_{\Gamma,-} \colon G\Gamma \otimes G- \Rightarrow G(\Gamma \otimes-)$ of $G$. The functor $\overline{G}$ is cocontinuous because colimits in a category of modules are computed as in the underlying symmetric monoidal closed category. Applying these two facts to the case $\ca{N}=\Mod_B$ and $G=VF$ we find that there exists a cocontinuous strong symmetric monoidal functor $\overline{VF}$ such that the diagram
\[
 \xymatrix{\Comod(A,\Gamma) \ar@{}[rrd]|{\cong} \ar[r]^-{F} \ar[d]_{V} & \Comod(B,\Sigma) \ar[r]^-{V} & \Mod_B \ar[d]^{B^{\prime} \ten{B} -} \\
 \Mod_A \ar[rr]_-{\overline{VF}} & & \Mod_{B^{\prime}}} 
\]
 commutes up to symmetric monoidal isomorphism. The conclusion follows from the fact that, up to symmetric monoidal isomorphism, all cocontinuous strong symmetric monoidal functors between module categories are induced by scalar extension along some ring homomorphism.
\end{proof}

\begin{cor}\label{cor:image_of_gamma_ffl_implies_ef2}
 Let $(A,\Gamma)$ and $(B,\Sigma)$ be flat Hopf algebroids, and let
\[
 F \colon \Comod(A,\Gamma) \rightarrow \Comod(B,\Sigma)
\]
 be a cocontinuous strong symmetric monoidal functor. If $VF$ sends the commutative monoid $\Gamma \in \Comod(A,\Gamma)$ to a faithfully flat $B$-algebra, then there exists a Hopf algebroid $(B^{\prime},\Sigma^{\prime})$, a surjective weak equivalence $w \colon (B,\Sigma) \rightarrow (B^{\prime},\Sigma^{\prime})$ and a morphism of Hopf algebroids $f \colon (A,\Gamma) \rightarrow (B^{\prime},\Sigma^{\prime})$ such that the diagram
\[
 \xymatrix{\Comod(A,\Gamma) \ar[rd]_{f_\ast} \ar[rr]^-{F} & \ar@{}[d]|{\cong} & \Comod(B, \Sigma) \ar[ld]^{w_\ast} \\
& \Comod(B^{\prime},\Sigma^{\prime})}
\]
 commutes up to symmetric monoidal equivalence.
\end{cor}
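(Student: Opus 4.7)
The plan is to build $(B^\prime, \Sigma^\prime)$, $w$, and $f$ in sequence, using Proposition~\ref{prop:almost_ef2} together with faithfully flat descent. First, I would set $B^\prime \defl VF(\Gamma)$ and invoke Proposition~\ref{prop:almost_ef2} to obtain a ring homomorphism $\eta \colon A \to B^\prime$ and a symmetric monoidal natural isomorphism
\[
 (B^\prime \ten{B} -) \circ VF \cong (B^\prime \ten{A} -) \circ V \smash{\rlap{.}}
\]
Since $B \to B^\prime$ is faithfully flat by hypothesis, I may form the Hopf algebroid
\[
 (B^\prime, \Sigma^\prime) \defl (B^\prime, B^\prime \ten{B} \Sigma \ten{B} B^\prime) \smash{\rlap{,}}
\]
whose structure maps are inherited by base change from $(B, \Sigma)$. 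By Remark~\ref{rmk:weak_equivalences_hopf_algebroids}, the induced $w \colon (B, \Sigma) \to (B^\prime, \Sigma^\prime)$ is a surjective weak equivalence, so Proposition~\ref{prop:comod_weak_equivalences} gives that $w_\ast$ is a symmetric monoidal equivalence, and I set $G \defl w_\ast \circ F$.

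Next I would produce the Hopf algebroid morphism $f \colon (A, \Gamma) \to (B^\prime, \Sigma^\prime)$. On bases, $f$ is $\eta$. For the arrow component $f_1 \colon \Gamma \to \Sigma^\prime$, apply $G$ to the commutative monoid $\Gamma \in \Comod(A, \Gamma)$; since $G$ is strong symmetric monoidal, $G(\Gamma)$ is a commutative monoid in $\Comod(B^\prime, \Sigma^\prime)$, and combining $V_{B^\prime} \circ w_\ast \cong (B^\prime \ten{B} -) \circ V_B$ with Proposition~\ref{prop:almost_ef2} canonically identifies its underlying $B^\prime$-bimodule with $B^\prime \ten{A} \Gamma$. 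Take $f_1$ to be the composite
\[
 \Gamma \longrightarrow B^\prime \ten{A} \Gamma \xrightarrow{\rho} \Sigma^\prime \ten{A} \Gamma \xrightarrow{\id \ten{A} \varepsilon} \Sigma^\prime \smash{\rlap{,}}
\]
where the first map sends $\gamma$ to $1 \ten{A} \gamma$, $\rho$ is the $\Sigma^\prime$-coaction of $G(\Gamma)$, and $\varepsilon \colon \Gamma \to A$ is the counit of the Hopf algebroid $(A, \Gamma)$.

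The remaining work has two parts, and I expect the second to be the main obstacle. First, one must check that $(\eta, f_1)$ genuinely defines a morphism of Hopf algebroids, i.e.\ that $f_1$ is a ring homomorphism compatible with source, target, counit, comultiplication, and antipode. These should reduce to the counit, coassociativity, multiplicativity, and antipode identities satisfied by $G(\Gamma)$ as a commutative monoid in $\Comod(B^\prime, \Sigma^\prime)$, which hold because $G$ is strong symmetric monoidal and because the Hopf algebroid structure of $\Gamma$ is entirely encoded in its $(A, \Gamma)$-comodule structure together with its commutative monoid structure. Second, one must show that $f_\ast \cong G$ as symmetric monoidal functors. Both $f_\ast$ and $G$ are cocontinuous and strong symmetric monoidal, both send $\Gamma$ to the same comodule by construction, and $V_{B^\prime} \circ f_\ast \cong V_{B^\prime} \circ G \cong (B^\prime \ten{A} -) \circ V_A$. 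The delicate point is to promote these coincidences to a genuine symmetric monoidal natural isomorphism; I expect this to follow from a density argument using cocontinuity and the fact that $\Gamma$ generates $\Comod(A, \Gamma)$ under colimits (which follows from the comonadicity of $V_A$), combined with the coaction compatibility that is built into the definition of $f_1$.
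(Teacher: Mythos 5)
Your construction of $B^{\prime}$, of $(B^{\prime},\Sigma^{\prime})$ as a pushout, of the surjective weak equivalence $w$, and even your explicit formula for $f_1$ all match the paper's proof up to the final step. The genuine gap is in your plan for showing $f_\ast \cong G$: it rests on the claim that ``$\Gamma$ generates $\Comod(A,\Gamma)$ under colimits, which follows from the comonadicity of $V_A$,'' and both halves of this are wrong. Comonadicity exhibits every comodule $M$ as an equalizer $M \to WVM \rightrightarrows WVWVM$ of cofree comodules, i.e.\ as a \emph{limit} of objects built from $\Gamma$, not a colimit. And the colimit-density claim is simply false in general: for $(A,\Gamma)=(k,k[\mathbb{G}_a])$ in characteristic zero one computes $\Hom_{\Comod}\bigl(k[\mathbb{G}_a],k\bigr)=0$, so the trivial representation (the unit object!) receives no nonzero map from $\Gamma$ and hence is not a colimit of copies of $\Gamma$. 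Since the corollary is stated for arbitrary flat Hopf algebroids, you also cannot substitute density of the dualizable comodules, as no resolution property is assumed. So the argument that your hand-built $f$ satisfies $f_\ast \cong G$ does not close; the verification in your part (a) that $(\eta,f_1)$ is a morphism of Hopf algebroids is likewise only asserted.

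The paper sidesteps both verifications at once. After transporting comodule structures along the isomorphism of Proposition~\ref{prop:almost_ef2} (using that the forgetful functor is an isofibration), $w_\ast F$ is isomorphic to a functor $G$ making the square with the two forgetful functors and $B^{\prime}\ten{A}-$ \emph{strictly} commute, and strict lifts of a fixed functor through comonadic (Eilenberg--Moore) forgetful functors correspond bijectively to morphisms of the underlying comonads (the paper cites \S\S2.1--2.2 of Lack--Street), hence to morphisms of Hopf algebroids. This produces $f$ with $f_\ast = G$ by construction, with nothing left to check. Your formula for $f_1$ --- coact on $1\ten{A}\gamma$ in $G(\Gamma)$ and apply $\id\ten{A}\varepsilon$ --- is exactly what that correspondence outputs, so the repair is to replace the density argument by an appeal to (or a direct proof of) this lifting correspondence rather than trying to compare $f_\ast$ and $G$ after the fact.
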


\begin{proof}
 Let $B^{\prime}=VF(\Gamma)$. By assumption, it is a faithfully flat $B$-algebra. Let
\[
\xymatrix{\Sigma \ar[r]^{w} & \Sigma^{\prime} \\ B\times B \ar[u]^{\sigma \otimes \tau} \ar[r] & B^{\prime} \otimes B^{\prime} \ar[u]_{\sigma \otimes \tau}}
\]
 be a pushout diagram in the category of commutative rings. This defines a surjective weak equivalence $w \colon (B,\Sigma) \rightarrow (B^{\prime}, \Sigma^{\prime})$.

 Together with Proposition~\ref{prop:almost_ef2} we get a diagram
\[
 \xymatrix{\Comod(A,\Gamma) \ar@{}[rd]_{\cong} \ar[r]^-{F} \ar[d]_{V} & \Comod(B,\Sigma) \ar[d]^-{V} \ar[r]^-{w_{\ast}} & \Comod(B^{\prime},\Sigma^{\prime}) \ar[ldd]^{V} \\
 \Mod_A \ar[rd]_-{B^{\prime} \ten{A} -} & \Mod_{B} \ar[d]_{B^{\prime} \ten{B} -}\\ & \Mod_{B^{\prime}} }
\]
 which commutes up to symmetric monoidal isomorphism.

 Since comodule structures can be uniquely transferred along isomorphisms, the composite $w_{\ast} F$ is isomorphic to a symmetric monoidal functor $G$ which makes the diagram
\[
 \xymatrix{\Comod(A,\Gamma) \ar[d]_{V} \ar[r]^-G & \Comod(B^{\prime},\Sigma^{\prime}) \ar[d]^{V} \\
 \Mod_A \ar[r]^-{B^{\prime}\ten{A} -} & \Mod_{B^{\prime}} }
\]
 strictly commutative. Therefore $G$ is induced by a morphism of symmetric monoidal comonads (this is a general fact about monads and their categories of Eilenberg-Moore algebras, see \cite[\S\S2.1-2.2]{LACK_STREET_FTMII}), hence it is induced by a morphism of Hopf algebroids.
\end{proof}

 The following corollary gives a characterization of tame functors, which will also be used in \S \ref{section:adams} to show that all strong symmetric monoidal left adjoints between categories of comodules of Adams Hopf algebroids are tame.

\begin{cor}\label{cor:tame_characterization}
 Let $(A,\Gamma)$ and $(B,\Sigma)$ be flat Hopf algebroids. A functor
\[
 F \colon \Comod(A,\Gamma) \rightarrow \Comod(B,\Sigma)
\]
 is tame if and only if $VF$ sends the commutative algebra $\Gamma \in \Comod(A,\Gamma)$ to a faithfully flat $B$-algebra.
\end{cor}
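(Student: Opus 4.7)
The plan is to prove each direction separately, using the machinery already in place, namely Corollary~\ref{cor:image_of_gamma_ffl_implies_ef2}, Proposition~\ref{prop:comod_weak_equivalences}, and Lurie's observation (cited in the paper after Definition~\ref{dfn:tame}) that tame functors preserve faithfully flat algebras.

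For the direction ``$F$ tame $\Rightarrow VF(\Gamma)$ faithfully flat'', I would first establish two preliminary observations. Firstly, the forgetful functor $V\colon \Comod(B,\Sigma) \to \Mod_B$ is itself tame: it is a strong symmetric monoidal left adjoint, it is exact, and a comodule is flat precisely when its underlying $B$-module is flat, so $V$ trivially preserves both flat objects and flat-ended short exact sequences. Secondly, the class of tame functors is closed under composition (strong symmetric monoidality, the adjoint condition, and preservation of flat objects compose directly; preservation of flat-ended short exact sequences composes since the first tame functor produces a flat-ended short exact sequence to which the second tame functor may be applied). Thus $VF$ is tame. It then suffices to check that $\Gamma$ is a faithfully flat commutative algebra in $\Comod(A,\Gamma)$: the multiplication is supplied by the Hopf algebroid structure, and since the source map $A \to \Gamma$ is faithfully flat, the functor $\Gamma \otimes_A -$ on $\Mod_A$ is faithful and exact; using that $V$ is faithful and exact and that $V(\Gamma \otimes -) \cong \Gamma \otimes_A V(-)$, this property lifts to $\Comod(A,\Gamma)$. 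Applying Lurie's result to $VF$ shows that $VF(\Gamma)$ is a faithfully flat algebra in $\Mod_B$, i.e.\ a faithfully flat $B$-algebra.

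For the direction ``$VF(\Gamma)$ faithfully flat $\Rightarrow F$ tame'', I would apply Corollary~\ref{cor:image_of_gamma_ffl_implies_ef2} directly, which furnishes a flat Hopf algebroid $(B',\Sigma')$, a surjective weak equivalence $w\colon (B,\Sigma) \to (B',\Sigma')$, and a morphism of Hopf algebroids $f\colon (A,\Gamma) \to (B',\Sigma')$ with $w_\ast F \cong f_\ast$ as symmetric monoidal functors. By Proposition~\ref{prop:comod_weak_equivalences}, $w_\ast$ is an equivalence, so $F \cong w_\ast^{-1} f_\ast$ for any quasi-inverse $w_\ast^{-1}$. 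The functor $f_\ast$ is tame: its underlying functor on modules is the scalar extension $B' \otimes_A -$, which preserves flat modules and flat-ended short exact sequences, and these properties lift through the faithful exact forgetful functor $V$. The functor $w_\ast^{-1}$ is a symmetric monoidal equivalence, hence trivially tame. Closure of tame functors under composition (already used above) now yields that $F$ is tame.

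I expect the arguments to be essentially mechanical given the previously established results; the only place that requires a moment's care is verifying that $\Gamma$ is genuinely a \emph{faithfully flat algebra} in the monoidal-categorical sense (not merely that its underlying module is faithfully flat), and transporting faithful flatness back and forth between $\Comod$ and $\Mod$ via the tame forgetful functor $V$. Everything else reduces to the observation that tame is closed under composition and contains all symmetric monoidal equivalences as well as the scalar extensions induced by Hopf algebroid morphisms.
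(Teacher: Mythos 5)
Your proposal is correct and follows essentially the same route as the paper: the forward direction observes that a comodule is flat iff its underlying module is, deduces that $V$ (hence $VF$) is tame, and applies Lurie's observation that tame functors preserve faithfully flat algebras; the converse applies Corollary~\ref{cor:image_of_gamma_ffl_implies_ef2} to reduce to functors induced by Hopf algebroid morphisms, which are tame for the same flatness reason. You are merely more explicit than the paper about closure of tame functors under composition and about why $\Gamma$ is a faithfully flat algebra in $\Comod(A,\Gamma)$, both of which the paper treats as immediate.
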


 \begin{proof}
 Note that a comodule of a flat Hopf algebroid is flat (in the sense that tensoring with it is exact) if and only if its underlying module is flat. Indeed, since the right adjoint is faithful and exact it suffices to check that $W(VM\otimes -)$ is exact whenever $M$ is a flat comodule. This is an immediate consequence of the projection formula $W(VM\otimes -) \cong M\otimes W(-)$.

 Assume first that $F$ is tame. Note that $\Gamma \in \Comod(A,\Gamma)$ is a faithfully flat algebra in the sense of \cite[Definition~5.6]{LURIE}. The forgetful functor $V$ is exact and preserves flat objects by the above argument. Thus $V$, and therefore $VF$, are both tame. The conclusion follows since tame functors send faithfully flat algebras to faithfully flat algebras (see \cite[Remark~5.10]{LURIE}).

 Conversely, assume that $VF$ sends $\Gamma$ to a faithfully flat algebra. From Corollary~\ref{cor:image_of_gamma_ffl_implies_ef2} we know that, up to equivalence, $F$ is given by the functor induced by a morphism of Hopf algebroids. All such functors are tame because a comodule is flat if and only if its underlying module is.
 \end{proof}

\begin{cor}\label{cor:comod_ef2}
 The pseudofunctor
\[
 \Comod \colon \ca{H} \rightarrow \ca{T}
\]
  satisfies the dual of condition (EF2) of Proposition~\ref{prop:localization}.
\end{cor}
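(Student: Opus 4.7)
The plan is to deduce this corollary almost directly from Corollaries~\ref{cor:tame_characterization} and \ref{cor:image_of_gamma_ffl_implies_ef2}; the work has all been packaged into those two results, so what remains is mainly bookkeeping. First I would unpack the dual of (EF2) in the present situation. Because the class of surjective weak equivalences admits a right calculus of fractions in $\ca{H}^{\op}$ by Proposition~\ref{prop:bf3} (the 2-category of affine groupoids is $\ca{H}^{\op}$), the correct localization criterion to apply to $\Comod \colon \ca{H} \to \ca{T}$ is the left-calculus (i.e.\ $(-)^{\op}$-dualized) form of Proposition~\ref{prop:localization}. The dual of (EF2) then reads as follows: for every tame functor
\[
 F \colon \Comod(A,\Gamma) \to \Comod(B,\Sigma),
\]
there exist a surjective weak equivalence $w \colon (B,\Sigma) \to (B^{\prime},\Sigma^{\prime})$ in $\ca{H}$ and a morphism of Hopf algebroids $g \colon (A,\Gamma) \to (B^{\prime},\Sigma^{\prime})$ together with a symmetric monoidal isomorphism $g_{\ast} \cong w_{\ast} \cdot F$.

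Next I would verify the hypothesis of Corollary~\ref{cor:image_of_gamma_ffl_implies_ef2}. By Definition~\ref{dfn:tame}, a tame functor is in particular a cocontinuous strong symmetric monoidal functor, so the abstract input matches. The additional faithful-flatness condition $VF(\Gamma) \in \Mod_B$ is provided by the \emph{only if} direction of Corollary~\ref{cor:tame_characterization}: since $F$ is tame, $VF(\Gamma)$ is a faithfully flat $B$-algebra.

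Corollary~\ref{cor:image_of_gamma_ffl_implies_ef2} therefore applies and produces exactly the triangle
\[
 \xymatrix{\Comod(A,\Gamma) \ar[rd]_{g_\ast} \ar[rr]^-{F} & \ar@{}[d]|{\cong} & \Comod(B, \Sigma) \ar[ld]^{w_\ast} \\
& \Comod(B^{\prime},\Sigma^{\prime})}
\]
demanded by the dual of (EF2), completing the argument. I do not anticipate any serious obstacle here; the only care needed is in matching the direction conventions of Pronk's (EF2) with the $(-)^{\op}$-dualized situation we are in, and in recognizing that the cocontinuity and strong symmetric monoidality of a tame functor are precisely what Corollary~\ref{cor:image_of_gamma_ffl_implies_ef2} requires as structural input. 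All the substantive content—namely, recognizing tameness via the image of $\Gamma$ and reconstructing $F$ up to a weak equivalence from a bona fide morphism of Hopf algebroids—has already been established.
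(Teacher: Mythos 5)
Your proposal is correct and follows exactly the paper's own argument: the paper's proof is the one-line citation of Corollary~\ref{cor:image_of_gamma_ffl_implies_ef2} and Corollary~\ref{cor:tame_characterization}, and you have simply spelled out the bookkeeping (dualizing (EF2), invoking the ``only if'' direction of the tameness characterization to get faithful flatness of $VF(\Gamma)$, then applying the factorization corollary). No gaps.
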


\begin{proof}
 This follows from Corollary~\ref{cor:image_of_gamma_ffl_implies_ef2} and Corollary~\ref{cor:tame_characterization}.
\end{proof}

 The 2-category $\ca{T}$ is large, because the category of tame functors between large categories need not be small. Nevertheless, the following corollaries show that there are no size issues in the essential image of the pseudofunctor $\Comod$.

\begin{cor}\label{cor:tame_preserves_presentability}
 Tame functors send $\lambda$-presentable comodules to $\lambda$-presentable comodules for all regular cardinals $\lambda$.
\end{cor}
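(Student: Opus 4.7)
The plan is to combine the two preceding corollaries with a general fact about $\lambda$-presentability in categories of comodules over a flat Hopf algebroid. Given a tame functor $F \colon \Comod(A,\Gamma) \rightarrow \Comod(B,\Sigma)$, Corollary~\ref{cor:tame_characterization} tells us that $VF(\Gamma)$ is faithfully flat over $B$, so Corollary~\ref{cor:image_of_gamma_ffl_implies_ef2} supplies a surjective weak equivalence $w \colon (B,\Sigma) \rightarrow (B^{\prime},\Sigma^{\prime})$ and a morphism of Hopf algebroids $f \colon (A,\Gamma) \rightarrow (B^{\prime},\Sigma^{\prime})$ such that $w_\ast F \simeq f_\ast$. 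Since $w_\ast$ is an equivalence (Proposition~\ref{prop:comod_weak_equivalences}), it preserves and reflects $\lambda$-presentability, so it suffices to show that $f_\ast$ preserves $\lambda$-presentable objects.

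First I would establish the following auxiliary claim: for any flat Hopf algebroid $(A,\Gamma)$, a comodule $M$ is $\lambda$-presentable in $\Comod(A,\Gamma)$ if and only if its underlying $A$-module $VM$ is $\lambda$-presentable in $\Mod_A$. The ``only if'' direction is formal: $V$ is a left adjoint with right adjoint $W = \Gamma \ten{A} -$, and $W$ is itself a left adjoint (with right adjoint $\Hom_A(\Gamma,-)$), hence preserves all colimits; thus $V$ sends $\lambda$-presentables to $\lambda$-presentables. For the ``if'' direction I would use that $V$ is comonadic, so a morphism $M \rightarrow N$ of comodules is the equalizer of the two obvious maps $\Hom_A(VM,VN) \rightrightarrows \Hom_A\bigl(VM, \Gamma \ten{A} VN\bigr)$ in $\Ab$. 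Since $V$ preserves colimits, $\Gamma$ is flat, and $\lambda$-filtered colimits commute with finite limits in $\Ab$, the functor $\Hom_{\Comod(A,\Gamma)}(M,-)$ preserves $\lambda$-filtered colimits whenever $\Hom_A(VM,-)$ does.

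With the claim in hand, the conclusion is immediate: if $M$ is $\lambda$-presentable in $\Comod(A,\Gamma)$, then $VM$ is $\lambda$-presentable in $\Mod_A$, and $V f_\ast M = B^{\prime} \ten{A} VM$ is $\lambda$-presentable in $\Mod_{B^{\prime}}$ because $B^{\prime} \ten{A} -$ is a left adjoint whose right adjoint (restriction of scalars along $A \rightarrow B^{\prime}$) preserves all colimits. Applying the claim again, $f_\ast M$ is $\lambda$-presentable in $\Comod(B^{\prime},\Sigma^{\prime})$.

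The main obstacle is the ``if'' direction of the auxiliary claim, since a priori the hom-set in a category of comodules is not computed pointwise from the underlying modules; the argument hinges on expressing it as a finite limit in $\Mod_A$ (or in $\Ab$) and on the flatness of $\Gamma$ so that $\Gamma \ten{A} -$ respects $\lambda$-filtered colimits.
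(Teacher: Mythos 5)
Your proposal is correct and follows essentially the same route as the paper: reduce via Corollaries~\ref{cor:tame_characterization} and \ref{cor:image_of_gamma_ffl_implies_ef2} to functors induced by morphisms of Hopf algebroids, use the fact that a comodule is $\lambda$-presentable exactly when its underlying module is, and conclude via extension of scalars. The only difference is that the paper cites this last equivalence from Hovey (\cite[Proposition~1.3.3]{HOVEY}) whereas you prove it inline; your equalizer argument for the ``if'' direction (and the adjoint argument for the ``only if'' direction) is sound.
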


\begin{proof}
 Since equivalences preserve $\lambda$-presentable comodules, it suffices to show this for functors induced by morphisms of Hopf algebroids. From \cite[Proposition~1.3.3]{HOVEY} we know that a comodule is $\lambda$-presentable if and only if its underlying module is. The claim follows from the fact that extension of scalars sends $\lambda$-presentable modules to $\lambda$-presentable modules.
\end{proof}

\begin{cor}\label{cor:tame_essentially_small}
 Let $(A,\Gamma)$ and $(B,\Sigma)$ be flat Hopf algebroids. The category of tame functors
\[
 \Comod(A,\Gamma) \rightarrow \Comod(B,\Sigma)
\]
 and symmetric monoidal natural transformations between them is essentially small. 
\end{cor}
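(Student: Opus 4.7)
The plan is to combine Corollaries \ref{cor:image_of_gamma_ffl_implies_ef2} and \ref{cor:tame_preserves_presentability} to bound the equivalence classes of objects, and then use cocontinuity together with local presentability of $\Comod(A,\Gamma)$ to bound the hom-classes.

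First I would control the equivalence classes of tame functors. Pick a regular cardinal $\lambda$ such that $\Gamma\in\Comod(A,\Gamma)$ is $\lambda$-presentable (equivalently, by \cite[Proposition~1.3.3]{HOVEY}, such that $\Gamma$ is $\lambda$-presentable as an $A$-module). For any tame functor $F$, Corollary \ref{cor:tame_preserves_presentability} shows that $F(\Gamma)$ is $\lambda$-presentable in $\Comod(B,\Sigma)$, so $B' := VF(\Gamma)$ is a $\lambda$-presentable $B$-module. Up to isomorphism the $\lambda$-presentable $B$-modules form a set, and on each such module the faithfully flat commutative $B$-algebra structures form a set; so the possible $B'$ arising this way lie in a set $S_0$. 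By Corollary \ref{cor:image_of_gamma_ffl_implies_ef2}, each tame $F$ is symmetric monoidally equivalent to a composite $w_\ast^{-1}\circ f_\ast$, in which the surjective weak equivalence $w\colon(B,\Sigma)\to(B',\Sigma')$ is uniquely determined by $B'\in S_0$ (with $\Sigma' = B'\ten{B}\Sigma\ten{B}B'$), and $f\colon(A,\Gamma)\to(B',\Sigma')$ is a morphism of Hopf algebroids. Since morphisms of Hopf algebroids between two fixed objects form a set, it follows that the equivalence classes of tame functors form a set.

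Next I would show that the symmetric monoidal natural transformations between two fixed tame functors $F,G$ form a set. Both $F$ and $G$ are left adjoints, hence cocontinuous, and $\Comod(A,\Gamma)$ is a Grothendieck abelian category and therefore locally presentable. Consequently each of $F,G$ is determined up to unique isomorphism by its restriction to a small full subcategory of $\mu$-presentable comodules for some sufficiently large regular cardinal $\mu$, and any natural transformation $F\Rightarrow G$ is determined by the corresponding transformation between these restrictions. Hence $[\Comod(A,\Gamma),\Comod(B,\Sigma)](F,G)$ is a set, and the symmetric monoidal natural transformations form a subset.

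The main obstacle is largely bookkeeping: choosing $\lambda$ and $\mu$ appropriately, and checking that at every stage the ``set'' produced is genuinely a set rather than a proper class. The substantive work has already been done in Corollaries \ref{cor:image_of_gamma_ffl_implies_ef2} and \ref{cor:tame_preserves_presentability}, which reduce the essential smallness statement to standard facts about locally presentable categories and essentially small categories of modules of bounded presentability rank.
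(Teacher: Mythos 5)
Your argument is correct, but it splits the work differently from the paper. For the hom-sets you do exactly what the paper does: tame functors are cocontinuous, $\Comod(A,\Gamma)$ is locally $\lambda$-presentable, so a tame functor and any natural transformation between two of them are determined by their restrictions to the essentially small subcategory of $\lambda$-presentable comodules, which by Corollary~\ref{cor:tame_preserves_presentability} lands in the $\lambda$-presentable $(B,\Sigma)$-comodules. The paper, however, uses this \emph{single} observation to dispose of the whole statement at once: the restriction construction embeds the entire category of tame functors (objects and morphisms together) into the category of functors between two essentially small categories, so no separate count of isomorphism classes of objects is needed. You instead bound the objects by invoking the structural classification of Corollary~\ref{cor:image_of_gamma_ffl_implies_ef2}, parametrizing each tame $F$ up to equivalence by the $B$-algebra $B'=VF(\Gamma)$ (which lies in a set by presentability) together with a Hopf algebroid morphism $(A,\Gamma)\rightarrow(B',\Sigma')$. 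This is heavier machinery than the paper needs, and it involves a little care (an isomorphism class of $F$ is controlled only up to the choices of $\Sigma'$, of $f$, and of a quasi-inverse to $w_\ast$), but it does go through and has the mild benefit of exhibiting an explicit small parametrizing set of Hopf-algebroid data rather than a mere embedding into a small functor category.
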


\begin{proof}
 The category $\Comod(A,\Gamma)$ is locally $\lambda$-presentable for some regular cardinal $\lambda$ (see \cite{PORST}). Every tame functor preserves $\lambda$-filtered colimits, so up to isomorphism it is determined by its restriction to $\lambda$-presentable $(A,\Gamma)$-comodules. From Corollary~\ref{cor:tame_preserves_presentability} we know that this restriction factors through the essentially small category of $\lambda$-presentable $(B,\Sigma)$-comodules. This shows that the category of tame functors can be embedded in the category of functors between two essentially small categories.
\end{proof}

\subsection{Interlude on the formal theory of monads}\label{section:ftm}
 We have seen that the pseudofunctor $\Comod$ sends surjective weak equivalences to equivalences, and that it satisfies conditions (EF1) and (EF2) of Proposition~\ref{prop:localization_pronk}. It remains to show that it also satisfies (EF3), meaning that it is fully faithful on 2-cells. The functor $\Comod$ can be broken up into several pieces, and we will show that each of them is fully faithful on 2-cells. To do this it is convenient to take a slightly more abstract point of view.

 B\'enaubou \cite{BENABOU} observed that a small category can be thought of as a monad in the bicategory of spans of sets. The span is given by the source and target maps, the multiplication of the monad correponds to the composition function, and the unit corresponds to the function which sends an object to its identity morphism. More generally, a category $C$ internal to a finitely complete category $\ca{E}$ gives rise to a monad in the bicategory $\Span(\ca{E})$. The `formal theory of monads' developed by Street \cite{STREET_FTM} is the systematic study of monads in arbitrary 2-categories $\ca{K}$. In \cite{STREET_FTM}, Street defines a 2-category of monads in $\ca{K}$, and he defines \emph{Eilenberg-Moore objects}, which generalize the category of Eilenberg-Moore algebras of a monad in the case $\ca{K}=\Cat$. For $\ca{K}=\Span(\ca{E})$, a 1-cell of monads whose underlying span is of the form
\[
 \xymatrix@!=5pt{& D_0 \ar@{=}[ld] \ar[rd]^{f_0} \\ C_0 && D_0 } 
\]
 is precisely an internal functor from $C$ to $D$. On the other hand, natural transformations do not correspond to 2-cells of monads in the sense of \cite{STREET_FTM}. In the sequel \cite{LACK_STREET_FTMII}, Lack and Street proved that natural transformations of internal functors correspond to a more general kind of 2-cell between monad morphisms. Since we are interested in the dual situation of comonads, we recall the duals of the definitions from \cite{LACK_STREET_FTMII}.

\begin{dfn}[Lack-Street]
 Let $\ca{K}$ be a 2-category. A comonad on an object $A$ of $\ca{K}$ consists of a 1-cell $c \colon A \rightarrow A$, together with two 2-cells $\varepsilon \colon c \Rightarrow 1_A$ and $\delta \colon c \Rightarrow cc$, subject to the usual axioms. A 1-cell from a comonad $c$ on $A$ to a comonad $d$ on $B$ consists of a pair $(f,\varphi)$ of a 1-cell $f \colon A \rightarrow B$ and a 2-cell $\varphi \colon fc \Rightarrow df$ such that the two diagrams
\[
\vcenter{ \xymatrix{
 fc \ar[d]_{f\delta} \ar[rr]^-{\varphi} && df \ar[d]^{\delta f}\\
fcc \ar[r]^-{ \varphi c} & dfc \ar[r]^-{d\varphi } & ddf 
}} \quad \mbox{and} \quad
\vcenter{\xymatrix{
 fc \ar[rd]_{f\varepsilon} \ar[rr]^-{\varphi} && df \ar[ld]^{\varepsilon f} \\ 
& f 
}}
\]
 are commutative (these two notions already appear in \cite{STREET_FTM}). A 2-cell between two 1-cells $(f,\varphi)$ and $(g,\psi)$ of comonads consists of a 2-cell $\rho \colon fc \Rightarrow g$ in $\ca{K}$ such that the diagram
\[
 \xymatrix{fc \ar[r]^-{f\delta} \ar[d]_{f\delta} & fcc \ar[r]^-{\varphi c} & dfc \ar[d]^{d\rho} \\
 fcc \ar[r]^-{\rho c} & gc \ar[r]^-{\psi} & dg }
\]
 is commutative. The resulting 2-category is denoted by $\EM^c(\ca{K})$.

 If $\ca{K}$ is a bicategory, one has to include coherence isomorphisms in the evident places. This way one obtains a bicategory which we will again denote by $\EM^c(\ca{K})$
\end{dfn}

 In \cite{LACK_STREET_FTMII}, Lack and Street show that the category $\EM^c(\ca{K})$ is the free completion under Eilenberg-Moore objects for comonads. For us, the following facts are more important.

\begin{prop}\label{prop:cocategories_cospans_ff_2cells}
 Let $\ca{C}$ be a finitely cocomplete category, and let $\Cat^c(\ca{C})$ be the 2-category of cocategory objects in $\ca{C}$. The pseudofunctor
\[
 \Cat^c(\ca{C}) \rightarrow \EM^c\bigr(\Cospan(\ca{C})\bigl)
\]
 which sends a cocategory object
\[
 \xymatrix{ A \ar@<4pt>[r]^{\sigma} \ar@<-4pt>[r]_{\tau} & \ar[l]|{\varepsilon} \Gamma & \ar[l]|-{\delta} \ar@<5pt>[l] \ar@<-5pt>[l] \Gamma \mathop{{\cup}_A} \Gamma}
\]
 to the comonad
\[
 \xymatrix@!=5pt{& \Gamma \\ A \ar[ru]^{\tau} && \ar[lu]_{\sigma} A}  
\]
 in cospans is fully faithful on 2-cells.
\end{prop}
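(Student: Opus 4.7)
The plan is to prove this by directly unwinding both kinds of 2-cell and matching them via the universal property of pushouts in $\ca{C}$. First I would describe the pseudofunctor on 1-cells: a cofunctor $(f_0,f_1)\colon(A_0,\Gamma_A)\to(B_0,\Gamma_B)$ is sent to the comonad 1-cell whose underlying cospan is $A_0 \xrightarrow{f_0} B_0 \xleftarrow{=} B_0$, together with a structural 2-cell $\varphi_f$ encoding $f_1$ through the pushout universal property. A direct computation shows that the apex of $\hat f \circ c$ (where $c$ denotes the comonad associated to $A$) is the pushout $\Gamma_A \cup_{A_0} B_0$ formed along $\tau_A$ and $f_0$, while the apex of $d \circ \hat f$ is simply $\Gamma_B$ because one leg of $\hat f$ is an identity and the corresponding pushout collapses; $\varphi_f$ is then the morphism $\Gamma_A \cup_{A_0} B_0 \to \Gamma_B$ induced by $f_1$ and $\sigma_B$ via the universal property.

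Next I would unpack a 2-cell in $\EM^c\bigl(\Cospan(\ca{C})\bigr)$ between the images of two parallel cofunctors $(f_0,f_1)$ and $(g_0,g_1)$: by definition its underlying datum is a 2-cell $\rho\colon \hat f \circ c \Rightarrow \hat g$ in $\Cospan(\ca{C})$, i.e.\ a morphism $\Gamma_A \cup_{A_0} B_0 \to B_0$ between apexes fitting into two triangles with the legs of the relevant cospans. The triangle against the identity leg of $\hat g$ forces the $B_0$-component of $\rho$ to be $1_{B_0}$, so by the pushout universal property the remaining data is a single morphism $\overline{\rho}\colon \Gamma_A \to B_0$ in $\ca{C}$ subject to explicit source and target conditions in terms of $\sigma_A$, $\tau_A$, $f_0$ and $g_0$. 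Inspection shows that this is precisely the underlying datum of a conatural transformation from $(f_0,f_1)$ to $(g_0,g_1)$ in $\Cat^c(\ca{C})$, together with its source/target compatibilities.

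Third, I would verify that the commutative-square axiom in the definition of an $\EM^c$ 2-cell, once pushed through the pushout presentations of the apexes of $\hat f \circ c$ and $\hat f \circ c \circ c$ and the explicit formulas for $\varphi_f$ and $\psi_g$, translates term-for-term into the conaturality equation for $\overline{\rho}$ with respect to the cocomposition $\delta_A\colon \Gamma_A \to \Gamma_A \cup_{A_0} \Gamma_A$ of the cocategory $A$. Combining these three steps yields the asserted bijection and hence full faithfulness on 2-cells.

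The principal technical obstacle is this last step: the $\EM^c$-axiom is a commutative square whose nodes are apexes of iteratively formed pushouts in $\ca{C}$, and verifying that it is equivalent to the conaturality equation requires careful bookkeeping of the universal cones at each stage. The conceptual content, however, is simply that the comonad 2-cell axiom and the conaturality axiom encode the same naturality condition, dual to the classical naturality condition on internal natural transformations between internal functors.
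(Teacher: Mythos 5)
Your proposal is correct in substance, but it takes a different route from the paper: the paper's entire proof is a citation, observing that the statement is the exact dual of the extension of B\'enabou's observation to the generalized 2-cells of monads worked out by Lack and Street in \S2.3 of ``The formal theory of monads II'' (internal categories in a finitely complete category versus monads in spans), whereas you reprove that result directly in the cospan setting. Your unwinding of the data is accurate: the image of a cofunctor has underlying cospan $A_0 \xrightarrow{f_0} B_0 \xleftarrow{=} B_0$, the apex of $\hat f \circ c$ is the pushout $\Gamma_A \cup_{A_0} B_0$ while that of $d \circ \hat f$ collapses to $\Gamma_B$, the identity leg of $\hat g$ forces the $B_0$-component of $\rho\colon \hat f \circ c \Rightarrow \hat g$ to be $1_{B_0}$, and the residual datum $\overline{\rho}\colon \Gamma_A \to B_0$ with its two leg conditions is exactly a conatural transformation of cofunctors (the dual of a morphism $C_0 \to D_1$ with prescribed source and target); the remaining square axiom in $\EM^c\bigl(\Cospan(\ca{C})\bigr)$ is the conaturality condition with respect to $\delta_A$. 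The only points to watch are bookkeeping ones: which of $\sigma_A$, $\tau_A$ gets glued to $f_0$ in forming $\hat f\circ c$ depends on the orientation convention for cospan composition, and the final axiom-matching step, which you flag but do not carry out, is precisely the computation Lack and Street perform. What your approach buys is a self-contained, explicit proof; what the paper's approach buys is brevity and a clean conceptual statement (duality plus an existing theorem) at the cost of sending the reader to the literature.
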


\begin{proof}
 This is dual to the extension of B\'enaubou's observation to the generalized 2-cells of monads introduced in \cite{LACK_STREET_FTMII} mentioned above. This dual situation of category objects internal to a finitely complete category and spans in that category is discussed in \cite[\S2.3]{LACK_STREET_FTMII}.
\end{proof}

\begin{lemma}\label{lemma:emc_ff_2cells}
 Let $F \colon \ca{K} \rightarrow \ca{L}$ be a pseudofunctor which is fully faithful on 2-cells. Then the induced pseudofunctor
\[
 \EM^c(F) \colon \EM^c(\ca{K}) \rightarrow \EM^c(\ca{L})
\]
 is fully faithful on 2-cells.
\end{lemma}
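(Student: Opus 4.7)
The plan is to unpack the definition of 2-cells in $\EM^c$ and reduce the statement directly to the hypothesis on $F$. Fix comonads $(A,c,\varepsilon,\delta)$ and $(B,d,\varepsilon',\delta')$ in $\ca{K}$ together with two 1-cells $(f,\varphi),(g,\psi)\colon (A,c)\to(B,d)$ in $\EM^c(\ca{K})$. By definition, a 2-cell in $\EM^c(\ca{K})$ between them is precisely a 2-cell $\rho\colon fc\Rightarrow g$ in $\ca{K}$ making the compatibility square from the definition commute. Thus the hom-set $\EM^c(\ca{K})\bigl((f,\varphi),(g,\psi)\bigr)$ is the subset of $\ca{K}(fc,g)$ cut out by this single equation, and analogously for $\EM^c(\ca{L})$.

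The action of $\EM^c(F)$ on such a $\rho$ is $F\rho$ conjugated by the coherence isomorphism $Ff\cdot Fc\cong F(fc)$ coming from the pseudofunctor structure. Post-composition with this coherence iso therefore gives a map
\[
\ca{K}(fc,g)\longrightarrow \ca{L}(Ff\cdot Fc,Fg),
\]
which is a bijection, since $F$ is fully faithful on 2-cells and the coherence iso is invertible. It remains to show that this bijection restricts to the subsets picked out by the $\EM^c$-axiom.

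Faithfulness of $\EM^c(F)$ on 2-cells is then immediate from faithfulness of $F$. For fullness, suppose $\tilde\rho$ is an $\EM^c(\ca{L})$-2-cell between $\EM^c(F)(f,\varphi)$ and $\EM^c(F)(g,\psi)$, and lift it through the bijection above to a unique 2-cell $\rho\colon fc\Rightarrow g$ in $\ca{K}$. The $\EM^c$-axiom for $\rho$ is an equation between two specific 2-cells of $\ca{K}$. Applying $F$ to each side and using the coherence isomorphisms of $F$ (together with the definitions of $\EM^c(F)(f,\varphi)$ and $\EM^c(F)(g,\psi)$) turns this into the $\EM^c$-axiom for $\tilde\rho$ in $\ca{L}$, which holds by hypothesis. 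Faithfulness of $F$ on 2-cells then forces the original equation in $\ca{K}$ to hold, so $\rho$ lies in the required subset and maps to $\tilde\rho$.

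The main obstacle is purely bookkeeping: in the bicategory case one must track the coherence isomorphisms $Ff\cdot Fc\cong F(fc)$ (and the analogous ones involving $Fd\cdot Ff$, etc.) to see that the two occurrences of the $\EM^c$-axiom really do correspond to one another after applying $F$. In the 2-category case there is nothing to track and the argument above is essentially automatic. For the bicategory case, one can either strictify $\ca{K}$ and $\ca{L}$ via the coherence theorem for bicategories, or else write the axiom diagrams out once and verify by inspection that inserting the coherence 2-cells for $F$ converts the $\ca{K}$-diagram into the $\ca{L}$-diagram pasted along $\tilde\rho$.
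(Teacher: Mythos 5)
Your proposal is correct and follows essentially the same route as the paper's proof: identify the 2-cells of $\EM^c(\ca{K})$ as those 2-cells of $\ca{K}$ satisfying one pasting equation, and use full faithfulness of $F$ on 2-cells to transfer both the bijection of hom-sets and the validity of that equation. The paper states this in two sentences; your version merely adds the explicit bookkeeping of the coherence isomorphisms, which is a harmless elaboration.
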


\begin{proof}
 The 2-cells in $\EM^c(\ca{K})$ are 2-cells $\rho$ in $\ca{K}$ such that one equation between two pasting diagrams hold. Since $F$ is fully faithful on 2-cells, this equation holds for $\rho$ in $\ca{K}$ if and only if it holds for $F\rho$ in $\ca{L}$. 
\end{proof}

 Recall from \cite{STREET_FTM} that an Eilenberg-Moore object for a comonad is universal among coactions of that comonad. In $\Cat$, the usual category of comodules of a comonad is an Eilenberg-Moore object of that comonad. If $c$ is a comonad on $A$ which has an Eilenberg-Moore object, we will denote that object by $v\colon A_c \rightarrow A$. The universal property of $A_c$ implies that $v$ has a right adjoint $w$, and that $c$ is equal to the comonad induced by this adjunction, that is, $c=v\cdot w$, with counit and comultiplication given by the counit and the unit of the adjunction.

 Now, given a 1-cell $(f,\varphi)$ between two comonads $c$ and $d$ which have Eilenberg-Moore objects, we obtain an induced coaction
\[
 \xymatrix{ fv \ar[r]^-{fv\eta} & fvwv \ar[r]^-{\varphi v} & dfv }
\]
 which, by the universal property of $B_d$, induces a 1-cell $\bar{f}$ making the diagram
\[
 \xymatrix{A_c \ar[d]_v \ar[r]^{\bar{f}} & B_d  \ar[d]^v \\
A \ar[r]^{f} & B } 
\]
 commutative. A 2-cell $\rho \colon (f,\varphi) \Rightarrow (g,\psi)$ of comonads induces a morphism
\[
 \xymatrix{fv \ar[r]^-{fv\eta} & fvwv \ar[r]^-{\rho v} & gv}
\]
 of the corresponding coactions, so by the universal property of Eilenberg-Moore objects it induces a 2-cell $\bar{\rho} \colon \bar{f} \Rightarrow \bar{g}$.

\begin{prop}\label{prop:emc_description_if_em_objects_exist}
 Let $\ca{K}$ be a 2-category with Eilenberg-Moore objects for comonads. Write $\ca{C}_{\ca{K}}$ for the following 2-category. An object of $\ca{C}_{\ca{K}}$ is a comonad in $\ca{K}$. A 1-cell $c \rightarrow d$ in $\ca{C}_{\ca{K}}$ is a pair $(f,\bar{f})$ making the square
\[
 \xymatrix{A_c \ar[d]_v \ar[r]^{\bar{f}} & B_d  \ar[d]^v \\
A \ar[r]^{f} & B}
\]
 commutative. A 2-cell $(f,\bar{f}) \Rightarrow (g,\bar{g})$ is a 2-cell $\bar{f} \Rightarrow \bar{g}$ in $\ca{K}$, subject to no compatibility conditions.

 The 2-functor
\[
 \EM^c(\ca{K}) \rightarrow \ca{C}_{\ca{K}}
\]
 which sends a comonad to itself, a 1-cell $(f,\varphi)$ to the pair $(f,\bar{f})$ defined above, and a 2-cell $\rho$ to $\bar{\rho}$ is an equivalence of 2-categories.
\end{prop}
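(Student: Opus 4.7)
\medskip

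\noindent\textbf{Proof proposal.} The plan is to show that the displayed 2-functor is identity-on-objects, essentially surjective on 1-cells, and an isomorphism of hom-categories between each pair of comonads, hence a biequivalence. The entire argument is built from two tools: the universal property of an Eilenberg--Moore object for a comonad $c$ on $A$, which says that $v\colon A_c\to A$ is terminal among $c$-coactions, together with the mate correspondence for the induced adjunction $v\dashv w$ with $c=vw$.

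First I would unpack what has to be verified. Since the 2-functor is the identity on comonads, essential surjectivity on objects is automatic. For essential surjectivity on 1-cells, given an object $(f,\bar{f})$ of $\ca{C}_{\ca{K}}$, that is a commuting square $v\bar{f}=fv$, I would define $\varphi\colon fc\Rightarrow df$ as the mate of this identity across the double adjunction $v\dashv w$, explicitly
\[
 fc = fvw = v\bar{f}w \xRightarrow{v\eta\bar{f}w} vwv\bar{f}w = vwfvw \xRightarrow{vwf\varepsilon} vwf = df.
\]
The comonad-morphism axioms for $(f,\varphi)$ (compatibility with $\delta$ and $\varepsilon$) then follow from standard mate calculus together with the fact that $\delta$ and $\varepsilon$ of $c$ are the comultiplication and counit induced by the adjunction $v\dashv w$. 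Finally one checks that the $\bar{f}$ manufactured from this $(f,\varphi)$ via the universal property of $B_d$ agrees, up to canonical isomorphism, with the $\bar{f}$ we started with; this is a direct consequence of the triangle identities.

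Next comes the 2-cell part, which is the main content. Fix comonad 1-cells $(f,\varphi)$ and $(g,\psi)$ with induced $\bar{f},\bar{g}\colon A_c\to B_d$. I claim the map $\rho\mapsto\bar{\rho}$ is a bijection onto the set of \emph{arbitrary} 2-cells $\bar{f}\Rightarrow\bar{g}$ in $\ca{K}$. By the universal property of $B_d$, 2-cells $\bar{f}\Rightarrow\bar{g}$ correspond bijectively to 2-cells $\sigma\colon fv\Rightarrow gv$ that are morphisms of $d$-coactions. Transposing across $v\dashv w$ gives a bijection between such $\sigma$ and 2-cells $\rho\colon fc=fvw\Rightarrow g$, in one direction $\rho=(g\varepsilon)(\sigma w)$ and in the other $\sigma=(\rho v)(fv\eta)$. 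The nontrivial step is to verify that the coaction-compatibility of $\sigma$ translates, under this transposition, into exactly the single pasting diagram that defines a 2-cell of $\EM^c(\ca{K})$ from $(f,\varphi)$ to $(g,\psi)$; this is a diagram chase using the explicit formula for $\varphi$ and $\psi$ as mates and the triangle identities. Composing the two bijections yields the required bijection $\rho\leftrightarrow\bar{\rho}$.

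The main obstacle is this last compatibility translation: it is where all the pasting data (the coaction of $d$ on $fv$, the map $\varphi$, the counit $\varepsilon$ and unit $\eta$) have to be juggled simultaneously, and it is the only place where the specific form of the defining axiom of a 2-cell in $\EM^c(\ca{K})$ is genuinely used. Once this translation is set up cleanly, uniqueness and existence of $\rho$ lifting a given $\bar{\rho}$ follow, and naturality of the bijection in $\bar{\rho}$ under vertical and horizontal composition is automatic from the naturality of the mate correspondence and of the universal property of Eilenberg--Moore objects. For the bicategorical version the only modification is to carry along the coherence isomorphisms of $\ca{K}$, which does not affect the structure of the argument.
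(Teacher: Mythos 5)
Your proposal is correct, and it is essentially the argument the paper relies on: the paper's entire proof is the one-line citation ``this is dual to \cite[\S2.2]{LACK_STREET_FTMII}'', and the Lack--Street argument being invoked is exactly your combination of the one- and two-dimensional universal property of Eilenberg--Moore objects with transposition (mates) across $v \dashv w$. The only cosmetic remark is that with a strict Eilenberg--Moore object the round trip on 1-cells recovers $\bar{f}$ on the nose (the induced coaction collapses to $v\eta\bar{f}$ by a triangle identity), so the hom-functors are in fact isomorphisms rather than mere equivalences, which is consistent with, and slightly stronger than, what you claim.
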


\begin{proof}
 This is dual to \cite[\S2.2]{LACK_STREET_FTMII}.
\end{proof}

 We are now ready to show that the pseudofunctor $\Comod$ is fully faithful on 2-cells.

\begin{prop}\label{prop:comod_ef3}
 The 2-functor $\Comod$ is fully faithful on 2-cells.
\end{prop}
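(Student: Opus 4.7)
The plan is to factor the pseudofunctor $\Comod$ through the formal theory of monads machinery just developed, so that each constituent is manifestly fully faithful on 2-cells. The dualization is organized around Propositions~\ref{prop:cocategories_cospans_ff_2cells} and \ref{prop:emc_description_if_em_objects_exist} together with Lemma~\ref{lemma:emc_ff_2cells}.

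First I would view a flat Hopf algebroid as a cocategory object in $\CommAlg$ equipped with an antipode. Since the antipode is unique when it exists and imposes no extra condition on morphisms or 2-cells (morphisms of Hopf algebroids automatically commute with antipodes), the forgetful pseudofunctor $\ca{H} \to \Cat^c(\CommAlg)$ is fully faithful on 2-cells. Composing with the pseudofunctor of Proposition~\ref{prop:cocategories_cospans_ff_2cells} produces a pseudofunctor
\[
\ca{H} \longrightarrow \EM^c\bigl(\Cospan(\CommAlg)\bigr)
\]
that is still fully faithful on 2-cells.

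Next I would introduce a suitable 2-category $\ca{K}$ of symmetric monoidal cocomplete abelian categories, with cocontinuous lax symmetric monoidal functors as 1-cells and symmetric monoidal natural transformations as 2-cells, and define a pseudofunctor
\[
P \colon \Cospan(\CommAlg) \longrightarrow \ca{K}
\]
sending $A$ to $\Mod_A$ and a cospan $A \to C \leftarrow B$ to the cocontinuous lax symmetric monoidal functor $C \ten{A} - \colon \Mod_A \to \Mod_B$, where the lax structure comes from the commutative multiplication on $C$. The essential verification is that $P$ is fully faithful on 2-cells: a symmetric monoidal natural transformation $\alpha \colon C_1 \ten{A} - \Rightarrow C_2 \ten{A} -$ is determined by its component at the generator $A \in \Mod_A$, which is a $B$-linear map $C_1 \to C_2$; the unit and multiplication axioms force this map to be a ring homomorphism compatible with both legs of the cospan, and any such ring homomorphism conversely induces a unique symmetric monoidal natural transformation.

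With $P$ shown to be fully faithful on 2-cells, Lemma~\ref{lemma:emc_ff_2cells} yields that $\EM^c(P)$ is fully faithful on 2-cells as well. The comonad on $\Mod_A$ corresponding to $\Gamma \ten{A} -$ in $\ca{K}$ has Eilenberg-Moore object $\Comod(A,\Gamma)$ with the forgetful functor as structure 1-cell, so Proposition~\ref{prop:emc_description_if_em_objects_exist} identifies $\EM^c(\ca{K})$ with the 2-category $\ca{C}_{\ca{K}}$, in which 2-cells between $(f,\bar f)$ and $(g,\bar g)$ are exactly symmetric monoidal natural transformations $\bar f \Rightarrow \bar g$. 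Chasing the identifications, the composite pseudofunctor $\ca{H} \to \ca{C}_{\ca{K}}$ coincides up to pseudonatural equivalence with $\Comod$, and the conclusion follows.

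\textbf{Expected difficulty.} The main obstacle I anticipate is not conceptual but book-keeping: one must pin down $\ca{K}$ precisely enough that the relevant Eilenberg-Moore objects genuinely exist inside it and agree with $\Comod(A,\Gamma)$ as \emph{symmetric monoidal} cocomplete abelian categories (not only as abstract categories), and verify that the composite of the pseudofunctors above reproduces $\Comod$ on 1-cells as well as objects, so that full faithfulness on 2-cells transfers to the desired statement. Once that verification is in place, the formal theory of monads does the work.
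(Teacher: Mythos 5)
Your proposal is correct and follows essentially the same route as the paper: both factor $\Comod$ as $\Cat^c(\CommAlg) \rightarrow \EM^c\bigl(\Cospan(\CommAlg)\bigr) \rightarrow \EM^c(\ca{K}) \rightarrow \ca{C}_{\ca{K}} \rightarrow \ca{K}$ and check full faithfulness on 2-cells factor by factor using Proposition~\ref{prop:cocategories_cospans_ff_2cells}, Lemma~\ref{lemma:emc_ff_2cells}, and Proposition~\ref{prop:emc_description_if_em_objects_exist}. The only cosmetic differences are that you verify directly that cospans of rings embed 2-fully-faithfully into module categories (where the paper cites a biequivalence from an earlier work) and that you handle the antipode via the forgetful functor to bialgebroids rather than by proving the statement for bialgebroids outright.
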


\begin{proof}
 We will show that this is true for all commutative bialgebroids, that is, the antipode is not needed for this result. Note that a commutative bialgebroid is precisely a cocategory object in the category of commutative rings. 

 Let $\ca{K}$ be the 2-category whose objects are rings, with 1-cells $A \rightarrow B$ the cocontinuous symmetric monoidal functors $\Mod_A \rightarrow \Mod_B$, and 2-cells the symmetric monoidal natural transformations. Let $\ca{A}$ be the category of commutative rings.

 Recall from \cite[Proposition~10.1.1]{SCHAEPPI} that the pseudofunctor
\[
F \colon \Cospan(\ca{A}) \rightarrow \ca{K} 
\]
 which sends a cospan
\[
 \xymatrix@!=5pt{& B \\ A \ar[ru] && \ar[lu] C }  
\]
 of commutative rings to the symmetric monoidal functor
\[
 B \ten{A} - \colon \Mod_A \rightarrow \Mod_C
\]
 gives a biequivalence between the bicategory $\Cospan(\ca{A})$ and the 2-category $\ca{K}$. It is in particular fully faithful on 2-cells, which is all we need here.

 Let $\ca{C}_{\ca{K}}$ be as in Proposition~\ref{prop:emc_description_if_em_objects_exist}. There is a 2-functor
\[
 \ca{C}_{\ca{K}} \rightarrow \ca{K}
\]
 which sends a comonad in $\ca{K}$ to its Eilenberg-Moore object, which in this case is the category of comodules of the comonad with the usual symmetric monoidal structure. It sends a 1-cell $(f,\bar{f})$ to $\bar{f}$, and a 2-cell to itself. We have now written the pseudofunctor $\Comod$ as the composite
\[
 \xymatrix{\Cat^c(\ca{A}) \ar[r] &
\EM^c\bigl(\Cospan(\ca{A})\bigr) \ar[r]^-{\EM^c(F)} & \EM^c(\ca{K}) \ar[r]^-{\simeq} & \ca{C}_{\ca{K}} \ar[r] & \ca{K}}
\]
 of four pseudofunctors. We need to show that all these pseudofunctors are fully faithful on 2-cells. For the first pseudofunctor, this was shown in Proposition~\ref{prop:cocategories_cospans_ff_2cells}, and for the second pseudofunctor it was shown in Lemma~\ref{lemma:emc_ff_2cells}. The third pseudofunctor is an equivalence by Proposition~\ref{prop:emc_description_if_em_objects_exist}, and the fourth pseudofunctor is fully faithful by definition of 2-cells in $\ca{C}_{\ca{K}}$ (see Proposition~\ref{prop:emc_description_if_em_objects_exist}).
\end{proof}

 This allows us to show that the corestriction of $\Comod \colon \ca{H} \rightarrow \ca{T}$ to its essential image is a bicategorical localization of $\ca{H}$ at the surjective weak equivalences.

\begin{proof}[Proof of Theorem~\ref{thm:comod_localization}]
 We have to check the dual conditions of Proposition~\ref{prop:localization}. That the class of surjective weak equivalences satisfies the dual conditions of (BF3) follows from Proposition~\ref{prop:bf3}, where this was checked for the opposite category of flat affine groupoids. The pseudofunctor $\Comod$ sends surjective weak equivalences to equivalences of categories by Proposition~\ref{prop:comod_weak_equivalences}. It is essentially surjective on objects by assumption, so (EF1) holds. The dual statements of (EF2) and (EF3) are the contents of Corollary~\ref{cor:comod_ef2} and Proposition~\ref{prop:comod_ef3} respectively.
\end{proof}

 From Theorem~\ref{thm:comod_localization} it follows that all symmetric monoidal natural transformations between tame functors are invertible. The following corollary of Proposition~\ref{prop:comod_ef3} shows how this can be deduced directly.

 \begin{cor}\label{cor:tame_natural_transformation_invertible}
 Let $(A,\Gamma)$ and $(B,\Sigma)$ be two flat Hopf algebroids. Any symmetric monoidal natural transformation between two tame functors
\[
 F,G \colon \Comod(A,\Gamma) \rightarrow \Comod(B,\Sigma)
\]
 is invertible.
 \end{cor}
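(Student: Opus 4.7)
The strategy is to reduce to Proposition~\ref{prop:comod_ef3} by first rigidifying both $F$ and $G$ --- via a single surjective weak equivalence applied to the codomain $(B,\Sigma)$ --- to functors literally in the direct image of $\Comod$. The punchline is then that every 2-cell in $\ca{H}$ is automatically invertible: $\ca{H}$ is dual to the 2-category of flat affine groupoid schemes, and natural transformations of internal functors whose codomain is a groupoid are automatically natural isomorphisms (explicitly, the antipode of the target Hopf algebroid provides the inverse).

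First, I would apply Corollary~\ref{cor:image_of_gamma_ffl_implies_ef2} to each of $F$ and $G$ separately, obtaining surjective weak equivalences $w_F\colon (B,\Sigma) \to (B_F,\Sigma_F)$ and $w_G\colon (B,\Sigma) \to (B_G,\Sigma_G)$ together with morphisms $f, g$ of Hopf algebroids from $(A,\Gamma)$ such that $(w_F)_\ast F \cong f_\ast$ and $(w_G)_\ast G \cong g_\ast$ as symmetric monoidal functors. I would then form a common refinement by setting $B' \defl B_F \ten{B} B_G$ and $\Sigma' \defl B' \ten{B} \Sigma \ten{B} B'$; the evident morphism $w \colon (B,\Sigma) \to (B',\Sigma')$ is then a surjective weak equivalence and factors through both $w_F$ and $w_G$ via morphisms $u_F, u_G$ in $\ca{H}$ (the factorizations coincide on underlying ring maps, giving a single well-defined $w$). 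Setting $f' \defl u_F f$ and $g' \defl u_G g$, we obtain $w_\ast F \cong f'_\ast$ and $w_\ast G \cong g'_\ast$.

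Next, transporting $w_\ast \alpha$ along these symmetric monoidal isomorphisms yields a symmetric monoidal natural transformation $\beta \colon f'_\ast \Rightarrow g'_\ast$ between functors in the direct image of $\Comod$. By Proposition~\ref{prop:comod_ef3}, there is a unique 2-cell $\tilde{\beta} \colon f' \Rightarrow g'$ in $\ca{H}$ such that $\Comod(\tilde{\beta}) = \beta$. By the groupoid argument recalled above, $\tilde{\beta}$ is invertible, so $\beta$ is invertible, and hence so is $w_\ast \alpha$. Finally, since $w$ is a surjective weak equivalence, $w_\ast$ is an equivalence of categories by Proposition~\ref{prop:comod_weak_equivalences}, and therefore reflects isomorphisms; this forces $\alpha$ itself to be invertible.

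The only mildly delicate step is the construction and verification of the common refinement $w$; everything else is bookkeeping once we know that (i) tame functors become corepresented by Hopf algebroid morphisms after suitable base change in the codomain, (ii) $\Comod$ is fully faithful on 2-cells, and (iii) 2-cells between morphisms of Hopf algebroids are invertible by virtue of the antipode. The main obstacle I expect is verifying (i) simultaneously for two tame functors, which is precisely what the common refinement accomplishes.
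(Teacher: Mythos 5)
Your proof is correct and follows essentially the same route as the paper: reduce via surjective weak equivalences (Corollary~\ref{cor:image_of_gamma_ffl_implies_ef2}) to the case where both functors are induced by morphisms of Hopf algebroids, lift the 2-cell using Proposition~\ref{prop:comod_ef3}, and conclude from the cogroupoid structure, finishing because equivalences reflect invertibility. The only cosmetic difference is that the paper applies the corollary twice in sequence (rigidify $F$, then rigidify the transported $G$ over the new base), whereas you rigidify both in parallel and build an explicit common refinement $B_F\ten{B}B_G$; both work, and the sequential version spares you the verification of that refinement.
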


 \begin{proof}
 Such a 2-cell is invertible if and only if its composite with an equivalence is. Applying Corollary~\ref{cor:image_of_gamma_ffl_implies_ef2} twice we reduce to the case where both $F$ and $G$ are induced by morphisms of Hopf algebroids. In this case, the 2-cell is the image of a 2-cell of Hopf algebroids under the pseudofunctor $\Comod$ (see Proposition~\ref{prop:comod_ef3}). Since Hopf algebroids are cogroupoid objects, any natural transformation between two 1-cells of Hopf algebroids is invertible.
 \end{proof}

\section{Stacks as a bicategorical localization}\label{section:stackslocalization}

 In this section we will prove Theorem~\ref{thm:stacks_localization}, which states the pseudofunctor which sends a flat affine groupoid to its associated stack exhibits the 2-category of algebraic stacks as a bicategorical localization of the 2-category of flat affine groupoids at the surjective weak equivalences. We start with some generalities about stacks associated to internal groupoids.

\subsection{Stacks associated to internal groupoids}\label{section:groupoids}

 Throughout this section, we write $\Gpd(\ca{C})$ for the 2-category of internal groupoids in a category $\ca{C}$. It is convenient to use Street's construction of the associated stack pseudofunctor (see \cite{STREET_SHEAF, STREET_STACK}). We follow the summary given in \cite{STREET_DESCENT}. Let $\ca{C}$ be a finitely complete category with a Grothendieck pretopology consisting of singleton coverings. In our case, $\ca{C}$ will be the category $\Aff$ of affine schemes, and the covering morphisms are the faithfully flat morphisms $p \colon V \rightarrow U$ of affine schemes. Each covering morphism $p$ as above gives rise to an equivalence relation 
\[
 \Er(p) \defl \xymatrix{V\pb{U} V \ar@<2pt>[r] \ar@<-2pt>[r] & V}
\]
 in $\ca{C}$, where $V\pb{U} V$ is the pullback of $p$ along itself. An equivalence relation can also be thought of as a groupoid with at most one morphism between any two objects. From this point of view, $\Er(p)$ is the groupoid object
\[
 \xymatrix{V\pb{U} V \pb{U} V \ar@<5pt>[r] \ar[r] \ar@<-5pt>[r] & V\pb{U} V \ar@<4pt>[r] \ar@<-4pt>[r] & U \ar[l]}
\]
 in $\ca{C}$. The morphism $p \colon V \rightarrow U$ induces an internal functor $p \colon \Er(p) \rightarrow U$ from $\Er(p)$ to the discrete groupoid $U$.

 Let
\[
\ca{F}=\Ps[\ca{C}^{\op},\Gpd]
\]
 be the 2-category with objects the pseudofunctors $\ca{C}^{\op} \rightarrow \Gpd$, 1-cells the pseudonatural transformations between them and 2-cells the modifications. Recall from \cite[\S4, p.~12]{STREET_DESCENT} that there is a Yoneda-like 2-functor
\[
 \Gpd(\ca{C}) \rightarrow \ca{F}
\]
 which sends an internal groupoid $X$ to the 2-functor $\ca{C}(-,X)$, and that this 2-functor is an equivalence on hom-categories. On discrete groupoids this gives the usual Yoneda embedding of $\ca{C}$ in the category of $\Set$-valued presheaves. We shall not distinguish an internal groupoid from the presheaf of groupoids that it represents notationally.

 \begin{dfn}\label{dfn:stacks}
  A pseudofunctor $F \in \ca{F}$ is called \emph{1-separated} if for all covering morphisms $p \colon V \rightarrow U$, the induced functor
\[
 p^{\ast} \colon \ca{F}(U,F) \rightarrow \ca{F}\bigl(\Er(p),F\bigr)
\]
 is faithful. It is called \emph{2-separated} respectively a \emph{stack} if $p^{\ast}$ is fully faithful respectively an equivalence for all covering morphisms $p \colon V \rightarrow U$.

 Let $\ca{S}$ denote the full sub-2-category of $\ca{F}$ consisting of stacks.
 \end{dfn}

 Street's construction of the associated stack is very similar to the construction of the associated sheaf functor on a Grothendieck site. In \cite{STREET_SHEAF, STREET_STACK}, Street constructs a pseudofunctor
\[
 L \colon \ca{F} \rightarrow \ca{F}
\]
 together with a pseudonatural transformation $\eta_F \colon F \rightarrow LF$ with the properties that $LF$ is always 1-separated, that it is 2-separated whenever $F$ is 1-separated, and that it is a stack when $F$ is 2-separated. In these cases the pseudonatural transformation $\eta_F$ gives a reflection into the sub-category of 1-separated presheaves, 2-separated presheaves, respectively stacks. Thus one obtains the stack associated to a groupoid valued pseudofunctor by applying $F$ three times.

 The value of $LF$ at an object $U \in \ca{C}$ is given by the filtered bicolimit
\[
 \colim \ca{F}\bigr(\Er(p),F\bigl)
\]
 taken over the category of all coverings $p \colon V \rightarrow U$ of $U$. In our case, this category is large. To avoid technical difficulties we therefore restrict attention to a full subcategory of $\Aff$ consisting of objects whose ring of regular functions is contained in some Grothendieck universe. Corollary~\ref{cor:tame_essentially_small} suggests that this is merely a technical convenience, not a necessity. 

 Since $L$ is given by a filtered bicolimit, this construction makes it very clear that $L$ preserves (weighted) finite limits. For this reason, the 2-category of stacks inherits all the nice exactness properties that $\Gpd$ has. Most importantly, there is a (bicategorical) factorization system on the 2-category of stacks, induced by the factorization of a functor into a functor which is essentially surjective on objects (eso) followed by a functor which is fully faithful (ff). In $\Gpd$ (or in $\Cat$), every eso functor exhibits its target as codescent object of its higher kernel (it suffices to check this for the corresponding strict notions of higher kernels and functors which are bijective on objects, which is done in the proof of \cite[Proposition~3]{STREET_DESCENT}). This property extends to $\ca{F}$ because finite bilimits and bicolimits are computed pointwise, and it holds for the 2-category of stacks as well because $L$ is a left exact bireflection. Similarly, a morphism of stacks is an equivalence if and only if it is both eso and ff.

 Note that the $\fpqc$-topology is not itself a topology generated by singleton coverings. But the $\fpqc$-site $\Aff$ is a \emph{superextensive site}, meaning that the category $\Aff$ is extensive, and the Grothendieck topology is generated by the extensive topology (the topology generated by finite coproduct inclusions)
 and the subtopology generated by singleton coverings consisting of a faithfully flat morphism. This notion is due to Bartels and Shulman (see the nLab page on superextensive sites (version 6) \cite{NLAB}). It is also discussed in \cite{ROBERTS}. We defer the proof of the following two propositions to Appendix~\ref{section:superextensive}.

\begin{cit}[Proposition~\ref{prop:extensive_stack}]
 Let $\ca{C}$ be an extensive category. Then a pseudofunctor
\[
 \ca{C}^{\op} \rightarrow \Gpd
\]
 is a stack for the extensive topology if and only if it sends finite coproducts in $\ca{C}$ to biproducts.  
\end{cit}

 In general it is not true that the associated stack on a site generated by two Grothendieck pretopologies can be computed in steps, but it is true in this particular case (the nLab page on superextensive sites (version 6) \cite{NLAB} contains a proof of the corresponding statement for sheaves).

\begin{cit}[Proposition~\ref{prop:superextensive}]
 Let $\ca{C}$ be a superextensive site, and let $F \colon \ca{C} \rightarrow \Gpd$ be a pseudofunctor which is a stack for the extensive topology. Then the associated stack of $F$ for the subtopology generated by singletons is also the associated stack for the original topology.
\end{cit}
 
 All the pseudofunctors we are interested in are represented by groupoids in the category $\Aff$. Thus they send coproducts in $\Aff$ to products in $\Gpd$, and the latter are also bicategorical products. To compute the stack associated to a groupoid we can therefore use the pseudofunctor $L$ defined above. We will use the following two lemmas to check that $L$ is a bicategorical localization.

 \begin{lemma}\label{lemma:groupoids_2_separated}
  Let $X$ be an internal groupoid in a site $\ca{C}$ whose topology is generated by singleton coverings consisting of regular epimorphisms. Then $X$ is 2-separated, and the unit $\eta_X \colon X \rightarrow LX$ is fully faithful.
 \end{lemma}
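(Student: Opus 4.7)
The plan is to unwind the definition of 2-separated using the representability of $X$, reducing the condition to a statement about descent of internal natural transformations along the covering morphism. Since $X$ is represented by an internal groupoid, a 1-cell $U \to X$ in $\ca{F}$ from a discrete object $U$ is simply a morphism $f \colon U \to X_0$, and a 2-cell between $f, g \colon U \rightrightarrows X_0$ is a morphism $\alpha \colon U \to X_1$ with $s\alpha = f$ and $t\alpha = g$ (the naturality square is trivial). For $\Er(p)$, a 1-cell to $X$ is an internal functor: a pair of morphisms $V \to X_0$ and $V \pb{U} V \to X_1$ compatible with source, target, identity, and composition. Crucially, the pullback $p^* f$ factors through the discrete groupoid $U$, so it sends every morphism of $\Er(p)$ to an identity of $X$.

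I would then compute the 2-cells between $p^* f$ and $p^* g$. Such a 2-cell is a morphism $\beta \colon V \to X_1$ with $s\beta = fp$, $t\beta = gp$ satisfying the internal naturality condition. Since $p^* f$ and $p^* g$ act by identities on arrows, the naturality condition (after using the groupoid laws of $X$) reduces to the single equation $\beta \pi_1 = \beta \pi_2$, where $\pi_1, \pi_2 \colon V \pb{U} V \rightrightarrows V$ are the projections. Here is where I use the hypothesis on the topology: because $p$ is a regular epimorphism, it is the coequalizer of its kernel pair $(\pi_1,\pi_2)$. Hence $\beta$ factors uniquely as $\alpha \circ p$ for a unique $\alpha \colon U \to X_1$, and the relations $s \alpha = f$, $t\alpha = g$ follow by cancelling the epimorphism $p$ from $s\alpha p = fp$ and $t\alpha p = gp$. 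This exhibits the map on 2-cells as a bijection, so $p^*$ is fully faithful on hom-groupoids; this is precisely 2-separatedness.

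For the second assertion I use the explicit description of $L$ as the filtered bicolimit
\[
LX(U) = \colim_p \ca{F}\bigl(\Er(p), X\bigr)
\]
indexed by singleton coverings $p \colon V \to U$, and the fact that $\eta_X \colon X \to LX$ is the cocone inclusion at the identity covering $p = \id_U$. Because the colimit is filtered, any 2-cell in $LX(U)$ between the images $\eta_X f$ and $\eta_X g$ of morphisms $f, g \colon U \to X_0$ is represented by a 2-cell $p^* f \Rightarrow p^* g$ for some covering $p$, and two such 2-cells are identified in the colimit precisely when they agree after pullback along some common refinement. The first part of the lemma, applied both to existence and to uniqueness (after pulling back once more to detect equality), shows that every such 2-cell descends to a unique 2-cell $f \Rightarrow g$ in $\ca{F}(U,X)$. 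This gives the required bijection on hom-sets, so $\eta_X$ is fully faithful.

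The main technical point is the reduction of the internal naturality condition to $\beta \pi_1 = \beta \pi_2$: it depends on the fact that $p^* f$ and $p^* g$ act by identities on morphisms, which is what allows the source-target constraints on $\beta$ to collapse into a single coequalizer condition that $p$, being a regular epimorphism, precisely detects. Everything else is formal manipulation of the explicit bicolimit formula defining $L$ together with the standard universal property of filtered colimits of groupoids.
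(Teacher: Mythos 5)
Your proof is correct and follows essentially the same route as the paper: you reduce $2$-separatedness to the observation that a $2$-cell $\beta \colon V \to X_1$ between $p^{\ast}f$ and $p^{\ast}g$ satisfies $\beta\pi_1 = \beta\pi_2$ (because $f$ and $g$ send all arrows to identities) and hence descends uniquely through the coequalizer $p$, and you then deduce full faithfulness of $\eta_X$ from the fact that $LX(U)$ is a filtered bicolimit along the fully faithful functors $p^{\ast}$. No gaps.
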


\begin{proof}
 Let $p \colon V \rightarrow U$ be a covering morphism. Since the Yoneda embedding of internal groupoids is an equivalence on hom-categories it suffices to check that
\[
 p^{\ast} \colon \Gpd(\ca{C})\bigl(U,X \bigr) \rightarrow \Gpd(\ca{C})\bigr(\Er(p),X\bigl)
\]
 is fully faithful. Thus let $f,g \colon U \rightarrow X$ be internal functors, and let $\beta$ be an internal natural transformation between $p^{\ast} f$ and $p^{\ast} g$. Since $U$ is discrete, the internal functors $f$ and $g$ are given by $(f,ef)$ and $(g,eg)$ respectively, where $e \colon X_0 \rightarrow X_1$ represents the function which sends an object to its identity morphism. Writing this in a diagram, we have
\[
 \xymatrix@!=40pt{ V \pb{U} V \ar[r]^-{q} \ar@<-2pt>[d]_{s} \ar@<2pt>[d]^{t} & U \ar@<2pt>[r]^{ef} \ar@<-2pt>[r]_{eg} & X_1 \ar@<-2pt>[d] \ar@<2pt>[d] \\
 V \ar[rru]^{\beta} \ar[r]^-{p} & U \ar@{..>}[ru] \ar@<2pt>[r]^{f} \ar@<-2pt>[r]_{g} & X_0}
\]
 and we would like to show that there exists a unique dotted arrow giving an internal natural transformation $f \Rightarrow g$ and making the diagram commutative. Representably we are dealing with a natural transformation between two functors which send all morphisms to identities. Therefore the component at the source and at the target of any morphism must coincide. To internalize this argument, write $c$ for the composition morphism of $X$. Then naturality of $\beta$ says that the equation
\[
 c \cdot (egq, \beta s)=c \cdot (\beta t, efq)
\]
 holds. The morphism $e$ assigns an object to its identity morphism, so this simplifies to $\beta s=\beta t$. Since $p$ is a regular epimorphism it is the coequalizer of its kernel pair. It follows that the dotted arrow does indeed exist, and the naturality axioms follow immediately from the fact that $p$ is an epimorphism.

 To check that $\eta_X \colon X \rightarrow LX$ is fully faithful we can work pointwise. The $U$-component of the unit can be computed as the bicolimit of the pseudonatural transformation
\[
 p^{\ast} \colon \Gpd(\ca{C})(U,X) \rightarrow \Gpd(\ca{C})\bigl(\Er(p),X)
\]
 from the constant diagram to the defining diagram of $LX(U)$. We have shown that the components $p^{\ast}$ of this pseudonatural transformation are fully faithful. Since the bicolimit in question is filtered, it follows that the induced morphism
\[
 X(U) = \Gpd(\ca{C})(U,X) \rightarrow LX(U)
\]
 is fully faithful as well.
\end{proof}

\begin{lemma}\label{lemma:discrete_groupoids_are_stacks}
 Let $\ca{C}$ be a superextensive site whose singleton coverings are regular epimorphisms. Then every discrete groupoid is a stack.
\end{lemma}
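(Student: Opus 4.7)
The plan is to reduce the problem to the two pretopologies that generate the superextensive topology, and then combine the results using Proposition~\ref{prop:superextensive}. Let $X$ be a discrete groupoid and let $F = \ca{C}(-,X) \colon \ca{C}^{\op} \to \Gpd$ be the presheaf of discrete groupoids it represents. I will show that $F$ is simultaneously a stack for the extensive topology and a stack for the singleton subtopology generated by the regular-epimorphism coverings. Once both are in place, Proposition~\ref{prop:superextensive} identifies the associated stack of $F$ for the singleton subtopology with its associated stack for the full superextensive topology; since $F$ is already a stack for the singleton subtopology, the unit into the associated stack is an equivalence, so $F$ itself is a stack for the full topology.

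For the extensive part I will invoke Proposition~\ref{prop:extensive_stack}, which reduces the claim to checking that $F$ sends finite coproducts to biproducts. Since $F$ takes values in discrete groupoids, biproducts coincide with ordinary products there, and $\ca{C}(U \sqcup V, X) \cong \ca{C}(U,X) \times \ca{C}(V,X)$ is simply the universal property of the coproduct applied to the representable $\ca{C}(-,X)$.

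The main work is to verify descent for a singleton covering $p \colon V \to U$ that is a regular epimorphism. Here I exploit the discreteness of both $F$ and $X$: an internal functor $\Er(p) \to X$ must send every morphism of $\Er(p)$ to an identity of $X$, so it reduces to a morphism $f \colon V \to X$ whose two restrictions along the projections $V \pb{U} V \rightrightarrows V$ agree; an internal natural transformation between two such functors forces the underlying maps to coincide. Hence $\ca{F}(\Er(p),F)$ is (equivalent to) the discrete set of morphisms $V \to X$ equalising the two projections. Because $p$ is a regular epimorphism, it is the coequaliser of its kernel pair, and the representable $\ca{C}(-,X)$ turns this coequaliser into an equaliser; this is exactly the statement that $p^{\ast} \colon F(U) \to \ca{F}(\Er(p),F)$ is a bijection, hence an equivalence of discrete groupoids. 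The only subtlety is confirming that the 2-categorical descent data truly collapses to this 1-categorical equaliser condition, which is routine given that $X$ has only identity morphisms.
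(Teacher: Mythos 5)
Your proof is correct and follows essentially the same route as the paper: reduce via Proposition~\ref{prop:superextensive} (together with Proposition~\ref{prop:extensive_stack}) to the singleton subtopology, then observe that descent data for a discrete target collapses to the condition that a morphism $V \to X$ equalizes the kernel pair of $p$, which is handled by $p$ being a regular epimorphism. You merely spell out the extensive-topology check and the collapse of the $2$-cells more explicitly than the paper does.
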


\begin{proof}
 Proposition~\ref{prop:superextensive} reduces the problem to showing that a discrete groupoid is a stack for the subtopology of singleton coverings.

 Let $W \in \ca{C}$, and let $p \colon V \rightarrow U$ be a covering. Since $W$ is discrete, a functor $\Er(p) \rightarrow W$ is uniquely determined by a morphism $V \rightarrow W$ which equalizes $s,t \colon V\pb{U} V \rightarrow V$. The claim follows from the fact that covering morphisms are regular epimorphisms.
\end{proof}

 The following proposition shows that, for a certain class of sites, stacks associated to internal groupoids can be computed by a single application of Street's pseudofunctor $L$.

\begin{prop}\label{prop:associated_stack}
  Let $\ca{C}$ be a superextensive site for which the singleton coverings are regular epimorphisms, and let $L$ be the pseudofunctor defined above using the subtopology generated by singleton coverings. Then the stack associated to $X \in \Gpd(\ca{C})$ (for the full topology) is given by a single application of $L$ to $X$.
\end{prop}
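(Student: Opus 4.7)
The plan is to combine Lemma~\ref{lemma:groupoids_2_separated} with Proposition~\ref{prop:superextensive}; the intermediate step is checking that any internal groupoid $X$, viewed as a presheaf of groupoids, is automatically a stack for the extensive part of the topology, so that the hypotheses of Proposition~\ref{prop:superextensive} apply to $X$ itself.

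First I would verify that $X \colon \ca{C}^{\op} \to \Gpd$ sends finite coproducts of $\ca{C}$ to products of groupoids. This is a routine computation from the definition of the Yoneda-like 2-functor recalled in \S\ref{section:groupoids}: for a coproduct $A \sqcup B$ in $\ca{C}$ the groupoid $X(A \sqcup B)$ has object-of-objects $\ca{C}(A \sqcup B, X_0) \cong \ca{C}(A, X_0) \times \ca{C}(B, X_0)$, and analogously for morphisms, yielding $X(A \sqcup B) \simeq X(A) \times X(B)$; the initial object is handled the same way. By Proposition~\ref{prop:extensive_stack}, $X$ is therefore a stack for the extensive topology.

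Next, Lemma~\ref{lemma:groupoids_2_separated} tells us that $X$ is 2-separated for the subtopology generated by singleton coverings, so the general property of Street's pseudofunctor $L$ recalled in \S\ref{section:groupoids} ensures that $LX$ is already a stack for that subtopology and that $\eta_X \colon X \to LX$ is the universal map into such stacks. In particular, a single application of $L$ produces the associated stack of $X$ for the singleton subtopology.

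Finally, Proposition~\ref{prop:superextensive}, applied to the pseudofunctor $X$ (whose hypothesis has just been verified), identifies the associated stack of $X$ for the subtopology of singletons with the associated stack for the full topology. Combining this with the previous paragraph, $\eta_X \colon X \to LX$ exhibits $LX$ as the stack associated to $X$ for the full topology on $\ca{C}$. There is no genuine obstacle here: the work has already been done in Lemma~\ref{lemma:groupoids_2_separated} and Proposition~\ref{prop:superextensive}, and the only thing worth highlighting is the elementary but essential observation that representable groupoid-valued presheaves are automatically stacks for the extensive topology, which is what allows the reduction from the full $\fpqc$-topology to its singleton subtopology.
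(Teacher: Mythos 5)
Your proposal is correct and follows the same route as the paper: reduce to the singleton subtopology via Proposition~\ref{prop:extensive_stack} and Proposition~\ref{prop:superextensive} (using that representable groupoid-valued presheaves send coproducts to biproducts), and then invoke 2-separatedness of $X$ from Lemma~\ref{lemma:groupoids_2_separated} to conclude that one application of $L$ suffices. The extra detail you supply on why representables are extensive-stacks is exactly the observation the paper makes in the text immediately preceding the proposition.
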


\begin{proof}
 From Proposition~\ref{prop:extensive_stack} and Proposition~\ref{prop:superextensive} we know that the associated stack to $X$ for the superextensive topology is given by the associated stack for the subtopology generated by singleton coverings. The claim follows immediately from the fact that $X$ is 2-separated (see Lemma~\ref{lemma:groupoids_2_separated}).
\end{proof}

\subsection{Proof that stacks form a bicategorical localization}\label{section:stacks_as_localization}

 To prove Theorem~\ref{thm:stacks_localization}, which states that the associated stack pseudofunctor $L$ is a bicategorical localization of the 2-category of flat affine groupoids at the internal weak equivalences, we will check that $L$ satisfies the conditions of Proposition~\ref{prop:localization_pronk}. By definition of algebraic stacks, $L$ is essentially surjective on objects, hence $L$ satisifes (EF1). The following proposition shows that it also satisfies condition (EF3).

 \begin{prop}\label{prop:L_ef3}
 Let $\ca{C}$ be a superextensive site for which the singleton coverings are regular epimorphisms. Then the associated stack pseudofunctor
\[
 L \colon \Gpd(\ca{C}) \rightarrow \ca{S}
\]
 defined in \S \ref{section:groupoids} is fully faithful on 2-cells.
 \end{prop}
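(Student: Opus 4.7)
The claim is pointwise in pairs of parallel 1-cells, so fix internal functors $f, g \colon X \rightarrow Y$ in $\Gpd(\ca{C})$; we must produce a bijection between 2-cells $f \Rightarrow g$ in $\Gpd(\ca{C})$ and 2-cells $Lf \Rightarrow Lg$ in $\ca{S}$. Since the Yoneda-like 2-functor $\Gpd(\ca{C}) \rightarrow \ca{F}$ is an equivalence on hom-categories, it is harmless to identify $X$, $Y$ with the pseudofunctors they represent and to work throughout in $\ca{F}$, so that $f, g$ become pseudonatural transformations and the 2-cells between them become modifications.

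The first reduction uses the biadjunction between $L$ and the inclusion $\ca{S} \hookrightarrow \ca{F}$: because $LY$ is a stack, whiskering with the unit $\eta_X$ yields an equivalence
\[
\ca{S}(LX, LY) \xrightarrow{\simeq} \ca{F}(X, LY),
\]
so modifications $Lf \Rightarrow Lg$ biject with modifications $Lf \cdot \eta_X \Rightarrow Lg \cdot \eta_X$. The pseudonaturality isomorphisms $Lf \cdot \eta_X \cong \eta_Y \cdot f$ and $Lg \cdot \eta_X \cong \eta_Y \cdot g$ then transport these bijectively onto modifications $\eta_Y \cdot f \Rightarrow \eta_Y \cdot g$. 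It therefore suffices to show that whiskering with $\eta_Y$ defines a bijection
\[
\ca{F}(X,Y)(f,g) \longrightarrow \ca{F}(X,LY)(\eta_Y \cdot f, \eta_Y \cdot g).
\]

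For this last step I would invoke Lemma~\ref{lemma:groupoids_2_separated}, which gives that each component $\eta_Y(U) \colon Y(U) \rightarrow LY(U)$ is a fully faithful functor of groupoids. Given a modification $\beta \colon \eta_Y \cdot f \Rightarrow \eta_Y \cdot g$, its $U$-component is a natural transformation between functors landing in $LY(U)$, each value of which factors uniquely through $\eta_Y(U)$; fullness produces a unique component $\alpha_U \colon f(U) \Rightarrow g(U)$ with $\eta_Y(U) \alpha_U = \beta_U$, and faithfulness ensures this $\alpha_U$ is itself natural. Faithfulness of each $\eta_Y(U)$ also converts the modification equation satisfied by $\beta$ (an equation of pasting diagrams postcomposed with $\eta_Y$-components of the pseudonaturality data) into the corresponding equation for the $\alpha_U$, so that the family $\{\alpha_U\}$ assembles into a genuine modification $\alpha \colon f \Rightarrow g$ with $\eta_Y * \alpha = \beta$. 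Uniqueness is immediate from pointwise faithfulness.

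The main obstacle is the bookkeeping in this last step: one has to chase the pseudonaturality coherence squares for $f$, $g$, and $\eta_Y$ carefully enough to see that the modification axiom for $\beta$ descends to the modification axiom for $\alpha$. Once this is checked, the three bijections compose to give the desired bijection $\Gpd(\ca{C})(X,Y)(f,g) \to \ca{S}(LX,LY)(Lf,Lg)$, completing the proof that $L$ is fully faithful on 2-cells.
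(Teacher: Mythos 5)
Your proposal is correct and follows essentially the same route as the paper: reduce along the bireflection unit $\eta_X$ (whiskering into the stack $LY$ is an equivalence of hom-categories), transport across the pseudonaturality isomorphisms, and then lift uniquely through $\eta_Y$ using the pointwise fully faithful components supplied by Lemma~\ref{lemma:groupoids_2_separated}. The only detail worth adding is an explicit appeal to Proposition~\ref{prop:associated_stack} to justify that a single application of $L$ already computes the associated stack, so that $\eta_Y \colon Y \rightarrow LY$ really is the unit of the stackification you are lifting through.
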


\begin{proof}
 That the associated stack pseudofunctor is given by a single application of $L$ is the content of Proposition~\ref{prop:associated_stack}.

 Let $\beta \colon Lf \Rightarrow Lg \colon LX \rightarrow LY$ be a 2-cell. From Lemma~\ref{lemma:groupoids_2_separated} we know that the $\eta_X \colon X \rightarrow LX$ and $\eta_Y \colon Y \rightarrow LY$ are fully faithful. Therefore there exists a unique 2-cell $\alpha \colon f \Rightarrow g$ such that the equation
\[
 \vcenter{\xymatrix@!=30pt{
X \dtwocell_{f}^{g}{^\alpha} \ar[r]^{\eta_X} \ar@{}[d]|{\quad\quad\quad\quad\quad\quad\cong} & LX \ar[d]^{Lg} \\
Y \ar[r]_{\eta_Y} & LY
}}
\quad \; = \;
\vcenter{\xymatrix@!=30pt{
X \ar[d]_{f} \ar[r]^{\eta_X} \ar@{}[d]|{\quad\quad\quad\quad\cong} & LX \dtwocell_{Lf}^{Lg}{^\beta} \\
Y \ar[r]_{\eta_Y} & LY
}}
\]
 of pasting diagrams holds. Applying pseudonaturality of $\eta$ to the left hand side we find that the equation
\[
 \vcenter{\xymatrix@!=30pt{
X \ar[d]_{f} \ar[r]^{\eta_X} \ar@{}[d]|{\quad\quad\quad\quad\cong} & LX \dtwocell_{Lf}^{Lg}{^L\alpha} \\
Y \ar[r]_{\eta_Y} & LY
}} 
\quad \; = \;
 \vcenter{\xymatrix@!=30pt{
X \ar[d]_{f} \ar[r]^{\eta_X} \ar@{}[d]|{\quad\quad\quad\quad\cong} & LX \dtwocell_{Lf}^{Lg}{^\beta} \\
Y \ar[r]_{\eta_Y} & LY
}} 
\]
 holds. The fact that $\eta_X$ is a bireflection implies that $L\alpha=\beta$. The conclusion follows because $\alpha$ was uniquely determined by $\beta$.
\end{proof}

 The following lemma will be used to show that $L$ sends surjective weak equivalences to weak equivalences and that it satisfies condition (EF2).

\begin{lemma}\label{lemma:presentation}
 Let $\ca{C}$ be a superextensive site whose singleton coverings are regular epimorphisms. Let $X \in \Gpd(\ca{C})$, and let $X_0 \rightarrow X$ be the internal functor from the discrete groupoid $X_0$ to $X$ which is the identity on objects. Then the higher kernel of the composite
\[
 \xymatrix{X_0 \ar[r] & X \ar[r]^-{\eta_X} & LX}
\]
 is
\[
 \xymatrix{X_2 \ar@<5pt>[r] \ar[r] \ar@<-5pt>[r] & X_1 \ar@<4pt>[r] \ar@<-4pt>[r] & X_0 \ar[l]} \smash{\rlap{,}} 
\]
 and $X_0 \rightarrow LX$ exhibits $LX$ as codescent object of its higher kernel in the 2-category $\ca{S}$ of stacks. It is in particular eso.
\end{lemma}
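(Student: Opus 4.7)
The plan is to reduce the assertion about $X_0 \to X \to LX$ to a computation involving only the internal functor $X_0 \to X$, using that $\eta_X$ is fully faithful (Lemma~\ref{lemma:groupoids_2_separated}) and that $L$ is a left exact bireflection. I would then invoke Lemma~\ref{lemma:discrete_groupoids_are_stacks} to transport the resulting codescent diagram from $\ca{F}$ into $\ca{S}$.

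First I would compute the higher kernel of $X_0 \to X$. Since the Yoneda-type embedding $\Gpd(\ca{C}) \to \ca{F}$ is an equivalence on hom-categories and preserves finite bilimits, this can be done pointwise: at a test object $U \in \ca{C}$ the induced functor of groupoids $X_0(U) \to X(U)$ is bijective on objects (both sides have object set $\ca{C}(U,X_0)$), so its iso-comma object parameterises triples $(a, b, \alpha)$ with $\alpha$ a morphism $a \to b$ in $X(U)$, which is exactly $\ca{C}(U, X_1)$ equipped with the source/target projections. The degree-two piece is represented analogously by $X_2$, yielding the higher kernel $X_\bullet$. Because $\eta_X$ is fully faithful, morphisms $\eta_X(a) \to \eta_X(b)$ in $LX(U)$ correspond bijectively to morphisms $a \to b$ in $X(U)$, so the iso-comma object of $X_0 \to LX$ agrees with that of $X_0 \to X$ and the higher kernel of the composite is again $X_\bullet$. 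Since $\ca{S} \hookrightarrow \ca{F}$ preserves bilimits, this description is equally valid in $\ca{S}$.

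For the codescent claim, any internal functor in $\Gpd(\ca{C})$ which is bijective on objects exhibits its codomain as codescent object of its higher kernel in $\ca{F}$ (as noted in \S \ref{section:groupoids}), so $X$ is the codescent object of $X_\bullet$ in $\ca{F}$. Being a left bireflection, $L$ preserves bicolimits, so $LX$ is the codescent object of $LX_\bullet$ in $\ca{S}$. Lemma~\ref{lemma:discrete_groupoids_are_stacks} then gives $LX_i \simeq X_i$ for each discrete object $X_i$, so $LX$ is the codescent object of $X_\bullet$ itself in $\ca{S}$. The ``eso'' assertion is immediate because a 1-cell in $\ca{S}$ is eso if and only if it exhibits its codomain as codescent object of its higher kernel (another fact from \S \ref{section:groupoids}).

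The only real obstacle will be bookkeeping across the three 2-categories $\Gpd(\ca{C})$, $\ca{F}$, and $\ca{S}$, but the relevant compatibilities — the Yoneda embedding and the inclusion $\ca{S} \hookrightarrow \ca{F}$ both preserve finite bilimits, and $L$ preserves bicolimits — are exactly what the argument needs.
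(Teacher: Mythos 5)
Your proof is correct and follows essentially the same route as the paper: a pointwise computation of the higher kernel and codescent object in $\ca{F}$, transported to $\ca{S}$ via Lemma~\ref{lemma:discrete_groupoids_are_stacks} and the fact that $L$ is a left exact bireflection onto $\ca{S}$. The only cosmetic difference is that you identify the higher kernel of $X_0 \rightarrow LX$ using full faithfulness of $\eta_X$, whereas the paper applies left exactness of $L$ to the higher kernel of $X_0 \rightarrow X$; both arguments are valid.
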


\begin{proof}
 We first show that the diagram in question is the higher kernel of $X_0 \rightarrow X$ in $\ca{F}$, and that $X_0 \rightarrow X$ exhibits $X$ as its codescent object. This can be checked pointwise, where it follows from the fact that any small category $C$ is the codescent object of its higher kernel
\[
 \xymatrix{C_2 \ar@<5pt>[r] \ar[r] \ar@<-5pt>[r] & C_1 \ar@<4pt>[r] \ar@<-4pt>[r] & C_0 \ar[l]} \smash{\rlap{,}}
\]
 where $C_2=C_1 \pb{C_0} C_1$ (see the proof of \cite[Proposition~3]{STREET_DESCENT}). By Lemma~\ref{lemma:discrete_groupoids_are_stacks} the higher kernel of $X_0 \rightarrow X$ is in $\ca{S}$. Since $\ca{S} \subseteq \ca{F}$ is bireflective, bicolimits are computed by applying the bireflection to the bicolimit in $\ca{F}$. For $X$ and the $X_i$, the bireflection is given by a single application of $L$ (see Proposition~\ref{prop:L_ef3}) Thus $LX$ is indeed the desired codescent object. The fact that $L$ is left exact implies that
\[
 \xymatrix{X_2 \ar@<5pt>[r] \ar[r] \ar@<-5pt>[r] & X_1 \ar@<4pt>[r] \ar@<-4pt>[r] & X_0 \ar[l]} 
\]
 is the higher kernel of $X_0 \rightarrow LX$.
 
 A morphism in $\ca{S}$ is eso if and only if it exhibits its codomain as a codescent object of its higher kernel, because the same is true in $\Cat$ (cf.\ \cite[Proposition~3]{STREET_DESCENT}).
\end{proof}

 We can generalize the definition of surjective weak equivalences of groupoids to any site. The pullback condition is independent of the topology, and the condition that the object part of the internal functor is faithfully flat is replaced by the condition that it is covering. 

\begin{prop}\label{prop:L_weak_equivalences}
 Let $\ca{C}$ be a superextensive site whose singleton coverings are regular epimorphisms. Then the pseudofunctor
\[
 L \colon \Gpd(\ca{C}) \rightarrow \ca{S}
\]
 sends surjective weak equivalences to equivalences.
\end{prop}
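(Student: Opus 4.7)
My plan is to show that $Lf \colon LX \to LY$ is both eso and fully faithful in $\ca{S}$, hence an equivalence by the (eso, ff) factorization system on $\ca{S}$ recalled in \S\ref{section:groupoids}. The two conditions defining a surjective weak equivalence enter separately: the covering hypothesis on $f_0$ handles the eso half, and the pullback square condition handles the fully faithful half.

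For the eso half, I would argue as follows. By Lemma~\ref{lemma:discrete_groupoids_are_stacks} both $X_0$ and $Y_0$ are stacks, and the descent condition for the stack $Y_0$ along the cover $f_0$ exhibits $Y_0$ as the codescent object in $\ca{S}$ of the Cech nerve of $f_0$, so $f_0$ is eso in $\ca{S}$. Composing with the eso $Y_0 \to LY$ supplied by Lemma~\ref{lemma:presentation} and using closure of the left class of the factorization system under composition, the composite $X_0 \to LY$ is eso. Since this composite also factors as $X_0 \to LX \to LY$, the standard cancellation property (if $hk$ is eso then $h$ is eso, by essential uniqueness of factorizations) forces $Lf$ to be eso.

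For the fully faithful half, I would check ff locally along the eso cover $X_0 \to LX$ (Lemma~\ref{lemma:presentation}). Using that $L$ preserves finite limits together with Lemma~\ref{lemma:presentation}, the bipullback $X_0 \times_{LX} X_0$ is identified with $X_1$, and analogously $Y_0 \times_{LY} Y_0 \simeq Y_1$. Computing $X_0 \times_{LY} X_0$ in stages along $X_0 \to Y_0 \to LY$ yields $(X_0 \times X_0) \times_{Y_0 \times Y_0} Y_1$, which by the pullback hypothesis of Definition~\ref{dfn:surjective_weak_equivalence} coincides with $X_1$. Unwinding the identifications shows that the induced comparison $X_0 \times_{LX} X_0 \to X_0 \times_{LY} X_0$ is an equivalence.

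The main obstacle I anticipate is justifying the ff-on-a-cover principle: for an eso $p \colon P \to A$ in $\ca{S}$ and any morphism $g \colon A \to B$, the map $g$ is ff if and only if the natural comparison $P \times_A P \to P \times_B P$ is an equivalence. This follows from the characterization of ff as inversion of the diagonal $A \to A \times_B A$ combined with descent along $p \times p \colon P \times P \to A \times A$, using the exactness properties that $\ca{S}$ inherits from $\Gpd$ via the left exact bireflection $L$; but spelling this out cleanly is the most delicate piece of the argument.
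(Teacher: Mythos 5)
Your eso argument is essentially the paper's: both use that $X_0\to LX$ and $Y_0\to LY$ are eso (Lemma~\ref{lemma:presentation}), that $w_0\colon X_0\to Y_0$ is eso because $Y_0$ is a stack and hence the codescent object of $\Er(w_0)$, and then cancellation in the (eso,\,ff) factorization system. Where you diverge is the fully faithful half. The paper's argument there is a one-liner: the pullback square in Definition~\ref{dfn:surjective_weak_equivalence} says precisely that the represented morphism $w$ is fully faithful in $\ca{F}$, equivalently that its higher kernel is trivial; since $L$ preserves finite bilimits it preserves higher kernels, so $Lw$ is fully faithful. Your route instead descends along the cover $X_0\to LX$, identifies $X_0\times_{LX}X_0\simeq X_1$ and $X_0\times_{LY}X_0\simeq (X_0\times X_0)\times_{Y_0\times Y_0}Y_1\simeq X_1$, and invokes a ``check ff after pulling back along an eso'' principle. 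That principle is true (the comparison $P\times_AP\to P\times_BP$ is the bipullback of the diagonal $A\to A\times_BA$ along the eso $p\times p$, and pulling back along an eso reflects equivalences by the codescent characterization of esos), so your proof can be completed; but as you yourself note, it is the delicate piece, and it is entirely avoidable: the full-faithfulness hypothesis is already a finite-bilimit condition on $w$ in $\ca{F}$, so left exactness of $L$ transports it directly without any descent. The trade-off is that your local-on-a-cover criterion is the more robust tool (it is what one would need if full faithfulness were only known after pulling back to a presentation), while the paper's argument exploits that here the hypothesis is given globally on the groupoid itself.
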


\begin{proof}
 Let $w \colon X \rightarrow Y$ be a surjective weak equivalence, that is, it is internally fully faithful and $w_0 \colon X_0 \rightarrow Y_0$ is covering. The first implies that the represented morphism $w$ in $\ca{F}$ is fully faithful. Equivalently, the higher kernel of $f$ is trivial. Since $L$ commutes with finite bilimits, it follows that $Lw$ is fully faithful. It remains to show that it is also eso.

 From Lemma~\ref{lemma:presentation} we know that the top and the bottom composite of the diagram
\[
 \xymatrix{X_0 \ar@{}[rrd]|{\cong} \ar[d]_{w_0} \ar[r] & X \ar[r]^-{\eta_X} & LX \ar[d]^{Lw} \\
Y_0 \ar[r] & Y \ar[r]^-{\eta_Y} & LY}
\]
 are eso. In Lemma~\ref{lemma:discrete_groupoids_are_stacks} we have seen that $Y_0$ is a stack. From the definition of stacks it follows that $w_0 \colon \Er(w_0) \rightarrow Y_0$ is a bireflection of $\Er(w_0)$ into the 2-category of stacks. Thus the composite
\[
 \xymatrix{ X_0 \ar[r] & \Er(w_0) \ar[r] & Y_0 }
\]
 is eso by Lemma~\ref{lemma:presentation}. But this composite is $w_0$, so we have shown that the left arrow as well as the top and bottom arrows of the above diagram are eso. It follows that the arrow on the right is eso, because the eso-ff factorization system in $\Cat$ has the analogous cancellation property.
\end{proof}

 To show that $L$ has all the properties of a bicategorical localization it remains to check that it satisfies condition (EF2) of Proposition~\ref{prop:localization_pronk}. Note that so far we have not made use of the fact that we are considering the 2-category of \emph{flat} affine groupoids, that is, groupoids whose source and target morphisms are flat. Since both these morphisms have a (common) section, they are automatically faithfully flat. The stack associated to such a groupoid $X$ has two important properties: its diagonal is representable, and the the morphism $X_0 \rightarrow LX$ is faithfully flat. The meaning of these two statements is spelled out in the proposition below.

 We could probably continue to work with a more general superextensive site, as long as the singleton coverings are effective descent morphisms, and consider groupoids whose source and target morphisms are covering. However, there are some technical details that the author has not checked. Therefore we henceforth only consider the site $\Aff$ with the $\fpqc$-topology.

\begin{prop}\label{prop:affine_diagonal_and_ffl_presentation}
 Let $\ca{C}=\Aff$, with the $\fpqc$-topology. Let $X$ be an affine groupoid with flat source and target maps. Then the diagonal of the associated stack $LX$ is representable, that is, in any bipullback square
\[
 \xymatrix{P \ar[r] \ar[d] \ar@{}[rd]|\cong & LX \ar[d]^{\Delta} \\ U \ar[r] & LX \times LX}
\]
 where $U \in \Aff$, the object $P$ is equivalent to some $V \in \Aff$. In particular, for any morphism $U \rightarrow LX$ we can find a bipullback square
\[
 \xymatrix{V \ar[r] \ar[d]_{p} \ar@{}[rd]|\cong & X_0 \ar[d] \\ U \ar[r] & LX} 
\]
 where $V \in \Aff$ and $X_0 \rightarrow LX$ is the morphism from Lemma~\ref{lemma:presentation}. 

 The morphism $X_0 \rightarrow LX$ is faithfully flat, that is, the morphism $p\colon V \rightarrow U$ in the above diagram is faithfully flat for all morphisms $U \rightarrow LX$.
\end{prop}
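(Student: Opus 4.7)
The proof rests on two inputs established above. Lemma~\ref{lemma:presentation} exhibits $X_0 \to LX$ as eso with higher kernel the given simplicial object $X_2 \rightrightarrows X_1 \rightrightarrows X_0$; in particular, $X_1$ is the bipullback
\[
\xymatrix{X_1 \ar[r] \ar[d] \ar@{}[rd]|{\cong} & X_0 \ar[d] \\ X_0 \ar[r] & LX}
\]
in $\ca{S}$, and composing with $(s,t)\colon X_1 \to X_0 \times X_0$ we also obtain a bipullback of $\Delta$ along $X_0 \times X_0 \to LX \times LX$. Second, the filtered bicolimit formula defining $L$ implies that any morphism $f\colon U\to LX$ from an affine $U$ factors fpqc-locally through $X_0\to LX$: there exist a faithfully flat $q\colon V'\to U$, a morphism $g\colon V'\to X_0$, and a 2-isomorphism $f\cdot q \cong (X_0\to LX)\cdot g$. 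Combining these inputs with Grothendieck's effective fpqc descent for affine schemes should yield both claims.

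For representability of the diagonal, given $(f_1,f_2)\colon U\to LX\times LX$, pick a common fpqc cover $q\colon V'\to U$ over which both $f_i$ factor through $X_0$, producing $(g_1,g_2)\colon V'\to X_0\times X_0$. Pasting bipullbacks identifies the bipullback of $\Delta$ along $(f_1,f_2)\cdot q$ with the affine scheme $V'\pb{X_0 \times X_0} X_1$. This scheme carries a canonical descent datum with respect to $q$, constructed from the coherence 2-isomorphisms witnessing the factorizations through $X_0$ together with composition in the groupoid $X_1$. By classical faithfully flat descent of affine schemes this datum is effective, producing an affine $P$ over $U$ representing the bipullback of $\Delta$ along $(f_1,f_2)$. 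The special case spelled out in the statement is the pullback of $\Delta$ along $U \times X_0 \to LX \times LX$.

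For faithful flatness of $X_0\to LX$, fix $U\to LX$, let $p\colon V\to U$ be the resulting bipullback along $X_0\to LX$, and choose a cover $q\colon V'\to U$ that factors through $X_0$ as above. Pasting bipullbacks gives
\[
V'\pb{U} V \;\cong\; V'\pb{LX} X_0 \;\cong\; V'\pb{X_0} X_1,
\]
the last identification using the factorization of $V'\to LX$ through $X_0$. The projection $V'\pb{X_0} X_1\to V'$ is the pullback of the source map $s\colon X_1\to X_0$, which is flat by hypothesis and split by the unit $e\colon X_0\to X_1$, hence faithfully flat. Descent of faithful flatness along the fpqc cover $q$ then forces $p\colon V\to U$ to be faithfully flat. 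The main technical obstacle is shared by both parts: one must carefully track 2-isomorphisms when translating between bicategorical bipullbacks and strict fpqc descent data, in particular when producing the cocycle on $V'\pb{X_0\times X_0} X_1$ and verifying that the descended affine really represents the bipullback over $U$; once this bookkeeping is in place, the arguments reduce to standard faithfully flat descent.
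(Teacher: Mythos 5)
Your argument is correct in outline, but it takes a different route from the paper for the simple reason that the paper does not prove this proposition at all: its entire proof is a citation of \cite[\S3.3]{NAUMANN} (the discussion following the definition of the functor $G$ there). What you have written is essentially a self-contained reconstruction of that argument: factor a map $U \rightarrow LX$ fpqc-locally through the presentation $X_0 \rightarrow LX$ using the explicit filtered bicolimit formula for $L$; identify the pullback of the diagonal over the cover $V'$ with the affine scheme $V' \pb{X_0 \times X_0} X_1$, using that $X_1$ is the bipullback of $X_0 \rightarrow LX$ against itself (which is exactly what the higher-kernel statement of Lemma~\ref{lemma:presentation} gives); then descend along the cover by effectivity of faithfully flat descent for affine schemes. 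Faithful flatness of $X_0 \rightarrow LX$ likewise reduces, after base change to the cover, to faithful flatness of the source map $s\colon X_1 \rightarrow X_0$, which is flat by hypothesis and surjective because of the unit section, and faithful flatness is fpqc-local on the base. All of these steps are sound, and your pasting of bipullbacks is consistent with how the paper itself manipulates them in the proof of Proposition~\ref{prop:L_ef2}. What the citation buys is brevity; what your version buys is that it isolates exactly which features of the site are used -- effectivity of the singleton coverings and affineness of $(s,t)\colon X_1 \rightarrow X_0 \times X_0$ -- which is relevant to the paper's remark that Theorem~\ref{thm:stacks_localization} might extend to other superextensive sites. The two items you flag as bookkeeping (the cocycle condition on $V' \pb{X_0 \times X_0} X_1$ coming from the coherence 2-cells, and the fact that being a bipullback of stacks can be checked fpqc-locally, so the descended affine really represents the bipullback over $U$) are indeed the only nontrivial verifications remaining, and both are standard.
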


\begin{proof}
 This is proved in \cite[\S3.3]{NAUMANN} after the definition of the functor $G$.
\end{proof}

 The following proposition shows that $L$ satisfies condition (EF2) of Proposition~\ref{prop:localization_pronk}.

\begin{prop}\label{prop:L_ef2}
 Let $X$, $Y$ be flat affine groupoids, and let $f \colon LX \rightarrow LY$ be a morphism of stacks. Then there exists a flat affine groupoid $Z$, a surjective weak equivalence $w \colon Z \rightarrow X$ and a morphism $g \colon Z \rightarrow Y$ of groupoids such that the diagram
\[
 \xymatrix{ & \ar@{}[d]|{\cong} \ar[ld]_{Lw} LZ \ar[rd]^{Lg} \\ LX \ar[rr]_f && LY}
\]
 commutes up to an invertible 2-cell.
\end{prop}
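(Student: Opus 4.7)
The plan is to construct $Z$ as a suitable pullback groupoid via a double application of Proposition~\ref{prop:affine_diagonal_and_ffl_presentation}. This is essentially the representation of a stack morphism by an anafunctor (cf.\ the works of Roberts cited in the introduction). First, form the bipullback in $\ca{S}$
\[
\xymatrix{Z_0 \ar[r]^-{g_0} \ar[d]_{w_0} \ar@{}[rd]|{\cong} & Y_0 \ar[d] \\ X_0 \ar[r] & LY}
\]
where the bottom edge is the composite $X_0 \to LX \xrightarrow{f} LY$. By Proposition~\ref{prop:affine_diagonal_and_ffl_presentation} applied to $LY$, $Z_0 \in \Aff$, $w_0$ is faithfully flat, and we obtain an invertible 2-cell $\alpha$ in $\ca{S}$. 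Next, define $Z_1 \defl Z_0 \pb{X_0} X_1 \pb{X_0} Z_0$ (using source and target of $X$ against $w_0$) and endow $(Z_0, Z_1)$ with the pullback groupoid structure inherited from $X$. The source and target of $Z_1$ are composites and pullbacks of flat maps (the faithfully flat $w_0$ with $s_X, t_X$), so $Z$ is a flat affine groupoid; the evident internal functor $w\colon Z \to X$ is a surjective weak equivalence, since $w_0$ is faithfully flat and the defining pullback of $Z_1$ is precisely the internal fully-faithfulness square.

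Next I would construct the internal functor $g\colon Z \to Y$. On objects, $g_0$ is the given second projection. For the arrow part, observe that since $LY$ is algebraic, $Y_1 \cong Y_0 \pb{LY} Y_0$ as a bipullback in $\ca{S}$ (by representability of $\Delta_{LY}$). Hence giving a morphism $g_1\colon Z_1 \to Y_1$ compatible with source and target is equivalent to giving an invertible 2-cell in $\ca{S}$ between the two composites $Z_1 \rightrightarrows Z_0 \xrightarrow{g_0} Y_0 \to LY$. The required 2-cell is obtained by pasting three pieces: $\alpha$ whiskered with the source $s_Z\colon Z_1 \to Z_0$; the canonical invertible 2-cell $X_1 \to LX$ between source and target (witnessing that $s_X, t_X$ become isomorphic after inclusion in the codescent object $LX$), whiskered with $Z_1 \to X_1$ and postcomposed with $f$; and $\alpha^{-1}$ whiskered with the target $t_Z\colon Z_1 \to Z_0$. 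Functoriality of $g$ (identities, composition, inverses) then follows from the universal property of $Y_1$, together with the functoriality of the groupoid structure of $X$ in $\ca{S}$ and of $f$.

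Finally I would verify $f \cdot Lw \cong Lg$. By construction the 2-cell $\alpha$ provides an invertible 2-cell between the restrictions of $f \cdot Lw$ and $Lg$ along the presentation $Z_0 \to LZ$. By Lemma~\ref{lemma:presentation}, $LZ$ is the codescent object in $\ca{S}$ of its higher kernel, so a 2-cell out of $LZ$ is uniquely determined by a 2-cell out of $Z_0$ that is compatible with the $Z_1$-action. The compatibility condition is exactly the one designed into the construction of $g_1$ in the previous paragraph, so $\alpha$ extends uniquely to the desired invertible 2-cell. The main obstacle is the construction of $g_1$ and the verification that $g$ is a groupoid morphism: both rest on the representability of $\Delta_{LY}$ via $Y_1$ and on somewhat intricate pasting-diagram arguments, whereas the descent step is routine once that is in place.
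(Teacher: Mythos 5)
Your proposal is correct and follows the same skeleton as the paper's proof: form $Z_0$ as the bipullback of $Y_0 \rightarrow LY$ along $X_0 \rightarrow LX \xrightarrow{f} LY$ using Proposition~\ref{prop:affine_diagonal_and_ffl_presentation}, promote $Z_0$ to a flat affine groupoid $Z$ with a surjective weak equivalence to $X$ and a functor to $Y$, and conclude via the codescent/bireflection property from Lemma~\ref{lemma:presentation}. Where you genuinely diverge is in the middle step. The paper takes $Z_1, Z_2$ to be the higher kernel of the eso composite $Z_0 \rightarrow X_0 \rightarrow LX$ and then obtains \emph{both} internal functors $w \colon Z \rightarrow X$ and $g \colon Z \rightarrow Y$ at once from the pseudofunctoriality of higher kernels (the squares $Z_0 \rightarrow LX$ over $X_0 \rightarrow LX$ and over $Y_0 \rightarrow LY$ induce maps of higher kernels); it only needs representability of the diagonals to see that $Z_1, Z_2$ are affine and that $w$ is fully faithful. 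You instead build $Z_1 = Z_0 \pb{X_0} X_1 \pb{X_0} Z_0$ as an explicit strict pullback of affines — equivalent to the paper's $Z_1 \simeq Z_0 \pb{LX} Z_0$ since $X_1 \simeq X_0 \pb{LX} X_0$ — and then construct $g_1$ by hand via the identification $Y_1 \simeq Y_0 \pb{LY} Y_0$ and an explicit pasting $\alpha \ast s_Z$, (whiskered canonical 2-cell over $X_1$) $\ast f$, $\alpha^{-1} \ast t_Z$. This buys a very concrete, anafunctor-style description of $g$ at the price of having to verify functoriality of $g_1$ (units, composition) by pasting-diagram arguments over $Z_2$; these verifications do go through (two maps from the discrete $Z_2$ to the discrete $Y_1$ that become isomorphic over the bipullback presentation of $Y_1$ are equal, and the required cocycle identity is inherited from the codescent structure of $X_0 \rightarrow LX$), but they are exactly the bookkeeping the paper's higher-kernel formulation is designed to avoid. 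The final descent step is carried out the same way in both arguments.
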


\begin{proof}
 By Proposition~\ref{prop:affine_diagonal_and_ffl_presentation} we can find $Z_0 \in \Aff$ which fits into a bipullback diagram
\[
  \xymatrix{Z_0 \ar[rr] \ar[d]_{w_0} \ar@{}[rrd]|\cong & & Y_0 \ar[d] \\ X_0 \ar[r]_{p} & LX \ar[r]_{f} & LY}
\]
 where $w_0$ is faithfully flat. Let
\[
 \xymatrix{Z_2 \ar@<5pt>[r] \ar[r] \ar@<-5pt>[r] & Z_1 \ar@<4pt>[r] \ar@<-4pt>[r] & Z_0 \ar[l]}
\]
 be the higher kernel of $pw_0$. By Proposition~\ref{prop:affine_diagonal_and_ffl_presentation} we can arrange for both $Z_1$ and $Z_2$ to lie in $\Aff$. Since there are no nontrivial 2-cells between discrete groupoids we get a truncated \emph{simplicial} object, not just truncated pseudosimplicial object. We claim that this higher kernel is a groupoid object in $\Aff$. To see this we have to check that various squares of the truncated simplicial diagram are pullback squares, and since we are dealing with discrete objects it suffices to check that they are bipullback squares. Since higher kernels and bipullback squares are computed as in $\ca{S}$, it suffices to check the claim in the category of groupoids. There it follows from the fact that for any functor $f \colon A \rightarrow B$ between groupoids, the diagram
\[
 \xymatrix{(f\downarrow f \downarrow f) \ar@<5pt>[r] \ar[r] \ar@<-5pt>[r] & (f\downarrow f) \ar@<4pt>[r] \ar@<-4pt>[r] & A \ar[l]}
\]
 is a higher kernel of $f$. The source and target morphisms of the groupoid $Z$ are faithfully flat by the second part of Proposition~\ref{prop:affine_diagonal_and_ffl_presentation}.

 Since the formation of higher kernels is pseudofunctorial, we also get an extension of $w_0$ to an internal functor $Z \rightarrow X$. Moreover, one way to compute $Z_1$ is as bipullback of the diagonal along the product of two copies of $pw_0$. We can do this by pasting two bipullback squares as in
\[
 \xymatrix{Z_1 \ar@{}[rd]|{\cong} \ar[r]^{w_1} \ar[d] & X_1 \ar@{}[rd]|{\cong} \ar[r] \ar[d] & LX \ar[d]^{\Delta} \\ 
Z_0 \times Z_0 \ar[r]_{w_0 \times w_0} & X_0 \times X_0 \ar[r]_{p\times p} & LX \times LX}
\]
 where the left square can be computed as a strict pullback in $\Aff$ since all the groupoids involved are discrete. This shows that $w$ is a surjective weak equivalence.

 Similarly we get an internal functor $g \colon Z \rightarrow Y$ induced by $f$. The morphism $w_0$ is an eso morphism of stacks since it is the bipullback of an eso morphism (see Lemma~\ref{lemma:presentation}). The same lemma tells us that $p$ is eso. Therefore the composite $pw_0$ is eso, and it follows that $LX$ is a codescent object of the diagram
\[
 \xymatrix{Z_2 \ar@<5pt>[r] \ar[r] \ar@<-5pt>[r] & Z_2 \ar@<4pt>[r] \ar@<-4pt>[r] & Z_0 \ar[l]}
\]
 in the 2-category $\ca{S}$ of stacks. In other words, $LX$ gives \emph{a} bireflection of $Z$ into stacks, and $\id_X$ respectively $f$ give \emph{a} morphism compatible with the internal functor $w \colon Z \rightarrow X$ respectively $g \colon Z \rightarrow Y$. From the universal property of bireflections we find that the diagram
\[
 \xymatrix{ & \ar@{}[d]|{\cong} \ar[ld]_{Lw} LZ \ar[rd]^{Lg} \\ LX \ar[rr]_f && LY}
\]
 commutes up to isomorphism.
\end{proof}

 We are now ready to prove that the pseudofunctor $L$ exhibits the 2-category of algebraic stacks as a bicategorical localization of the category of flat affine groupoids at the surjective weak equivalences.

\begin{proof}[Proof of Theorem~\ref{thm:stacks_localization}]
 We have to check the conditions of Proposition~\ref{prop:localization}. The pseudofunctor $L$ sends surjective weak equivalences to equivalences by Proposition~\ref{prop:L_weak_equivalences}, and the class of surjective weak equivalences satisfies condition (BF3) by Proposition~\ref{prop:bf3}. It remains to check that conditions (EF1)-(EF3) of Proposition~\ref{prop:localization_pronk} are satisfied. The first is true by definition of algebraic stacks. The second and third are the content of Propositions~\ref{prop:L_ef2} and \ref{prop:L_ef3} respectively.
\end{proof}

\section{Adams Hopf algebroids and Adams stacks}\label{section:adams}
 In this section we will prove that any flat Hopf algebroid for which the dualizable comodules form a generator of the category of all comodules is an Adams Hopf algebroid (Theorem~\ref{thm:adams_iff_resolution_property}), and that all strong symmetric monoidal left adjoint functors from its category of comodules to another category of comodules are tame (Theorem~\ref{thm:adams_implies_tame}). Recall from \cite[Definition~1.4.3]{HOVEY} that a Hopf algebroid $(A,\Gamma)$ is called an \emph{Adams Hopf algebroid} if $\Gamma$, considered as $(A,\Gamma)$-comodule, is a filtered colimit of dualizable comodules $\Gamma_i$.

 \subsection{The Tannakian perspective}
 To understand why every flat Hopf algebroid for which the dualizable comodules form a generator is an Adams Hopf algebroid we recall the following facts about Tannakian reconstruction from \cite{SCHAEPPI}. These are not required for the proof, but they give an explanation for why the proof strategy works.

 Fix a flat Hopf algebroid $(A,\Gamma)$ such that the dualizable comodules form a generator of $\Comod(A,\Gamma)$. Let $\ca{A}^{d} \subseteq \Comod(A,\Gamma)$ be the full subcategory of dualizable comodules, and write
\[
 w \colon \ca{A}^{d} \rightarrow \Mod_A
\]
 for the forgetful functor. Note that $\ca{A}^{d}$ is generally not an abelian category. Adapting \cite[Corollary~7.5.2]{SCHAEPPI} to Hopf algebroids we get an isomorphism
\[
 \Gamma \cong \int^{A\in\ca{A}^{d}} w(A) \otimes w(A)^{\vee}
\]
 of Hopf algebroids over $A$. It is also not hard to see that $w$ is \emph{flat}, that is, it is a filtered colimit of representable functors $\ca{A}^{d}(A_i,-)$. The Yoneda lemma
\[
 \int^{A\in\ca{A}^{d}} \ca{A}^{d}(A_i,A) \otimes w(A)^{\vee} \cong w(A_i)^{\vee}
\]
 and the fact that colimits commute with each other show that $\Gamma$ is a filtered colimit of dualizable $A$-modules. Moreover, these modules are the underlying modules of comodules. This makes it at least plausible that $\Gamma$ is a filtered colimit of comodules. Moreover, going back to the proof that $w$ is a filtered colimit of representables we get a candidate for the indexing diagram.  From this point of view, the content of the proof of Theorem~\ref{thm:adams_iff_resolution_property} is that this candidate diagram works.

\begin{dfn}\label{dfn:strong_resolution_property}
 An algebraic stack $X$ has the \emph{strong resolution property} if the dualizable objects form a generator of $\QCoh(X)$. 
\end{dfn}

\begin{rmk}\label{rmk:strong_resolution}
 From \cite[Proposition~1.4.1]{HOVEY} it follows that an algebraic stack with the strong resolution property also has the resolution property, and that the two notions coincide for coherent algebraic stacks.
\end{rmk}

 We are now ready to prove that a flat Hopf algebroid is an Adams Hopf if and only if the category of dualizable comodules is a generator. Combining this with the above remark we find that an algebraic stack is an Adams stack if and only if it has the strong resolution property.

\begin{proof}[Proof of Theorem~\ref{thm:adams_iff_resolution_property}]
 From \cite[Proposition~1.4.4]{HOVEY} we know that the dualizable comodules of an Adams Hopf algebroid form a generator of the category of comodules, so it remains to show the converse.

 Let $\ca{A}^{d}$ be the category of dualizable comodules of $(A,\Gamma)$, and write
\[
 w \colon \ca{A}^{d} \rightarrow \Mod_A
\]
 for the forgetful functor. Since colimits in the category of comodules are computed as in the category of $A$-modules, we can use \cite[Proposition~1.4.1]{HOVEY} to show that the diagram
\[
 d \colon  \ca{A}^{d} \slash \Gamma \rightarrow \Comod(A,\Gamma)
\]
 which sends $\varphi \colon M \rightarrow \Gamma$ to its domain $M$ has colimit $\Gamma$. In fact, $\ca{A}^{d}$ is a \emph{dense} generator of $\Comod(A,\Gamma)$ (cf.\ \cite[Corollary~7.5.2]{SCHAEPPI}). We claim that the category $\ca{A}^{d} \slash \Gamma$ is filtered.

 We have a natural bijection
\[
\xymatrix{ \Comod(A,\Gamma)\bigl( M,\Gamma \bigr) \ar[r]^-{\cong} & \Mod_A(M, A)}
\]
 given by composition with $\varepsilon \colon \Gamma \rightarrow A$. We can use this to define a contravariant functor
\[
\xymatrix{ \ca{A}^{d} \slash \Gamma \ar[r] & \el(w) }
\]
 which sends $\varphi\colon M \rightarrow \Gamma$ to the object $(M^{\vee}, \varepsilon \varphi)$, and a morphism $f \colon \varphi \rightarrow \varphi^{\prime}$ to $f^{\vee}$. Since every $M \in \ca{A}^{d}$ is its own double dual, the above bijection shows that this functor is essentially surjective. An explicit essential preimage of $(X,x)$ is given by the morphism $X^{\vee} \rightarrow \Gamma$ corresponding to the composite
\[
 \xymatrix{X^{\vee} \ar[r]^-{1 \otimes x} & X^{\vee}\otimes X \ar[r]^-{\varepsilon_X} & A}
\]
 under the above bijection.

 It remains to show that the functor is fully faithful. Fix two objects $\varphi \colon M \rightarrow \Gamma$ and $\psi \colon N \rightarrow \Gamma$ of $\ca{A}^{d} \slash \Gamma$. The assignment which sends an object of $\ca{A}^{d}$ to its dual is an equivalence of categories, so for any morphism $g \colon (N^{\vee},\varepsilon \psi) \rightarrow (M^{\vee}, \varepsilon \varphi)$ there exists a unique morphism $f \colon M \rightarrow N$ of comodules such that $f^{\vee}=g$. The morphism $f^{\vee}=\Hom_A(f,A)$ takes $\varepsilon \psi$ to $\varepsilon \varphi$ if and only if the triangle
\[
 \xymatrix{ M \ar[rr]^-{f} \ar[rd]_{\varepsilon \varphi} & & N \ar[ld]^{\varepsilon \psi}\\ & A}
\]
 commutes. From the natural bijection
\[
\xymatrix{ \Comod(A,\Gamma)\bigl( M,\Gamma \bigr) \ar[r]^-{\cong} & \Mod_A(M, A)}
\]
 it follows that this is the case if and only if $f$ is a morphism $(M,\varphi) \rightarrow (N,\psi)$ in the category $\ca{A}^{d} \slash \Gamma$, which shows that the functor is indeed fully faithful. Therefore our diagram is equivalent to the opposite of the category of elements of $w$. It remains to show that $\el(w)$ is cofiltered.

 It is clearly nonempty, and the existence of direct sums in $\ca{A}^{d}$ shows that the second condition for a category to be cofiltered is also satisfied. To see the last condition, let
\[
\xymatrix{(X,x) \ar@<2pt>[r]^-{\varphi} \ar@<-2pt>[r]_-{\psi} & (Y,y)} 
\]
 be two morphisms in $\el(w)$. Let $\gamma \colon K\rightarrow X$ be the equalizer of $\varphi$ and $\psi$ in $\Comod(A,\Gamma)$, which is computed as in $\Mod_A$ since $(A,\Gamma)$ is flat. From the definition of morphisms in $\el(w)$ it follows that there exists $k \in K$ such that $\gamma(k)=x$. Since the category of dualizable comodules forms a generator, we can find $Z \in \ca{A}$ and a morphism $Z \rightarrow K$ whose image contains $k$ (see \cite[Proposition~1.4.1]{HOVEY}). A choice of an element $z \in Z$ with image $k \in K$ turns the composite
\[
 \xymatrix{ Z \ar[r] & K \ar[r]^-{\gamma} & X}
\]
 into a morphism $(Z,z) \rightarrow (X,x)$ in $\el(w)$ with the desired property.
\end{proof}

\begin{cor}\label{cor:coherent_resolution_property_implies_adams}
 Any coherent algebraic stack $X$ with the resolution property is an Adams stack.
\end{cor}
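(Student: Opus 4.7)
The plan is to derive this corollary directly from the two results already in hand, namely Theorem~\ref{thm:adams_iff_resolution_property} and Remark~\ref{rmk:strong_resolution}. By the theorem, being an Adams stack is equivalent to having the strong resolution property (Definition~\ref{dfn:strong_resolution_property}), so the task reduces to upgrading the resolution property to the strong resolution property under the coherence hypothesis.

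First I would invoke Remark~\ref{rmk:strong_resolution}, which asserts precisely that for coherent algebraic stacks the resolution property and the strong resolution property coincide. Thus from the hypothesis that $X$ is coherent and has the resolution property, we conclude that the dualizable quasi-coherent sheaves form a generator of $\QCoh(X)$. Then I would apply Theorem~\ref{thm:adams_iff_resolution_property} (or equivalently, its Hopf algebroid formulation on a presenting groupoid of $X$) to conclude that $X$ is an Adams stack.

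There is no real obstacle here, since the coincidence of the two resolution properties on coherent stacks was already asserted as Remark~\ref{rmk:strong_resolution}; the only input beyond Theorem~\ref{thm:adams_iff_resolution_property} is that on a locally coherent category every finitely presentable object (in particular every coherent sheaf, and hence every quotient of a dualizable one) is a filtered colimit of its finitely presentable subobjects, together with \cite[Proposition~1.4.1]{HOVEY} to translate between "every object is a quotient of a sum of dualizables" and "the dualizables form a generator." The resulting proof is therefore essentially a one-line citation combining Remark~\ref{rmk:strong_resolution} with Theorem~\ref{thm:adams_iff_resolution_property}.
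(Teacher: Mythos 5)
Your proposal is correct and coincides with the paper's own proof, which likewise consists of the single observation that Remark~\ref{rmk:strong_resolution} upgrades the resolution property to the strong resolution property for coherent stacks, whereupon Theorem~\ref{thm:adams_iff_resolution_property} gives the Adams condition. No further commentary is needed.
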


\begin{proof}
 This follows from Theorem~\ref{thm:adams_iff_resolution_property} and Remark~\ref{rmk:strong_resolution}.
\end{proof}

\subsection{Tame functors and Adams stacks}
 In this section we will prove Theorem~\ref{thm:adams_implies_tame}, which states that all strong symmetric monoidal left adjoints
\[
F \colon \Comod(A,\Gamma) \rightarrow \Comod(B,\Sigma) 
\]
 are tame if $(A,\Gamma)$ is an Adams Hopf algebroid and $(B,\Sigma)$ is flat. In order to do this, we have to use the characterization of tame functors from Corollary~\ref{cor:tame_characterization}.

\begin{proof}[Proof of Theorem~\ref{thm:adams_implies_tame}]

 To show that any such $F$ is tame, it suffices to show that any symmetric monoidal left adjoint
\[
 F\colon \Comod(A,\Gamma) \rightarrow \Mod_B
\]
 sends the algebra $\Gamma \in \Comod(A,\Gamma)$ to a faithfully flat $B$-algebra (see Corollary~\ref{cor:tame_characterization}). Equivalently, we want to show that the sequence
\[
 \xymatrix{0 \ar[r] & F(A) \ar[r] & F(\Gamma) \ar[r] & F(\Gamma \slash A) \ar[r] & 0}
\]
 is an exact sequence of flat $B$-modules (see \cite[Lemma~5.5]{LURIE}). By definition of Adams Hopf algebroids there exists a filtered diagram $\Gamma_i$ of dualizable comodules whose colimit is $\Gamma$. Write $\ca{D}$ for the indexing category of this diagram. Since the comodule $A$ is finitely presentable, there exists an index $j \in \ca{D}$ such that the morphism $A \rightarrow \Gamma$ factors through $\Gamma_j$. Using the fact that $\ca{D}$ is filtered we find that the functor
\[
 j \slash \ca{D} \rightarrow \ca{D}
\]
 is final, hence that $\Gamma$ is also a colimit of the diagram
\[
 D \colon j\slash \ca{D} \rightarrow \Comod(A,\Gamma)
\]
 which sends the object $j \rightarrow i$ to $\Gamma_i$. Moreover, $j \slash \ca{D}$ is clearly filtered. The morphisms
\[
A \rightarrow \Gamma_j \rightarrow \Gamma_i
\]
 define a natural transformation from the constant diagram $C_A$ at $A$ to $D$, which we will denote by $\alpha \colon C_A \Rightarrow D$. By construction, the colimit of $\alpha$ is the unit $A \rightarrow \Gamma$ of the algebra $\Gamma$ in $\Comod(A,\Gamma)$. The morphism $A \rightarrow \Gamma$ is split as a morphism of $A$-modules, with splitting given by $\varepsilon \colon \Gamma \rightarrow A$. It follows that the components $\alpha_{j\rightarrow i}$ of $\alpha$ are also split as morphisms of $A$-modules. 

 By replacing $\ca{D}$ with $j \slash \ca{D}$ if necessary, we can therefore assume that the diagram $\Gamma_{(-)}$ on $\ca{D}$ admits a natural transformation $\alpha_i \colon A \rightarrow \Gamma_i$ with these properties, meaning that the components $\alpha_i$ are split as morphisms of $A$-modules, and the colimit of the $\alpha_i$ is the unit $A \rightarrow \Gamma$ of the algebra $\Gamma$ in $\Comod(A,\Gamma)$.

 The fact that $\alpha_i$ is split as a morphism of $A$-modules implies that the exact sequence
\[
\xymatrix{0\ar[r] & A \ar[r]^-{\alpha_i} & \Gamma_i \ar[r] & \Gamma_i \slash A \ar[r] & 0}
\]
 is split as a sequence of $A$-modules. Thus the underlying $A$-module of $\Gamma_i \slash A$ is finitely generated and projective, so $\Gamma_i \slash A$ is dualizable as a comodule. Since (filtered) colimits commute with colimits it follows that $\Gamma\slash A$ is the filtered colimit of the $\Gamma_i \slash A$. Thus $F(\Gamma)$ and $F(\Gamma \slash A)$ are both filtered colimits of dualizable $B$-modules, so they are flat. It remains to show that $F$ preserves the exact sequence
\[
 \xymatrix{0 \ar[r] & A \ar[r] & \Gamma \ar[r] & \Gamma \slash A \ar[r] & 0} \smash{\rlap{.}}
\]

 Since filtered colimits commute with finite limits, it suffices to check that the sequence
\[
\xymatrix{0\ar[r] & F(A) \ar[r]^-{F(\alpha_i)} & F(\Gamma_i) \ar[r] & F(\Gamma_i \slash A) \ar[r] & 0} 
\]
 is exact for all $i \in \ca{D}$. We have reduced the problem to showing that $F$ preserves exact sequences
\[
 \xymatrix{0\ar[r] & L \ar[r] & M \ar[r] & N \ar[r] & 0}
\]
 of dualizable comodules. Since the underlying $A$-module of a dualizable comodule is projective, such sequences are split as sequences of $A$-modules. The existence of this splitting implies that the dual sequence
\[
 \xymatrix{0\ar[r] & N^{\vee} \ar[r] & M^{\vee} \ar[r] & L^{\vee} \ar[r] & 0} 
\]
 is also an exact sequence in $\Comod(A,\Gamma)$. Since $F$ is left adjoint, both
\[
  \xymatrix{FL \ar[r] & FM \ar[r] & FN \ar[r] & 0}
\]
and
\[
  \xymatrix{F(N^{\vee}) \ar[r] & F(M^{\vee}) \ar[r] & F(L^{\vee}) \ar[r] & 0}
\]
 are exact in $\Mod_B$. From the fact that $\Hom_B(-,B)$ turns colimits into limits it follows that the sequence
\[
  \xymatrix{0 \ar[r] & F(L^{\vee})^{\vee} \ar[r] & F(M^{\vee})^{\vee} \ar[r] & F(N^{\vee})^{\vee}} 
\]
 is exact. As a strong symmetric monoidal functor, $F$ preserves duals, hence there is a natural isomorphism $F(-) \cong F\bigl((-)^{\vee}\bigr)^{\vee}$. This shows that $F$ preserves exact sequences of dualizable comodules.
\end{proof}

 This result allows us to prove Theorem~\ref{thm:adams_embedding} and Theorem~\ref{thm:coherent_adams_weakly_tannakian_equivalence}.

 \begin{dfn}\label{dfn:right_exact_symmetric_monoidal}
 A symmetric monoidal additive category is called \emph{right exact symmetric monoidal} if it has finite colimits, and if for every $A\in\ca{A}$, the functor
\[
 A\otimes - \colon \ca{A} \rightarrow \ca{A}
\]
 preserves finite colimits. We write $\ca{RM}$ for the 2-category with objects the right exact symmetric monoidal additive categories, 1-cells the right exact strong symmetric monoidal functors, and 2-cells the symmetric monoidal natural transformations between them.
\end{dfn}

 Theorem~\ref{thm:adams_embedding} says that the restriction of the pseudofunctor
\[
 \QCoh_{\fp}(-) \colon \ca{AS}^{\op} \rightarrow \ca{RM}
\]
 to Adams stacks is an equivalence on hom-categories.


\begin{proof}[Proof of Theorem~\ref{thm:adams_embedding}]
 From Theorem~\ref{thm:stacks_embedding} we know that the pseudofunctor
\[
 \QCoh(-) \colon \ca{AS}^{\op} \rightarrow \ca{T}
\]
 is an equivalence on hom-categories, and from Theorem~\ref{thm:adams_implies_tame} we know that for two Adams stacks $X$ and $Y$, \emph{any} strong symmetric monoidal left adjoint
\[
F \colon \QCoh(Y) \rightarrow \QCoh(X)
\]
 is tame. Moreover, $\QCoh(Y)$ is finitely presentable and closed (\cite[Theorem~1.3.1]{HOVEY}), so restricting along the inclusion $\QCoh_{\fp}(Y) \subseteq \QCoh(Y)$ gives an equivalence between strong symmetric monoidal left adjoints $F$ as above and right exact strong symmetric monoidal functors
\[
 \QCoh_{\fp}(Y) \rightarrow \QCoh(X) \smash{\rlap{.}}
\]
 From Corollary~\ref{cor:tame_preserves_presentability} we know that this restriction factors through $\QCoh_{\fp}(X)$.
\end{proof}

 Combining this with Theorem~\ref{thm:recognition} we find that the pseudofunctor $\Coh(-)$ gives a biequivalence between the 2-category of coherent algebraic stacks with the resolution property and the 2-cateogory of weakly Tannakian categories.

\begin{proof}[Proof of Theorem~\ref{thm:coherent_adams_weakly_tannakian_equivalence}]
 From Theorem~\ref{thm:adams_embedding} and Corollary~\ref{cor:coherent_resolution_property_implies_adams} we know that the pseudofunctor $\Coh(-)$ is an equivalence on hom-categories.
 Theorem~\ref{thm:recognition} shows that $\Coh(-)$ is essentially surjective on objects.
\end{proof}
 \section{A conjecture by Richard Pink}\label{section:filteredmodules}

\subsection{Outline}
 In this section we will prove Theorem~\ref{thm:conjecture_for_adams_stacks}. In \S \ref{section:recollections} we recall the definitions of the categories of filtered modules. The first half of Theorem~\ref{thm:conjecture_for_adams_stacks} is the content of the following proposition.

\begin{prop}\label{prop:first_half_of_conjecture}
  There is an algebraic stack $\mathfrak{P}$ on the $\fpqc$-site $\Aff$ and a symmetric monoidal equivalence $\MF_{W,\fp} \simeq \Coh(\mathfrak{P})$. Let 
\[
\vcenter{ \xymatrix{ \mathfrak{P}_{{\mathbb{Q}}_p} \ar[d] \ar[r] & \mathfrak{P} \ar[d] \\ 
\Spec(\mathbb{Q}_p) \ar[r] & \Spec(\mathbb{Z}_p)} }
\quad\mbox{and}\quad
\vcenter{ \xymatrix{ \mathfrak{P}_{n} \ar[d] \ar[r] & \mathfrak{P} \ar[d] \\ 
\Spec(\mathbb{Z}\slash p^n \mathbb{Z}) \ar[r] & \Spec(\mathbb{Z}_p)}}
\]
 be bipullback squares in the category of \emph{all} stacks on the $\fpqc$-site $\Aff$. Then $\Coh(\mathfrak{P}_{{\mathbb{Q}}_p})$ is equivalent (as a symmetric monoidal $\mathbb{Q}_p$-linear category) to the Tannakian category $\MF^{\Phi,f}_K$ of weakly admissible $\Phi$-modules, and $\Coh(\mathfrak{P}_n)$ is equivalent to the full subcategory of $\MF_{W,\fp}$ of objects whose underlying $W$-modules are annihilated by $p^n$.
\end{prop}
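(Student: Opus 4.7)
The plan is to derive the proposition from the recognition theorem together with base change of flat Hopf algebroids along the morphisms $\mathbb{Z}_p \to \mathbb{Z}/p^n\mathbb{Z}$ and $\mathbb{Z}_p \to \mathbb{Q}_p$. First I would verify that $\MF_{W,\fp}$, regarded as a $\mathbb{Z}_p$-linear symmetric monoidal abelian category, is weakly Tannakian in the sense of Definition~\ref{dfn:weakly_tannakian}. The forgetful functor $w \colon \MF_{W,\fp} \to \Mod_W$ is strong symmetric monoidal (the tensor product in $\MF_{W,\fp}$ is built on underlying $W$-modules), faithful, and exact by definition, hence is a fiber functor. For condition (ii), given an object $M \in \MF_{W,\fp}$, I would lift a surjection $W^r \twoheadrightarrow M$ of $W$-modules to an epimorphism in $\MF_{W,\fp}$ from a dualizable object by pulling back the filtration on $M$ and then equipping $W^r$ with suitable $\Phi$-maps; the key point is that dualizable objects of $\MF_{W,\fp}$ are precisely those with free underlying $W$-module. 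Applying Theorem~\ref{thm:hopf_recognition} produces a coherent flat Hopf algebroid $(W,\Gamma)$ in $\Mod_{\mathbb{Z}_p}$ with $\MF_{W,\fp} \simeq \Comod_{\fp}(W,\Gamma)$, and Proposition~\ref{prop:general_base} together with Theorem~\ref{thm:recognition} provides the stack $\mathfrak{P}$ over $\Spec(\mathbb{Z}_p)$ with $\Coh(\mathfrak{P}) \simeq \MF_{W,\fp}$.

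Next I would compute the two bipullbacks by base change of Hopf algebroids. Since the associated-stack pseudofunctor $L$ preserves finite weighted bilimits (cf.\ \S\ref{section:groupoids}, where $L$ is constructed as a filtered bicolimit of finite-limit preserving functors) and bipullbacks in the full 2-category of stacks agree with those in $\ca{S}$, the bipullback of $\mathfrak{P}\to \Spec(\mathbb{Z}_p)$ along $\Spec(R) \to \Spec(\mathbb{Z}_p)$ is the stack associated to the Hopf algebroid $(W \ten{\mathbb{Z}_p} R, \Gamma \ten{\mathbb{Z}_p} R)$, which is again flat since $\Gamma$ is flat over $W$. Thus $\mathfrak{P}_n$ is presented by $(W/p^n, \Gamma/p^n)$ and $\mathfrak{P}_{\mathbb{Q}_p}$ by $(K, \Gamma \ten{\mathbb{Z}_p}\mathbb{Q}_p)$, where $K = W\ten{\mathbb{Z}_p}\mathbb{Q}_p$ is the field of fractions of $W$.

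For the mod-$p^n$ case, a comodule over $(W/p^n, \Gamma/p^n)$ is the same datum as a $(W,\Gamma)$-comodule whose underlying $W$-module is annihilated by $p^n$, because for such an $M$ the identity $M\ten{W}\Gamma = M\ten{W/p^n}(\Gamma/p^n)$ holds; by \cite[Proposition~1.3.3]{HOVEY}, finite presentability of a comodule is detected on the underlying module, so restricting to finitely presentable comodules on both sides yields the desired identification of $\Coh(\mathfrak{P}_n)$ with the full subcategory of $\MF_{W,\fp}$ of $p^n$-torsion objects.

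The generic fiber case is the main obstacle. I would first observe that the canonical functor $\MF_{W,\fp} \to \Comod_{\fp}(K, \Gamma\ten{\mathbb{Z}_p}\mathbb{Q}_p)$ induced by the base change $(W,\Gamma) \to (K,\Gamma\ten{\mathbb{Z}_p}\mathbb{Q}_p)$ sends a torsion-free object $M$ to $M\ten{W}K$, and that its essential image is precisely $\MF^{\Phi,f}_K$ by the Fontaine-Laffaille theorem that every weakly admissible $\Phi$-module arises by base change from a torsion-free object of $\MF_{W,\fp}$. To upgrade this to an equivalence of symmetric monoidal $\mathbb{Q}_p$-linear categories, I would verify that $\MF^{\Phi,f}_K$ is itself weakly Tannakian with neutral fiber functor to $\Mod_K$ and then compare the two resulting Hopf algebroids: applying Theorem~\ref{thm:coherent_adams_weakly_tannakian_equivalence} (or equivalently Theorem~\ref{thm:affine_group_schemes_recognition}) to $\MF^{\Phi,f}_K$ gives an affine gerbe whose Hopf algebroid must coincide, by the universal property of $\Gamma$ (the coend $\int^{X} w(X)\ten{W}w(X)^{\vee}$ from the proof of Corollary~\ref{cor:L_hopf_monoidal}) tensored with $\mathbb{Q}_p$, with $(K,\Gamma\ten{\mathbb{Z}_p}\mathbb{Q}_p)$. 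The delicate point is to verify that passing to the generic fibre of the coend description commutes with extension of scalars, which ultimately reduces to the Fontaine-Laffaille classification together with exactness of $(-)\ten{\mathbb{Z}_p}\mathbb{Q}_p$.
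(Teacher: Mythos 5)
Your construction of $\mathfrak{P}$ and your treatment of the $p^n$-torsion fibers match the paper's argument: the paper also verifies that $\MF_{W,\fp}$ is weakly Tannakian (citing Wintenberger for conditions i)--iii)), applies Theorem~\ref{thm:hopf_recognition} to get a flat Hopf algebroid $(W,\Pi)$ over $\mathbb{Z}_p$, computes both bipullbacks as base changes of Hopf algebroids (Proposition~\ref{prop:quotient_and_localization}), and identifies $\Comod_{\fp}(W/p^n,\Pi/p^n)$ with the $p^n$-torsion subcategory using the Noetherian hypothesis and the fact that finite presentability of a comodule is detected on the underlying module. That part of your proposal is sound.

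The generic fiber is where you diverge from the paper, and where your argument has a genuine gap. You propose to apply the recognition theorem a second time to $\MF^{\Phi,f}_K$ and then identify the resulting Hopf algebroid with $(K, S^{-1}\Pi)$ via the coend formula, asserting that the comparison ``reduces to the Fontaine--Laffaille classification together with exactness of $(-)\ten{\mathbb{Z}_p}\mathbb{Q}_p$.'' Exactness of localization lets you pull $S^{-1}$ inside the coend $\int^{M\in\ca{A}^d} w(M)\ten{W} w(M)^{\vee}$, but it does \emph{not} let you change the indexing category: you must still show that the coend over the dualizable objects of $\MF^{\Phi,f}_K$ agrees with the localized coend over $\ca{A}^d\subseteq\MF_{W,\fp}$. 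That requires exactly the statement that the canonical functor $\mathbb{Q}_p\ten{\mathbb{Z}_p}\ca{A}^d \rightarrow (\MF^{\Phi,f}_K)^d$ is an equivalence, and this has two halves: essential surjectivity (existence of strongly divisible lattices, i.e.\ the definition of weak admissibility) and \emph{full faithfulness}, which needs the observation that an arbitrary morphism $f\colon \Delta\rightarrow\Delta'$ of weakly admissible modules satisfies $p^i f(M)\subseteq M'$ for some $i$, so that morphisms localize correctly. Your proposal addresses neither the full faithfulness nor the fact that you are comparing categories with different objects (lattice-free vs.\ lattice-equipped); the paper handles both by introducing the intermediate category $\ca{C}$ of pairs $(\Delta, M)$ of a weakly admissible module with a chosen strongly divisible lattice, together with the Fontaine--Laffaille/Wintenberger equivalence between $\ca{C}$ and the torsion-free part of $\MF_{W,\fp}$. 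Relatedly, your assertion that the essential image of $\MF_{W,\fp}\rightarrow\Comod_{\fp}(K,S^{-1}\Pi)$ ``is precisely $\MF^{\Phi,f}_K$'' presupposes the identification you are trying to establish: a priori $\MF^{\Phi,f}_K$ is not a subcategory of the abstract comodule category. The paper instead proves a general localization result (Proposition~\ref{prop:comodules_of_localization}), namely $S^{-1}R\ten{R}\Comod_{\fp}(A,\Gamma)\simeq\Comod_{\fp}(S^{-1}A,S^{-1}\Gamma)$ with every object represented by an $S$-torsion-free comodule; its proof of essential surjectivity uses that, over a Noetherian base, every comodule is the filtered union of its finitely presentable subcomodules (Proposition~\ref{prop:finitely_generated_subcomodules}). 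If you insist on your route you can bypass that proposition, but only by fully carrying out the coend comparison, which in the end consumes the same Fontaine--Laffaille and Wintenberger inputs; as written, the ``delicate point'' you flag is the actual content of the proof and remains unproved.
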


 We prove this proposition in \S \ref{section:first_half}. The existence of $\mathfrak{P}$ is an immediate consequence of the recognition theorem (Theorem~\ref{thm:recognition}). The difficulty lies in identifying the categories of coherent sheaves of $\mathfrak{P}_n$ and $\mathfrak{P}_{\mathbb{Q}_p}$.

 To prove Theorem~\ref{thm:conjecture_for_adams_stacks} from the above proposition, it remains to show that $\mathfrak{P}$ is the bicolimit of the $\mathfrak{P}_n$. We use the embedding theorem (Theorem~\ref{thm:adams_embedding}) to reduce this to a question about bilimits of symmetric monoidal categories, which we compute in \S \S \ref{section:bilimit_in_ab_categories} and \ref{section:bicolimit_adams}.

\subsection{Recollections about filtered modules}\label{section:recollections}
 The following summary is taken from \cite[\S2]{SCHAEPPI}. Fix a perfect field $k$ of characteristic $p>0$, and let $W$ be the ring of Witt vectors with coefficients in $k$. For our purposes it suffices to know that $W$ is a complete discrete valuation ring with residue field $k$ which contains the ring of $p$-adic integers $\mathbb{Z}_p$, and that $p \in \mathbb{Z}_p$ is a uniformizer of $W$. A construction of the ring can be found in \cite[\S II.6]{SERRE}. There is an automorphism $\sigma \colon W \rightarrow W$ of $\mathbb{Z}_p$-algebras which lifts the Frobenius automorphism on the residue field $k$ of $W$ (see \cite[Th\'eor\`eme~II.7 and Proposition~II.10]{SERRE}). This automorphism $\sigma$ is again called the Frobenius automorphism. The induced isomorphism of the field of fractions $K$ of $W$ is again denoted by $\sigma$. For a $W$-module (or a $K$-vector space) $M$, we write $M_\sigma$ for the $W$-module obtained by base change along $\sigma$. In the following definition we use the same notation and terminology that was introduced in \cite{WINTENBERGER}.

\begin{dfn}\label{dfn:MF_W}
 A \emph{filtered $F$-Module} consists of
\begin{itemize}
\item
 a $W$-module $M$ with a decreasing filtration $(\Fil^i M)_{i \in \mathbb{Z}}$ of submodules $\Fil^i M \subseteq M$. The filtration is \emph{exhaustive}, $\bigcup_{i\in \mathbb{Z}} \Fil^i M = M$, and \emph{separated}, $\bigcap_{i\in \mathbb{Z}} \Fil^i M =0$;
\item
 for each $i\in \mathbb{Z}$, a morphism $\varphi^i \colon \Fil^i M \rightarrow M_\sigma$ of $W$-modules such that the restriction of $\varphi^i$ to $\Fil^{i+1} M$ is $p\varphi^{i+1}$.
\end{itemize}
 A morphism of filtered $F$-modules $M \rightarrow M^\prime$ is a morphism $g\colon M\rightarrow M^\prime$ of $W$-modules such that for all $i\in \mathbb{Z}$, $g(\Fil^i M ) \subseteq \Fil^i M^\prime$ and $\varphi^i_{M^{\prime}} \circ g = g \circ \varphi^i_M$. We denote the category of filtered $F$-modules by $\MF$, and we write $\MF_{W,\fp}$ for the full subcategory of objects $M$ which satisfy
\begin{itemize}
 \item
 the $W$-module $M$ is finitely generated;
 \item
 the modules $\Fil^i M$ are direct summands of $M$;
 \item
 the images of the $\varphi^i$ span $M$, that is, $\sum_{i\in \mathbb{Z}} \varphi^i(\Fil^i M) =M$.
\end{itemize}
\end{dfn}

\begin{dfn}\label{dfn:filtered_phi_modules}
 A filtered $\Phi$-module is a vector space $\Delta$ over $K$, endowed with a $\sigma$-semilinear isomorphism $\Phi \colon \Delta \rightarrow \Delta$ and an exhaustive and separated filtration $(\Fil^i \Delta)_{i \in Z}$. A morphism of filtered $\Phi$-modules is a morphism of $K$-vector spaces which is compatible with $\Phi$ and with the filtration. 
 The category of filtered $\Phi$-modules is denoted by $\MF^{\Phi}_K$.

 Fix a finite dimensional filtered $\Phi$-module $\Delta$. A $W$-lattice $M$ in $\Delta$ is called \emph{strongly divisible} if
\[
 \sum_{i\in \mathbb{Z}} p^{-i} \Phi(\Fil^i \Delta \cap M) = M \smash{\rlap{.}}
\]
 A \emph{weakly admissible filtered $\Phi$-module} is a filtered $\Phi$-module which has a strongly divisible lattice. The full subcategory of $\MF^{\Phi}_K$ consisting of weakly admissible filtered $\Phi$-modules is denoted by $\MF^{\Phi,f}_K$.
\end{dfn}

\begin{prop}\label{prop:MF_W_properties}
 The category of filtered $F$-modules has the following properties.
\begin{enumerate}
 \item[i)]
 The category $\MF_{W,\fp}$ is abelian, and the forgetful functor
\[
 w \colon \MF_{W,\fp} \rightarrow \Mod_W 
\]
 is an exact $\mathbb{Z}_p$-linear functor.
 \item[ii)]
 The category $\MF_{W,\fp}$ is endowed with a $\mathbb{Z}_p$-linear tensor product which turns $\MF_{W,\fp}$ into a closed symmetric monoidal $\mathbb{Z}_p$-linear category, and the forgetful functor $w \colon \MF_{W,\fp} \rightarrow \Mod_W$ is strong symmetric monoidal.
 \item[iii)]
 For any object $M$ of $\MF_{W,\fp}$ there exists a dualizable object $M^\prime$ of $\MF_{W,\fp}$ and an epimorphism $g \colon M^{\prime} \rightarrow M$.
\end{enumerate}
\end{prop}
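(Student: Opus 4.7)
The plan is to reduce (i) and (ii) to the established theory of Fontaine--Laffaille modules and to treat (iii), the existence of dualizable covers, as the main substantive step.

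For (i), kernels and cokernels in the ambient category $\MF$ of all filtered $F$-modules are computed at the level of underlying $W$-modules, with the induced subspace and quotient filtrations and the unique Frobenius-semilinear structure making the structural morphisms equivariant. One then checks that the three conditions cutting out $\MF_{W,\fp}$ (finite generation, direct-summand filtration, and surjectivity of $\sum_i\varphi^i$) are preserved by these operations; this rests on $W$ being a discrete valuation ring and on morphisms in $\MF_{W,\fp}$ being strict with respect to the filtration, and is spelled out in \cite{FONTAINE_LAFFAILLE}. Exactness and $\mathbb{Z}_p$-linearity of $w$ are then immediate from the construction.

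For (ii), the symmetric monoidal structure is the convolution tensor product: $M\otimes N = M\otimes_W N$ with filtration $\Fil^n(M\otimes N) = \sum_{i+j=n}\Fil^iM\otimes_W\Fil^jN$ and $\varphi^n$ assembled canonically from the $\varphi^i\otimes \varphi^j$ via the relation $\varphi^i|_{\Fil^{i+1}} = p\varphi^{i+1}$, with unit $W$ in pure filtration degree $0$ and $\varphi^0 = \id$. The symmetric monoidal axioms are inherited from $\Mod_W$, and the forgetful functor $w$ is strong symmetric monoidal by construction. Preservation of $\MF_{W,\fp}$ under the tensor product is a direct verification using the direct-summand hypothesis. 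Closedness reduces, once (iii) is available, to the case where one argument is dualizable, where the internal hom is simply the tensor with the dual.

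The main substantive step is (iii). Given $M\in\MF_{W,\fp}$, exploit the direct-summand hypothesis to write $M \cong \bigoplus_i \mathrm{gr}^i M$ as a $W$-module (non-canonically), so that $M/pM \cong \bigoplus_i \mathrm{gr}^iM/p$ as a $k$-vector space. The span condition together with $\varphi^i|_{\Fil^{i+1}} = p\varphi^{i+1}$ shows that the induced map $\bigoplus_i \bar\varphi^i \colon \bigoplus_i \mathrm{gr}^iM/p \to M/pM$ is a surjection of $k$-spaces of the same finite dimension, hence an isomorphism. Choose a $k$-basis $\{\bar z_\alpha\}$ of $\bigoplus_i \mathrm{gr}^iM/p$ with $\bar z_\alpha \in \mathrm{gr}^{i_\alpha}M/p$, and lift each $\bar z_\alpha$ to $z_\alpha \in \mathrm{gr}^{i_\alpha}M$ and then to $x_\alpha \in \Fil^{i_\alpha}M$. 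By construction, both $\{x_\alpha \bmod p\}$ and $\{\varphi^{i_\alpha}_M(x_\alpha) \bmod p\}$ are $k$-bases of $M/pM$, so both $\{x_\alpha\}$ and $\{\varphi^{i_\alpha}_M(x_\alpha)\}$ are minimal $W$-module generating sets of $M$ by Nakayama. Let $M'$ be the free $W$-module on formal basis $\{e_\alpha\}$, filtered by $\Fil^jM' = \bigoplus_{\alpha \colon i_\alpha \geq j} We_\alpha$ (a direct summand of the free module $M'$), define $s \colon M'\to M$ by $e_\alpha \mapsto x_\alpha$ (surjective by minimality of $\{x_\alpha\}$), and specify $\varphi^{i_\alpha}_{M'}(e_\alpha)\in M'_\sigma$ as a $W$-linear lift through $s_\sigma\colon M'_\sigma\twoheadrightarrow M_\sigma$ of $\varphi^{i_\alpha}_M(x_\alpha)$. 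The remaining $\varphi^j_{M'}$'s are forced by the compatibility relation. The minimality of $\{\varphi^{i_\alpha}_M(x_\alpha)\}$ ensures that the chosen lifts $\{\varphi^{i_\alpha}_{M'}(e_\alpha)\}$ are linearly independent modulo $p$ in $M'_\sigma$, hence form a $W$-basis, which gives the span condition on $M'$. Thus $M'$ lies in $\MF_{W,\fp}$, has free finite-rank underlying $W$-module, and is therefore dualizable, with $s \colon M' \twoheadrightarrow M$ the required epimorphism. The main obstacle, namely arranging the generating data so that both $s$ is surjective on underlying $W$-modules and the span condition for $M'$ holds simultaneously, is successfully circumvented by the key observation that $\bigoplus_i\bar\varphi^i$ is an isomorphism modulo $p$.
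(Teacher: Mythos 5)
Your proof is correct, but it takes a genuinely different route from the paper: the paper disposes of all three parts purely by citation to Wintenberger (Proposition~1.4.1 for part~i), \S1.7 for the tensor structure, and Proposition~1.6.3 together with the characterization of dualizable objects for part~iii)), whereas you reprove part~iii) from scratch. Your key observation --- that the span condition forces $\bigoplus_i \bar\varphi^i \colon \bigoplus_i \mathrm{gr}^i M \otimes_W k \to M_\sigma \otimes_W k$ to be a surjection between $k$-spaces of equal dimension, hence an isomorphism, so that one can simultaneously arrange surjectivity of $s$ and the span condition for the free cover $M'$ --- is exactly the mechanism underlying Wintenberger's Proposition~1.6.3, and your execution of it (adapted basis, Nakayama on both $\{x_\alpha\}$ and $\{\varphi^{i_\alpha}(x_\alpha)\}$, lifting the Frobenius through the free module) is sound. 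What the paper's approach buys is brevity and a clean interface with the established Fontaine--Laffaille literature; what yours buys is that the reader sees why free covers exist. Two points where you are terser than your own standard and lean implicitly on the same citations the paper uses: the assertion that an object of $\MF_{W,\fp}$ with free finite-rank underlying module is dualizable requires the explicit construction of the dual filtration and dual Frobenius (this is precisely the ``characterization of dualizable objects'' in Wintenberger \S1.7, and uses that for a free object the span condition makes the assembled map $\overline{M'} \to M'_\sigma$ an isomorphism); and the reduction of closedness to the dualizable case needs right-exactness of $X \otimes -$, which follows since $w$ is faithful exact and $w(X) \otimes_W -$ is right exact, but deserves a sentence. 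Neither is a gap relative to the paper, whose own proof is entirely by reference.
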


\begin{proof}
 Part~i) is proved in \cite[Proposition~1.4.1]{WINTENBERGER}. The symmetric monoidal structure is constructed in \cite[\S1.7]{WINTENBERGER}. Part~iii) follows from \cite[Proposition~1.6.3]{WINTENBERGER} and the characterization of dualizable objects in \cite[\S1.7]{WINTENBERGER}.
\end{proof}

 In our terminology, the category $\MF_{W,\fp}$ is thus a weakly Tannakian  $\mathbb{Z}_p$-linear category with a fiber functor $w \colon \MF_{W,\fp} \rightarrow \Mod_W$. From Theorem~\ref{thm:recognition} it follows that there exists an algebraic stack $\mathfrak{P}$ over $\mathbb{Z}_p$ and a symmetric monoidal $\mathbb{Z}_p$-linear equivalence $\MF_{W,\fp} \simeq \Coh(\mathfrak{P})$. In order to compute the desired bipullbacks it is convenient to use the corresponding result about flat Hopf algebroids.

 \begin{prop}\label{prop:MF_W_weakly_tannakian}
  There is a flat Hopf algebroid $(W,\Pi)$ in $\Mod_{\mathbb{Z}_p}$ and a symmetric monoidal equivalence $\Comod_{\fp}(W,\Pi) \simeq \MF_{W,\fp}$ such that the diagram
\[
 \xymatrix{\MF_{W,\fp} \ar[r] \ar[rd]_{w} & \Comod_{\fp}(W,\Pi) \ar[d] \\ & \Mod_W}
\]
 is commutative.
 \end{prop}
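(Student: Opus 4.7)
The plan is to obtain Proposition~\ref{prop:MF_W_weakly_tannakian} as an immediate application of the recognition theorem for Hopf algebroids, Theorem~\ref{thm:hopf_recognition}. The first step is to verify that the forgetful functor $w \colon \MF_{W,\fp} \rightarrow \Mod_W$ exhibits $\MF_{W,\fp}$ as a weakly Tannakian $\mathbb{Z}_p$-linear category in the sense of Definition~\ref{dfn:weakly_tannakian}, with $B = W$ regarded as a commutative $\mathbb{Z}_p$-algebra. Almost every ingredient is already recorded in Proposition~\ref{prop:MF_W_properties}: part~(i) gives that $\MF_{W,\fp}$ is abelian and $w$ is exact and $\mathbb{Z}_p$-linear; part~(ii) gives the closed symmetric monoidal $\mathbb{Z}_p$-linear structure on $\MF_{W,\fp}$ together with a strong symmetric monoidal structure on $w$; and part~(iii) verifies condition~ii) of Definition~\ref{dfn:weakly_tannakian}, namely that every object admits an epimorphism from a dualizable one.

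The only feature of a fiber functor not recorded explicitly in Proposition~\ref{prop:MF_W_properties} is faithfulness of $w$, but this is immediate from Definition~\ref{dfn:MF_W}: a morphism in $\MF_{W,\fp}$ is by definition a $W$-linear morphism satisfying additional compatibility with the filtrations and the maps $\varphi^i$, so two parallel morphisms that agree after applying $w$ are equal. Thus $w$ is exact, faithful, strong symmetric monoidal and $\mathbb{Z}_p$-linear, so it is a fiber functor, and $\MF_{W,\fp}$ satisfies the two conditions of Definition~\ref{dfn:weakly_tannakian}.

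Having checked the hypotheses, the second step is to apply Theorem~\ref{thm:hopf_recognition} to $\MF_{W,\fp}$ with the fiber functor $w$. The theorem directly produces a coherent commutative Hopf algebroid $(W,\Pi)$ in $\Mod_{\mathbb{Z}_p}$ with the resolution property, together with a symmetric monoidal $\mathbb{Z}_p$-linear equivalence $\MF_{W,\fp} \simeq \Comod_{\fp}(W,\Pi)$ making the triangle
\[
\xymatrix{\MF_{W,\fp} \ar[r]^-{\simeq} \ar[rd]_w & \Comod_{\fp}(W,\Pi) \ar[d] \\ & \Mod_W}
\]
commutative, where the vertical arrow on the right is the forgetful functor. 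This is exactly the content of the proposition. I do not anticipate any obstacle here: all the real work sits either in the previously recalled Proposition~\ref{prop:MF_W_properties} (which repackages results of Wintenberger) or in Theorem~\ref{thm:hopf_recognition}, so the proof of Proposition~\ref{prop:MF_W_weakly_tannakian} is little more than a bookkeeping step translating the recognition theorem into the setting at hand.
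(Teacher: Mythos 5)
Your proposal is correct and follows exactly the route the paper takes: the paper's proof is the one-line observation that the statement follows from Proposition~\ref{prop:MF_W_properties} and Theorem~\ref{thm:hopf_recognition} applied with $R=\mathbb{Z}_p$. Your additional remark that faithfulness of $w$ is immediate from Definition~\ref{dfn:MF_W} is a correct (and welcome) filling-in of a detail the paper leaves implicit.
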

 
 \begin{proof}
 This follows from Proposition~\ref{prop:MF_W_properties} and Theorem~\ref{thm:hopf_recognition}, applied to the case $R=\mathbb{Z}_p$.
\end{proof}

 \subsection{Computing the fibers}\label{section:first_half}
 The computation of the bipullbacks in Theorem~\ref{thm:conjecture_for_adams_stacks} follows from general facts about flat Hopf algebroids $(A,\Gamma)$ where $A$ is Noetherian. The Noetherian condition is needed in order to identify the categories of finitely presentable comodules of the fibers.

 \begin{prop}\label{prop:quotient_and_localization}
  Let $R$ be a commutative ring, $I \subseteq R$ an ideal, $S \subseteq R$ a multiplicative set. Let $(A,\Gamma)$ be a flat Hopf algebroid in $\Mod_R$. Write $X$ for the algebraic stack associated to $(A,\Gamma)$, and let
\[
\vcenter{ \xymatrix{ S^{-1} X \ar[d] \ar[r] & X \ar[d] \\ 
\Spec(S^{-1} R) \ar[r] & \Spec(R)} }
\quad\mbox{and}\quad
\vcenter{ \xymatrix{ X \slash IX \ar[d] \ar[r] & X \ar[d] \\ 
\Spec(R \slash I) \ar[r] & \Spec(R)}} 
\]
 be bipullback diagrams in the 2-category of stacks on the $\fpqc$-site $\Aff$. Then $X \slash IX$ is equivalent to the stack associated to the Hopf algebroid $(A\slash IA, \Gamma \slash I\Gamma)$ and $S^{-1} X$ is equivalent to the stack associated to $(S^{-1} A, S^{-1}\Gamma)$.
 \end{prop}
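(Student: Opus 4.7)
The plan is to reduce both bipullbacks to strict pullbacks of affine groupoid schemes and then appeal to the left exactness of the associated stack pseudofunctor $L$. The morphism $X \to \Spec(R)$ is represented by a morphism of internal groupoids $(\Spec A, \Spec \Gamma) \to \Spec(R)$ in $\Gpd(\Aff)$, where $\Spec(R)$ is viewed as a discrete groupoid; likewise $\Spec(R/I) \to \Spec(R)$ and $\Spec(S^{-1}R) \to \Spec(R)$ are morphisms of discrete groupoids. Since every 2-cell in a discrete groupoid is an identity, the bipullback of such a diagram in $\Gpd(\Aff)$ coincides with the strict pullback in the underlying 1-category, computed componentwise using compatibility of fiber products of affine schemes with tensor products of rings. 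In the quotient case this yields the affine groupoid $\bigl(\Spec(A \ten{R} R/I), \Spec(\Gamma \ten{R} R/I)\bigr) = \bigl(\Spec(A/IA), \Spec(\Gamma/I\Gamma)\bigr)$, and in the localization case it yields $\bigl(\Spec(S^{-1}A), \Spec(S^{-1}\Gamma)\bigr)$.

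Next I would verify that the resulting pairs $(A/IA, \Gamma/I\Gamma)$ and $(S^{-1}A, S^{-1}\Gamma)$ are flat Hopf algebroids. Since the two $R$-module structures on $\Gamma$ agree by Proposition~\ref{prop:general_base}, we can unambiguously identify $\Gamma/I\Gamma \cong \Gamma \ten{A} A/IA$ and $S^{-1}\Gamma \cong \Gamma \ten{A} S^{-1}A$ as bimodules over the new base ring; flatness is then preserved by base change of flat modules, and the Hopf algebroid structure maps are induced from those of $(A,\Gamma)$ via the base change functor.

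Finally, I would transfer this computation to stacks. The representable 2-functor $\Gpd(\Aff) \to \ca{F}$ recalled in \S\ref{section:groupoids} preserves bilimits, as it is an internal analogue of the Yoneda embedding, so the pullback of affine groupoids above gives the bipullback in $\ca{F}$ of the represented pseudofunctors. Since $L \colon \ca{F} \to \ca{S}$ is left exact, applying it yields the bipullback in $\ca{S}$ of the associated stacks; by Lemma~\ref{lemma:discrete_groupoids_are_stacks} the presheaves $\Spec(R)$, $\Spec(R/I)$ and $\Spec(S^{-1}R)$ are already stacks and are unchanged by $L$, while $L$ sends the presheaf represented by $(\Spec A, \Spec \Gamma)$ to $X$ itself. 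This identifies $X/IX$ with the stack associated to $(A/IA, \Gamma/I\Gamma)$ and $S^{-1}X$ with the stack associated to $(S^{-1}A, S^{-1}\Gamma)$. The main point to confirm is that $L$ preserves this particular bipullback, which is subsumed by its left exactness established via the construction of $L$ as a filtered bicolimit.
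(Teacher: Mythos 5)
Your proposal is correct and follows essentially the same route as the paper: reduce the bipullback to a strict pullback of affine groupoid schemes (using that the bottom legs are discrete and that the Yoneda-type embedding preserves limits), compute it as a pushout of commutative rings given by tensoring with $R/I$ resp.\ $S^{-1}R$, and transfer back via the left exactness of the associated stack pseudofunctor. The only difference is cosmetic: you build up from the groupoid level while the paper reduces down from the stack level, and you add an explicit (correct, if strictly speaking unneeded for the statement) check that the resulting pairs are flat Hopf algebroids.
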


 \begin{proof}
 Since the functor which sends a presheaf of groupoids to its associated stack preserves finite bilimits, it suffices to compute the desired bipullbacks in the category of presheaves of groupoids. In both squares the two groupoids in the bottom leg are discrete, so the bipullbacks in question coincide with the strict pullbacks. All the constituents of the two diagrams are obtained by applying the Yoneda embedding to groupoids internal to affine schemes, and the Yoneda embedding preserves all limits. This reduces the problem to a computation of the desired pullbacks in the category of affine groupoid schemes.

 Limits of internal groupoids are computed at the level of underlying affine schemes (because groupoids are a finite limit theory). Under the contravariant equivalence between affine schemes and commutative rings, pullbacks correspond to pushouts. In the case of interest they are given by
\[
(R \slash I \ten{R} A, R \slash I \ten{R} \Gamma ) 
\]
 and
\[
(S^{-1} R \ten{R} A, S^{-1}R \ten{R} \Gamma) 
\]
 respectively. These are isomorphic to the Hopf algebroids in the statement.
 \end{proof}

 It remains to characterize the categories of coherent sheaves of the algebraic stacks $X \slash IX$ and $S^{-1} X$. To do this we have to assume that the commutative ring $A$ is Noetherian.

\begin{lemma}
 Let $(A,\Gamma)$ be a Hopf algebroid. Then an $(A,\Gamma)$-comodule is finitely presentable if and only if its underlying $A$-module is finitely presentable.
\end{lemma}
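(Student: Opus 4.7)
The plan is to exploit the adjunction $V \dashv W \colon \Mod_A \to \Comod(A,\Gamma)$ with $V$ the forgetful functor and $W(N) = \Gamma \ten{A} N$. The key observation I will use throughout is that $W$ preserves filtered colimits, since tensor products do; and that, as a consequence, filtered colimits in $\Comod(A,\Gamma)$ are computed as in $\Mod_A$ (the coaction on a filtered colimit of comodules transfers uniquely along the canonical isomorphism $\colim (\Gamma \ten{A} N_i) \cong \Gamma \ten{A} \colim N_i$).

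For the direction \emph{comodule finitely presentable} $\Rightarrow$ \emph{underlying module finitely presentable}, I would use the adjunction formula directly: for any filtered diagram $(N_i)$ in $\Mod_A$,
\[
\Mod_A(VM, \colim N_i) \cong \Comod(M, W \colim N_i) \cong \Comod(M, \colim W N_i) \cong \colim \Comod(M, W N_i) \cong \colim \Mod_A(VM, N_i),
\]
where finite presentability of $M$ is used at the third step and preservation of filtered colimits by $W$ at the second.

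For the converse, I would use the fact that the hom-set $\Comod(M,N)$ sits in an equalizer
\[
\Comod(M,N) \longrightarrow \Mod_A(VM,VN) \rightrightarrows \Mod_A\bigl(VM, \Gamma \ten{A} VN\bigr)
\]
expressing that a morphism of underlying modules is a comodule map precisely when it is compatible with both coactions. Now let $(N_i)$ be a filtered diagram in $\Comod(A,\Gamma)$. By the observation above, the underlying module of $\colim N_i$ is $\colim V N_i$, and $\Gamma \ten{A} V(\colim N_i) \cong \colim (\Gamma \ten{A} V N_i)$. If $VM$ is finitely presentable in $\Mod_A$, then both parallel functors $\Mod_A(VM, V-)$ and $\Mod_A(VM, \Gamma \ten{A} V-)$ preserve the filtered colimit. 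Since filtered colimits of abelian groups commute with finite limits (in particular equalizers), $\Comod(M, \colim N_i) \cong \colim \Comod(M, N_i)$, so $M$ is finitely presentable as a comodule.

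This approach is essentially formal and I do not expect a genuine obstacle; the one point deserving care is the claim that filtered colimits in $\Comod(A,\Gamma)$ are created by $V$, which relies on $\Gamma \ten{A} -$ preserving filtered colimits. This holds for any $\Gamma$, so the statement in fact does not even require the flatness hypothesis on $(A,\Gamma)$ — it only uses the coaction equalizer description and the interchange of filtered colimits with finite limits in $\Ab$.
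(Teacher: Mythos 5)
Your argument is correct. Note that the paper does not actually prove this lemma itself: its ``proof'' is a citation of \cite[Proposition~1.3.3]{HOVEY}, so there is no in-text argument to compare against line by line. What you have written is the standard self-contained proof one would expect behind that citation: for the forward direction you use the adjunction $V \dashv W$ together with the fact that $V$ creates (hence $W$ preserves) filtered colimits, and for the converse you use the equalizer presentation of $\Comod(M,N)$ inside $\Mod_A(VM,VN) \rightrightarrows \Mod_A(VM,\Gamma\ten{A}VN)$ and the commutation of filtered colimits with finite limits in $\Set$ (or $\Ab$). Both steps are sound; the only point that needed care --- that colimits of comodules are computed on underlying modules --- follows because $V$ is comonadic, equivalently because the forgetful functor from coalgebras of the comonad $\Gamma\ten{A}-$ creates whatever colimits $\Mod_A$ has. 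Your closing observation is also right: nothing in this argument uses flatness of $\Gamma$ over $A$ (flatness is what makes $\Comod(A,\Gamma)$ abelian, which is irrelevant here), so your proof is marginally more general than the flat setting in which Hovey states the result and in which the paper applies it.
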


\begin{proof}
 See \cite[Proposition~1.3.3]{HOVEY}.
\end{proof}

\begin{prop}\label{prop:finitely_generated_subcomodules}
 Let $(A,\Gamma)$ be a flat Hopf algebroid such that $A$ is Noetherian, and let $M$ be an $(A,\Gamma)$-comodule. Then every finitely generated sub-$A$-module of $M$ is contained in a finitely presentable sub-comodule, and the category $\Comod(A,\Gamma)$ is locally finitely presentable.
\end{prop}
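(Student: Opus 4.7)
My plan is to first prove the sharp finiteness statement---that every finitely generated sub-$A$-module of a comodule is contained in a finitely generated sub-comodule---and then deduce local finite presentability of $\Comod(A,\Gamma)$ from it. Since $A$ is Noetherian, the preceding lemma identifies finitely presentable sub-comodules with those whose underlying $A$-module is finitely generated, so the first assertion reduces to: given a finitely generated $N \subseteq M$, find a finitely generated $N' \supseteq N$ with $\psi(N') \subseteq \Gamma \ten{A} N'$, where $\psi \colon M \to \Gamma \ten{A} M$ is the coaction.

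The key observation is that flatness of $\Gamma$ makes the functor $\Gamma \ten{A} (-)$ preserve intersections of sub-$A$-modules of $M$, so for each element $x \in \Gamma \ten{A} M$ there is a smallest sub-$A$-module $I(x) \subseteq M$ with $x \in \Gamma \ten{A} I(x)$. Since $x$ can be written as a finite sum $\sum \gamma_i \otimes m_i$, one has $I(x) \subseteq \sum A m_i$, and Noetherianness of $A$ guarantees that $I(x)$ is itself finitely generated. Applied to $x = \psi(m)$, counitality forces $m \in I(\psi(m))$: indeed, $m = (\varepsilon \ten{A} 1)\psi(m)$ lies in $A \ten{A} I(\psi(m)) = I(\psi(m))$.

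I would then iterate this construction, setting $N_0 = N$ and $N_{i+1} = N_i + \sum_j I(\psi(x_{ij}))$ where $x_{ij}$ range over a finite generating set of $N_i$. Each $N_i$ is finitely generated, and $\psi(N_i) \subseteq \Gamma \ten{A} N_{i+1}$ by construction. Minimality of the $I(\psi(x_{ij}))$ shows that all $N_i$ sit inside the smallest sub-comodule $\langle N \rangle \subseteq M$ containing $N$ (which exists since flatness makes arbitrary intersections of sub-comodules sub-comodules). The hard part will be to show the chain stabilizes, so that its union is a finitely generated sub-comodule. I expect this to come from a Sweedler-style argument invoking the coassociativity identity $(1 \ten{A} \psi)\psi(m) = (\Delta \ten{A} 1)\psi(m)$ in $\Gamma \ten{A} \Gamma \ten{A} M$: the right-hand side visibly lies in $\Gamma \ten{A} \Gamma \ten{A} I(\psi(m))$, and combined with the minimality characterization of $I$, this should force $\psi$ to already carry $I(\psi(m))$ into $\Gamma \ten{A} I(\psi(m))$, so that $I(\psi(m))$ is itself a sub-comodule and a single step of the iteration suffices.

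Granted the first assertion, the second follows by standard abstract nonsense: every comodule $M$ is then the filtered union of its finitely presentable sub-comodules, so the essentially small family of finitely presentable comodules forms a strong generator of $\Comod(A,\Gamma)$; combined with cocompleteness---colimits are created by the comonadic forgetful functor $V \colon \Comod(A,\Gamma) \to \Mod_A$---this identifies $\Comod(A,\Gamma)$ as locally finitely presentable.
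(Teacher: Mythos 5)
Your reduction of the second assertion to the first is fine and matches the paper, but the engine of your argument---the ``smallest sub-$A$-module $I(x)\subseteq M$ with $x\in\Gamma\otimes_A I(x)$''---does not exist in general, and this sinks the whole first part. Flatness of $\Gamma$ gives left exactness of $\Gamma\otimes_A(-)$, hence preservation of \emph{finite} intersections of submodules, but not of infinite ones; the family of candidate submodules is downward directed yet need not have a least element. Concretely, for $A=\mathbb{Z}$ and the flat module $\mathbb{Q}$, the element $1\in\mathbb{Q}\otimes_{\mathbb{Z}}\mathbb{Z}$ lies in $\mathbb{Q}\otimes_{\mathbb{Z}}n\mathbb{Z}$ for every $n\geq 1$ but not in $\mathbb{Q}\otimes_{\mathbb{Z}}0$. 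Noetherianity of $A$ gives the ascending chain condition, not the descending one, so it does not rescue the construction. Your parenthetical claim that ``flatness makes arbitrary intersections of sub-comodules sub-comodules'' is likewise false---this is exactly the point of the remark following this proposition in the paper, which cites counterexamples of Gabber and Serre: only \emph{finite} limits of comodules are computed as in $\Mod_A$. Finally, the Sweedler-style coassociativity step you defer to is the standard argument over a field, where one chooses a representation $\sum\gamma_i\otimes n_i$ with the $\gamma_i$ linearly independent; over a general ring with $\Gamma$ merely flat there is no such normal form, so even granting the existence of $I(\psi(m))$ the deduction that it is a sub-comodule does not go through as stated.

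The fix is to abandon minimality altogether. Write $\rho(m_i)=\sum_j g_j\otimes m_i^j$ for a finite generating set $m_1,\dots,m_n$ of $N$, let $N'$ be the (finitely generated) submodule spanned by the $m_i^j$, and set $N''=\rho^{-1}(\Gamma\otimes_A N')$, i.e.\ the pullback of $\rho\colon M\to\Gamma\otimes_A M$ against the inclusion $\Gamma\otimes_A N'\hookrightarrow\Gamma\otimes_A M$. Both of these are maps of comodules ($\rho$ is the unit of the adjunction $V\dashv W$, and $\Gamma\otimes_A N'=W(N')$ is a sub-comodule of $W(VM)$ by flatness), and since this is a \emph{finite} limit it is computed in $\Mod_A$; hence $N''$ is a sub-comodule. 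By construction $N\subseteq N''$, and the counit identity $(\varepsilon\otimes_A M)\circ\rho=\id_M$ squeezes $N''\subseteq N'$, so $N''$ is a submodule of a finitely generated module over a Noetherian ring and therefore finitely presentable. Your concluding paragraph deducing local finite presentability from this is correct.
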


\begin{proof}
 The proof is basically identical to the one for representations of flat group schemes over Noetherian rings, see for example \cite[\S1.4]{SERRE_GEBRES} and \cite[Corollary~7.5.3]{SCHAEPPI}.

 Fix a submodule $N \subseteq M$ generated by $m_i$, $1 \leq i \leq n$. The coaction of the elements $m_i$ can be written as $\rho(m_i)= \sum g_j \otimes m_i^j$ for some finite collection $m_i^j$. Let $N^{\prime}$ be the submodule generated by the $m_i^j$, and let
\[
 \newdir{ >}{{}*!/-7pt/@{>}}
 \xymatrix{N^{\prime \prime} \ar@{{ >}->}[d] \ar@{{ >}->}[r] & \Gamma \ten{A} N^{\prime} \ar@{{ >}->}[d] \\ M \ar@{{ >}->}[r]^-{\rho} & \Gamma \ten{A} M }
\]
 be a pullback diagram in $\Mod_A$. Since $(A,\Gamma)$ is flat, this is also a pullback diagram of $(A,\Gamma)$-comodules. Thus $N^{\prime\prime}$ is a comodule and by construction, $N \subseteq N^{\prime\prime}$.

 Commutativity of the diagram
\[
 \newdir{ >}{{}*!/-7pt/@{>}}
 \xymatrix{N^{\prime\prime} \ar@{{ >}->}[d] \ar@{{ >}->}[r] & \Gamma \ten{A} N^{\prime} \ar[r]^-{\varepsilon \ten{A} N^{\prime}} \ar@{{ >}->}[d] & N^{\prime} \ar@{{ >}->}[d] \\ M \ar@{{ >}->}[r]^-{\rho} & \Gamma \ten{A} M \ar[r]^-{\varepsilon \ten{A} M} & M} 
\]
 and the fact that $\varepsilon \ten{A} M \cdot \rho=\id_M$ show that $N^{\prime \prime} \subseteq N^{\prime}$, hence that it is a finitely generated $A$-module.
\end{proof}

 From the above lemma it follows that every comodule is the filtered union of its finitely presented sub-comodules, hence that the category of comodules is locally finitely presentable.

\begin{rmk}
 In \cite{WEDHORN} slightly more is claimed, but there is a subtle mistake in the argument: it is not true that an arbitrary intersection of sub-comodules is again a sub-comodule (at the level of flat coalgebras there are counterexamples due to Gabber and Serre, see \cite[Remarque~VI.11.10.1]{SGA3}). An intersection is a form of limit, and only finite limits of comodules can be computed as in the category of modules. The author was unable to prove the full claim of \cite[Proposition~5.7]{WEDHORN}.
\end{rmk}

 \begin{prop}\label{prop:comodules_of_quotient}
 Let $R$, $I$, $(A,\Gamma)$, $X$ and $X \slash IX$ be as in Proposition~\ref{prop:quotient_and_localization}. Assume further that $A$ is Noetherian. Then $\Coh(X \slash IX)$ is equivalent (as a symmetric monoidal $R$-linear category) to the full subcategory of $\Coh(X)$ consisting of those coherent sheaves which are annihilated by $I$.
 \end{prop}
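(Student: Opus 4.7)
The plan is to translate the statement into Hopf algebroid language and exhibit the desired equivalence as restriction of scalars along the quotient morphism $(A,\Gamma) \to (A/IA, \Gamma/I\Gamma)$ of Hopf algebroids. By Proposition~\ref{prop:quotient_and_localization}, $X \slash IX$ is the stack associated to $(A/IA, \Gamma/I\Gamma)$, and since coherent sheaves correspond to finitely presentable comodules, the statement reduces to the claim that $\Comod_{\fp}(A/IA, \Gamma/I\Gamma)$ is equivalent, as a symmetric monoidal $R$-linear category, to the full subcategory of $\Comod_{\fp}(A,\Gamma)$ consisting of comodules annihilated by $I$.

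The first step is to define the restriction of scalars functor $r \colon \Comod(A/IA, \Gamma/I\Gamma) \to \Comod(A,\Gamma)$, whose essential image obviously lies in the $I$-annihilated comodules. The key observation that drives both fully faithfulness and essential surjectivity onto this image is that for any $A$-module $M$ with $IM=0$, the canonical surjection $\Gamma \otimes_A M \to (\Gamma/I\Gamma) \otimes_{A/IA} M$ is an isomorphism. This implies both that an $(A,\Gamma)$-coaction on such an $M$ is the same datum as an $(A/IA, \Gamma/I\Gamma)$-coaction, and that a morphism $M \to N$ of $A$-modules (equivalently, of $A/IA$-modules) respects the one if and only if it respects the other. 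The coassociativity and counit axioms transfer automatically because the structure maps of $(A/IA, \Gamma/I\Gamma)$ are induced from those of $(A,\Gamma)$ by applying $R/I \ten{R} -$.

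To match finitely presentable objects on both sides I would use the Noetherian hypothesis: since $A$ is Noetherian, so is $A/IA$, and by the lemma preceding Proposition~\ref{prop:finitely_generated_subcomodules} a comodule on either side is finitely presentable precisely when its underlying module is finitely generated. An $I$-annihilated $A$-module is finitely generated over $A$ if and only if it is finitely generated over $A/IA$, so $r$ indeed restricts to an equivalence between the finitely presentable subcategories. Finally, symmetric monoidality follows because the tensor product of comodules is defined on underlying modules and $M \ten{A} N \cong M \ten{A/IA} N$ whenever both are $I$-annihilated; the induced coactions agree by the same isomorphism used above.

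The main obstacle, though essentially bookkeeping, is keeping the two $A$-module structures on $\Gamma$ straight and checking that the identification $\Gamma \ten{A} M \cong (\Gamma/I\Gamma) \ten{A/IA} M$ is compatible with comultiplication, counit, and tensor product of coactions. This is harmless because $(A,\Gamma)$ is a Hopf algebroid in $\Mod_R$, so the two $R$-actions on $\Gamma$ coincide and the quotient $\Gamma/I\Gamma$ is well-defined independently of which $A$-module structure on $\Gamma$ is used to cut by $I$.
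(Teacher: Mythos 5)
Your proposal is correct and follows essentially the same route as the paper: the same reduction via Proposition~\ref{prop:quotient_and_localization} to comparing comodule categories, the same key isomorphism $\Gamma/I\Gamma \ten{A/IA} M \cong \Gamma \ten{A} M$ for $I$-annihilated $M$ driving fully faithfulness and the identification of the essential image, and the same use of the Noetherian hypothesis to match finitely presentable objects on both sides. Your extra remarks on transferring the coaction axioms and on symmetric monoidality are fine elaborations of what the paper leaves implicit.
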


 \begin{proof}
 From Proposition~\ref{prop:quotient_and_localization} we know that $X$ is the stack associated to $(A,\Gamma)$ and $X \slash IX$ is associated to $(A\slash IA, \Gamma \slash I\Gamma)$. Thus it suffices to check that the category $\Comod_{\fp}(A\slash IA, \Gamma \slash I\Gamma)$ is equivalent to the full subcategory of $\Comod_{\fp}(A,\Gamma)$ consisting of finitely presented comodules whose underlying module is annihilated by $I$. For an $A$-module $M$ which is annihilated by $I$ we have a natural isomorphism
\[
 \Gamma \slash I \Gamma \ten{A \slash IA} M \cong \Gamma \ten{A} M
\]
 from which we deduce that there is a fully faithful functor
\[
 \Comod(A\slash IA, \Gamma \slash I\Gamma) \rightarrow \Comod(A,\Gamma) 
\]
 whose image is the full subcategory of $(A,\Gamma)$-comodules whose underlying module is annihilated by $I$. The assumption that $A$ is Noetherian implies that  an $A$-module which is annihilated by $I$ is finitely presented as an $A$-module if and only if it is finitely presented as an $A\slash IA$-module. Thus the restriction of the above functor to finitely presented comodules gives the desired equivalence.
 \end{proof}
 
  Recall that the base-change $T \ten{R} \ca{C}$ of an $R$-linear category $\ca{C}$ along a ring homomorphism $R \rightarrow T$ has the same objects as $\ca{C}$, and the $T$-module of morphisms between two objects $C$, $C^{\prime}$ in $T\ten{R} \ca{C}$ is given by $T \ten{R} \ca{C}(C,C^{\prime})$. 

 \begin{prop}\label{prop:comodules_of_localization}
 Fix a commutative ring $R$ and a multiplicative set $S \subseteq R$. Let $(A,\Gamma) \in \Mod_R$ be a Hopf algebroid such that $A$ is Noetherian. Then there is an equivalence
\[
 F \colon S^{-1} R\ten{R} \Comod_{\fp}(A,\Gamma) \rightarrow \Comod_{\fp}(S^{-1} A, S^{-1} \Gamma)
\]
 of symmetric monoidal $S^{-1}R$-linear categories.

 Moreover, the restriction of $F$ to the full subcategory of finitely presentable comodules whose underlying modules have no $S$-torsion is also an equivalence. In other words, in the category $S^{-1} R\ten{R} \Comod_{\fp}(A,\Gamma)$, every object is isomorphic to an $S$-torsion free comodule.
 \end{prop}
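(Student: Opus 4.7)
The plan is to construct $F$ as the base-change of the symmetric monoidal $R$-linear functor $\Comod_{\fp}(A,\Gamma) \to \Comod_{\fp}(S^{-1}A, S^{-1}\Gamma)$ induced by the evident morphism of Hopf algebroids $(A,\Gamma) \to (S^{-1}A, S^{-1}\Gamma)$, which on objects sends $M$ to $S^{-1}M = S^{-1}A \ten{A} M$. This functor is well-defined because localization is strong symmetric monoidal, exact, and preserves finite presentability; moreover it is $R$-linear. Since its codomain is $S^{-1}A$-linear, hence a fortiori $S^{-1}R$-linear, the universal property of base-change yields the desired $S^{-1}R$-linear symmetric monoidal functor $F$.

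For full faithfulness, we exploit that $\Comod(A,\Gamma)(M,M')$ is the equalizer of two maps $\Mod_A(M,M') \rightrightarrows \Mod_A(M, \Gamma \ten{A} M')$. Since $M$ is finitely presentable over $A$ (by the lemma preceding the proposition), the standard identification $\Mod_{S^{-1}A}(S^{-1}M, S^{-1}N) \cong S^{-1}\Mod_A(M,N)$ holds for every $N$, and because localization is exact and commutes with $(-)\ten{A}(-)$ on finitely presented arguments, applying $S^{-1}R \ten{R} (-) \cong S^{-1}(-)$ to the equalizer above gives precisely the equalizer defining $\Comod(S^{-1}A, S^{-1}\Gamma)(S^{-1}M, S^{-1}M')$. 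This identification is the map induced by $F$ on hom-modules. For essential surjectivity, fix $N \in \Comod_{\fp}(S^{-1}A, S^{-1}\Gamma)$ and pick generators $n_1, \ldots, n_k$ of $N$ as an $S^{-1}A$-module. Viewing $N$ as an $(A,\Gamma)$-comodule via restriction, the $A$-submodule $N_0 \subseteq N$ generated by the $n_i$ is finitely generated, so by Proposition~\ref{prop:finitely_generated_subcomodules} it is contained in a finitely presentable $(A,\Gamma)$-sub-comodule $M \subseteq N$. The inclusion $S^{-1}M \hookrightarrow N$ is injective by flatness of $S^{-1}(-)$, and it is surjective because the submodule $N_0$ already generates $N$ over $S^{-1}A$; hence $F(M) \cong N$.

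For the second claim we show that every $M \in \Comod_{\fp}(A,\Gamma)$ is isomorphic in the base-change to an $S$-torsion-free comodule, from which the restricted equivalence follows. Set $M[S^\infty] \defl \ker(M \to S^{-1}M)$. This kernel is an $(A,\Gamma)$-sub-comodule because $(A,\Gamma)$ is flat, the canonical map $M \to S^{-1}M$ is a morphism of $(A,\Gamma)$-comodules (via restriction along $(A,\Gamma) \to (S^{-1}A, S^{-1}\Gamma)$), and finite limits of comodules are computed at the module level. The Noetherian hypothesis on $A$ now enters essentially: $M[S^\infty]$ is a submodule of the finitely generated module $M$, hence itself finitely generated, so there is a single element $s \in S$ (a product of finitely many annihilators of generators) with $s \cdot M[S^\infty] = 0$, i.e.\ $s \cdot \id_{M[S^\infty]} = 0$. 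After base-change $s$ becomes invertible, forcing $\id_{M[S^\infty]} = 0$ and hence $M[S^\infty] \cong 0$ in $S^{-1}R \ten{R} \Comod_{\fp}(A,\Gamma)$. The quotient map $M \to M/M[S^\infty]$ therefore becomes an isomorphism in the base-change, and $M/M[S^\infty]$ is by construction $S$-torsion free.

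The main obstacle is the essential surjectivity of $F$: it is where we must bridge the gap between finite presentability over $S^{-1}A$ and finite presentability over $A$, and this is precisely where the Noetherian hypothesis is forced into the proof via Proposition~\ref{prop:finitely_generated_subcomodules}. The other two components (the hom-module calculation and the torsion-killing argument) are essentially formal consequences of flatness of localization and the Noetherian finiteness of $M[S^\infty]$, respectively.
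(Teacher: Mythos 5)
Your proof is correct. The construction of $F$ via base-change and the full-faithfulness argument through the equalizer presentation of comodule hom-modules coincide with the paper's. For essential surjectivity you use the same two key moves as the paper — transporting the coaction of a finitely presentable $(S^{-1}A,S^{-1}\Gamma)$-comodule $N$ back to an $(A,\Gamma)$-coaction via the isomorphism $\Gamma\ten{A}N\cong S^{-1}\Gamma\ten{S^{-1}A}N$ for $S$-local $N$ (your word ``restriction'' elides exactly this point, which deserves a sentence), and then invoking Proposition~\ref{prop:finitely_generated_subcomodules} — but you select one finitely presentable subcomodule containing a finite $S^{-1}A$-generating set and check $S^{-1}M\cong N$ by hand, whereas the paper writes $N$ as the filtered union of \emph{all} its finitely presentable subcomodules and factors $\id_N$ through one of them using finite presentability; this difference is cosmetic. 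Where you genuinely diverge is the second claim. The paper gets it for free from its essential-surjectivity argument: the subcomodules it produces are submodules of an $S$-local module, hence automatically $S$-torsion free, so the restricted functor is still essentially surjective. You instead prove the stated reformulation directly, showing that for any $M$ the torsion subcomodule $M[S^\infty]=\ker(M\to S^{-1}M)$ is a finitely generated subcomodule (Noetherianness), hence killed by a single $s\in S$, hence zero after base-change. That is a valid and arguably more self-contained route, closer to the literal sentence being proved; the one step you should make explicit is why $M[S^\infty]\cong 0$ forces $M\to M/M[S^\infty]$ to become invertible in the base-change category, which is a priori only additive. Either factor $s\cdot\id_M$ through the quotient to exhibit an explicit inverse of the form $s^{-1}\bar{s}$, or observe that the equivalence $F$ you have already established reflects isomorphisms and sends the quotient map to the isomorphism $S^{-1}M\to S^{-1}\bigl(M/M[S^\infty]\bigr)$.
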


 \begin{proof}
  First note that the strong symmetric monoidal functor
\[
 S^{-1}A \ten{A} - \colon \Mod_A \rightarrow \Mod_{S^{-1}A}
\]
 induces a strong symmetric monoidal $R$-linear functor
\[
 F_0 \colon \Comod_{\fp}(A,\Gamma) \rightarrow \Comod(S^{-1} A, S^{-1} \Gamma) \smash{\rlap{.}}
\]
 From the universal property of base-change for categories it follows that there exists a unique $S^{-1}R$-linear functor $F$ such that the diagram
\[
 \xymatrix{\Comod_{\fp}(A,\Gamma) \ar[rd]_{F_0} \ar[r] & S^{-1}R \ten{R} \Comod_{\fp}(A,\Gamma) \ar[d]^{F} \\ 
& \Comod_{\fp}(S^{-1}A, S^{-1}\Gamma)}
\]
 is commutative. Using the fact that $S^{-1}A \ten{A} -$ is strong symmetric monoidal, it is not hard to see that $F$ is strong symmetric monoidal as well. We claim that $F$ is fully faithful and essentially surjective on objects.

 Recall that for all finitely presented $A$-modules $M$ and \emph{all} $A$-modules $N$, the function
\[
 S^{-1} \Hom_{A}(M,N) \rightarrow \Hom_{S^{-1} A} (S^{-1} M, S^{-1} N)
\]
 which sends $\varphi \slash s$ to $1\slash s \cdot S^{-1} \varphi$ is bijective. Indeed, since $S^{-1} R$ is flat over $R$ and the function is natural in $M$ we can reduce to the case where $M=A$, where both sides are isomorphic to $S^{-1} N$.

 The hom-$R$-module between two objects $(M,\rho_M)$, $(N,\rho_N)$ of $\Comod(A,\Gamma)$ is given by the equalizer of the diagram
\[
 \xymatrix{\Hom_{A}(M,N) \ar[rd]_{\Gamma \ten{A} -} \ar[rr]^{\Hom_A(M,\rho_N)} && \Hom_{A}(M,\Gamma \ten{A} N) \\
 & \Hom_A(\Gamma \ten{A} M, \Gamma\ten{A} N) \ar[ru]_{\quad\; \Hom_A(\rho_M,N)} }
\]
 and the analogous statement holds for comodules over $(S^{-1}A, S^{-1}\Gamma)$. Commutativity of the diagram
\[
 \xymatrix{S^{-1}R \ten{R} \Comod_{fp}(A,\Gamma)\bigl(M,N\bigr) \ar[r]^-{F_{M,N}} \ar[d] & \Comod(S^{-1}A,S^{-1}\Gamma)\bigl(M,N\bigr) \ar[d] \\
S^{-1} \Hom_A(M,N)  \ar[r]^-{\cong} \ar@<-2pt>[d] \ar@<2pt>[d] & \Hom_{S^{-1}A}(S^{-1}M, S^{-1}N) \ar@<-2pt>[d] \ar@<2pt>[d] \\
 S^{-1} \Hom_A(M,\Gamma\ten{A} N) \ar[r]^-{\cong} & \Hom_{S^{-1}A}(S^{-1}M, S^{-1}\Gamma \ten{S^{-1}A} S^{-1}N)}
\]
 and flatness of $S^{-1}R$ over $R$ imply that $F_{M,N}$ is an isomorphism. Thus $F$ is fully faithful, as claimed.

 It remains to check that $F$, or equivalently $F_0$, is essentially surjective. To do this we will need to use the fact that $A$ is Noetherian. Fix a finitely presentable $(S^{-1}A, S^{-1}\Gamma)$-comodule $M$. If $N$ is an $A$-module on which the elements of $S$ act by isomorphisms, then $\Gamma \ten{A} N$ has the same property. It follows that for such $N$, there is a natural isomorphism
\[
 S^{-1}\Gamma \ten{S^{-1}A} N \cong \Gamma\ten{A} N
\]
 of $S^{-1}A$-modules. This allows us to consider $M$ as an $(A,\Gamma)$-comodule $M_0$. The functor $F_0$ defined above has an evident extension to \emph{all} comodules, which we shall not distinguish notationally. From the construction of $M_0$ it is clear that $F_0(M_0)\cong M$. 

 In general $M_0$ is of course not finitely presentable as an $A$-module. However, from Proposition~\ref{prop:finitely_generated_subcomodules} we know that $M_0$ is the filtered union of its finitely presentable sub-comodules $(M_i)_{i\in I}$. The functor $F_0$ preserves filtered colimits because $S^{-1}A \ten{A} -$ does, and it preserves monomorphisms because $S^{-1}A$ is flat over $A$. Thus $M\cong F_0(M_0)$ is the filtered union of the $F_0(M_i)$. But $M$ is finitely presentable by assumption, so the identity of $M$ factors through one of the monomorphisms $F_0(M_i) \rightarrow M$. We conclude that $M \cong F_0(M_i)$, which shows that $F_0$, and therefore $F$, is essentially surjective.

 The second statement follows from the fact that the underlying $A$-module of $M_i$ has no $S$-torsion (because it is a submodule of an $S$-local $A$-module).
 \end{proof}

 These results about bipullbacks along morphisms $\Spec(R \slash I) \rightarrow \Spec(R)$ and $\Spec(S^{-1} R) \rightarrow \Spec(R)$ allow us to prove the first half of Theorem~\ref{thm:conjecture_for_adams_stacks}.

\begin{proof}[Proof of Proposition~\ref{prop:first_half_of_conjecture}]
 Let $\mathfrak{P}$ be the stack associated to the flat Hopf algebroid $(W,\Pi)$ from Proposition~\ref{prop:MF_W_weakly_tannakian}. In Proposition~\ref{prop:quotient_and_localization} we have shown that $\mathfrak{P}_n$ is associated to the Hopf algebroid $(W_n,\Pi_n)$, and that $\mathfrak{P}_{\mathbb{Q}_p}$ is associated to the Hopf algebroid $(S^{-1}W, S^{-1}\Pi )$ where $S=\{1,p,p^2,\ldots \}$.

 Furthermore, the categories of coherent sheaves are equivalent to the categories of comodules of the respective Hopf algebroids. From Proposition~\ref{prop:comodules_of_quotient} we know that $\Comod_{\fp}(W_n,\Pi_n)$ is equivalent to the full subcategory of $(W,\Pi)$-modules whose underlying module is annihilated by $p^n$. The first claim follows from the fact that $\Comod_{\fp}(W,\Pi)$ is equivalent to $\MF_{W,\fp}$ (see Proposition~\ref{prop:MF_W_weakly_tannakian}).

 Let $\ca{C}$ be the $\mathbb{Z}_p$-linear category whose objects are pairs $(\Delta, M)$ of a weakly admissible filtered $\Phi$-module $\Delta$ and a strongly divisible lattice $M$ of $\Delta$, and with morphisms $(\Delta, M) \rightarrow (\Delta^{\prime}, M^{\prime})$ the morphisms of filtered $\Phi$-modules $\Delta \rightarrow \Delta^{\prime}$ whose underlying morphism of $K$-vector spaces sends $M$ to $M^{\prime}$ (see \cite[\S7.11]{FONTAINE_LAFFAILLE}). If $(\Delta, M)$, $(\Delta^{\prime}, M^{\prime})$ are two objects of $\ca{C}$, then $M\otimes M^{\prime}$ is a strongly divisible lattice in $\Delta \otimes \Delta$ (see \cite[Corollaire~7.9]{FONTAINE_LAFFAILLE}, \cite[Proposition~4.2]{LAFFAILLE}). Thus $\ca{C}$ can be endowed with an evident symmetric monoidal structure.

 Given an arbitrary morphism $f \colon \Delta \rightarrow \Delta^{\prime}$ of filtered $\Phi$-modules, there exists $i \in \mathbb{N}$ such that $p^i f(M) \subseteq M^{\prime}$. This implies that the faithful strong symmetric monoidal $\mathbb{Z}_p$-linear functor
\[
 \ca{C} \rightarrow \MF^{\Phi,f}_{K}
\]
 which sends $(\Delta, M)$ to $\Delta$ induces a symmetric monoidal $\mathbb{Q}_p$-linear equivalence between $\mathbb{Q}_p \ten{\mathbb{Z}_p} \ca{C}$ and $\MF^{\Phi,f}_{K}$ (cf.\ \cite[\S7.11]{FONTAINE_LAFFAILLE}).

 On the other hand, there is also a functor
\[
 \ca{C} \rightarrow \MF_{W,\fp}
\]
 given by $(\Delta, M) \mapsto M$, with filtration $\Fil^i M = \Fil^i \Delta \cap M$, and
\[
 \varphi^i \colon \Fil^i M \rightarrow M_\sigma
\]
 given by the restriction of $p^{-i} \Phi$. In \cite[\S7.12]{FONTAINE_LAFFAILLE} and \cite[\S1.6.2]{WINTENBERGER} it was shown that this functor induces an equivalence between $\ca{C}$ and the full subcategory of $\MF_{W,\fp}$ of objects whose underlying $W$-module has no $p$-torsion. It is evidently strong symmetric monoidal. Combining these two results with Proposition~\ref{prop:comodules_of_localization} we get the desired equivalence
\[
 \Coh(\mathfrak{P}_{\mathbb{Q}_p} ) \simeq \Comod_{\fp}(S^{-1}W,S^{-1}\Pi) \simeq \mathbb{Q}_p \ten{\mathbb{Z}_p} \ca{C} \simeq \MF^{\Phi,f}_{K}
\]
 of $\mathbb{Q}_p$-linear symmetric monoidal categories.
\end{proof}

 \subsection{The category of filtered modules as a bilimit}\label{section:bilimit_in_ab_categories}
 While the computation of finite bilimits in the category of stacks is relatively straightforward (cf.\ Proposition~\ref{prop:quotient_and_localization}), computing bicolimits is more involved. They are obtained by first computing the bicolimit in the category of presheaves of groupoids, and then taking the associated stack. Already the first step is problematic, because bicolimits of groupoids are not as easy to compute.

 In order to prove the second half of Theorem~\ref{thm:conjecture_for_adams_stacks} we make use of the fact that the 2-category of Adams stacks is equivalent to a full sub-2-category of the 2-category of right exact symmetric monoidal additive categories and right exact strong symmetric monoidal functors (see Theorem~\ref{thm:adams_embedding}). The problem is thereby reduced to the computation of a certain bilimit in this 2-category.

 Since $\MF_{W,\fp}$ is $\mathbb{Z}_p$-linear, multiplication by $p$ defines an endomorphism for all of its objects. For each $n\in \mathbb{N}$ we can therefore define a right exact $\mathbb{Z}_p$-linear functor
\[
(-)_n \colon \MF_{W,\fp} \rightarrow \MF_{W,n}
\]
 which sends $M$ to $M_n \defl M \slash p^n M$. It is not hard to see that this functor is strong symmetric monoidal. Our goal is therefore to show that the collection of these functors exhibits $\MF_{W,\fp}$ as bilimit of the sequence $\MF_{W,n}$. We do this in two steps. We first show that $\MF_{W,\fp}$ is a bilimit among all $\Ab$-enriched categories. In a second step we will see that it is also a bilimit in the category of right exact symmetric monoidal additive categories. We then use the above mentioned embedding of the 2-category of Adams stacks in the 2-category of right exact symmetric monoidal additive categories to finish the proof of Theorem~\ref{thm:conjecture_for_adams_stacks}.

 We will use the following proposition to compute the bilimit of the sequence $\MF_{W,n}$ in the category of $\Ab$-enriched categories.

 \begin{prop}\label{prop:bilimit_of_ab_categories}
  Let $\ca{A}$ be an $\Ab$-enriched category, and let $\ca{A}_{n} \subseteq \ca{A}$ be a nested sequence of reflective subcategories, with reflections $L_n \colon \ca{A} \rightarrow \ca{A}_n$. Suppose that the following conditions hold:
\begin{enumerate}
 \item[i)] For every object $A$ of $\ca{A}$, the reflections $A \rightarrow L_n A$ exhibit $A$ as limit of the sequence
\[
 \xymatrix{ \ldots \ar[r] & L_n A \ar[r] & L_{n-1} A \ar[r] & \ldots \ar[r] & L_1 A} \smash{\rlap{,}}
\]
 where the morphisms are the ones induced by the universal property of $L_n$;
 \item[ii)]
 The limit $A=\varprojlim A_n$ of every sequence
\[
 \xymatrix{ \ldots \ar[r] & A_n \ar[r] & A_{n-1} \ar[r] & \ldots \ar[r] & A_1}  
\]
 where $A_n \in \ca{A}_n$ and the morphism $A_n \rightarrow A_{n-1}$ is a reflection of $A_n$ into $\ca{A}_{n-1}$ exists in $\ca{A}$, and the projections $A \rightarrow A_n$ are reflections of $A$ into $\ca{A}_n$.
\end{enumerate}
 Then the functors $L_n$ exhibit $\ca{A}$ as bilimit of the $\ca{A}_n$ in the category of $\Ab$-enriched categories.
 \end{prop}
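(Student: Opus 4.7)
The plan is to verify the bilimit universal property directly: for an arbitrary $\Ab$-enriched category $\ca{B}$, we show that the functor
\[
\Phi_{\ca{B}} \colon [\ca{B}, \ca{A}] \longrightarrow \mathcal{PC}_{\ca{B}}
\]
sending $F$ to the pseudo-cone $(L_n F)_n$ with its canonical coherence isomorphisms is an equivalence. Here $\mathcal{PC}_{\ca{B}}$ denotes the category whose objects are families $(F_n \colon \ca{B} \to \ca{A}_n,\ \alpha_n \colon \ell_n F_n \Rightarrow F_{n-1})$ with each $\alpha_n$ invertible (where $\ell_n$ is the restriction of $L_{n-1}$ to $\ca{A}_n$, itself a reflection since $\ca{A}_{n-1} \subseteq \ca{A}_n$), and whose morphisms are families $(\beta_n \colon F_n \Rightarrow G_n)$ compatible with the $\alpha_n$.

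Essential surjectivity of $\Phi_{\ca{B}}$ is the content of condition~(ii). Given $(F_n, \alpha_n)$, define $F$ on objects by
\[
F(B) \defl \varprojlim_n F_n(B),
\]
where the transition morphism $F_n(B) \to F_{n-1}(B)$ is the composite of the reflection unit $F_n(B) \to \ell_n F_n(B)$ with $(\alpha_n)_B$. Because $\alpha_n$ is invertible, this composite is again a reflection of $F_n(B)$ into $\ca{A}_{n-1}$, so condition~(ii) yields the limit in $\ca{A}$ with the projections $F(B) \to F_n(B)$ being reflections, hence $L_n F(B) \cong F_n(B)$. The universal property of the limit extends $F$ canonically to an $\Ab$-enriched functor whose image under $\Phi_{\ca{B}}$ is isomorphic to $(F_n, \alpha_n)$.

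For full faithfulness of $\Phi_{\ca{B}}$, fix $F, G \colon \ca{B} \to \ca{A}$. Applying condition~(i) to $G(B)$ and using the reflection adjunctions $L_n \dashv \mathrm{incl}_n$ yields a natural identification
\[
\ca{A}(F(B), G(B)) \cong \varprojlim_n \ca{A}(F(B), L_n G(B)) \cong \varprojlim_n \ca{A}_n(L_n F(B), L_n G(B)),
\]
under which a natural transformation $\beta \colon F \Rightarrow G$ corresponds to the compatible family $(L_n \beta)$. Any compatible family of pseudo-cone morphisms $(\beta_n)$ therefore determines unique components $\beta_B \colon F(B) \to G(B)$, and naturality in $B$ follows by applying the same identification to $G(B')$ and invoking naturality of each $\beta_n$.

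The main obstacle is the coherence bookkeeping in the essential surjectivity step: one must verify that the pointwise construction $B \mapsto F(B)$ assembles into an $\Ab$-enriched functor and that the isomorphisms $L_n F(B) \cong F_n(B)$ are simultaneously natural in $B$ and compatible with the coherence isomorphisms $\alpha_n$, so that the resulting comparison is genuinely an isomorphism of pseudo-cones rather than merely an objectwise isomorphism. This all falls out of the universal property of the sequential limits $F(B) = \varprojlim F_n(B)$ combined with uniqueness of reflections, but the organization needs to be carried out carefully.
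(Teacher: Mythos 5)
Your proof is correct and follows essentially the same route as the paper's: the paper constructs the standard model of the bilimit (the category of sequences $(A_n, g_n)$ with $g_n$ invertible, which is your $\mathcal{PC}_{\ca{B}}$ for $\ca{B}$ the unit category) and shows the canonical comparison $\ca{A} \to \ca{B}$ is fully faithful by condition~i) and essentially surjective by condition~ii), exactly the two reductions you make. Your version verifies the represented universal property against an arbitrary test category, which costs the extra pointwise bookkeeping you flag at the end but uses the hypotheses in the same way.
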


\begin{proof}
 The bilimit in question can be computed as follows. Let $\ca{B}$ be the category whose objects are given by sequences $(A_n,g_n)_{n \in \mathbb{N}}$ where $A_n \in \ca{A}_n$ and
\[
 g_n \colon A_{n} \rightarrow L_{n}(A_{n+1})
\]
 are isomorphisms. The morphisms of $\ca{B}$ are given by sequences $h_i \colon A_n \rightarrow A^{\prime}_n$ which are compatible with $g_n$ and $g_n^{\prime}$. The evident functors
\[
 \ca{B} \rightarrow \ca{A}_n
\]
 exhibit $\ca{B}$ as the desired bilimit. The pseudo-cone on $\ca{A}$ given by the additive functors $L_n$ defines an additive functor
\[
 F \colon \ca{A} \rightarrow \ca{B}
\]
 which sends $A$ to the system $(L_n(A), g_n)$, where $g_i$ is the canonical isomorphism
\[
 L_{n}(A) \rightarrow L_{n}\bigl( L_{n+1} A\bigr)
\]
 induced by the universal property of the reflections. This functor is fully faithful by i), and it is essentially surjective on objects by ii).
\end{proof}

 As example where the above proposition applies one can take $\ca{A}$ to be the category of finitely generated modules of a complete local Noetherian ring, and $\ca{A}_n$ the full subcategory of the modules annihilated by a power of the maximal ideal. Indeed, it is well-known that i) holds in this case, and ii) is the content of the following proposition.

 \begin{prop}\label{prop:inverse_limit_complete_noetherian_ring}
  Let $(A,\mathfrak{m})$ be a complete Noetherian local ring. Let $M_n$ be an $A$-module annihilated by $ \mathfrak{m}^{n}$, and let $f_n \colon M_{n+1} \rightarrow M_{n}$ be surjective homomorphisms of $A$-modules with kernel $\mathfrak{m}^{n} M_{n+1}$. If $M_1$ is finitely generated, then the limit
\[
 M=\varprojlim M_n 
\]
 of the system $(M_n,f_n)_{n\in \mathbb{N}}$ is a finitely generated $A$-module, and the kernels of the surjective homomorphisms
\[
 M \rightarrow M_n
\]
 are given by $\mathfrak{m}^{n} M$.
 \end{prop}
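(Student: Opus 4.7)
The plan is to first construct an explicit surjection $\varphi \colon A^r \to M$ and then use a Cauchy-sequence argument, confined to $\mathfrak{m}^n A^r$, to identify the kernels. Since each $f_n$ is surjective the system is Mittag-Leffler, so the projection $p_n \colon M \to M_n$ is surjective; choose a finite generating set $\bar x_1,\dots,\bar x_r$ of $M_1$ and lift it to $x_1,\dots,x_r \in M$, giving $\varphi \colon A^r \to M$, $e_i \mapsto x_i$. Let $\varphi_n \defl p_n\varphi$ and $K_n \defl \ker \varphi_n$. Iterating the hypothesis $\ker(f_j)=\mathfrak{m}^j M_{j+1}$ yields $M_n/\mathfrak{m} M_n\cong M_1$, whence $M_n=\mathrm{im}(\varphi_n)+\mathfrak{m} M_n$; since $\mathfrak{m}^n M_n=0$, iterating this equation forces $\mathrm{im}(\varphi_n)=M_n$. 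The key algebraic input is then the identity
\[
 K_n \;=\; K_{n+1}+\mathfrak{m}^n A^r,
\]
which holds because for $a\in K_n$ one has $\varphi_{n+1}(a)\in\ker(M_{n+1}\to M_n)=\mathfrak{m}^n M_{n+1}=\varphi_{n+1}(\mathfrak{m}^n A^r)$, so subtracting some $d\in \mathfrak{m}^n A^r$ with the same $\varphi_{n+1}$-value puts the difference in $K_{n+1}$.

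Using this identity, surjectivity of $\varphi$ follows from completeness. Given $m=(m_n)\in M$, I would inductively choose $a_n\in A^r$ with $\varphi_n(a_n)=m_n$ and $a_{n+1}\equiv a_n\pmod{\mathfrak{m}^n A^r}$: lift $m_{n+1}$ to some $b\in A^r$, decompose $b-a_n\in K_n$ as $c+d$ with $c\in K_{n+1}$, $d\in \mathfrak{m}^n A^r$, and set $a_{n+1}\defl b-c$. Then $(a_n)$ is $\mathfrak{m}$-adically Cauchy in $A^r$, and completeness of $A$ provides a limit $a\in A^r$ with $a\equiv a_n\pmod{\mathfrak{m}^n A^r}$ for every $n$; consequently $\varphi_n(a)=m_n$ for every $n$, so $\varphi(a)=m$, and $M$ is generated by $x_1,\dots,x_r$.

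For the kernel statement, the inclusion $\mathfrak{m}^n M\subseteq \ker p_n$ is immediate, and finite generation together with surjectivity of $\varphi$ identifies $\mathfrak{m}^n M$ with $\varphi(\mathfrak{m}^n A^r)$ and $\ker p_n$ with $\varphi(K_n)$, so it suffices to prove $K_n\subseteq \mathfrak{m}^n A^r+\ker\varphi$. Given $a\in K_n$, I would run the preceding construction with the additional constraint that the approximating elements lie in $\mathfrak{m}^n A^r$: set $b_n\defl 0$ and inductively choose $b_j\in \mathfrak{m}^n A^r$ with $\varphi_j(b_j)=\varphi_j(a)$ and $b_{j+1}\equiv b_j\pmod{\mathfrak{m}^j A^r}$, using $\varphi_{j+1}(a)\in \mathfrak{m}^n M_{j+1}=\varphi_{j+1}(\mathfrak{m}^n A^r)$ together with $K_j=K_{j+1}+\mathfrak{m}^j A^r$ to perform the inductive step. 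The limit $b$ lies in the closed subgroup $\mathfrak{m}^n A^r \subseteq A^r$ and satisfies $\varphi(b)=\varphi(a)$, whence $a - b \in \ker\varphi$. The main obstacle is isolating the identity $K_n=K_{n+1}+\mathfrak{m}^n A^r$; once it is in place, both the surjectivity of $\varphi$ and the identification of the kernels reduce to standard completeness-of-$A$ arguments.
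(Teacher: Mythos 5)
Your proof is correct, but it takes a genuinely different route from the paper. The paper's entire argument is two sentences: it observes that each composite transition map $M_{\ell} \rightarrow M_n$ ($\ell \geq n$) is a reflection into the subcategory of modules killed by $\mathfrak{m}^n$, hence has kernel $\mathfrak{m}^n M_{\ell}$, and then cites \cite[Proposition~I.0.7.2.9]{EGAI}, which is precisely the statement being proved. You instead reprove that EGA result from scratch via successive approximation: the presentation $\varphi \colon A^r \rightarrow M$, the identity $K_n = K_{n+1} + \mathfrak{m}^n A^r$, and two Cauchy-sequence arguments (one free, one constrained to $\mathfrak{m}^n A^r$). Your argument is sound; the standard inputs you are implicitly invoking are that $A^r$ is $\mathfrak{m}$-adically complete and that $\mathfrak{m}^n A^r$ is closed in $A^r$ (Krull intersection applied to $A^r/\mathfrak{m}^n A^r$), both automatic for a complete Noetherian local ring. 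One small point worth making explicit: your step $\varphi_{j+1}(a) \in \mathfrak{m}^n M_{j+1}$ for $a \in K_n$ uses that $\ker(M_{j+1} \rightarrow M_n) = \mathfrak{m}^n M_{j+1}$ for the composite transition maps with arbitrary $n$, not just $n=1$; this follows by the same iteration you carry out for $M_1$ (preimage of $\mathfrak{m}^k M_j$ under a surjection with kernel $\mathfrak{m}^j M_{j+1}$, $j \geq k$, is $\mathfrak{m}^k M_{j+1}$), and it is exactly the observation the paper isolates before citing EGA. What your approach buys is self-containedness; what the paper's buys is brevity and a conceptual framing (the transition maps as reflections) that plugs directly into the categorical bilimit argument of Proposition~\ref{prop:bilimit_of_ab_categories}.
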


\begin{proof}
 Note that $f_n$ gives a reflection of $M_{n+1}$ into the full subcategory $\Mod_A$ consisting of modules annihilated by $\mathfrak{m}^n$. The composite $M_{\ell} \rightarrow M_n$ obtained from the $f_k$, $n<k\leq \ell$ therefore gives a reflection of $M_{\ell}$ into the subcategory of $\Mod_A$ consisting of modules annihilated by $\mathfrak{m}^n$. Thus its kernel is given by $\mathfrak{m}^n M_{\ell}$. The claim follows from \cite[Proposition~I.0.7.2.9.]{EGAI}.
\end{proof}

 Our goal is therefore to show that the conditions of Proposition~\ref{prop:bilimit_of_ab_categories} are satisfied for the category $\MF_{W,\fp}$ and the sequence of reflective subcategories $\MF_{W,n}$ consisting of objects whose underlying $W$-module is annihilated by $p^n$.

 \begin{lemma}\label{lemma:bilimit_conditions}
  Let $M_n$ be a sequence of objects of $\MF_{W,\fp}$ such that $p^n M_n=0$, and let $f_n \colon M_{n+1} \rightarrow M_n$ be epimorphisms with kernel $p^n M_{n+1}$. Then the limit $M$ of the system $(M_n,f_n)$ exists in $\MF_{W,\fp}$ and is computed as in $\Mod_W$. Moreover, the projections $M \rightarrow M_n$ are epimorphisms with kernel $p^n M$, that is, they are reflections of $M$ into $\MF_{W,n}$.
 \end{lemma}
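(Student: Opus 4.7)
The plan is to define $M = \varprojlim M_n$ in $\Mod_W$, equip it with the induced filtration $\Fil^i M = \varprojlim \Fil^i M_n$ and Frobenius maps obtained by passage to the limit, and then verify that this data makes $M$ an object of $\MF_{W,\fp}$ satisfying the stated universal property. The reflection property of each $f_n$, together with the direct-summand structure of $\Fil^i M_{n+1}$ inside $M_{n+1}$, forces the restricted transition $\Fil^i M_{n+1} \twoheadrightarrow \Fil^i M_n$ to have kernel $p^n \Fil^i M_{n+1}$. Since $M_1$ and hence each $\Fil^i M_1$ is a finite-dimensional $k$-vector space, two applications of Proposition~\ref{prop:inverse_limit_complete_noetherian_ring} yield that $M$ and each $\Fil^i M$ are finitely generated $W$-modules, that the projections $\pi_n \colon M \to M_n$ are surjective with kernel $p^n M$, and that the canonical map $\Fil^i M / p^n \Fil^i M \to \Fil^i M_n$ is an isomorphism. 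The maps $\varphi^i \colon \Fil^i M \to M_\sigma$ and the relation $\varphi^i|_{\Fil^{i+1} M} = p \varphi^{i+1}$ then arise by passage to the limit, using that $(-)_\sigma$ is base change along an automorphism and hence commutes with inverse limits in $\Mod_W$.

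Exhaustiveness and separatedness of the filtration follow from an induction on $n$: the equality $\Fil^i M_n = M_n$ translates into $\Fil^i M_{n+1} + p^n M_{n+1} = M_{n+1}$, and combining the direct-summand decomposition of $\Fil^i M_{n+1}$ in $M_{n+1}$ with the annihilation $p^{n+1} M_{n+1} = 0$ forces $\Fil^i M_{n+1} = M_{n+1}$; a symmetric argument treats vanishing of the filtration. Hence the filtration bounds of $M_1$ propagate to every level and thus to $M$. The Frobenius generating condition $\sum_i \varphi^i(\Fil^i M) = M_\sigma$ then reduces, via Nakayama's lemma applied to the finitely generated $W$-module $M_\sigma$ (the sum on the left being essentially finite by the bounds just established), to its analog modulo $p$, which holds because the image of $\sum_i \varphi^i(\Fil^i M)$ in $(M_1)_\sigma$ is $\sum_i \varphi^i(\Fil^i M_1) = (M_1)_\sigma$.

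The main obstacle is verifying that each $\Fil^i M$ is a direct summand of $M$. I propose to argue by induction on the length of the (essentially finite) filtration, reducing the problem to showing that each associated graded piece $D_i = \Fil^i M / \Fil^{i+1} M$ is a free $W$-module. The modular law shows that $D_{i,n} = \Fil^i M_n / \Fil^{i+1} M_n$ is a direct summand of $M_n$ at each finite level; applying Proposition~\ref{prop:inverse_limit_complete_noetherian_ring} a third time to the tower $(D_{i,n})$, together with a Mittag-Leffler argument on the surjective tower $(\Fil^{i+1} M_n)$, yields an exact sequence $0 \to \Fil^{i+1} M \to \Fil^i M \to D_i \to 0$ with $D_i / p^n D_i \cong D_{i,n}$. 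The crux is then to show that $D_i$ is $p$-torsion-free: the fact that each $D_{i,n}$ is a direct summand of $M_n$ constrains its invariant factors in a way compatible with the tower structure, so that the limit $D_i$ is free over the DVR $W$. Once this is established, the sequence splits by projectivity of $D_i$, and the induction concludes that each $\Fil^i M$ is a direct summand of $M$.

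Finally, the universal property in $\MF_{W,\fp}$ is routine: a cone $(g_n \colon N \to M_n)$ from $N \in \MF_{W,\fp}$ yields, by the $\Mod_W$ universal property of the inverse limit, a unique $W$-linear $g \colon N \to M$ compatible with the projections; the inclusions $g(\Fil^i N) \subseteq \Fil^i M$ and the identities $g \varphi^i_N = \varphi^i_M g$ follow componentwise from the corresponding statements for the $g_n$. That each projection $\pi_n$ exhibits $M_n$ as the reflection of $M$ into $\MF_{W,n}$ then follows from the kernel calculation $\ker \pi_n = p^n M$ together with the identification $M / p^n M \cong M_n$ in $\MF_W$.
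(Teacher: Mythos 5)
Your overall architecture (define $M=\varprojlim M_n$ and $\Fil^i M=\varprojlim \Fil^i M_n$ in $\Mod_W$, invoke Proposition~\ref{prop:inverse_limit_complete_noetherian_ring}, obtain the Frobenius data and the universal property by passage to the limit) matches the paper, and your Nakayama argument for exhaustiveness, separatedness and the spanning condition $\sum_i\varphi^i(\Fil^i M)=M_\sigma$ is a legitimate, arguably cleaner, alternative to the paper's identification $\overline{M}\cong\varprojlim\overline{M_n}$. But the step you yourself flag as the crux --- that each $\Fil^i M$ is a direct summand of $M$ --- contains a genuine gap. You propose to reduce this to showing that each graded piece $D_i=\Fil^i M/\Fil^{i+1}M$ is a \emph{free} $W$-module, arguing that the direct-summand property of $D_{i,n}$ inside $M_n$ ``constrains its invariant factors'' so that $D_i=\varprojlim D_{i,n}$ is $p$-torsion-free. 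This is false. Objects of $\MF_{W,\fp}$ are allowed to have $p$-torsion, and that torsion sits inside the graded pieces: take $M_n=W/p^{\min(n,2)}$ with the one-step filtration $\Fil^0M_n=M_n$, $\Fil^1M_n=0$ (and any admissible $\varphi^0$). Then $M=W/p^2$, $D_0=W/p^2$ is a direct summand of every $M_n$ and satisfies $D_0/p^nD_0\cong D_{0,n}$, yet it is not free. So the implication you need (``direct summand at each finite level $\Rightarrow$ free limit'') does not hold, and freeness of $D_i$ is in any case only a sufficient, not a necessary, condition for the extension $0\to\Fil^{i+1}M\to\Fil^iM\to D_i\to 0$ to split. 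Your induction therefore cannot close.

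The paper avoids this by using Wintenberger's numerical criterion (\cite[Lemme~1.5.3]{WINTENBERGER}): a submodule $N\subseteq N'$ of a finitely generated $W$-module is a direct summand if and only if $N\cap p^kN'=p^kN$ for all $k$. One then verifies this for $\Fil^iM\subseteq M$ by commuting $\varprojlim$ with the operations $(-)\cap p^kM_n$ and $p^k(-)$, which is legitimate because all the towers involved consist of finite-length $W$-modules and hence satisfy the Mittag--Leffler condition; the criterion holds at each finite level since the $\Fil^iM_n$ are direct summands of $M_n$. If you replace your graded-piece argument with this criterion, the rest of your proof goes through.
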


 \begin{proof} 
 Let $M =\varprojlim M_n$ in the catgory $\Mod_W$. From Proposition~\ref{prop:inverse_limit_complete_noetherian_ring} we know that it is finitely generated. Similarly, we can define $F_i =\varprojlim \Fil^i M_n$. Since limits commute with each other we know that the $F_i$ define a filtration of $M$. Since $M_1$ is a vector space over the residue field $k$ of $W$, we can find a $i_0$ and $i_1$ such that $\Fil^{i_0} M_1=M_1$ and $\Fil^{i_1} M_1=0$. Nakayama's lemma implies that $F_{i_0}=M$ and $F_{i_1}=0$, so the filtration is exhaustive and separated.

 We claim that $F_i$ is a direct summand of $M$. We will prove this by an argument that is almost identical to the one used by Wintenberger to show that $\MF_{W,\fp}$ is abelian. Recall from \cite[Lemme~1.5.3]{WINTENBERGER} that a $W$-submodule $N \subseteq N^{\prime}$ is a direct summand if and only if $N \cap p^k N^{\prime } =p^k N$ for all $k \in \mathbb{N}$. Since $W$ is a discrete valuation ring, any finitely generated $W$-module that is annihilated by a power of $p$ has finite length. Thus any system consisting of such modules satisfies the Mittag-Leffler condition. From this we deduce that
\[
F_i \cap p^k M = \varprojlim \Fil^i M_n \cap p^k M_n
\]
 and
\[
 p^k F_n =\varprojlim p^k \Fil^i M_n \smash{\rlap{.}}
\]
 The claim follows from the fact that the filtrations of the $M_n$ are direct summands (see \cite[Proposition~1.4.1]{WINTENBERGER}). Thus $\Fil^i M \defl F_i$ is an exhaustive separated filtration by direct summands.

 To give $M$ the structure of an object of $\MF_{W,\fp}$ it remains to construct morphisms $\varphi_k \colon \Fil^k M \rightarrow M_{\sigma}$ satisfying the conditions of Definition~\ref{dfn:MF_W}. Denote the colimit of the diagram
\[
 \xymatrix@H=17pt@!R=40pt@!C=15pt{&  \cdots \ar@{_{(}->}[ld] \ar[rd]^{p\cdot} & & \Fil^{k} M \ar@{_{(}->}[ld] \ar[rd]^{p\cdot}  & & \Fil^{k+1} M \ar@{_{(}->}[ld] \ar[rd]^{p\cdot} & & \cdots \ar@{_{(}->}[ld] \ar[rd]^{p\cdot} \\
\cdots && \Fil^{k-1}M && \Fil^{k} M && \Fil^{k+1} M && \cdots
}
\]
 by $\overline{M}$ (cf.\ \cite[Remarques~1.4]{FONTAINE_LAFFAILLE}). To give $\varphi_k$ with the desired properties is equivalent to giving a surjective homomorphism $\varphi \colon \overline{M} \rightarrow M_{\sigma}$ of $W$-modules. Recall that the filtration of $M$ satisfies $\Fil^{i_0} M=M$ and $\Fil^{i_1} M=0$, the above diagram vanishes to the right of $i_0$, and half the morphisms are identities to the left of $i_0$. Thus we can replace it by a finite diagram which has the same colimit. Moreover, all the filtrations of the $M_n$ also satisfy $\Fil^{i_0} M_n=M_n$ and $\Fil^{i_1} M_n=0$, by the same argument we used to show this for $M$. The observation that $\varprojlim$ is exact on the category of systems $(N_i,f_i)$ where the $f_i$ are surjective and the $N_i$ of finite length implies that the canonical morphism
\[
\overline{M} \rightarrow \varprojlim \overline{ M_n} \smash{\rlap{.}}
\]
 is invertible. On the other hand, base change along $\sigma$ is an isomorphism of categories, so it clearly commutes with all limits. Using exactness of $\varprojlim$ again we deduce the existence of the desired surjective homomorphism $\varphi \colon \overline{M} \rightarrow M_{\sigma}$.

 It remains to check that this construction gives a limit in the category $\MF_{W,\fp}$. It is clear from the construction that $(M,\Fil^i M)$ is the limit of $(M_n, \Fil^i M_n)$ in the category of filtered modules. A homomorphism of filtered $W$-modules between two objects $N$, $M$ of $\MF_{W,\fp}$ is a morphism in $\MF_{W,\fp}$ if and only if the diagram
\[
 \xymatrix{\overline{N} \ar[r]^-{\overline{f}} \ar[d]_{\varphi} & \overline{M} \ar[d]^{\varphi} \\ N_\sigma \ar[r]^-{f_{\sigma}} & M_{\sigma} }
\]
 commutes (see \cite[Remarques~1.4]{FONTAINE_LAFFAILLE}). Since $M_{\sigma}$ is a limit of the $(M_n)_{\sigma}$, the above diagram commutes if and only if the outer rectangle of the diagram
\[
 \xymatrix{\overline{N} \ar[r]^-{\overline{f}} \ar[d]_{\varphi}& \overline{M} \ar[r] \ar[d]^{\varphi} & \overline{M_n} \ar[d]^{\varphi} \\ N_\sigma \ar[r]^-{f_{\sigma}} & M_{\sigma} \ar[r] & (M_n)_{\sigma} } 
\]
 commutes for all $n \in \mathbb{N}$. From this it follows easily that $M$ is the desired limit in $\MF_{W,\fp}$.
 \end{proof}

 \begin{cor}\label{cor:MF_W_bilimit_in_ab_categories}
  The reflections $\MF_{W,\fp} \rightarrow \MF_{W,n}$ exhibit $\MF_{W,\fp}$ as bilimit of the sequence $\MF_{W,n}$ in the 2-category of $\Ab$-enriched categories. 
 \end{cor}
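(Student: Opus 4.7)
The plan is to deduce the corollary by directly invoking Proposition~\ref{prop:bilimit_of_ab_categories} with $\ca{A}=\MF_{W,\fp}$ and $\ca{A}_n=\MF_{W,n}$, where the reflection $L_n \colon \MF_{W,\fp}\rightarrow \MF_{W,n}$ is given by $M\mapsto M\slash p^n M$ with induced filtration and induced $\varphi^i$. Condition (ii) of that proposition is already established: it is precisely the content of Lemma~\ref{lemma:bilimit_conditions}, which tells us that the limit of a compatible system $(M_n,f_n)$ exists in $\MF_{W,\fp}$, is computed on underlying $W$-modules, and that each projection $M\rightarrow M_n$ is a reflection into $\MF_{W,n}$.

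It therefore remains to verify condition (i), namely that for every $M\in \MF_{W,\fp}$ the canonical comparison morphism
\[
 \alpha\colon M \longrightarrow \varprojlim M\slash p^n M
\]
 induced by the reflection morphisms is an isomorphism in $\MF_{W,\fp}$. I would argue this in two steps. First, on underlying $W$-modules, since $M$ is finitely generated over the complete discrete valuation ring $W$ with maximal ideal $(p)$, Proposition~\ref{prop:inverse_limit_complete_noetherian_ring} shows that $M\cong \varprojlim M\slash p^n M$ as $W$-modules. Combined with the fact from Lemma~\ref{lemma:bilimit_conditions} that limits in $\MF_{W,\fp}$ are computed as in $\Mod_W$, this shows that $\alpha$ is a bijection on underlying modules.

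Second, I would check that $\alpha$ is an isomorphism \emph{in} $\MF_{W,\fp}$, which amounts to showing that its inverse on underlying modules is again a morphism of filtered $F$-modules. For the filtration this reduces to the identity $\Fil^i M = \varprojlim \Fil^i M\slash p^n \Fil^i M$, which holds because $\Fil^i M$ is a direct summand of $M$ (so $p^n M \cap \Fil^i M = p^n \Fil^i M$, hence $\Fil^i(M\slash p^n M) \cong \Fil^i M\slash p^n \Fil^i M$) and is itself finitely generated over the complete ring $W$. The corresponding statement for the maps $\varphi^i$ follows by the same reasoning, using that the colimits $\overline{(-)}$ and the base change $(-)_\sigma$ both commute with the surjective inverse limit of finite length modules appearing in the proof of Lemma~\ref{lemma:bilimit_conditions}.

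With both conditions of Proposition~\ref{prop:bilimit_of_ab_categories} verified, the conclusion is immediate. The main technical point is the compatibility of the filtrations with the limit, but this is essentially a book-keeping exercise once one uses the direct summand hypothesis on $\Fil^i M$ and the completeness of $W$; no new ideas beyond those of Lemma~\ref{lemma:bilimit_conditions} are required.
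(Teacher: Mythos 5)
Your proposal is correct and follows essentially the same route as the paper: both reduce the corollary to Proposition~\ref{prop:bilimit_of_ab_categories}, obtain condition~(ii) from Lemma~\ref{lemma:bilimit_conditions}, and for condition~(i) use that the relevant limit is computed on underlying $W$-modules together with Proposition~\ref{prop:inverse_limit_complete_noetherian_ring}. The only difference is your final step: where you explicitly verify that the inverse of the comparison morphism respects the filtrations and the $\varphi^i$, the paper shortcuts this by noting that the forgetful functor $\MF_{W,\fp}\rightarrow \Mod_W$ is faithful and exact and therefore reflects isomorphisms, which makes your (correct, but more laborious) book-keeping unnecessary.
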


 \begin{proof}
  We have to check that the conditions i) and ii) of Proposition~\ref{prop:bilimit_of_ab_categories} are satisfied. The latter is proved in Lemma~\ref{lemma:bilimit_conditions}. From the same lemma we also know that the limit of the sequence from condition i) is computed as in the category of $W$-modules. Thus the comparison morphism
\[
 M \rightarrow \varprojlim L_n(M)
\]
 is invertible as a morphism of $W$-modules. Since the forgetful functor from $\MF_{W,\fp}$ to $\Mod_W$ is faithful and exact, this morphism must be invertible in $\MF_{W,\fp}$ as well. This shows that condition i) of Proposition~\ref{prop:bilimit_of_ab_categories} holds as well.
 \end{proof}

 \subsection{Computing the bicolimit in the 2-category of Adams stacks}\label{section:bicolimit_adams}

 As already mentioned in \S\ref{section:bilimit_in_ab_categories} we need to show that the result from Corollary~\ref{cor:MF_W_bilimit_in_ab_categories} extends to the 2-category $\ca{RM}$ of right exact symmetric monoidal additive categories (see Definition~\ref{dfn:right_exact_symmetric_monoidal}).

\begin{prop}\label{prop:bilimit_of_rex_monoidal_categories}
 The reflections $\MF_{W,\fp} \rightarrow \MF_{W,n}$ exhibit $\MF_{W,\fp}$ as bilimit of the sequence $\MF_{W,n}$ in the 2-category $\ca{RM}$ of right exact symmetric monoidal additive categories.
\end{prop}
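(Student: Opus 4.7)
The plan is to upgrade Corollary~\ref{cor:MF_W_bilimit_in_ab_categories} from the 2-category of $\Ab$-enriched categories to $\ca{RM}$. I would first check that each reflection $(-)_n \colon \MF_{W,\fp} \to \MF_{W,n}$ is actually a 1-cell of $\ca{RM}$: it is right exact because it is a left adjoint, and it is strong symmetric monoidal because of the canonical isomorphism $(M \ten{W} M')/p^n \cong M/p^n \ten{W} M'/p^n$ (noting that these are the underlying $W$-modules of the tensor products, and that the forgetful functor to $\Mod_W$ is faithful, exact, and strong symmetric monoidal).

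Next, to verify the universal property, I would produce an explicit candidate bilimit in $\ca{RM}$ by equipping the compatible-systems category $\ca{B}$ constructed in the proof of Proposition~\ref{prop:bilimit_of_ab_categories} with componentwise tensor product and componentwise finite colimits. Both structures are well-defined because the structural reflections $\MF_{W,n+1} \to \MF_{W,n}$ are strong symmetric monoidal and right exact by the previous paragraph, so the coherence isomorphisms $g_n$ defining objects of $\ca{B}$ can be tensored and the resulting systems still satisfy the compatibility condition. The projections $\ca{B} \to \MF_{W,n}$ are tautologically in $\ca{RM}$. Given a pseudo-cone of 1-cells $F_n \colon \ca{A} \to \MF_{W,n}$ of $\ca{RM}$, the induced additive functor $\ca{A} \to \ca{B}$ sending $A$ to $(F_n A)$ is strong symmetric monoidal and right exact because both properties are detected by the projections to each $\MF_{W,n}$. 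The same levelwise argument handles symmetric monoidal natural transformations, yielding the 2-cell part of the universal property.

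Finally, I would transfer the conclusion back to $\MF_{W,\fp}$ using the equivalence $\MF_{W,\fp} \simeq \ca{B}$ from Corollary~\ref{cor:MF_W_bilimit_in_ab_categories}. This equivalence sends $M$ to $\bigl((-)_n M\bigr)_n$, and the fact that each $(-)_n$ is strong symmetric monoidal, with coherence data natural in $M$, endows it with a canonical strong symmetric monoidal structure compatible with the levelwise monoidal structure on $\ca{B}$. Right exactness follows from Lemma~\ref{lemma:bilimit_conditions}, since cokernels in $\MF_{W,\fp}$ are computed at the level of underlying $W$-modules, and the reflections detect isomorphisms. The main (mild) obstacle will be the bookkeeping of monoidal coherence, in particular checking that the comparison isomorphisms $(-)_n(M \otimes M') \cong (-)_n M \otimes (-)_n M'$ intertwine the transition maps $g_n$ that define objects of $\ca{B}$; but no genuinely new input beyond the $\Ab$-enriched case is required, since everything is formal once the reflections are known to lie in $\ca{RM}$.
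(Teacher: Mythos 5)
Your proposal is correct, but it takes a different route from the paper. The paper's proof is a short appeal to 2-dimensional monad theory: $\ca{RM}$ is the 2-category of strict algebras and pseudomorphisms for a 2-monad on $\Ab\mbox{-}\Cat$, so the forgetful functor to $\Ab\mbox{-}\Cat$ \emph{creates} bilimits; given Corollary~\ref{cor:MF_W_bilimit_in_ab_categories}, the only thing left to check is that the diagram and the cone actually lift to $\ca{RM}$, which the paper gets for free by observing that all the functors involved are induced by the Hopf algebroid morphisms $(W,\Pi) \rightarrow (W_n,\Pi_n)$ and $(W_{n+1},\Pi_{n+1}) \rightarrow (W_n,\Pi_n)$, hence are automatically right exact and strong symmetric monoidal. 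What you do instead is prove the relevant instance of that creation statement by hand: you equip the compatible-systems category $\ca{B}$ with componentwise monoidal and colimit structure, verify the universal property levelwise, and transport it across the equivalence $\MF_{W,\fp} \simeq \ca{B}$. Both arguments hinge on the same two inputs (the $\Ab$-enriched bilimit computation and membership of the transition functors in $\ca{RM}$); yours buys self-containedness at the cost of the coherence bookkeeping you acknowledge, while the paper's buys brevity at the cost of a citation to 2-monad theory. Two small remarks: right exactness of the comparison equivalence $\MF_{W,\fp} \rightarrow \ca{B}$ is automatic for any equivalence of categories, so the appeal to Lemma~\ref{lemma:bilimit_conditions} there is unnecessary; and for strong monoidality of $(-)_n$ your argument should be phrased as saying that the canonical comparison morphism $(M \ten{W} M^{\prime})_n \rightarrow M_n \ten{W} M^{\prime}_n$ already lives in $\MF_{W,n}$ and becomes invertible after applying the forgetful functor, which reflects isomorphisms --- faithfulness alone does not let you \emph{lift} an isomorphism of underlying modules, only detect that a given morphism is one. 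The cleanest fix is the paper's: these functors are induced by morphisms of Hopf algebroids, hence strong symmetric monoidal by construction.
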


\begin{proof}
 It is well-known that limits in categories of sets endowed with algebraic structure and functions which preserve these are easy to compute. A way to make this statement precise is to say that the forgetful functor from a category of algebras of a monad on $\Set$ creates limits.

 A similar fact is true for categories endowed with algebraic structure. The 2-category $\ca{RM}$ is the category of strict algebras and pseudomorphisms of a 2-monad on $\Ab\mbox{-}\Cat$, so the forgetful functor creates bilimits by 2-dimensional monad theory. It remains to check that all the morphisms involved in the diagram are morphisms in $\ca{RM}$. This follows easily from the observation that they are induced by morphisms $(W,\Pi) \rightarrow (W_n,\Pi_n)$ and $(W_{n+1},\Pi_{n+1}) \rightarrow (W_n,\Pi_n)$ of Hopf algebroids (cf.\ Propositions~\ref{prop:MF_W_weakly_tannakian} and \ref{prop:comodules_of_quotient}).
\end{proof}

 We are now ready to finish the proof of Theorem~\ref{thm:conjecture_for_adams_stacks}.

 \begin{proof}[Proof of Theorem~\ref{thm:conjecture_for_adams_stacks}]
 The first half of Theorem~\ref{thm:conjecture_for_adams_stacks} was already proved in Proposition~\ref{prop:first_half_of_conjecture}. It remains to check that the canonical morphisms $\mathfrak{P}_n \rightarrow \mathfrak{P}$ exhibit $\mathfrak{P}$ as bicolimit of the $\mathfrak{P}_n$ in the 2-category of Adams stacks.

 The categories of comodules of the algebraic stacks $\mathfrak{P}$, $\mathfrak{P}_n$, and $\mathfrak{P}_{\mathbb{Q}_p}$ are weakly Tannakian categories, with fiber functors to modules over $W$, $W \slash p^n W$, and $K$ respectively, hence these stacks are coherent and they have the resolution property (see Theorem~\ref{thm:recognition}). From Corollary~\ref{cor:coherent_resolution_property_implies_adams} it follows that they are Adams stacks. 

 In Proposition~\ref{prop:bilimit_of_rex_monoidal_categories} we have seen that $\MF_{W,\fp}$ is the bilimit of the categories $\MF_{W,n}$ in the bicategory $\ca{RM}$ of right exact symmetric monoidal additive categories. We have just shown that they lie in the image of the contravariant pseudofunctor from Adams stacks to $\ca{RM}$ from Theorem~\ref{thm:adams_embedding}. The conclusion follows because this pseudofunctor is an equivalence on hom-categories.
 \end{proof}

\appendix
\section{Superextensive sites}\label{section:superextensive}

 In this appendix we show that a the stack associated to a pseudofunctor on a superextensive site can be computed in two steps. In the first step one computes the associated stack in the extensive topology, and in the second step one computes the stack associated to the resulting pseudofunctor in the subtopology generated by singleton coverings. This statement appears on the nLab page on superextensive sites (version 6) \cite{NLAB}, but the author was unable to find a proof in the literature.

\subsection{Extensive categories}

 Recall that coproducts are called \emph{disjoint} if the pullback of two distinct coproduct inclusions is initial, and the coproduct inclusions are monomorphisms. 

\begin{dfn}\label{dfn:extensive}
 A category with finite coproducts is called \emph{extensive} if finite coproducts are disjoint, and finite coproducts are stable under pullback.
\end{dfn}

\begin{rmk}
 A category $\ca{C}$ is extensive if and only if for all finite coproducts $U=\coprod U_i$, the coproduct functor
\[
 \coprod \colon \prod \ca{C}\slash U_i \rightarrow \ca{C} \slash U
\]
 is an equivalence of categories.
\end{rmk}

\begin{example}
 The category $\Aff$ of affine schemes is extensive. It is equivalent to the opposite of the category of commutative ring, and finite products of commutative rings are characterized by complete systems of orthogonal idempotents. Using this characterization it is easy to see that the pushout of a homomorphism
\[
 \varphi \colon  A \times B \rightarrow C
\]
 along the projection $A\times B \rightarrow A$ is given by $\varphi(1,0) C$. The desired properties of binary products of rings (which correspond to binary coproducts of affine schemes) follow easily from this description. The zero ring is a strict terminal object because ring homomorphisms preserve both the zero element and the unit element.
\end{example}

\begin{dfn}\label{dfn:superextensive}
 Let $\ca{C}$ be an extensive category. The \emph{extensive topology} on $\ca{C}$ is the Grothendieck topology generated by the finite families consisting of coproduct inclusions into finite coproducts. A Grothendieck topology on $\ca{C}$ is called \emph{(finitary) superextensive} if it is generated by finite families and it contains the extensive topology.
\end{dfn}

\begin{example}
 The $\fpqc$-topology on $\Aff$ is superextensive.
\end{example}

\subsection{Bifinal functors}

 Recall that a functor $F \colon \ca{C} \rightarrow \ca{D}$ is called \emph{final} if for all diagrams $G \colon \ca{D} \rightarrow \ca{X}$ in a category $\ca{X}$, the comparison morphism
\[
 \colim^{\ca{C}} GF \rightarrow \colim^{\ca{D}} G
\]
 is an isomorphism. The formalism of weighted colimits is useful for characterizing such functors. Recall that an ordinary colimit is a weighted colimit for the constant weight $J \colon \ca{D}^{\op} \rightarrow \Set$ at the terminal object. This says that the colimit of $G \colon \ca{D} \rightarrow \ca{X}$ is a representing object for the functor
\[
 [\ca{D}^{\op},\Set]\bigr(J,\ca{X}(G-,X)\bigl)
\]
 on $\ca{X}$. By definition of left Kan extensions we have an isomorphism
\[
 [\ca{C}^{\op},\Set]\bigr(J,\ca{X}(GF-,X)\bigl) \cong [\ca{D}^{\op},\Set]\bigr(\Lan_{F} J, \ca{X}(G-,X)\bigl) 
\]
 which is natural in $X$. Thus a necessary and sufficient condition for $F$ to be final is that $\Lan_F J(d) \cong \ast$ for all $d \in \ca{D}$. From the standard formula for left Kan extensions it follows that this is the case if and only if the slice category $d \slash F$ is connected.

 In order to use this concept to deal with the associated stacks pseudofunctor, we need a bicategorical version of finality.

\begin{dfn}
 A functor $F \colon \ca{C} \rightarrow \ca{D}$ between 1-categories is \emph{bifinal} if for all bicategories $\ca{X}$ and all pseudofunctors $G \colon \ca{D} \rightarrow \ca{X}$, the canonical comparison morphism
\[
 \colim^{\ca{C}} GF \rightarrow \colim^{\ca{D}} G 
\]
 between bicolimits is an equivalence (and either bicolimit exists if and only if the other does).
\end{dfn}

\begin{prop}
 A functor $F \colon \ca{C} \rightarrow \ca{D}$ between 1-categories is bifinal if and only if for all $d \in \ca{D}$, the slice $d \slash F$ is simply connected, that is, $\pi_0 N(d \slash F)=0$ and $\pi_1 N(d \slash F)=0$, where $N$ denotes the nerve functor.
\end{prop}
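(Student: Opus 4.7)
The plan is to reformulate bifinality as a single condition on a canonical pseudonatural transformation between $\Gpd$-valued weights, mirroring the classical argument that ordinary finality is equivalent to connectedness of slices. Write $J_\ca{C} \colon \ca{C}^{\op} \rightarrow \Gpd$ and $J_\ca{D} \colon \ca{D}^{\op} \rightarrow \Gpd$ for the constant pseudofunctors at the terminal groupoid. A bicolimit of $G \colon \ca{D} \rightarrow \ca{X}$ is the weighted pseudocolimit $G \otimes_{\ca{D}} J_\ca{D}$, and the bicolimit of $GF$ is $GF \otimes_{\ca{C}} J_\ca{C} \simeq G \otimes_{\ca{D}} \Lan_{F^{\op}} J_\ca{C}$ by the usual adjunction between restriction and left Kan extension along $F^{\op}$. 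The comparison morphism is the image under $G \otimes_{\ca{D}} -$ of the canonical pseudonatural transformation $\alpha \colon \Lan_{F^{\op}} J_\ca{C} \rightarrow J_\ca{D}$ obtained as the counit of this adjunction. A bicategorical Yoneda argument, analogous to Kelly's treatment of final functors in enriched category theory, shows that the comparison is an equivalence for every bicategory $\ca{X}$ and every pseudofunctor $G$ if and only if $\alpha$ is a pointwise equivalence in $[\ca{D}^{\op},\Gpd]$.

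Next I would compute $\alpha$ pointwise. By the standard formula for bicategorical left Kan extensions, $(\Lan_{F^{\op}} J_\ca{C})(d)$ is the pseudocolimit in $\Gpd$ of the constant $\ast$-valued diagram over the slice 1-category $d\slash F$. For any small 1-category $\ca{I}$, this pseudocolimit is equivalent to the fundamental groupoid $\Pi_1 N(\ca{I})$ of the nerve of $\ca{I}$: indeed, groupoids model homotopy $1$-types, and the bicolimit of the constant point-diagram over $\ca{I}$ in $\Gpd$ agrees with the $1$-truncation of the homotopy type of $|N\ca{I}|$, which is precisely $\Pi_1 N \ca{I}$. Consequently, the component $\alpha_d$ is equivalent to the canonical map $\Pi_1 N(d\slash F) \rightarrow \ast$, and this is an equivalence of groupoids exactly when $\pi_0 N(d\slash F) = 0$ and $\pi_1 N(d\slash F) = 0$, that is, exactly when $d\slash F$ is simply connected.

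The main obstacle is the forward direction: deducing the pointwise condition on slices from the assumption that the comparison of bicolimits is an equivalence for every $G$ and every $\ca{X}$. The cleanest route is to test bifinality with $\ca{X} = \Gpd$ and $G$ the corepresentable $\ca{D}(d,-) \colon \ca{D} \rightarrow \Set \hookrightarrow \Gpd$, regarded as a discrete-groupoid-valued pseudofunctor. Since $d \slash \ca{D}$ has an initial object, its nerve is contractible, so $\colim^\ca{D} G \simeq \Pi_1 N(d\slash \ca{D}) \simeq \ast$. On the other hand, $\colim^\ca{C} GF \simeq \Pi_1 N(d\slash F)$, because the category of elements of the discrete-groupoid diagram $c \mapsto \ca{D}(d, Fc)$ is precisely $d\slash F$. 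Bifinality of $F$ therefore forces $\Pi_1 N(d\slash F) \simeq \ast$, which is exactly the simple-connectedness conclusion. The converse implication follows formally from the chain of equivalences $G \otimes_\ca{D} \Lan_{F^{\op}} J_\ca{C} \simeq G \otimes_\ca{D} J_\ca{D}$, valid whenever $\alpha$ is a pointwise equivalence, together with the fact that weighted pseudocolimits are invariant under equivalence of weights. All remaining work is routine bookkeeping for the coherence data of pseudofunctors, pseudonatural transformations, and modifications involved in the bicategorical weighted-colimit formalism.
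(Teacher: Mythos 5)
Your argument is correct and follows essentially the same route as the paper: both reduce bifinality, via the Kan-extension adjunction for the terminal weight, to the condition that $\Lan^{\prime}_F J(d)\simeq\ast$ for all $d$, and then identify $\Lan^{\prime}_F J(d)$ with the fundamental groupoid of $N(d\slash F)$. The only real difference is that the paper carries out this last identification explicitly as a codescent-object computation (via the functor $\lvert-\rvert_{\mathrm{iso}}$ and the fact that $\lvert\partial\Delta^n\rvert_{\mathrm{iso}}\rightarrow\lvert\Delta^n\rvert_{\mathrm{iso}}$ is invertible for $n>2$), whereas you cite it as the known fact that groupoids model homotopy $1$-types; your extra test against corepresentables is valid but redundant given the Yoneda reduction you already invoked.
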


\begin{proof}
 We follow the proof for the 1-categorical case outlined above. The bicolimit of a pseudofunctor $G \colon \ca{D} \rightarrow \ca{X}$ to a bicategory $\ca{X}$ represents
\[
 \Ps[\ca{D}^{\op},\Cat]\bigl(J, \ca{X}(G-,X) \bigr)
\]
 up to pseudonatural equivalence, where $J$ is the constant functor sending every object of $\ca{D}^{\op}$ to the terminal category. By definition of the bicategorical Kan extension $\Lan^{\prime}_F J$ of $J \colon \ca{C}^{\op} \rightarrow \Cat$ along $F$ we have an equivalence
\[
 \Ps[\ca{C}^{\op},\Cat]\bigr(J,\ca{X}(GF-,X)\bigl) \simeq \Ps[\ca{D}^{\op},\Cat]\bigr(\Lan^{\prime}_{F} J, \ca{X}(G-,X)\bigl) 
\]
 which is pseudonatural in $X$. Thus $F$ is final if and only if $\Lan^{\prime}_F J(d) \simeq \ast$ for all $d \in \ca{D}$.

 Let $\ca{A}$ be any bicategory, and let $F, G \colon \ca{A} \rightarrow \Cat$ be two pseudofunctors. Recall from \cite{STREET_CORRECTIONS} that the weighted bilimit $\{F,G\} \in \Cat$ is equivalent of to the descent object of the diagram
\[
 \xymatrix@C=10pt{ 
   \ar[r] \displaystyle{\prod_{a}} [Fa,Ga]
 & \displaystyle{\prod_{a,a^{\prime}}} 
   [ \ca{A}(a^{\prime},a)\times Fa,Ga^{\prime}]
   \ar@<4pt>[l] \ar@<-4pt>[l]
 & \displaystyle{\prod_{a,a^{\prime},a^{\prime\prime}}}
   [\ca{A}(a^{\prime},a^{\prime\prime}) \times \ca{A}(a^{\prime},a) \times Fa, Ga^{\prime\prime}]
   \ar[l] \ar@<5pt>[l] \ar@<-5pt>[l] }
\]
 whose simplicial identities hold up to isomorphism. From the natural isomorphism $\Cat(\ast, \ca{B}) \cong \ca{B}$ and the definition of weighted bilimits \cite[Formula~(1.17)]{STREET_FIBRATIONS} it follows that $\{F,G\} \simeq \Ps[\ca{A},\Cat](F,G)$. Using this we can adapt the proof of the standard formula for left Kan extensions in the 1-categorical case to show that $\Lan^{\prime}_F J(d)$ is the codescent object of the diagram
\[
 \xymatrix@C=15pt{\displaystyle{\coprod_{c,c^{\prime},c^{\prime\prime}}} \ca{D}(d,Fc)\times \ca{C}(c,c^{\prime}) \times \ca{C}(c^{\prime},c^{\prime\prime}) \ar[r] \ar@<5pt>[r] \ar@<-5pt>[r] & \displaystyle{\coprod_{c,c^{\prime}}} \ca{D}(d,Fc) \times \ca{C}(c,c^{\prime}) \ar@<4pt>[r] \ar@<-4pt>[r] & \ar[l] \displaystyle{\coprod_{c}} \ca{D}(d,Fc)}
\]
 in $\Cat$ (here we used the fact that $J(c)=\ast$ for all $c\in \ca{C}$). The simplical identites for this particular diagram hold strictly because $F$ is a functor between 1-categories.

 Note that the above codescent diagram is the truncation of the nerve of $d \slash F$, considered as a functor $\Delta^{\op} \rightarrow \Set \subseteq \Cat$. The codescent object of a strict codescent diagram $K$ can be computed as the strict weighted 2-colimit $\Delta_{\mathrm{iso}} \star K$, where $\Delta_{\mathrm{iso}}^n$ is the category consisting of a chain of $n$ isomorphisms. Writing
\[
 \lvert -\rvert_{\mathrm{iso}} \colon \Set^{\Delta^{\op}} \rightarrow \Cat
\]
 for the left adjoint induced by $\Delta_{\mathrm{iso}} \colon \Delta \rightarrow \Cat$, we get the formula
\[
 \Lan^{\prime}_F J (d) \simeq  \lvert (d \slash F) \rvert_{\mathrm{iso}}
\]
 for the value of $\Lan^{\prime}_F J$ at the object $d\in \ca{D}$. Note that this left adjoint factors through $\Gpd$, because groupoids form a coreflective subcategory of $\Cat$. We have reduced the problem to showing that $ \lvert -\rvert_{\mathrm{iso}}$ sends simply connected simplicial sets to contractible groupoids. This follows from the fact that
\[
  \lvert \partial \Delta^n \rvert_{\mathrm{iso}} \rightarrow  \lvert \Delta^n \rvert_{\mathrm{iso}}
\]
 is an isomorphism for $n>2$ (cf.\ \cite[\S 6]{LACK_GRAY}).
\end{proof}

 A similar result about final functors for homotopy colimits in topological spaces is proved in \cite[\S XI.9]{BOUSFIELD_KAN} and for model categories in \cite[\S 19.6]{HIRSCHHORN}. In that context the slice categories have to be contractible.

\begin{cor}\label{cor:projections_bifinal}
 Let $\ca{I}_0$ and $\ca{I}_1$ be two filtered categories. Then the two projection functors
\[
 \pr_{i} \colon \ca{I}_0 \times \ca{I}_1 \rightarrow \ca{I}_i
\]
 are bifinal.
\end{cor}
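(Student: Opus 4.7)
The plan is to use the characterization of bifinality established in the preceding proposition: it suffices to show that for every $i \in \ca{I}_i$, the slice category $i \slash \pr_i$ has a simply connected nerve. By symmetry I will only treat $\pr_0$. Fix $i_0 \in \ca{I}_0$.

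My first step is to unwind the definition of the slice. An object of $i_0 \slash \pr_0$ is a triple $\bigl((j_0,j_1),f\bigr)$ with $f \colon i_0 \to j_0$ in $\ca{I}_0$ and $j_1 \in \ca{I}_1$, while a morphism to $\bigl((j_0',j_1'),f'\bigr)$ is a pair of arrows $(g_0,g_1)$ with $g_0 \circ f = f'$ (no condition on $g_1$ beyond that it is a morphism in $\ca{I}_1$). Hence there is a canonical isomorphism of categories
\[
i_0 \slash \pr_0 \;\cong\; (i_0 \slash \ca{I}_0) \times \ca{I}_1.
\]

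Next I observe that both factors on the right are filtered. The slice $i_0 \slash \ca{I}_0$ is filtered because $\ca{I}_0$ is and the forgetful functor to $\ca{I}_0$ creates the finite diagrams one has to cocone over; $\ca{I}_1$ is filtered by assumption. Products of filtered categories are filtered, so $i_0 \slash \pr_0$ is filtered.

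The final step is to invoke the classical fact that the nerve of any filtered category is contractible (in particular $\pi_0 = 0$ and $\pi_1 = 0$); this can be seen directly by checking that any finite subcomplex of the nerve admits a cone inside the category, using the defining property of filteredness. Applying the preceding proposition, $\pr_0$ is bifinal, and the argument for $\pr_1$ is identical. I do not expect any real obstacle here: the only thing that requires a moment of care is the identification of the slice with a product, after which contractibility of nerves of filtered categories does all the work.
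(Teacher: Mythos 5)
Your argument is correct and follows the same route as the paper: show the slice $d \slash \pr_i$ is filtered and invoke contractibility of nerves of filtered categories, then apply the characterization of bifinality via simply connected slices. The paper simply asserts filteredness of the slice without the explicit identification $i_0 \slash \pr_0 \cong (i_0 \slash \ca{I}_0) \times \ca{I}_1$, which you supply as a (correct) justification.
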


\begin{proof}
 For each object $d \in \ca{I}_i$, the slice category $d \slash \pr_i$ is filtered. The nerve of a filtered category is contractible.
\end{proof}

\begin{cor}\label{cor:filtered_colimit_biproduct}
 Let $F_i \colon \ca{I}_i \rightarrow \ca{X}$, $i=0,1$ be two pseudofunctors on the filtered categories $\ca{I}_i$. If $\ca{X}$ has filtered bicolimits and biproducts, then the comparison morphism
\[
 \colim^{\ca{I}_0\times \ca{I}_1}F_0\times F_1 \rightarrow \colim^{\ca{I}_0} F_0 \times \colim^{\ca{I}_1} F_1
\]
 is an equivalence.
\end{cor}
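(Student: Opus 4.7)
The plan is to deduce this corollary from the bifinality result (Corollary~\ref{cor:projections_bifinal}) together with the observation that biproducts, being simultaneously products and coproducts, are preserved by every bicolimit. Write $\ca{I} = \ca{I}_0 \times \ca{I}_1$ and $p_i \colon \ca{I} \to \ca{I}_i$ for the projections. I interpret the pseudofunctor $F_0 \times F_1$ as $(i_0,i_1) \mapsto F_0(i_0) \times F_1(i_1)$, and since $\ca{X}$ has biproducts this is canonically equivalent to the pseudofunctor $F_0 p_0 \oplus F_1 p_1$ obtained by taking the pointwise biproduct of $F_0 p_0$ and $F_1 p_1$.

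First I would note that in a category with biproducts, the finite coproduct $\oplus$ is a bicolimit (it represents the coproduct), so by the defining universal property of bicolimits and the fact that bicolimits commute with bicolimits, there is a canonical equivalence
\[
 \colim^{\ca{I}} \bigl(F_0 p_0 \oplus F_1 p_1 \bigr) \;\simeq\; \colim^{\ca{I}} F_0 p_0 \;\oplus\; \colim^{\ca{I}} F_1 p_1 \smash{\rlap{.}}
\]
Next, Corollary~\ref{cor:projections_bifinal} tells us that each $p_i$ is bifinal, so by the very definition of bifinality the canonical morphisms
\[
 \colim^{\ca{I}} F_0 p_0 \longrightarrow \colim^{\ca{I}_0} F_0 \quad \mbox{and} \quad \colim^{\ca{I}} F_1 p_1 \longrightarrow \colim^{\ca{I}_1} F_1
\]
are equivalences in $\ca{X}$. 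Combining these two equivalences and the equivalence $A \oplus B \simeq A \times B$ provided by the biproduct structure of $\ca{X}$ yields the required equivalence
\[
 \colim^{\ca{I}_0 \times \ca{I}_1}(F_0 \times F_1) \;\simeq\; \colim^{\ca{I}_0} F_0 \;\times\; \colim^{\ca{I}_1} F_1 \smash{\rlap{.}}
\]

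Finally I would check that the equivalence constructed this way agrees, up to canonical isomorphism, with the comparison 1-cell appearing in the statement; this is routine bookkeeping using the universal properties of the bicolimits and of the product. The main obstacle is not conceptual but formal: one must be careful that the three equivalences above really are instances of the canonical comparison maps (rather than some arbitrary equivalences). The cleanest way to see this is to observe that each step is the unique (up to invertible 2-cell) 1-cell induced by a universal property, so composing them produces the canonical comparison. The hypothesis that $\ca{I}_0$ and $\ca{I}_1$ are filtered is used \emph{only} through the bifinality of the projections; no exactness of filtered bicolimits with respect to biproducts is required, since biproducts are already absolute colimits.
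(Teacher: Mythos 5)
Your argument hinges on reading ``biproduct'' as the additive-category notion of a direct sum $\oplus$ that is simultaneously a product and a coproduct, so that the pointwise product diagram $F_0 p_0 \times F_1 p_1$ becomes a pointwise \emph{coproduct} diagram and is therefore preserved by any bicolimit. But in this paper the prefix ``bi-'' consistently signals the bicategorical (up-to-equivalence) version of a notion --- bipullback, bilimit, bireflection --- and ``biproduct'' here means \emph{bicategorical product}, not direct sum. The corollary is applied in Proposition~\ref{prop:superextensive} with $\ca{X}=\Gpd$, where the objects being multiplied are descent groupoids; the product of two groupoids is emphatically not their coproduct, so the premise that ``biproducts are absolute colimits'' fails in exactly the situation the corollary is needed for. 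Consequently the step
\[
 \colim^{\ca{I}} \bigl(F_0 p_0 \oplus F_1 p_1 \bigr) \;\simeq\; \colim^{\ca{I}} F_0 p_0 \;\oplus\; \colim^{\ca{I}} F_1 p_1
\]
has no justification: you are asking a bicolimit to commute with a finite bi\emph{limit}, and that is not free.

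The paper's proof supplies the missing ingredient precisely where you claim none is needed: it invokes the fact that finite bilimits commute with \emph{filtered} bicolimits (in $\Gpd$), so the comparison $\colim^{\ca{I}}(G_0\times G_1)\to\colim^{\ca{I}}G_0\times\colim^{\ca{I}}G_1$ is an equivalence, and only then applies bifinality of the projections (Corollary~\ref{cor:projections_bifinal}) to identify $\colim^{\ca{I}}F_i\pr_i$ with $\colim^{\ca{I}_i}F_i$. So the filteredness of $\ca{I}_0$ and $\ca{I}_1$ is used twice --- once for the exactness of filtered bicolimits against finite bilimits and once for bifinality --- whereas your last sentence explicitly denies the first use. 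The bifinality half of your argument is correct and matches the paper; the product-versus-colimit half needs to be replaced by the commutation of finite bilimits with filtered bicolimits.
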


\begin{proof}
 Let $G_i=F_i \cdot \pr_i \colon \ca{I}_0 \times \ca{I}_1 \rightarrow \ca{X}$. Since finite bilimits commute with filtered bicolimits, the comparison morphism
\[
 \colim^{\ca{I}_0\times \ca{I}_1}G_0 \times G_1 \rightarrow  \colim^{\ca{I}_0\times \ca{I}_1} G_0 \times  \colim^{\ca{I}_0\times \ca{I}_1} G_1
\]
 is an equivalence, where $G_0 \times G_1$ is the pointwise product. The claim follows from bifinality of the projections $\pr_i$ (see Corollary~\ref{cor:projections_bifinal}).
\end{proof}

\subsection{Stacks on superextensive sites}

\begin{prop}\label{prop:extensive_stack}
 Let $\ca{C}$ be an extensive category. Then a pseudofunctor
\[
 \ca{C}^{\op} \rightarrow \Gpd
\]
 is a stack for the extensive topology if and only if it sends finite coproducts in $\ca{C}$ to biproducts.
\end{prop}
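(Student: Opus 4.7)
The plan is to unwind the descent condition for a coproduct covering explicitly, using extensivity to collapse all the iterated fibered products, and then recognize what remains as the statement that $F$ sends the coproduct to a bicategorical product in $\Gpd$.

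First I would recall that the extensive topology is generated by the finite covering families $\{q_i \colon U_i \to U\}_{i \in I}$ (including the empty family, which covers the initial object $\emptyset$) that exhibit $U$ as a coproduct $\coprod_i U_i$. By definition of extensivity, the fibered products appearing in the \v{C}ech nerve of such a cover satisfy $U_i \pb{U} U_j \cong \emptyset$ for $i \ne j$ and $U_i \pb{U} U_i \cong U_i$, and analogously on triple intersections one has $U_i\pb{U} U_j \pb{U} U_k \cong U_i$ if $i = j = k$ and $\emptyset$ otherwise.

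Next I would unwind the descent groupoid $\mathrm{Desc}(\{q_i\},F)$ using these simplifications. Its objects are families $(x_i)_{i\in I}$ with $x_i\in F(U_i)$ together with gluing isomorphisms $\alpha_{ij}$ in $F(U_i \pb{U} U_j)$ satisfying the cocycle condition; the diagonal terms $\alpha_{ii}$ are forced to be identities by the normalization axioms, while the off-diagonal terms $\alpha_{ij}$ live in $F(\emptyset)$. Provided $F(\emptyset) \simeq *$, these off-diagonal isomorphisms are uniquely determined and all cocycle conditions become vacuous, so $\mathrm{Desc}(\{q_i\},F) \simeq \prod_i F(U_i)$; moreover under this identification the canonical comparison $F(U) \to \mathrm{Desc}(\{q_i\},F)$ becomes the restriction-assembly map $F(U) \to \prod_i F(U_i)$.

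For the ``if'' direction I would assume $F$ sends finite coproducts in $\ca{C}$ to bicategorical products in $\Gpd$; specializing to the empty coproduct yields $F(\emptyset) \simeq *$ (the empty product), and the analysis above shows the comparison map is then the equivalence $F(U) \simeq \prod_i F(U_i)$, so $F$ is a stack. For the converse, applying the stack condition to the empty covering of $\emptyset$ (whose descent object, as a limit over the empty diagram, is the terminal groupoid) first gives $F(\emptyset) \simeq *$, and then the same analysis applied to a coproduct cover $\{U_i \to \coprod_i U_i\}$ shows $F(\coprod_i U_i) \simeq \prod_i F(U_i)$.

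The main obstacle will be the careful bookkeeping of the normalized descent groupoid for a $\Gpd$-valued pseudofunctor: checking that the diagonal gluing data are genuinely forced to the identity, that the off-diagonal cocycle equations are vacuous once $F(\emptyset) \simeq *$, and that the comparison functor identifies with the canonical map to the product. Once this is dispatched both directions of the equivalence fall out at once.
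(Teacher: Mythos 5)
Your proposal is correct and follows essentially the same route as the paper: descent for the empty cover of $\varnothing$ is equivalent to $F(\varnothing)$ being biterminal, and extensivity collapses the descent diagram of a coproduct cover to the constant diagram with value $\prod_i F(U_i)$, so the stack condition becomes exactly the biproduct condition. Your version merely spells out the bookkeeping (vanishing off-diagonal overlaps, triviality of the gluing data once $F(\varnothing)\simeq \ast$) that the paper leaves implicit.
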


\begin{proof}
 The pseudofunctor $F$ satisfies descent for the empty cover of the initial object if and only if it sends the initial object to a biterminal object. Using this and extensivity we find that the descent diagram for the finite cover
\[
\{\mathrm{in}_i \colon U_i \rightarrow \coprod U_i \} 
\]
 is equivalent to the constant descent diagram with value $\prod F(U_i)$.
\end{proof}

 Recall that the stack associated to a pseudofunctor on a site whose topology is generated by singleton coverings is obtained by applying the pseudofunctor $L$ three times, where $LF(U)$ is given by the bicolimit
\[
 \colim^{\Cov(U)^{\op}} \Ps[\ca{C}^{\op},\Gpd](\Er(p),F) \smash{\rlap{,}}
\]
 taken over the preordered set $\Cov(U)$ of coverings of $U$. Here $\Er(p)$ stands for the kernel pair of $p$, considered as an internal groupoid. The objects of $\Cov(U)$ are the coverings $p \colon V \rightarrow U$ of $U$, with $p \leq p^{\prime}$ if there exists a morphism $V \rightarrow V^{\prime}$ making the triangle
\[
 \xymatrix{V \ar[rr] \ar[rd]_{p} && V^{\prime} \ar[ld]^{p^{\prime}} \\ & U }
\]
 commutative. The preordered set $\Cov(U)^{\op}$ is directed because coverings are stable under pullbacks.

\begin{prop}\label{prop:superextensive}
 Let $\ca{C}$ be a superextensive site, and let $F \colon \ca{C} \rightarrow \Gpd$ be a pseudofunctor which is a stack for the extensive topology. Then the associated stack of $F$ for the subtopology generated by singletons is also the associated stack for the superextensive topology.
\end{prop}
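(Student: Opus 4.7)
The plan is to show that the singleton-topology stackification $L^3 F$ is in fact a stack for both of the generating pretopologies of the superextensive topology—the extensive one and the one of singleton coverings. A covering in the superextensive topology can always be refined by an extensive cover followed by a singleton cover, so descent for both generators implies descent for the full topology (this is the analogue of the sheaf argument on the cited nLab page). Being a stack for the singleton subtopology is the defining property of $L^3 F$, so by Proposition~\ref{prop:extensive_stack} it remains to show that $L^3 F$ sends finite coproducts in $\ca{C}$ to biproducts in $\Gpd$. Since $F$ has this property by hypothesis, it suffices to show, by induction, that each application of Street's functor $L$ preserves the ``sends finite coproducts to biproducts'' property.

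So fix a pseudofunctor $G \colon \ca{C}^{\op} \to \Gpd$ sending finite coproducts to biproducts, and let $U = \coprod_{i=1}^n U_i$. By construction $LG(U)$ is a filtered bicolimit indexed by $\Cov(U)^{\op}$, the (opposite of the) directed set of singleton coverings of $U$. Using extensivity of $\ca{C}$, any singleton covering $p \colon V \to U$ decomposes uniquely as $p = \coprod_i p_i$, where $p_i \colon V_i \to U_i$ is the pullback of $p$ along the coproduct inclusion $U_i \hookrightarrow U$; each $p_i$ is again a singleton covering (singleton covers being stable under pullback), and conversely any family $(q_i \colon W_i \to U_i)_i$ of singleton coverings assembles into the singleton covering $\coprod_i q_i$ of $U$. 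Hence $p \mapsto (p_i)_i$ exhibits an equivalence of filtered directed sets $\Cov(U)^{\op} \simeq \prod_i \Cov(U_i)^{\op}$.

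Iterated extensivity next yields a level-wise decomposition $\Er(p) \cong \coprod_i \Er(p_i)$ of the associated truncated simplicial objects in $\ca{C}$, and the biproduct property of $G$ then converts this to an equivalence of descent groupoids
\[
\Ps[\ca{C}^{\op},\Gpd]\bigl(\Er(p),G\bigr) \;\simeq\; \prod_i \Ps[\ca{C}^{\op},\Gpd]\bigl(\Er(p_i),G\bigr).
\]
Feeding this equivalence and the equivalence of indexing directed sets into Corollary~\ref{cor:filtered_colimit_biproduct} (finite products commute with filtered bicolimits in $\Gpd$) gives the desired equivalence $LG(U) \simeq \prod_i LG(U_i)$.

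The main technical obstacle is the verification of the decomposition $\Er(p) \cong \coprod_i \Er(p_i)$. The nontrivial input is that extensivity forces the cross pullbacks $V_i \times_U V_j$ to vanish for $i \neq j$, collapses $V_i \times_U V_i$ to $V_i \times_{U_i} V_i$, and does the analogous thing at every higher simplicial level; checking this amounts to a careful but routine application of extensivity to the relevant pullback diagrams. Once the decomposition is in hand, the rest of the argument is a formal consequence of the two auxiliary results already proved earlier in the appendix.
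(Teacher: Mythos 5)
Your proposal is correct and follows essentially the same route as the paper's proof: decompose $\Cov(U)$ of a coproduct into a product of the $\Cov(U_i)$ via extensivity, decompose the kernel-pair groupoids $\Er(p)$ accordingly, use the biproduct hypothesis on $F$ to split the descent groupoids, and conclude with Corollary~\ref{cor:filtered_colimit_biproduct}. The only cosmetic differences are that the paper treats the empty coproduct separately (using strictness of the initial object to see $LF(\varnothing)\simeq F(\varnothing)\simeq\ast$) and then reduces to binary coproducts rather than handling general $n$ in one step.
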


\begin{proof}
 Since a superextensive topology is generated by the extensive topology and the subtopology of singleton coverings, a pseudofunctor is a stack for the superextensive topology if and only if it is a stack for both the extensive topology and the singleton topology. Therefore it suffices to show that $L$ preserves stacks for the extensive topology. Thus we have to show that $LF$ sends finite coproducts to biproducts whenever $F$ does.

 Since the initial object $\varnothing$ of $\ca{C}$ is strict, the identity is the only singleton covering of $\varnothing$, and it follows that $LF(\varnothing) \simeq F(\varnothing) \simeq \ast$. This reduces the problem to showing that $LF(U+U^{\prime}) \simeq LF(U) \times LF(U^{\prime})$.

 Since $\ca{C}$ is extensive, any morphism $q \colon W \rightarrow U+U^{\prime}$ is isomorphic to
\[
p + p^{\prime} \colon V + V^{\prime} \rightarrow U+U^{\prime} 
\]
 where $p$ and $p^{\prime}$ are the pullbacks of $q$ along the respective coproduct inclusions. They are in particular coverings if $q$ is a covering. Conversely, if $p$ and $p^{\prime}$ are covering, so is $q$, because the topology is superextensive. Extensivity of $\ca{C}$ implies that the binary coproduct functor
\[
 \Cov(U) \times \Cov(U^{\prime}) \rightarrow \Cov(U+U^{\prime})
\]
 is an equivalence of categories.

 Moreover, using extensivity again we find that the kernel pair $\Er(p+p^{\prime})$ of $p+p^{\prime}$ is the coproduct of the kernel pair $\Er(p)$ of $p$ and $\Er(p^{\prime})$ of $p^{\prime}$. Using the fact that $\Ps[\ca{C}^{\op},\Gpd]\bigr(\Er(p+q),F\bigl)$ is the descent object of
\[
 \xymatrix{F(V+V^{\prime}) \ar[r]  & \ar@<4pt>[l] \ar@<-4pt>[l] F(V\pb{U} V + V^{\prime}\pb{U^{\prime}} V^{\prime}) &  \ar@<5pt>[l] \ar[l] \ar@<-5pt>[l] \ldots }
\]
 (see \cite[p.~12]{STREET_DESCENT}) we find that the pseudofunctor
\[
 \Ps[\ca{C}^{\op},\Gpd]\bigr(\Er(-),F\bigl) \colon \Cov(U+U^{\prime})^{\op} \rightarrow \Gpd
\]
 is equivalent to the composite
\[
 \xymatrix{\Cov(U)^{\op} \times \Cov(U^{\prime})^{\op} \ar[r]^-{G \times G^{\prime}} & \Gpd\times \Gpd  \ar[r]^-{\times} & \Gpd} \smash{\rlap{,}}
\]
 where $G$ and $G^{\prime}$ stand for the pseudofunctors $\Ps[\ca{C}^{\op},\Gpd]\bigl(\Er(-),F \bigr)$ on $\Cov(U)^{\op}$ and $\Cov(U^{\prime})^{\op}$ respectively. From Corollary~\ref{cor:filtered_colimit_biproduct} it follows that the comparison morphism
\[
 LF(U+U^{\prime}) \rightarrow LF(U) \times LF(U^{\prime})
\]
 is an equivalence.
\end{proof}

\bibliographystyle{amsalpha}
\bibliography{stacksii}

\end{document}